\definecolor{ultramarine}{RGB}{0,32,96}
\colorlet{mygreen}{green!20!gray}
\colorlet{myultramarine}{ultramarine!20!gray}
\newsavebox\myboxA
\newsavebox\myboxB
\newlength\mylenA
\DeclareMathOperator*{\dprime}{^{\hskip-0.5mm\prime \prime}}
\DeclareMathOperator*{\dprimeind}{^{\hskip-1.5mm\prime \prime}}
\DeclareMathOperator*{\tprime}{^{\hskip-1.5mm\prime \prime \prime}}
\newcommand{\upperset}[2]{%
\underset{%
        \text{\raisebox{0.5ex}{\smash{\fontsize{5}{5}$#1$}}}
              }{#2}%
                    }
\numberwithin{equation}{section}
\numberwithin{figure}{section}
\DeclareFontFamily{U}{BOONDOX-calo}{\skewchar\font=45 }
\DeclareFontShape{U}{BOONDOX-calo}{m}{n}{
  <-> s*[1.05] BOONDOX-r-calo}{}
\DeclareFontShape{U}{BOONDOX-calo}{b}{n}{
  <-> s*[1.05] BOONDOX-b-calo}{}
\DeclareMathAlphabet{\mathcalboondox}{U}{BOONDOX-calo}{m}{n}
\SetMathAlphabet{\mathcalboondox}{bold}{U}{BOONDOX-calo}{b}{n}
\DeclareMathAlphabet{\mathbcalboondox}{U}{BOONDOX-calo}{b}{n}
\title{Morphisms of pre-Calabi-Yau categories and morphisms of cyclic $A_{\infty}$-categories}
\author{Marion Boucrot}
\date{}
\newcommand{\MA}{\mathcal{A}}
\newcommand{\MB}{\mathcal{B}}
\newcommand{\MO}{\mathcal{O}}
\newcommand{\ZZ}{{\mathbb{Z}}}
\newcommand{\NN}{{\mathbb{N}}}
\newcommand{\kk}{\Bbbk}
\newcommand{\Hom}{\operatorname{Hom}}
\newcommand{\shom}{\operatorname{hom}}
\newcommand{\Homgr}{\mathcal{H}om}
\newcommand{\llg}{\operatorname{lg}}
\newcommand{\nec}{\operatorname{nec}}
\newcommand{\inn }{\operatorname{inn}}
\newcommand{\out}{\operatorname{out}}
\newcommand{\llt}{\operatorname{lt}}
\newcommand{\rrt}{\operatorname{rt}}
\newcommand{\Multi}{\operatorname{Multi}}
\newcommand{\id}{\operatorname{id}}
\newcommand{\multinec}{\operatorname{multinec}}
\newcommand{\pre}{\operatorname{pre}}
\newcommand{\pCY}{\operatorname{pre-CY}_d}
\newcommand{\SpCY}{\operatorname{Spre-CY}_d}
\newcommand{\NpCY}{\operatorname{Npre-CY}_d}
\newcommand{\Ahat}{\operatorname{\widehat{A_{\infty}}_d}}
\newcommand{\cyc}{\operatorname{\operatorname{cyc}\widehat{A_{\infty}}_d}}
\newcommand{\Scyc}{\operatorname{\operatorname{Scyc}\widehat{A_{\infty}}_d}}
\newcommand*{\relrelbarsep}{.386ex}
\newcommand*{\relrelbar}{%
  \mathrel{%
    \mathpalette\@relrelbar\relrelbarsep
  }%
}
\newcommand*{\@relrelbar}[2]{%
  \raise#2\hbox to 0pt{$\m@th#1\relbar$\hss}%
  \lower#2\hbox{$\m@th#1\relbar$}%
}
\providecommand*{\rightrightarrowsfill@}{%
  \arrowfill@\relrelbar\relrelbar\rightrightarrows
}
\providecommand*{\leftleftarrowsfill@}{%
  \arrowfill@\leftleftarrows\relrelbar\relrelbar
}
\providecommand*{\xrightrightarrows}[2][]{%
  \ext@arrow 0359\rightrightarrowsfill@{#1}{#2}%
}
\providecommand*{\xleftleftarrows}[2][]{%
  \ext@arrow 3095\leftleftarrowsfill@{#1}{#2}%
}
\newcommand*\xoverline[2][0.75]{%
    \sbox{\myboxA}{$\m@th#2$}%
    \setbox\myboxB\null
    \ht\myboxB=\ht\myboxA%
    \dp\myboxB=\dp\myboxA%
    \wd\myboxB=#1\wd\myboxA
    \sbox\myboxB{$\m@th\overline{\copy\myboxB}$}
    \setlength\mylenA{\the\wd\myboxA}
    \addtolength\mylenA{-\the\wd\myboxB}%
    \ifdim\wd\myboxB<\wd\myboxA%
       \rlap{\hskip 0.5\mylenA\usebox\myboxB}{\usebox\myboxA}%
    \else
        \hskip -0.5\mylenA\rlap{\usebox\myboxA}{\hskip 0.5\mylenA\usebox\myboxB}%
    \fi}
\newcommand*{\doubarl}[1]{\xoverline{\xoverline{#1}}} 
\newcommand*{\doubar}[1]{\bar{\bar{#1}}} 
\newcommand{\doublefleche}
{ (0.05,-0.04) -- (0.18,-0.04)
(0.05,0.04) -- (0.18,0.04)
(0,0) -- (0.07,-0.07) 
(0,0) -- (0.07,0.07)}
\newcommand{\doubleflechescindeeleft}
{(0.07,-0.07-0.1) -- (0.22,-0.07-0.1)
(0,-0.1) -- (0.22,-0.1)
(0,-0.1) -- (0.1,-0.1-0.1) }
\newcommand{\doubleflechescindeeright}
{(0,0.1) -- (0.22,0.1)
(0.07,0.07+0.1) -- (0.22,0.07+0.1)
(0,0.1) -- (0.1,0.1+0.1)
}
\newcommand{\fleche}{
[<-,> = stealth] (0,0)--(0.5,0)
}
\newcommand{\flechelong}{
[<-, > = stealth] (0,0)--(1,0)
}
\newtheorem{definition}{Definition}[section]
\newtheorem{definition-proposition}[definition]{Definition-Proposition}
\newtheorem{remark}[definition]{Remark}
\newtheorem{proposition}[definition]{Proposition}
\newtheorem{example}[definition]{Example}
\newtheorem{corollary}[definition]{Corollary}
\newtheorem{lemma}[definition]{Lemma}
\newtheorem{theoreme}[]{Theorem}
\begin{document}

\maketitle
\hrulefill
\begin{abstract} 
In this article we prove that there exists a relation between $d$-pre-Calabi-Yau morphisms introduced by M. Kontsevich, A. Takeda and Y. Vlassopoulos and cyclic $A_{\infty}$-morphisms, extending a result proved by D. Fern\'andez and E. Herscovich. This leads to a functor between the category of $d$-pre-Calabi-Yau structures and the partial category of $A_{\infty}$-categories of the form $\MA\oplus\MA^*[d-1]$ with $\MA$ a graded quiver and whose morphisms are the data of an $A_{\infty}$-structure on $\MA\oplus\MB^*[d-1]$ together with $A_{\infty}$-morphisms $\MA[1]\oplus\MB^*[d]\rightarrow \MA[1]\oplus\MA^*[d]$ and $\MA[1]\oplus\MB^*[d]\rightarrow \MB[1]\oplus\MB^*[d]$.
\end{abstract}

\textbf{Mathematics subject classification 2020:} 16E45, 18G70, 14A22

\textbf{Keywords:} $A_{\infty}$-categories, pre-Calabi-Yau categories

\hrulefill

\tableofcontents

\section{Introduction}
Pre-Calabi-Yau structures were introduced by M. Kontsevich and Y. Vlassopoulos in the last de\-cade.
These structures have also appeared
under different names, such as $V_{\infty}$-algebras in \cite{tz}, $A_{\infty}$-algebras with boundary in
\cite{seidel}, or weak Calabi-Yau structures in \cite{kontsevich} for example. These references show that pre-Calabi-Yau structures
play an important role in homological algebra, symplectic geometry, string topology, noncommutative geometry and even in Topological Quantum Field Theory. 

A pre-Calabi-Yau structure is a Maurer-Cartan element of the necklace bracket introduced in \cite{ktv}. This bracket is given as the commutator of a necklace product and is a noncommutative analogue to the Schouten bracket on polyvector fields. This allows to regard pre-Calabi-Yau algebras as formal noncommutative Poisson structures. Therefore, one should expect a relation between pre-Calabi-Yau algebras and double Poisson algebras, which are natural candidates for Poisson structures for noncommutative differential geometry developed in \cite{c-beg} and \cite{vdb}. Actually, W.-K. Yeung show in \cite{yeung} that double Poisson structures on dg categories are examples of pre-Calabi-Yau categories. Moreover, it is proved in \cite{ik} that nongraded double Poisson algebras are in correspondence with a restricted class of pre-Calabi-Yau algebras.
More precisely, they established a one-to-one correspondence between the class of nongraded double Poisson algebra and the class of pre-Calabi-Yau algebras of the form $A\oplus A^*[d-1]$ whose only nonzero multiplications are the usual product and $m_3$ sending $A\otimes A^*\otimes A$ to $A$ and $A^*\otimes A\otimes A^*$ to $A^*$. 
D. Fern\'andez and E. Herscovich extend in \cite{fh} this correspondence to the differential graded setting and show that this correspondence satisfy a functorial property by studying the link between morphisms of double Poisson dg algebras and cyclic $A_{\infty}$-morphisms. 
Actually, they show that every morphism between double Poisson dg algebras $A$ and $B$ induces a cyclic $A_{\infty}$-structure on $A\oplus B^*[d-1]$ and strict cyclic $A_{\infty}$-morphisms 
 \begin{equation}
        \begin{tikzcd}[ampersand replacement=\&, column sep=small]
          \& A[1]\oplus B^*[d] \ar[dl, ""] \ar[dr, ""']  \\
            A[1]\oplus A^*[d] \&\& B[1]\oplus B^*[d] 
        \end{tikzcd}
\end{equation}
where the cyclic $A_{\infty}$-structures on $A\oplus A^*[d-1]$ and $B\oplus B^*[d-1]$ are the one corresponding to the double Poisson dg algebras $A$ and $B$ respectively.

There is also a strong relation between pre-Calabi-Yau algebras and $A_{\infty}$-algebras. Actually, M. Kontsevich, A. Takeda and Y. Vlassopoulos show in \cite{ktv} that for $d\in\ZZ$, a $d$-pre-Calabi-Yau structure on a finite dimensional vector space $A$ is equivalent to a cyclic $A_{\infty}$-structure on $A\oplus A^*[d-1]$ such that $A\hookrightarrow A\oplus A^*[d-1]$ is an $A_{\infty}$-subalgebra.
The definition of pre-Calabi-Yau morphisms first appeared in \cite{ktv} and then in \cite{lv}, in the properadic setting.
A natural question is then about the link between pre-Calabi-Yau morphisms and $A_{\infty}$-morphisms of the corresponding boundary construction.

In this paper, we study the relation between $A_{\infty}$-morphisms and pre-Calabi-Yau morphisms in a larger generality than in \cite{fh} with the same kind of construction, namely, we prove the following theorem: 

\begin{theoreme}
\label{thm:main-article-1}
     There exists a functor $\mathcal{P} : \pCY\rightarrow \Ahat$ which sends a $d$-pre-Calabi-Yau category $(\MA,s_{d+1}M_{\MA})$ to the $A_{\infty}$-category $(\MA\oplus\MA^*[d-1],sm_{\MA\oplus\MA^*})$ given in \cite{ktv} and a $d$-pre-Calabi-Yau morphism $(\Phi_0,s_{d+1}\Phi) : (\MA,s_{d+1}M_{\MA})\rightarrow (\MB,s_{d+1}M_{\MB})$ to an $A_{\infty}$-structure $sm_{\MA\oplus\MB^*}$ together with $A_{\infty}$-morphisms 
 \begin{equation}
\begin{tikzcd}
&(\MA \oplus \MB^*[d-1],sm_{\MA\oplus\MB^*}) \arrow[swap,"s\varphi_{\MA}"]{dl} \arrow[swap,"s\varphi_{\MB}"]{dr}\\
(\MA \oplus \MA^*[d-1],sm_{\MA\oplus\MA^*})&& (\MB \oplus \MB^*[d-1], sm_{\MB\oplus\MB^*}).
\end{tikzcd}
\end{equation}
\end{theoreme}

The aim of this work is to have a better understanding of pre-Calabi-Yau morphisms as it is an intricate notion. The relation with notions of $A_{\infty}$-structures and $A_{\infty}$-morphisms give an other point of view and the possibility of using well-known results and constructions in the $A_{\infty}$-case to give an intuition in the pre-Calabi-Yau case. For example, this can be relevant to investigate the notion of homotopy of pre-Calabi-Yau morphisms which is not known for the moment.

Let us briefly present the contents of the article.
In Section \ref{not-conv}, we fix the notations and conventions we use in this paper and in Section \ref{A-cat}, we recall the notions related to $A_{\infty}$-categories.
Section \ref{pcy-cat} is devoted to present the notions of discs and diagrams, which are a rewriting of the graphical calculus presented in \cite{ktv}, as well as the notion of pre-Calabi-Yau structures.
We also recall the link between pre-Calabi-Yau structures and $A_{\infty}$-structures in the case of a $\Hom$-finite graded quiver.
We incidentally show that the necklace product for a graded quiver $\MA$ is in fact equivalent to the usual Gerstenhaber circle product on $\MA \oplus \MA^{*}[d-1]$ (see Proposition \ref{proposition:j-bracket}), which does not seem to have been observed in the literature so far.  
In Section \ref{pcy-morphisms}, we recall the definitions of pre-Calabi-Yau morphisms and of the category $pCY_d$ given in \cite{ktv} and \cite{lv}. 

Section \ref{main section} is the core of the article. 
In Subsection \ref{mixed necklace}, we define a graded Lie algebra whose Maurer-Cartan elements induce $A_{\infty}$-structures on $A\oplus B^*[d-1]$.
In Subsection \ref{strict case}, we prove Theorem \ref{thm:main-article-1} in the case of strict $d$-pre-Calabi-Yau morphisms. In that case, the resulting $A_{\infty}$-morphisms are strict and cyclic and the $A_{\infty}$-structure on $\MA\oplus\MB^*[d-1]$ is cyclic. Moreover, we prove in Corollary \ref{coro:strict-case} that those data define a functor between partial categories.
In Subsection \ref{general case}, we prove Theorem \ref{thm:main-article-1} in the general case and show that with an additional assumption on the pre-Calabi-Yau morphism $\MA\rightarrow\MB$, the $A_{\infty}$-structure on $\MA\oplus\MB^*[d-1]$ is almost cyclic. We prove in Corollary \ref{corollary:functors} that this defines a functor between partial categories.

\textbf{Acknowledgements.} This work is part of a PhD thesis directed by Estanislao Herscovich and Hossein Abbaspour. The author thank them for the useful discussions and advices. The author also thank Johan Leray for a useful discussion about pre-Calabi-Yau morphisms. 

This work is supported by the French National Research Agency in the framework of the ``France 2030" program (ANR-15-IDEX-0002) and by the LabEx PERSYVAL-Lab (ANR-11-LABX-0025-01).

\section{Notations and conventions} \label{not-conv}
In what follows $\kk$ will be a field of characteristic different from $2$ and $3$ and to simplify we will denote $\otimes$ for $\otimes_{\kk}$. We will denote by $\NN = \{0,1,2,\dots \}$ the set of natural numbers and we define $\NN^*=\NN\setminus\{0\}$.
For $i,j\in\NN$, we define the interval of integers $\llbracket i,j\rrbracket=\{n\in\NN | i\leq n\leq j\}$. 

Recall that if we have a (cohomologically) graded vector space $V=\oplus_{i\in\ZZ}V^i$, we define for $n\in\ZZ$ the graded vector space $V[n]$ given by $V[n]^i=V^{n+i}$ for $i\in\ZZ$ and the map $s_{V,n} : V\rightarrow V[n]$ whose underlying set theoretic map is the identity. 
Moreover, if $f:V\rightarrow W$ is a morphism of graded vector spaces, we define the map $f[n] : V[n]\rightarrow W[n]$ sending an element $s_{V,n}(v)$ to $s_{W,n}(f(v))$ for all $v\in V$.
We will denote $s_{V,n}$ simply by $s_n$ when there is no possible confusion, and $s_1$ just by $s$.

We now recall the Koszul sign rules, that are the ones we use to determine the signs appearing in this paper. If $V,W$ are graded vector spaces, we have a map
$\tau_{V,W} : V\otimes W\rightarrow W\otimes V$ defined as
\begin{equation}
    \tau_{V,W}(v\otimes w)=(-1)^{|w||v|}w\otimes v
\end{equation}
where $v\in V$ is a homogeneous element of degree $|v|$ and $w\in W$ is a homogeneous element of degree $|w|$. 
More generally, given graded vector spaces $V_1,\dots,V_n$ and $\sigma\in\mathcalboondox{S}_n$, 
we have a map
\[
\tau^{\sigma}_{V_1,\dots,V_n} : V_1\otimes\dots\otimes V_n \rightarrow V_{\sigma^{-1}(1)}\otimes\dots\otimes V_{\sigma^{-1}(n)}
\]
defined as
\begin{equation}
    \tau^{\sigma}_{V_1,\dots,V_n}(v_1\otimes\dots\otimes v_n)=(-1)^{\epsilon}(v_{\sigma^{-1}(1)}\otimes\dots\otimes v_{\sigma^{-1}(n)})
\end{equation}
with
\[
\epsilon=\sum\limits_{\substack{i>j\\ \sigma^{-1}(i)<\sigma^{-1}(j)}} |v_{\sigma^{-1}(i)}||v_{\sigma^{-1}(j)}|
\]
where $v_i\in V_i$ is a homogeneous element of degree $|v_i|$ for $i\in\llbracket 1,n \rrbracket$.

Throughout this paper, when we consider an element $v$ of degree $|v|$ in a graded vector space $V$, we mean a homogeneous element $v$ of $V$.
Also, we will denote by $\id$ the identity map of every space of morphisms, without specifying it.
All the products in this paper will be products in the category of graded vector spaces. Given graded vector spaces $(V_i)_{i\in I}$, we thus have \[
\prod\limits_{i\in I}V_i=\bigoplus\limits_{n\in\ZZ}\prod\limits_{i\in I}V_{i}^{n}\]
where the second product is the usual product of vector spaces.

Given graded vector spaces $V,W$ we will denote by $\Hom_{\kk}(V,W)$ the vector space of $\kk$-linear maps $f : V \rightarrow W$ and by $\shom_{\kk}^d(V,W)$ the vector space of homogeneous $\kk$-linear maps $f : V \rightarrow W$ of degree d, \textit{i.e.} $f(v)\in W^{n+d}$ for all $v\in V^n$.
We assemble them in the graded vector space $\Homgr_{\kk}(V,W) =\bigoplus_{d\in \ZZ} \shom_{\kk}^d(V,W)\subseteq \Hom_{\kk}(V,W)$. We define the \textbf{\textcolor{ultramarine}{graded dual}} of a graded vector space $V=\bigoplus_{n\in\ZZ}V^n$ as the graded vector space $V^{*}=\Homgr_{\kk}(V,\kk)$.
Moreover, given graded vector spaces $V$, $V'$, $W$, $W'$ and homogeneous elements $f\in \Homgr_{\kk}(V,V')$ and $g\in\Homgr_{\kk}(W,W')$, we have that 
\begin{equation}
(f\otimes g)(v\otimes w)=(-1)^{|g||v|}f(v)\otimes g(w)
\end{equation}
for homogeneous elements $v\in V$ and $w\in W$.
Recall that given graded vector spaces $V_1,\dots,V_n$ and $d\in\ZZ$ we have a homogeneous linear isomorphism of degree $0$
\begin{equation}
    \label{eq:iso-shift-tensor-prod}
    H_j : (\bigotimes\limits_{i=1}^n V_i)[d]\rightarrow V_1\otimes\dots\otimes V_{j-1}\otimes V_j[d]\otimes V_{j+1}\otimes\dots\otimes V_n
\end{equation}
sending an element $s_d(v_1\otimes\dots \otimes v_n)$ to $(-1)^{d(|v_1|+\dots+|v_{j-1}|)}v_1\otimes\dots\otimes v_{j-1}\otimes s_dv_j\otimes v_{j+1}\otimes\dots\otimes v_n$.
Moreover, given graded vector spaces $V$ and $W$ and an integer $d\in\ZZ$, we have homogeneous linear isomorphisms of degree $0$
\begin{equation}
    \label{eq:iso-shift-output}
    \Homgr_{\kk}(V,W)[d]\rightarrow \Homgr_{\kk}(V,W[d])
\end{equation}
sending $s_df\in \Homgr_{\kk}(V,W)[d]$ to the map sending $v\in V$ to $s_d(f(v))$
and
\begin{equation}
    \label{eq:iso-shift-input}
    \Homgr_{\kk}(V,W)[d]\rightarrow\Homgr_{\kk}(V[-d],W)
\end{equation}
sending $s_df\in \Homgr_{\kk}(V,W)[d]$ to the map sending $s_{-d}v\in V[-d]$ to $(-1)^{d|f|}f(v)$.

Recall that a graded quiver $\MA$ consists of a set of objects $\MO$ together with graded vector spaces ${}_y\MA_x$ for every $x,y\in\MO$.
A dg quiver $\MA$ is a graded quiver such that ${}_y\MA_x$ is a dg vector space for every $x,y\in\MO$.
Given a quiver $\MA$, its \textbf{\textcolor{ultramarine}{enveloping graded quiver}} is the graded quiver $\MA^{e}=\MA^{op}\otimes \MA$ whose set of objects is $\MO\times \MO$ and whose space of morphisms from an object $(x,y)$ to an object $(x',y')$ is 
defined as the graded vector space ${}_{(x',y')}(\MA^{op}\otimes \MA)_{(x,y)}={}_{x}\MA_{x'}\otimes {}_{y'}\MA_{y}$.
A graded quiver $\MA$ with set of objects $\MO$ is said to be \textbf{\textcolor{ultramarine}{$\Hom$-finite}} if ${}_{y}\MA_x$ is finite dimensional for every $x,y\in\MO$. Given a graded quiver $\MA$ with set of objects $\MO$ its \textbf{\textcolor{ultramarine}{graded dual quiver}} is the quiver $\MA^{*}$ whose set of objects is $\MO$ and for $x,y\in \MO$, the space of morphisms from $x$ to $y$ is defined as ${}_{y}\MA^{*}_{x}=({}_{x}\MA_{y})^{*}$. 
Given graded quivers $\MA$ and $\MB$ with respective sets of objects $\MO_{\MA}$ and $\MO_{\MB}$ , a \textbf{\textcolor{ultramarine}{morphism of graded quivers}} $(\Phi_0,\Phi) : \MA \rightarrow \MB$ is the data of a map $\Phi_0 : \MO_{\MA}\rightarrow \MO_{\MB}$ between the sets of objects together with a collection $\Phi=(\Phi^{x,y})_{x,y\in\MO_{\MA}}$ of morphisms of graded vector spaces $\Phi^{x,y} : {}_x\MA_y\rightarrow {}_{\Phi_0(x)}\MB_{\Phi_0(y)}$ for every $x,y\in\MO_{\MA}$.

We will denote \[
\bar{\MO}=\bigsqcup_{n\in\NN^*}\MO^n
\]and more generally, we will denote by $\doubar{\MO}$ the set formed by all finite tuples of elements of $\bar{\MO}$, \textit{i.e.} 
\begin{equation}
\begin{split}
\bar{\bar{\MO}}=\bigsqcup_{n\in\NN^*} \bar{\MO}^n=\bigsqcup_{n\in\NN^*}\bigsqcup_{(p_1,\dots,p_n)\in\mathcal{T}_n}\MO^{p_1}\times\dots\times \MO^{p_n}
\end{split}
\end{equation}  
where $\mathcal{T}_n=\NN^n$ for $n>1$ and $\mathcal{T}_1=\NN^*$.
Given $\bar{x}=(x_1,\dots,x_n)\in\bar{\MO}$ we define its \textbf{\textcolor{ultramarine}{length}} as $\llg(\bar{x})=n$, its \textbf{\textcolor{ultramarine}{left term}} as $\llt(\bar{x})=x_1$ and its \textbf{\textcolor{ultramarine}{right term}} as $\rrt(\bar{x})=x_n$. For $i\in\llbracket 1,n\rrbracket$, we define $\bar{x}_{\leq i}=(x_1,\dots,x_i)$, $\bar{x}_{\geq i}=(x_i,\dots,x_n)$ and for $j>i$, $\bar{x}_{\llbracket i,j\rrbracket}=(x_i,x_{i+1},\dots,x_j)$. One can similarly define $\bar{x}_{<i}$ and $\bar{x}_{>i}$.
Moreover, given $\doubar{x}=(\bar{x}^1,\dots,\bar{x}^n)\in\doubar{\MO}$ we define its \textbf{\textcolor{ultramarine}{length}} as $\llg(\doubar{x})=n$, its \textbf{\textcolor{ultramarine}{left term}} as $\llt(\doubar{x})=\bar{x}^1$ and its \textbf{\textcolor{ultramarine}{right term}} as $\rrt(\doubar{x})=\bar{x}^n$. 
For $\bar{x}=(x_1,\dots,x_n)\in\bar{\MO}$, we will denote 
\[
\MA^{\otimes \bar{x}}={}_{x_1}\MA_{x_2}\otimes {}_{x_2}\MA_{x_3}\otimes\dots\otimes {}_{x_{\llg(\bar{x})-1}}\MA_{x_{\llg(\bar{x})}}
\]
and we will often denote an element of $\MA^{\otimes\bar{x}}$ as $a_1,a_2,\dots,a_{\llg(\bar{x})-1}$ instead of $a_1\otimes a_2\otimes \dots \otimes a_{\llg(\bar{x})-1}$ for $a_i\in {}_{x_i}\MA_{x_{i+1}}$, $i\in\llbracket 1,\llg(\bar{x})-1\rrbracket$. Moreover, given a tuple $\doubar{x}=(\bar{x}^1,\dots,\bar{x}^n)\in\doubar{\MO}$ we will denote $\MA^{\otimes \doubar{x}}=\MA^{\otimes \bar{x}^1}\otimes \MA^{\otimes \bar{x}^2}\otimes\dots\otimes \MA^{\otimes \bar{x}^n}$.
Given tuples $\bar{x}=(x_1,\dots,x_n),\bar{y}=(y_1,\dots,y_m)\in\bar{\MO}$, we define their concatenation as $\bar{x}\sqcup\bar{y}=(x_1,\dots,x_n,y_1,\dots,y_m)$.
We also define the inverse of a tuple $\bar{x}=(x_1,\dots,x_n)\in\bar{\MO}$ as $\bar{x}^{-1}=(x_n,x_{n-1},\dots,x_1)$.
If $\sigma\in\mathcalboondox{S}_n$ and $\bar{x}=(x_1,\dots,x_n)\in\MO^n$, we define $\bar{x}\cdot\sigma=(x_{\sigma(1)},x_{\sigma(2)},\dots,x_{\sigma(n)})$.
Moreover, given $\sigma\in\mathcalboondox{S}_n$ and $\doubar{x}=(\bar{x}^1,\dots,\bar{x}^n)\in\bar{\MO}^n$, we define $\doubar{x}\cdot\sigma=(\bar{x}^{\sigma(1)},\bar{x}^{\sigma(2)},\dots,\bar{x}^{\sigma(n)})$.
We denote by $C_n$ the subgroup of $\mathcalboondox{S}_n$ generated by the cycle $\sigma=(1 2\dots n)$ which sends $i\in \llbracket 1,n-1\rrbracket $ to $i+1$ and $n$ to $1$.
\section{\texorpdfstring{$A_{\infty}$-categories}{A-infinity-categories}} 
\label{A-cat}
In this section, we recall the notion of (cyclic) $A_{\infty}$-categories and (cyclic) $A_{\infty}$-morphisms as well as the definition of the natural bilinear form associated to a graded quiver.
We also introduce a bilinear form on categories of the form $\MA\oplus\MB^*$ where $\MA$ and $\MB$ are graded quivers related by a morphism $\MA\rightarrow\MB$.

\begin{definition}
\label{def:hochschild-graded-vs}
    Given graded quivers $\MA$ and $\MB$ with respective sets of objects $\MO_{\MA}$ and $\MO_{\MB}$ as well as a map $\Phi_0 : \MO_{\MA}\rightarrow \MO_{\MB}$, we define the graded quiver
    \[
    C_{\Phi_0}(\MA,\MB)=\prod\limits_{p\geq 1}\prod\limits_{\bar{x}\in\MO_{\MA}^p}C^{\bar{x}}(\MA,\MB)=\prod\limits_{p\geq 1}\prod\limits_{\bar{x}\in\MO_{\MA}^p}\Homgr_{\kk}(\MA[1]^{\otimes \bar{x}}, {}_{\Phi_0(\llt(\bar{x}))}\MB_{\Phi_0(\rrt(\bar{x}))})
    \]
    We will denote $C_{\id}(\MA,\MA)$ simply by $C(\MA)$.
    A homogeneous element $sf^{x_1,\dots,x_n}\in C_{\Phi_0}^{x_1,\dots,x_n}(\MA,\MB)$ will be represented by a disc with $n-1$ inputs with the objects $x_1,\dots,x_n$ between them and  one output (see Figure \ref{fig:element-of-C(A,B)}).
\begin{figure}[H]
    \centering
\begin{tikzpicture}[line cap=round,line join=round,x=1.0cm,y=1.0cm]
\clip(-7.5,-0.8) rectangle (10.896080730271247,0.8);
   \draw [line width=0.5pt] (0.,0.) circle (0.5cm);
    \shadedraw [->,>=stealth](0.5,0)--(0.9,0);
\draw (0,0.25)node[anchor=north]{$\mathbf{f}$};
\draw[rotate =180][<-,>=stealth] (0.5,0)--(0.9,0);
\draw[rotate =120][<-,>=stealth] (0.5,0)--(0.9,0);
\draw[rotate =150][<-,>=stealth] (0.5,0)--(0.9,0);
\draw[rotate =-120][<-,>=stealth] (0.5,0)--(0.9,0);
\draw (-0.1,0.9) node[anchor=north]{$\scriptstyle{x_1}$};
\draw (-0.55,0.7) node[anchor=north]{$\scriptstyle{x_2}$};
\draw (-0.7,0.37) node[anchor=north]{$\scriptstyle{x_3}$};
\draw (-0.1,-0.5) node[anchor=north]{$\scriptstyle{x_{n}}$};
\draw[fill=black] (-0.7,-0.2) circle (0.3pt);
\draw[fill=black] (-0.65,-0.37) circle (0.3pt);
\draw[fill=black] (-0.55,-0.5) circle (0.3pt);
\end{tikzpicture}
\caption{The representation of a homogeneous element in $C(\MA,\MB)$}
    \label{fig:element-of-C(A,B)}
\end{figure}
    \noindent
    To simplify, we regroup the incoming arrows as in Figure \ref{fig:element-of-C(A,B)-simplified}.
    
      \begin{figure}[H]
    \centering
\begin{tikzpicture}[line cap=round,line join=round,x=1.0cm,y=1.0cm]
\clip(-7.5,-0.6) rectangle (10.896080730271247,0.6);
   \draw [line width=0.5pt] (0.,0.) circle (0.5cm);
    \shadedraw [rotate=180, shift={(0.5cm,0cm)}] \doublefleche;
    \shadedraw [->,>=stealth](0.5,0)--(0.9,0);
\draw (0,0.25)node[anchor=north]{$\mathbf{f}$};
\end{tikzpicture}
\caption{The simplified representation of a homogeneous element in $C(\MA,\MB)$}
    \label{fig:element-of-C(A,B)-simplified}
\end{figure}
\end{definition}

\begin{definition}
    The \textbf{\textcolor{ultramarine}{type}} of a disc representing a map $F^{\bar{x}} : \MA[1]^{\otimes \bar{x}}\rightarrow {}_{\Phi_0(\llt(\bar{x}))}\MB_{\Phi_0(\rrt(\bar{x}))}$ is the tuple $\bar{x}\in\bar{\MO}$.
\end{definition}

\begin{definition}
    Let $\MA$ be a graded quiver.
    By the isomorphism \eqref{eq:iso-shift-output}, an element $s\mathbf{F}\in C(\MA)[1]$ induces maps 
    \begin{equation}
        \label{eq:shift-disc}
        \MA[1]^{\otimes\bar{x}}\rightarrow {}_{\llt(\bar{x})}\MB_{\rrt(\bar{x})}[1]
    \end{equation}
sending an element $(sa_1,\dots,sa_{n-1})$ to $s(F^{\bar{x}}(sa_1,\dots,sa_{n-1}))$
    for $\bar{x}=(x_1,\dots,x_n)\in \MO^n$, $a_i\in{}_{x_i}\MA_{x_{i+1}}$, $i\in\llbracket 1,n-1 \rrbracket$.
     A homogeneous element $s\mathbf{f}\in C_{\Phi_0}(\MA,\MB)[1]$ will be represented by a disc with a bold output (see Figure \ref{fig:element-of-C(A,B)[1]}).
     
      \begin{figure}[H]
    \centering
\begin{tikzpicture}[line cap=round,line join=round,x=1.0cm,y=1.0cm]
\clip(-7.5,-0.6) rectangle (10.896080730271247,0.6);
   \draw [line width=0.5pt] (0.,0.) circle (0.5cm);
    \shadedraw [rotate=180, shift={(0.5cm,0cm)}] \doublefleche;
    \shadedraw[line width=1.1pt] [->,>=stealth](0.5,0)--(0.9,0);
\draw (0,0.25)node[anchor=north]{$\mathbf{f}$};
\end{tikzpicture}
\caption{The representation of a homogeneous element in $C(\MA,\MB)[1]$}
    \label{fig:element-of-C(A,B)[1]}
\end{figure}
\end{definition}

\begin{definition}
\label{def:gerstenhaber-prod}
Given a graded quiver $\MA$ with set of objects $\MO$, the \textbf{\textcolor{ultramarine}{Gerstenhaber product}} of elements $s\mathbf{f},s\mathbf{g}\in C(\MA)[1]$ is defined as the element $s\mathbf{f}\upperset{G}{\circ}s\mathbf{g}\in C(\MA)[1]$ given by
\begin{equation}
    \label{eq:gerstenhaber-product}
   (s\mathbf{f}\upperset{G}{\circ}s\mathbf{g})^{\bar{x}}=\sum\limits_{1\leq i<j\leq \llg(\bar{x})} sf^{\bar{x}_{\leq i}\sqcup \bar{x}_{\geq j}}\upperset{G,i,j}{\circ}sg^{\bar{x}_{\llbracket i,j\rrbracket}}
\end{equation}
for $\bar{x}\in\bar{\MO}$
where 
\begin{equation}
    \begin{split}
    &\big(sf^{\bar{x}_{\leq i}\sqcup \bar{x}_{\geq j}}\upperset{G,i,j}{\circ}sg^{\bar{x}_{\llbracket i,j\rrbracket}}\big)(sa_1,\dots,sa_{n-1})\\&=(-1)^{\epsilon_i} sf^{\bar{x}_{\leq i}\sqcup \bar{x}_{\geq j}}(sa_1,\dots,sa_{i-1},sg^{\bar{x}_{\llbracket i,j\rrbracket}}(sa_i,\dots,sa_{j-1}),sa_{j},\dots,sa_{n-1})
    \end{split}
\end{equation}
for $a_i\in{}_{x_{i}}\MA_{x_{i+1}}$, with $\epsilon_i=(|\mathbf{g}|+1)\sum\limits_{r=1}^{i-1}|sa_r|$.

The map $(s\mathbf{f}\upperset{G}{\circ}s\mathbf{g})^{\bar{x}}$ is by definition the sum of the maps associated to diagrams of type $\bar{x}$ and of the form

\begin{tikzpicture}[line cap=round,line join=round,x=1.0cm,y=1.0cm]
\clip(-6,-1) rectangle (4.302897460633269,1);
 \draw [line width=0.5pt] (0.,0.) circle (0.5cm);
    \shadedraw [rotate=180, shift={(0.5cm,0cm)}] \doublefleche;
    \draw [line width=0.5pt] (1.5,0.) circle (0.5cm);
    \shadedraw[shift={(1cm,0cm)},rotate=180] \doubleflechescindeeleft;
    \shadedraw[shift={(1cm,0cm)},rotate=180] \doubleflechescindeeright;
    \shadedraw[ shift={(1cm,0cm)},rotate=180] \fleche;
    \draw [line width=1.1pt,rotate around={0:(1.5,0)}] [->,, >= stealth, >= stealth](2,0) -- (2.4,0);
\draw (1.5,0.25) node[anchor=north] {$\mathbf{f}$};
\draw (0,0.25) node[anchor=north] {$\mathbf{g}$};
\end{tikzpicture}

for $\Bar{x}\in\Bar{\MO}$.
\end{definition}

The following Definition-Proposition was first given in \cite{gerstenhaber}, where the reader can find a proof of the statement. 

\begin{definition-proposition}
\label{def-prop:gerstenhaber-bracket}
Given a graded quiver $\MA$ with set of objects $\MO$, the \textbf{\textcolor{ultramarine}{Gerstenhaber bracket}} is the graded Lie bracket $[-,-]_G$ on $C(\MA)[1]$ defined for two elements $s\textbf{f},s\textbf{g}\in C(\MA)[1]$
as $[s\textbf{f},s\textbf{g}]_G\in C(\MA)[1]$ with
\begin{equation}
    \label{eq:gerstenhaber-bracket}
    [s\textbf{f},s\textbf{g}]_G^{\bar{x}}=(s\textbf{f}\upperset{G}{\circ}s\textbf{g})^{\bar{x}}-(-1)^{(|\textbf{f}|+1)(|\textbf{g}|+1)}(s\textbf{g}\upperset{G}{\circ}s\textbf{f})^{\bar{x}}
\end{equation}
for $\bar{x}\in\bar{\MO}$.
\end{definition-proposition}

\begin{definition}
\label{def:A-inf-alg}
An \textbf{\textcolor{ultramarine}{$A_{\infty}$-structure}} on a graded quiver $\MA$ with set of objects $\MO$ is an element of degree $1$ $sm_{\MA}\in C(\MA)[1]$ satisfying the Stasheff identities introduced by J. Stasheff in \cite{stasheff} and given by
\begin{equation}
    \label{eq:stasheff-identities}
   \tag{$\operatorname{SI}^{\bar{x}}$}
    \sum\limits_{1\leq i <j \leq n}sm_{\MA}^{\bar{x}_{\leq i}\sqcup\bar{x}_{\geq j}}\circ(\id^{\otimes \bar{x}_{\leq i}}\otimes sm_{\MA}^{\bar{x}_{\llbracket i,j\rrbracket}}\otimes \id^{\otimes \bar{x}_{\geq j}})=0
\end{equation}
for each $n\in\NN^*$ and $\bar{x}\in\MO^n$. 
This is tantamount to $[sm_{\MA},sm_{\MA}]_{G}=0$ where $[-,-]_G$ denotes the Gerstenhaber bracket.
In terms of diagrams, this reads $\sum\mathcal{E}(\mathcalboondox{D})=0$ where the sum is over all the filled diagrams $\mathcalboondox{D}$ of type $\bar{x}$ and of the form 

\begin{tikzpicture}[line cap=round,line join=round,x=1.0cm,y=1.0cm]
\clip(-6,-0.6) rectangle (4.302897460633269,0.6);
 \draw [line width=0.5pt] (0.,0.) circle (0.5cm);
    \shadedraw [rotate=180, shift={(0.5cm,0cm)}] \doublefleche;
    \draw [line width=0.5pt] (1.5,0.) circle (0.5cm);
    \shadedraw[shift={(1cm,0cm)},rotate=180] \doubleflechescindeeleft;
    \shadedraw[shift={(1cm,0cm)},rotate=180] \doubleflechescindeeright;
    \shadedraw[ shift={(1cm,0cm)},rotate=180] \fleche;
    \draw [line width=1.1pt,rotate around={0:(1.5,0)}] [->,, >= stealth, >= stealth](2,0) -- (2.4,0);
\draw (1.5,0.2) node[anchor=north] {$m_{\MA}$};
\draw (0,0.2) node[anchor=north] {$m_{\MA}$};
\end{tikzpicture}

 A graded quiver endowed with an $A_{\infty}$-structure is called an \textbf{\textcolor{ultramarine}{$A_{\infty}$-category}}. 
\end{definition}

\begin{example}
\label{example:dg-cat-pCY}
    If $\MA$ is a dg category with differential $d_{\MA}$ and product $\mu$, it carries a natural $A_{\infty}$-structure $sm_{\MA}\in C(\MA)[1]$ with $sm_{\MA}^{x,y}=d_{{}_{x}\MA_{y}}[1]$, $sm_{\MA}^{x,y,z}(sa,sb)=(-1)^{|a|}s\mu(a,b)$ for $a\in{}_{x}\MA_{y}$, $b\in{}_{y}\MA_{z}$ and $sm_{\MA}^{\bar{x}}=0$ for $\bar{x}\in\MO^n$ with $n>3$.
\end{example}

We now recall the notion of (nondegenerate) bilinear form on a category, which will be useful for the definition of cyclic $A_{\infty}$-category.

\begin{definition}
A \textbf{\textcolor{ultramarine}{bilinear form of degree d}} on a graded quiver $\MA$ is a collection $\Gamma=({}_{y}\Gamma_{x})_{x,y\in\MO}$ of homogeneous $\kk$-linear maps ${}_{y}\Gamma_x : {}_{y}\MA_{x}[1]\otimes {}_{x}\MA_{y}[1] \rightarrow \kk$ of degree $d+2$.
\end{definition}
\begin{definition}
    A bilinear form $\Gamma$ on a graded quiver $\MA$ is \textbf{\textcolor{ultramarine}{nondegenerate}} if the induced map 
    \[{}_y\MA_x[1]\rightarrow ({}_{x}\MA_{y}[1])^*\] sending an element $sa\in {}_y\MA_x[1]$ to the map sending $sb\in {}_{x}\MA_{y}[1]$ to ${}_y\Gamma_x(sa,sb)$ is an isomorphism.
\end{definition}

\begin{example}
\label{example:natural-mixed-bilinear-form}
Consider two graded quivers $\MA$ and $\MB$ and a morphism $(\Phi_0,\Phi) : \MA\rightarrow \MB$ of graded quivers. 
Define the bilinear form $\Gamma^{\Phi} : (\MA[1]\oplus \MB^{*}[d])^{\otimes 2} \rightarrow \kk$ of degree $d+2$ by 
\begin{equation}
    {}_y\Gamma^{\Phi}_x(tf,sa)=-(-1)^{|sa||tf|}{}_x\Gamma^{\Phi}_y(sa,tf)=(-1)^{|tf|+1}(f\circ \Phi^{x,y})(a)
\end{equation} 
for $f\in {}_{\Phi_0(y)}\MB^{*}_{\Phi_0(x)}$, $a\in{}_{x}\MA_{y}$, where t stands for the shift morphism $\MB^{*}\rightarrow\MB^{*}[d]$ and 
\begin{equation}
{}_y\Gamma^{\Phi}_x(tf,tg)={}_y\Gamma^{\Phi}_x(sa,sb)=0    
\end{equation} for
$f\in{}_{\Phi_0(y)}\MB^{*}_{\Phi_0(x)}$, $g\in{}_{\Phi_0(x)}\MB^{*}_{\Phi_0(y)}$, $a\in{}_{y}\MA_{x}$ and $b\in{}_{x}\MA_{y}$.
This bilinear form $\Gamma^{\Phi}$ will be called the \textbf{\textcolor{ultramarine}{$\Phi$-mixed bilinear form}}.
\end{example}

\begin{example}
\label{example:natural-bilinear-form}
If $\MB=\MA$, the bilinear form $\Gamma^{\id}$ of the previous example is called the \textbf{\textcolor{ultramarine}{natural bilinear form}} on $\MA$ and will be denoted $\Gamma^{\MA}$.
\end{example}

\begin{remark}
The natural bilinear form on a $\Hom$-finite graded quiver $\MA$ is nondegenerate, whereas the $\Phi$-mixed bilinear form is not in general. 
\end{remark}

\begin{definition}
An $A_{\infty}$-structure $sm_{\MA}\in C(\MA)[1]$ on a graded quiver $\MA$ is \textbf{\textcolor{ultramarine}{almost cyclic}} with respect to a homogeneous bilinear form $\Gamma :\MA[1]^{\otimes 2}\rightarrow \kk$ if the following holds:
\begin{equation}
\begin{split}
    \label{eq:cyclicity-pcy}
    &{}_{x_{1}}\Gamma_{x_n}(sm_{\MA}^{\bar{x}}(sa_1,\dots,sa_{n-1}),sa_n)\\&=(-1)^{|sa_n|(\sum\limits_{i=1}^{n-1}|sa_i|)}{}_{x_{n}}\Gamma_{x_{n-1}}(sm_{\MA}^{\bar{x}\cdot\sigma}(sa_n,sa_1,\dots,sa_{n-2}),sa_{n-1})
    \end{split}
\end{equation}
for each $n\in\NN^*$, $\bar{x}=(x_1,\dots,x_{n})\in \MO^n$, $\sigma=(1 2\dots n)$ with $a_i\in{}_{x_i}\MA_{x_{i+1}}$ for $i\in\llbracket 1, n-1\rrbracket$ and $a_n\in{}_{x_n}\MA_{x_1}$.
An \textbf{\textcolor{ultramarine}{almost cyclic}} $A_{\infty}$-category is an $A_{\infty}$-category whose $A_{\infty}$-structure is almost cyclic with respect to a fixed homogeneous bilinear form.  
 An almost cyclic $A_{\infty}$-category with respect to a nondegenerate bilinear form is called a \textbf{\textcolor{ultramarine}{cyclic}} $A_{\infty}$-category.
\end{definition}

We end this section with the definitions of (cyclic) $A_{\infty}$-morphisms. The following definition is due to M. Sugawara (see \cite{sugawara}).

\begin{definition}
\label{def:A-inf-morphism}
An $A_{\infty}$\textbf{\textcolor{ultramarine}{-morphism}} between $A_{\infty}$-categories $(\MA,sm_{\MA})$ and $(\MB,sm_{\MB})$ with respective sets of objects $\MO_{\MA}$ and $\MO_{\MB}$ is a map $F_0 : \MO_{\MA}\rightarrow \MO_{\MB}$ together with a degree $0$ element $s\mathbf{f}=(sf^{\bar{x}})_{\bar{x}\in\bar{\MO}_{\MA}}$ of $C_{F_0}(\MA,\MB)[1]$ that satisfies
\begin{equation}
\label{eq:stasheff-morphisms}
\tag{$\operatorname{MI}^{\bar{x}}$}
\begin{split}
    \sum\limits_{1\leq i<j\leq \llg(\bar{x})}sf^{\bar{x}_{\leq i}\sqcup\bar{x}_{\geq j}}&\circ(\id^{\otimes \bar{x}_{\leq i}}\otimes sm_{\MA}^{\bar{x}_ {\llbracket i,j\rrbracket}}\otimes \id^{\otimes \bar{x}_{\geq j}})\\&=\sum\limits_{1\leq i_1<\dots<i_n\leq \llg(\bar{x})}sm_{\MB}^{\scriptsize{\xoverline{F_0(x)}}}(sf^{\bar{x}_{\leq i_1}}\otimes sf^{\bar{x}_{\llbracket i_1,i_2\rrbracket}}\otimes\dots\otimes sf^{\bar{x}_{\llbracket i_n,\llg(\bar{x})\rrbracket}})
    \end{split}
\end{equation}
for every $\bar{x}\in\bar{\MO}_{\MA}$. This is tantamount to 
\[
s\mathbf{f}\upperset{G}{\circ} sm_{\MA}=sm_{\MB}\upperset{M}{\circ} s\mathbf{f}
\]
where the composition $(sm_{\MB}\upperset{M}{\circ}s\mathbf{f})^{\bar{x}}$ is the right hand side of the identity \eqref{eq:stasheff-morphisms}. 
Note that given $\bar{x}\in\MO_{\MA}$ we have that $s\mathbf{f}\upperset{G}{\circ} sm_{\MA}=\sum\mathcal{E}(\mathcalboondox{D})$ and $sm_{\MB}\upperset{M}{\circ} s\mathbf{f}=\sum\mathcal{E}(\mathcalboondox{D'})$ where the sums are over all the filled diagrams $\mathcalboondox{D}$ and $\mathcalboondox{D'}$ of type $\bar{x}$ and of the form 

\begin{minipage}{21cm}
    \begin{tikzpicture}[line cap=round,line join=round,x=1.0cm,y=1.0cm]
\clip(-3.5,-2) rectangle (4,1);
    \draw [line width=0.5pt] (0.,0.) circle (0.5cm);
    \shadedraw [rotate=180, shift={(0.5cm,0cm)}] \doublefleche;
    \draw [line width=0.5pt] (1.5,0.) circle (0.5cm);
    \shadedraw[shift={(1cm,0cm)},rotate=180] \doubleflechescindeeleft;
    \shadedraw[shift={(1cm,0cm)},rotate=180] \doubleflechescindeeright;
    \shadedraw[ shift={(1cm,0cm)},rotate=180] \fleche;
    \draw [line width=1.1pt,rotate around={0:(1.5,0)}] [->,, >= stealth, >= stealth](2,0) -- (2.4,0);
    \draw (0,0.2) node[anchor=north] {$m_{\MA}$};
\draw (1.5,0.25) node[anchor=north] {$\mathbf{f}$};
\draw (3.5,0.25) node[anchor=north] {and};
\end{tikzpicture}
\begin{tikzpicture}[line cap=round,line join=round,x=1.0cm,y=1.0cm]
\clip(-2,-2) rectangle (4.30289746063327,1.5894004481482595);
   \draw [line width=0.5pt] (0.,0.) circle (0.5cm);
    \draw [line width=1.1pt,rotate=0] [->, >= stealth, >= stealth] (0.5,0) -- (0.9,0);
    \draw [rotate=-90] [<-, >= stealth, >= stealth](0.5,0) -- (0.9,0);
    \draw [rotate=180] [<-, >= stealth, >= stealth](0.5,0) -- (0.9,0);
    \draw [rotate=135] [<-, >= stealth, >= stealth](0.5,0) -- (0.9,0);
    \draw [line width=0.5pt] (0.,-1.2) circle (0.3cm);
    \draw[rotate around={-90:(0,-1.2)},shift={(0.3,-1.2)}]\doublefleche;
    \draw [line width=0.5pt] (-1.2,0) circle (0.3cm);
     \draw[rotate around={180:(-1.2,0)},shift={(-0.9,0)}]\doublefleche;
    \draw [line width=0.5pt] (-0.84,0.84) circle (0.3cm);
    \draw[rotate around={135:(-0.84,0.84)},shift={(-0.54,0.84)}]\doublefleche;
\begin{scriptsize}
\draw [fill=black] (-0.45,-0.4) circle (0.3pt);
\draw [fill=black] (-0.3,-0.5) circle (0.3pt);
\draw [fill=black] (-0.5,-0.25) circle (0.3pt);
\end{scriptsize}
  \draw (0,0.2) node[anchor=north] {$m_{\MB}$};
\draw (-1.2,0.25) node[anchor=north] {$\mathbf{f}$};
\draw (0,-0.95) node[anchor=north] {$\mathbf{f}$};
\draw (-0.84,1.09) node[anchor=north] {$\mathbf{f}$};
\end{tikzpicture}

\end{minipage}

\noindent respectively.
\end{definition}

The following definition was introduced in \cite{kajiura} by H. Kajiura in the case of cyclic $A_{\infty}$-categories.
\begin{definition}
\label{def:cyclic-morphism}
An $A_{\infty}$-morphism $(F_0,s\mathbf{f})$ between almost cyclic $A_{\infty}$-categories $(\MA,sm_{\MA})$ and $(\MB,sm_{\MB})$ with respect to bilinear forms $\gamma$ and $\Gamma$ is \textbf{\textcolor{ultramarine}{cyclic}} if 
\begin{equation}
\label{eq:cyclicity-morphism-1}
        {}_{F_0(x)}\Gamma_{F_0(y)}(f^{x,y}(sa),f^{y,x}(sb))={}_{x}\gamma_{y}(sa,sb)
\end{equation}
for $x,y\in\MO$, $a\in{}_{x}\MA_{y}$ and $b\in{}_{y}\MA_{x}$
and for $n\geq 3$
\begin{equation}
\label{eq:cyclicity-morphism-2}
        \sum\limits_{\substack{\bar{x}\in Z\\ \bar{y}\in Z'}}{}_{\llt(\bar{z})}\Gamma_{\rrt(\bar{z})}(F^{\bar{x}}(sa_1,\dots,sa_i),F^{\bar{y}}(sa_{i+1},\dots,sa_n))=0
\end{equation}
for $\bar{z}\in\bar{\MO}$, $(sa_1,\dots,sa_i)\in \MA[1]^{\otimes\bar{x}}$ and $(sa_{i+1},\dots,sa_n)\in\MA[1]^{\otimes\bar{y}}$ where 
\begin{equation}
    \begin{split}
        Z=\{\bar{x}\in\bar{\MO}\;|\; \llt(\bar{x})=\llt(\bar{z}), \rrt(\bar{x})=\rrt(\bar{z})\}\;\text{and}\;
        Z'=\{\bar{y}\in\bar{\MO}\;|\; \llt(\bar{y})=\rrt(\bar{z}), \rrt(\bar{y})=\llt(\bar{z})\}.
    \end{split}
\end{equation}
\end{definition}
\section{Pre-Calabi-Yau categories} \label{pcy-cat}
In this section, we present the diagrammatic calculus and recall the definition of $d$-pre-Calabi-Yau structures, $d\in\ZZ$, appearing in \cite{ktv} and \cite{yeung} as well as their relation with $A_{\infty}$-structures when the graded quiver considered is $\Hom$-finite.
\subsection{Diagrammatic calculus}
In this subsection, we define discs and diagrams and we explain how to evaluate and compose them.
In order to give the definition of a $d$-pre-Calabi-Yau structure on a graded quiver $\MA$, we consider the following graded vector space, called the higher Hochschild complex in \cite{ktv} and the multidual in \cite{yeung}.
\begin{definition}
\label{def:multidual}
Given a graded quiver $\MA$ with set of objects $\MO$, we define the graded vector space 
\[
\Multi^{\bullet}_d(\MA)=\prod\limits_{n\in\NN^*}\Multi^{n}_d(\MA) = \prod\limits_{n\in\NN^*}\prod\limits_{\doubar{x}\in\bar{\MO}^n} \Multi^{\doubar{x}}_d(\MA)\]
where $\Multi^{\doubar{x}}_d(\MA)$ is the graded vector space consisting of sums of homogeneous $\kk$-linear maps of the form
\begin{equation}
\begin{split}
    \label{eq:element-multidual-Bar}
    F^{\doubar{x}}=F^{\bar{x}^1,\dots,\bar{x}^n} &: \MA[1]^{\otimes \bar{x}^1}\otimes\MA[1]^{\otimes\bar{x}^2}\otimes \dots \otimes \MA[1]^{\otimes \bar{x}^n} \\&\hskip2cm\rightarrow  {}_{\llt(\bar{x}^{1})}\MA_{\rrt(\bar{x}^2)}[-d]\otimes{}_{\llt(\bar{x}^{2})}\MA_{\rrt(\bar{x}^{3})}[-d]\otimes\dots\otimes {}_{\llt(\bar{x}^n)}\MA_{\rrt(\bar{x}^1)}[-d]
\end{split}
\end{equation}
for $\doubar{x}=(\bar{x}^1,\dots,\bar{x}^n)\in\bar{\MO}^n$.

The action of $\sigma=(\sigma_n)_{n\in\NN^*}\in\prod_{n\in\NN^*} C_n$ on an element $\mathbf{F} = (F^{\doubar{x}})_{\doubar{x} \in \doubar{\MO}}\in\Multi^{\bullet}_d(\MA)$ is the element $\sigma\cdot\mathbf{F}\in\Multi^{\bullet}_d(\MA)$ given by 
\[(\sigma\cdot\mathbf{F})^{\doubar{x}}=\tau^{\sigma_n}_{{}_{\llt(\bar{x}^{1})}\MA_{\rrt(\bar{x}^2)}[-d],{}_{\llt(\bar{x}^{2})}\MA_{\rrt(\bar{x}^{3})}[-d],\dots, {}_{\llt(\bar{x}^n)}\MA_{\rrt(\bar{x}^1)}[-d]}\circ F^{\doubar{x}\cdot\sigma_n} \circ \tau^{\sigma_n}_{\MA[1]^{\otimes \bar{x}^1},\MA[1]^{\otimes\bar{x}^2}, \dots ,\MA[1]^{\otimes \bar{x}^n}}
\] 
for $\doubar{x}=(\bar{x}^1,\dots,\bar{x}^n)\in\bar{\MO}^n$.
We will denote by $\Multi^{\bullet}_d(\MA)^{C_{\llg(\bullet)}}$ the space of elements of $\Multi^{\bullet}_d(\MA)$ that are invariant under the action of $\prod_{n\in\NN^*} C_n$.
\end{definition}

The aim of this section is to represent elements of (a shifted version of) the graded vector space $\Multi^{\bullet}_d(\MA)^{C_{\llg(\bullet)}}$ by discs.

\begin{definition}
    A \textbf{\textcolor{ultramarine}{disc}} $D$ is a circle with two distinguished sets of points $I$ and $O$ such that $|O|\geq 1$, $|I|\geq 0$ if $|O|>1$ and $|I|>0$ if $|O|=1$. The points in $I$ (resp. $O$) are called \textbf{\textcolor{ultramarine}{incoming}} (resp. \textbf{\textcolor{ultramarine}{outgoing}}).
    An incoming (resp. outgoing) point will be pictured as an incoming (resp. outgoing) arrow (see Figure \ref{fig:disc}).
    The \textbf{\textcolor{ultramarine}{size}} of the disc $D$ is the number of outgoing arrows, and it will be denoted by $|D|$. 
\end{definition}

\begin{figure}[H]
    \centering
   \begin{tikzpicture}
     \draw [line width=0.5pt] (0.,0.) circle (0.5cm);
     \draw [rotate=120] [->,> = stealth] (0.5,0) -- (0.9,0);
     \draw [rotate=-120] [->,> = stealth] (0.5,0) -- (0.9,0);
     \draw [rotate=0] [->,> = stealth] (0.5,0) -- (0.9,0);
     \draw [rotate=30] [<-,> = stealth] (0.5,0) -- (0.9,0);
     \draw [rotate=60] [<-,> = stealth] (0.5,0) -- (0.9,0);
     \draw [rotate=90] [<-,> = stealth] (0.5,0) -- (0.9,0);
     \draw [rotate=-40] [<-,> = stealth] (0.5,0) -- (0.9,0);
     \draw [rotate=-80] [<-,> = stealth] (0.5,0) -- (0.9,0);
     \draw [rotate=-160] [<-,> = stealth] (0.5,0) -- (0.9,0);
     \draw [rotate=-200] [<-,> = stealth] (0.5,0) -- (0.9,0);
\end{tikzpicture}
    \caption{A disc of size 3.}
    \label{fig:disc}
\end{figure}

\begin{definition} 
\label{definition:decorated-disc}
Given a graded quiver $\MA$ with set of objects $\MO$, a \textbf{\textcolor{ultramarine}{decorated disc of size $n$}} is a disc of size $n$ together with a clockwise labeling of the outgoing arrows from $1$ to $n$ and a distinguished object of $\MO$ between any couple of consecutive arrows. 
Given an arrow $\alpha$ of the disc $D$, we will write it $\alpha = {}_{y}\alpha_{x}$ with $x, y \in \MO$ to indicate that $x$ clockwise precedes the arrow $\alpha$ in $D$ and $\alpha$ clockwise precedes $y$ in $D$. 
For instance, the outgoing arrow labeled by $3$ in Figure \ref{fig:decorated-disc} might be denoted by ${}_{x_{3}^{1}}\alpha_{x_{1}^{3}}$. 

The \textbf{\textcolor{ultramarine}{type}} of the decorated disc is the tuple of the form $(\bar{x}^1,\dots,\bar{x}^n)$ where  $\bar{x}^i$ is the tuple formed by objects of $\MO$, read in counterclockwise order, between the outgoing arrows $i-1$ and $i$, with the convention that the arrow $0$ is the arrow $n$ (see Figure \ref{fig:decorated-disc}). 
\end{definition}

\begin{figure}[H]
    \centering
\begin{small}
 \begin{tikzpicture}
     \draw [line width=0.5pt] (0.,0.) circle (0.5cm);
     \draw [rotate=120] [->,> = stealth] (0.5,0) -- (0.9,0);
     \draw [rotate=-120] [->,> = stealth] (0.5,0) -- (0.9,0);
     \draw [rotate=0] [->,> = stealth] (0.5,0) -- (0.9,0);
     \draw [rotate=30] [<-,> = stealth] (0.5,0) -- (0.9,0);
     \draw [rotate=60] [<-,> = stealth] (0.5,0) -- (0.9,0);
     \draw [rotate=90] [<-,> = stealth] (0.5,0) -- (0.9,0);
     \draw [rotate=-40] [<-,> = stealth] (0.5,0) -- (0.9,0);
     \draw [rotate=-80] [<-,> = stealth] (0.5,0) -- (0.9,0);
     \draw [rotate=-160] [<-,> = stealth] (0.5,0) -- (0.9,0);
     \draw [rotate=-200] [<-,> = stealth] (0.5,0) -- (0.9,0);
\draw (-0.65,1.2) node[anchor=north west]{$\scriptstyle{1}$};
\draw (0.9,0.25) node[anchor=north west]{$\scriptstyle{2}$};
\draw (-0.65,-0.75) node[anchor=north west]{$\scriptstyle{3}$};
\draw (-0.9,-0.2) node[anchor=north west] {$\scriptstyle{x_3^1}$};
\draw (-1.05,0.3) node[anchor=north west] {$\scriptstyle{x_2^1}$};
\draw (-0.95,0.8143778265370197) node[anchor=north west] {$\scriptstyle{x_1^1}$};
\draw (-0.05,1.06853467879074) node[anchor=north west] {$\scriptstyle{x_3^2}$};
\draw (0.25,0.8325318874122855) node[anchor=north west] {$\scriptstyle{x_2^2}$};
\draw (-0.45,1.05) node[anchor=north west] {$\scriptstyle{x_4^2}$};
\draw (0.5,0.53) node[anchor=north west] {$\scriptstyle{x_1^2}$};
\draw (0.4,-0.02070897372520403) node[anchor=north west] {$\scriptstyle{x_3^3}$};
\draw (0.1,-0.4) node[anchor=north west] {$\scriptstyle{x_2^3}$};
\draw (-0.4,-0.5) node[anchor=north west] {$\scriptstyle{x_1^3}$};
\end{tikzpicture}
\end{small}
        \caption{A decorated disc of type $\doubar{x}=(\bar{x}^1,\bar{x}^2,\bar{x}^3)$ where $\bar{x}^1=(x_1^1,x_2^1,x_3^1)$, $\bar{x}^2=(x_1^2,x_2^2,x_3^2,x_4^2)$ and $\bar{x}^3=(x_1^3,x_2^3,x_3^3)$.}
    \label{fig:decorated-disc}
\end{figure}

\begin{definition}
    Given a graded quiver $\MA$ with set of objects $\MO$ and a tuple $\doubar{x}\in\doubar{\MO}$ we associate to a map $F^{\doubar{x}}\in\Multi^{\doubar{x}}_d(\MA)$ the unique decorated disc of type $\doubar{x}$.
\end{definition}

\begin{definition}
    A \textbf{\textcolor{ultramarine}{marked}} disc is a decorated disc with a bold arrow (see Figure \ref{fig:marked-disc}).
\end{definition}

\begin{figure}[H]
    \centering
    \begin{small}
    \begin{tikzpicture}
     \draw [line width=0.5pt] (0.,0.) circle (0.5cm);
     \draw [rotate=120] [->,> = stealth] (0.5,0) -- (0.9,0);
     \draw [line width=1.1pt, rotate=-120] [->,> = stealth] (0.5,0) -- (0.9,0);
     \draw [rotate=0] [->,> = stealth] (0.5,0) -- (0.9,0);
     \draw [rotate=30] [<-,> = stealth] (0.5,0) -- (0.9,0);
     \draw [rotate=60] [<-,> = stealth] (0.5,0) -- (0.9,0);
     \draw [rotate=90] [<-,> = stealth] (0.5,0) -- (0.9,0);
     \draw [rotate=-40] [<-,> = stealth] (0.5,0) -- (0.9,0);
     \draw [rotate=-80] [<-,> = stealth] (0.5,0) -- (0.9,0);
     \draw [rotate=-160] [<-,> = stealth] (0.5,0) -- (0.9,0);
     \draw [rotate=-200] [<-,> = stealth] (0.5,0) -- (0.9,0);
\draw (-0.65,1.2) node[anchor=north west]{$\scriptstyle{1}$};
\draw (0.9,0.25) node[anchor=north west]{$\scriptstyle{2}$};
\draw (-0.65,-0.75) node[anchor=north west]{$\scriptstyle{3}$};
\draw (-0.9,-0.2) node[anchor=north west] {$\scriptstyle{x_3^1}$};
\draw (-1.05,0.3) node[anchor=north west] {$\scriptstyle{x_2^1}$};
\draw (-0.95,0.8143778265370197) node[anchor=north west] {$\scriptstyle{x_1^1}$};
\draw (-0.05,1.06853467879074) node[anchor=north west] {$\scriptstyle{x_3^2}$};
\draw (0.25,0.8325318874122855) node[anchor=north west] {$\scriptstyle{x_2^2}$};
\draw (-0.45,1.05) node[anchor=north west] {$\scriptstyle{x_4^2}$};
\draw (0.5,0.53) node[anchor=north west] {$\scriptstyle{x_1^2}$};
\draw (0.4,-0.02070897372520403) node[anchor=north west] {$\scriptstyle{x_3^3}$};
\draw (0.1,-0.4) node[anchor=north west] {$\scriptstyle{x_2^3}$};
\draw (-0.4,-0.5) node[anchor=north west] {$\scriptstyle{x_1^3}$};
\end{tikzpicture}
\end{small}
    \caption{A marked disc.}
    \label{fig:marked-disc}
\end{figure}

\begin{definition}
    Given a graded quiver $\MA$ and an element $s_{d+1}\mathbf{F}\in\Multi_d^{\bullet}(\MA)[d+1]$, a disc \textbf{\textcolor{ultramarine}{filled}} with $\mathbf{F}$ is a marked disc with the letter $\mathbf{F}$ in his center (see Figure \ref{fig:marked-disc}). Such a disc is called a \textbf{\textcolor{ultramarine}{filled}} disc.
\end{definition}

\begin{figure}[H]
    \centering
    \begin{small}
    \begin{tikzpicture}
     \draw [line width=0.5pt] (0.,0.) circle (0.5cm);
     \draw [rotate=120] [->,> = stealth] (0.5,0) -- (0.9,0);
     \draw [line width=1.1pt, rotate=-120] [->,> = stealth] (0.5,0) -- (0.9,0);
     \draw [rotate=0] [->,> = stealth] (0.5,0) -- (0.9,0);
     \draw [rotate=30] [<-,> = stealth] (0.5,0) -- (0.9,0);
     \draw [rotate=60] [<-,> = stealth] (0.5,0) -- (0.9,0);
     \draw [rotate=90] [<-,> = stealth] (0.5,0) -- (0.9,0);
     \draw [rotate=-40] [<-,> = stealth] (0.5,0) -- (0.9,0);
     \draw [rotate=-80] [<-,> = stealth] (0.5,0) -- (0.9,0);
     \draw [rotate=-160] [<-,> = stealth] (0.5,0) -- (0.9,0);
     \draw [rotate=-200] [<-,> = stealth] (0.5,0) -- (0.9,0);
\draw (-0.65,1.2) node[anchor=north west]{$\scriptstyle{1}$};
\draw (0.9,0.25) node[anchor=north west]{$\scriptstyle{2}$};
\draw (-0.65,-0.75) node[anchor=north west]{$\scriptstyle{3}$};
\draw (-0.9,-0.2) node[anchor=north west] {$\scriptstyle{x_3^1}$};
\draw (-1.05,0.3) node[anchor=north west] {$\scriptstyle{x_2^1}$};
\draw (-0.95,0.8143778265370197) node[anchor=north west] {$\scriptstyle{x_1^1}$};
\draw (-0.05,1.06853467879074) node[anchor=north west] {$\scriptstyle{x_3^2}$};
\draw (0.25,0.8325318874122855) node[anchor=north west] {$\scriptstyle{x_2^2}$};
\draw (-0.45,1.05) node[anchor=north west] {$\scriptstyle{x_4^2}$};
\draw (0.5,0.53) node[anchor=north west] {$\scriptstyle{x_1^2}$};
\draw (0.4,-0.02070897372520403) node[anchor=north west] {$\scriptstyle{x_3^3}$};
\draw (0.1,-0.4) node[anchor=north west] {$\scriptstyle{x_2^3}$};
\draw (-0.4,-0.5) node[anchor=north west] {$\scriptstyle{x_1^3}$};
\draw (0,0.25) node[anchor=north]{$\mathbf{F}$};
\end{tikzpicture}
\end{small}
    \caption{A filled disc.}
    \label{fig:filled-disc}
\end{figure}

To simplify, we will omit the objects when drawing a marked disc and we will draw a large incoming arrow instead of several consecutive incoming arrows (see Figure \ref{fig:simplified-disc}). Moreover, we also omit the label of the outgoing arrows and define the last outgoing arrow as the bold outgoing arrow if it is outgoing and as the outgoing arrow preceding clockwise the bold arrow if it is incoming. This convention comes from the fact that we will always apply the shift either on the last tensor factor of the result of applying a map on elements or on one of the first elements in the rest of the thesis.

\begin{figure}[H]
\centering
\begin{tikzpicture}
     \draw [line width=0.5pt] (0.,0.) circle (0.5cm);
     \draw [line width=1.1pt,rotate=-120] [->, >= stealth] (0.5,0) -- (0.9,0);
     \draw [rotate=0] [->, >= stealth] (0.5,0) -- (0.9,0);
     \draw [rotate=120] [->, >= stealth] (0.5,0) -- (0.9,0);
     \shadedraw[rotate=180,shift={(0.5,0)}] \doublefleche;
    \shadedraw[rotate=60,shift={(0.5,0)}] \doublefleche;
    \shadedraw[rotate=-60,shift={(0.5,0)}] \doublefleche;
\end{tikzpicture}
    \caption{A marked disc.}
    \label{fig:simplified-disc}
\end{figure}

\begin{definition}
    Consider a graded quiver $\MA$ with set of objects $\MO$ and $s_{d+1}\mathbf{F}$ of $\Multi_d^{\bullet}(\MA)[d+1]$. 
    Given $\doubar{x} = (\bar{x}^{1}, \dots, \bar{x}^{n}) \in \bar{\MO}^{n}$ and 
    \[(a,b) \in (\{ i\} \times \llbracket 1 , \llg(\bar{x}^{1}) + \dots + \llg(\bar{x}^{n})\rrbracket) \sqcup (\{ o\} \times \llbracket 1 , n\rrbracket)\],
    by the isomorphisms \eqref{eq:iso-shift-tensor-prod} and \eqref{eq:iso-shift-input}, $s_{d+1}F^{\doubar{x}}$ induces a morphism  of the form 
     \begin{small}
    \begin{equation}
    \label{eq:eval-1}
     \begin{split}
     &\MA[1]^{\otimes \bar{x}^1}\otimes \dots\otimes \MA[1]^{\otimes \bar{x}^{j-1}}\otimes \MA[1]^{\otimes \bar{x}^j_{\leq b'}}\otimes {}_{x^j_{b'}}\MA_{x^j_{b'+1}}[-d]\otimes \MA[1]^{\otimes \bar{x}^j_{>b'}} \otimes \MA[1]^{\otimes \bar{x}^n} 
    \\& \hspace{8cm}\rightarrow  {}_{\llt(\bar{x}^{1})}\MA_{\rrt(\bar{x}^2)}[-d]\otimes\dots\otimes {}_{\llt(\bar{x}^n)}\MA_{\rrt(\bar{x}^1)}[-d]
    \end{split}
     \end{equation}
       \end{small}
       given by 
       \begin{equation}
           (-1)^{(d+1)|\mathbf{F}|}F^{\doubar{x}}\circ (\id^{\otimes (\llg(\bar{x}^1)+\dots+\llg(\bar{x}^{j-1})+b'-j)}\otimes s_{-d-1}\otimes \id^{\otimes (\llg(\bar{x}^{j})-b-1'+\llg(\bar{x}^{j+1})+\dots+\llg(\bar{x}^n)-n+j)})
       \end{equation}
    if $a = i$ and $b=\llg(\bar{x}^1)+\dots+\llg(\bar{x}^j)+b'$ with $j\in\llbracket 1,n\rrbracket$, $b'\in \llbracket 1,\llg(\bar{x}^j)-1\rrbracket$. We associate to it the marked disc filled with $\mathbf{F}$ and decorated as the one of $F^{\doubar{x}}$ whose $b'$-th incoming arrow between outgoing arrows $j-1$ and $j$ is a bold arrow (see Figure \ref{fig:shifted-input}).
    
    \begin{figure}[H]
    \centering
\begin{tikzpicture}
     \draw [line width=0.5pt] (0.,0.) circle (0.5cm);
     \draw [rotate=0] [->, >= stealth] (0.5,0) -- (0.9,0);
     \draw [rotate=-90] [->, >= stealth] (0.5,0) -- (0.9,0);
     \draw [rotate=90] [->, >= stealth] (0.5,0) -- (0.9,0);
     \draw [rotate=180] [->, >= stealth] (0.5,0) -- (0.9,0);
     \shadedraw[rotate=45,shift={(0.5,0)}] \doublefleche;
    \shadedraw[rotate=-45,shift={(0.5,0)}] \doublefleche;
     \shadedraw[rotate=-135,shift={(0.5,0)}] \doubleflechescindeeleft;
     \shadedraw[rotate=-135,shift={(0.5,0)}] \doubleflechescindeeright;
     \shadedraw[line width=1.1pt, rotate=-135,shift={(0.5,0)}] \fleche;
\begin{scriptsize}
\draw [fill=black] (-0.45,0.4) circle (0.3pt);
\draw [fill=black] (-0.25,0.55) circle (0.3pt);
\draw [fill=black] (-0.55,0.2) circle (0.3pt);
\end{scriptsize}
\draw (0,0.25) node[anchor=north]{$\mathbf{F}$};
\end{tikzpicture}
    \caption{A disc representing a map of the form \eqref{eq:eval-1}.}
    \label{fig:shifted-input}
\end{figure}
    
    Moreover, by the isomorphisms \eqref{eq:iso-shift-tensor-prod} and \eqref{eq:iso-shift-output} $s_{d+1}F^{\doubar{x}}$ induces a morphism of the form
    \begin{small}
    \begin{equation}
    \label{eq:eval-2}
   \begin{split}   
     &\MA[1]^{\otimes \bar{x}^1}\otimes\MA[1]^{\otimes\bar{x}^2}\otimes \dots \otimes \MA[1]^{\otimes \bar{x}^n} 
    \\
    &\hspace{1.7cm}\rightarrow  {}_{\llt(\bar{x}^{1})}\MA_{\rrt(\bar{x}^2)}[-d]\otimes\dots\otimes{}_{\llt(\bar{x}^{b-1})}\MA_{\rrt(\bar{x}^{b})}[-d]\otimes {}_{\llt(\bar{x}^{b})}\MA_{\rrt(\bar{x}^{b+1})}[1]\otimes\dots\otimes {}_{\llt(\bar{x}^n)}\MA_{\rrt(\bar{x}^1)}[-d]
    \end{split}
     \end{equation}
       \end{small}
       given by $(\id^{\otimes (b-1)} \otimes s_{d+1} \otimes \id^{\otimes (n-b)}) \circ F^{\doubar{x}}$
    if $a = o$. We associate to it the marked disc filled with $\mathbf{F}$ and decorated as the one of $F^{\doubar{x}}$ where the $b$-th outgoing arrow is a bold arrow (see Figure \ref{fig:shifted-output}).

\begin{figure}[H]
    \centering
\begin{tikzpicture}
     \draw [line width=0.5pt] (0.,0.) circle (0.5cm);
     \draw [line width=1.1pt,rotate=0] [->, >= stealth] (0.5,0) -- (0.9,0);
     \draw [rotate=-90] [->, >= stealth] (0.5,0) -- (0.9,0);
     \draw [rotate=90] [->, >= stealth] (0.5,0) -- (0.9,0);
     \draw [rotate=180] [->, >= stealth] (0.5,0) -- (0.9,0);
     \shadedraw[rotate=45,shift={(0.5,0)}] \doublefleche;
    \shadedraw[rotate=-45,shift={(0.5,0)}] \doublefleche;
    \shadedraw[rotate=-135,shift={(0.5,0)}] \doublefleche;
\begin{scriptsize}
\draw [fill=black] (-0.45,0.4) circle (0.3pt);
\draw [fill=black] (-0.25,0.55) circle (0.3pt);
\draw [fill=black] (-0.55,0.2) circle (0.3pt);
\end{scriptsize}
\draw (0,0.25) node[anchor=north]{$\mathbf{F}$};
\end{tikzpicture}
    \caption{A disc representing a map of the form \eqref{eq:eval-2}.}
    \label{fig:shifted-output}
\end{figure}

Given a disc $D$ of type $\doubar{x}$ filled with $\mathbf{F}$, we denote by $\mathcal{E}(D)$ the map \eqref{eq:eval-1} multiplied by $(-1)^{(|\mathbf{F}|+d+1)(d+1)}$ if the bold arrow is incoming and the map \eqref{eq:eval-2} if the bold arrow is outgoing.
\end{definition}

\begin{definition}
    Consider a graded quiver $\MA$ and an element $s_{d+1}\mathbf{F}\in\Multi_d^{\bullet}(\MA)[d+1]$. Given a marked disc $D$, we denote by $\mathcal{E}(D,s_{d+1}\mathbf{F})$ the map $\mathcal{E}(D')$ where $D'$ is the disc $D$ filled with $\mathbf{F}$.
\end{definition}

\begin{example}
    Given the filled disc $D$ in Figure \ref{fig:filled-disc} the evaluation of the map $\mathcal{E}(D)$    
    at elements $(\bar{sa}^1,\bar{sa}^2,\bar{sa}^3)$ is obtained by first compute 
    \[F^{\doubar{x}}(\bar{sa}^1,\bar{sa}^2,\bar{sa}^3)={}_{\llt(\bar{x}^1)}F^{-d}_{\rrt(\bar{x}^2)}\otimes {}_{\llt(\bar{x}^2)}F^{-d}_{\rrt(\bar{x}^3)}\otimes {}_{\llt(\bar{x}^3)}F^{-d}_{\rrt(\bar{x}^1)}\] and then apply the shift $s_{d+1}$ to the third tensor factor of the result since the bold arrow is the third outgoing arrow of the disc. The result is then 
    \[
    (-1)^{\epsilon}{}_{\llt(\bar{x}^1)}F^{-d}_{\rrt(\bar{x}^2)}\otimes {}_{\llt(\bar{x}^2)}F^{-d}_{\rrt(\bar{x}^3)}\otimes {}_{\llt(\bar{x}^3)}F^1_{\rrt(\bar{x}^1)}
    \]
    with $\epsilon=(d+1)(|{}_{\llt(\bar{x}^1)}F^{-d}_{\rrt(\bar{x}^2)}|+|{}_{\llt(\bar{x}^2)}F^{-d}_{\rrt(\bar{x}^3)}|)$ and where ${}_{\llt(\Bar{x}^3)}F^1_{\rrt(\Bar{x}^1)}=s_{d+1}{}_{\llt(\Bar{x}^3)}F^{-d}_{\rrt(\Bar{x}^1)}$. The indices $\llt(\bar{x}^i)$ and $\rrt(\bar{x}^{i+1})$ of $F$ indicate that the element is in a shift of ${}_{\llt(\bar{x}^i)}\MA_{\rrt(\bar{x}^{i+1})}$ whereas the super script $-d$ or $1$ indicate the shift.
\end{example}

When working with elements of $\Multi_d^{\bullet}(\MA)^{C_{\bullet}}[d+1]$, we will usually sum over all the possible outgoing positions of the bold arrow in order to obtain cyclically invariant elements.

\begin{definition}
A \textbf{\textcolor{ultramarine}{diagram}} $(D,R)$ is the data of a collection $D$ of marked discs $D_1,\dots,D_n$, each of which having sets of arrows $A_i$ for $i \in \llbracket 1 , n \rrbracket$, together with a subset $R \subseteq (\sqcup _{i=1}^n A_i)^2$ satisfying that
\begin{enumerate}[label=(D.\arabic*)]
\item if $(\alpha,\beta)\in R\cap (A_i\times A_j)$, then $i\neq j$ and $\alpha$ is an incoming arrow and $\beta$ is an outgoing arrow;
\item  if $({}_{y'}\alpha_{x'},{}_{y}\beta_{x}) \in R$, then $x' = y$ and $x = y'$; 
\end{enumerate}
where we use the notation introduced in Definition \ref{definition:decorated-disc}. 
We represent a pair of arrows $(\alpha,\beta) \in R \cap (A_i\times A_j)$ by connecting the outgoing arrow $\beta$ of $D_{j}$ with the incoming arrow $\alpha$ of $D_{i}$ (see Figure \ref{fig:composed-diag}), and we will say that the disc $D_{i}$ 
\textbf{\textcolor{ultramarine}{shares an arrow}} with $D_{j}$ or that $\alpha$ and $\beta$ are \textbf{\textcolor{ultramarine}{connected}}. 

An \textbf{\textcolor{ultramarine}{incoming (resp. outgoing) arrow}} 
of the diagram $(D,R)$ is an arrow $\alpha$ of one of the discs $D_i$ such that there is no arrow $\beta$ satisfying that $(\alpha,\beta) \in R$ (resp. $(\beta,\alpha) \in R$). 
An outgoing arrow of a disc $D_i$ is \textbf{\textcolor{ultramarine}{free}} if it is an outgoing arrow of $(D,R)$.
A diagram $(D,R)$ has a distinguished object of $\MO$ between any couple of consecutive arrows of $(D,R)$ given by the decoration of the discs $D_{1}, \dots, D_{n}$.
Roughly speaking, a diagram is a finite collection of decorated discs, each of which shares at most an arrow with any other one (see Figure \ref{fig:diagram} for an example).
\end{definition}

\begin{figure}[H]
    \centering
 \begin{tikzpicture}
     \draw [line width=0.5pt] (0.,0.) circle (0.5cm);
     \draw [rotate=-120] [->, >= stealth] (0.5,0) -- (0.9,0);
     \draw [rotate=120] [->, >= stealth] (0.5,0) -- (0.9,0);
     \shadedraw[rotate=-60,shift={(0.5,0)}]\doublefleche;
    \shadedraw[rotate=-180,shift={(0.5,0)}] \doubleflechescindeeleft;
     \shadedraw[rotate=-180,shift={(0.5,0)}] \doubleflechescindeeright;
     \shadedraw[line width=1.1pt, rotate=-180,shift={(0.5,0)}] \fleche;
     \draw [line width=0.5pt] (-1.5,0.) circle (0.5cm);
     \draw[rotate around={-120:(-1.5,0)}][->, >= stealth] (-1,0)--(-0.6,0);
     \draw[rotate around={120:(-1.5,0)}][->, >= stealth] (-1,0)--(-0.6,0);
      \draw[rotate around={60:(-1.5,0)},shift={(-1,0)}]\doublefleche;
      \draw[rotate around={-60:(-1.5,0)},shift={(-1,0)}]\doublefleche;
      \draw[rotate around={180:(-1.5,0)},shift={(-1,0)}]\doublefleche;
     \shadedraw[rotate=60,shift={(0.5,0)}] \doubleflechescindeeleft;
     \shadedraw[rotate=60,shift={(0.5,0)}] \doubleflechescindeeright;
     \shadedraw[line width=1.1pt, rotate=60,shift={(0.5,0)}] \fleche;
     \draw [line width=0.5pt] (0.75,1.3) circle (0.5cm);
     \draw[rotate around={0:(0.75,1.3)}][->, >= stealth](1.25,1.3)--(1.65,1.3);
     \draw[rotate around={120:(0.75,1.3)}][->, >= stealth](1.25,1.3)--(1.65,1.3);
     \draw[rotate around={60:(0.75,1.3)},shift={(1.25,1.3)}]\doublefleche;
     \draw[rotate around={-60:(0.75,1.3)},shift={(1.25,1.3)}]\doublefleche;
     \draw[rotate around={180:(0.75,1.3)},shift={(1.25,1.3)}]\doublefleche;
     \draw [line width=0.5pt] (1.5,0.) circle (0.5cm);
     \shadedraw[rotate around={-180:(1.5,0)},shift={(2,0)}] \doubleflechescindeeleft;
     \shadedraw[rotate around={-180:(1.5,0)},shift={(2,0)}] \doubleflechescindeeright;
     \shadedraw[line width=1.1pt, rotate around={-180:(1.5,0)},shift={(2,0)}] \fleche;
     \shadedraw[rotate around={0:(1.5,0)},shift={(2,0)}] \doublefleche;
     \draw[line width=1.1pt, rotate around={90:(1.5,0)}][->, >= stealth] (2,0)--(2.4,0);
     \draw[rotate around={-90:(1.5,0)}][->, >= stealth](2,0)--(2.4,0);
\end{tikzpicture}
 \caption{A diagram.}
    \label{fig:diagram}
\end{figure}

\begin{definition}
    We say that a diagram $(D=\{D_1,\dots,D_n\},R)$ is \textbf{\textcolor{ultramarine}{admissible}} if the following conditions are satisfied 
    \begin{enumerate}[label=(A.\arabic*)]
        \item\label{item:a1} $\forall i\in \llbracket 1,n\rrbracket, \exists\; \alpha_i\in D_i \; \text{such that}\; (\alpha_i,\beta_j)\in R$ for some $j\neq i$, $\beta_j\in D_j$; 
        \item for $(x,y)\in R$, either $x$ is a bold arrow or $y$ is;
        \item there is no family of arrows $\{x_1,\dots,x_k\}$ such that $(x_i,x_{i+1})\in R$ for all $i\in \llbracket 1,k \rrbracket$ and $x_{k}=x_1$. 
    \end{enumerate} 

    Roughly speaking, a diagram $(D=\{D_1,\dots,D_n\},R)$ is admissible if $D$ is connected, with no cycles and if every couple of arrows in $R$ contains a unique bold arrow.
    Note that there is precisely one disc in $D = \{ D_{1}, \dots, D_{n}\}$ whose bold arrow is also an incoming or outgoing arrow of the diagram $(D,R)$, \textit{i.e.} an admissible diagram $(D,R)$ always has either an incoming or outgoing bold arrow. 
\end{definition}

\begin{example}
The diagram in Figure \ref{fig:diagram} is admissible whereas the following is not admissible because of the cycle.
\begin{figure}[H]
    \centering
\begin{tikzpicture}[line cap=round,line join=round,x=1.0cm,y=1.0cm]
\clip(-3,-1) rectangle (2.5,2.2);
     \draw [line width=0.5pt] (0.,0.) circle (0.5cm);
     \draw [rotate=-120] [->, >= stealth] (0.5,0) -- (0.9,0);
     \draw [rotate=120] [->, >= stealth] (0.5,0) -- (0.9,0);
     \shadedraw[rotate=-60,shift={(0.5,0)}]\doublefleche;
    \shadedraw[rotate=-180,shift={(0.5,0)}] \doubleflechescindeeleft;
     \shadedraw[rotate=-180,shift={(0.5,0)}] \doubleflechescindeeright;
     \shadedraw[line width=1.1pt, rotate=-180,shift={(0.5,0)}] \fleche;
     \draw [line width=0.5pt] (-1.5,0.) circle (0.5cm);
     \draw[rotate around={-120:(-1.5,0)}][->, >= stealth] (-1,0)--(-0.6,0);
     \draw[rotate around={120:(-1.5,0)}][->, >= stealth] (-1,0)--(-0.6,0);
      \draw[rotate around={60:(-1.5,0)},shift={(-1,0)}]\doublefleche;
      \draw[rotate around={-60:(-1.5,0)},shift={(-1,0)}]\doublefleche;
      \draw[rotate around={180:(-1.5,0)},shift={(-1,0)}]\doublefleche;
     \shadedraw[rotate=60,shift={(0.5,0)}] \doubleflechescindeeleft;
     \shadedraw[rotate=60,shift={(0.5,0)}] \doubleflechescindeeright;
     \shadedraw[line width=1.1pt, rotate=60,shift={(0.5,0)}] \fleche;
     \draw [line width=0.5pt] (0.75,1.3) circle (0.5cm);
     \draw[line width=1.1pt, rotate around={0:(0.75,1.3)}][->, >= stealth](1.25,1.3)--(1.65,1.3);
     \draw[rotate around={120:(0.75,1.3)}][->, >= stealth](1.25,1.3)--(1.65,1.3);
     \draw[rotate around={60:(0.75,1.3)},shift={(1.25,1.3)}]\doublefleche;
     \draw[rotate around={-60:(0.75,1.3)},shift={(1.25,1.3)}]\doubleflechescindeeleft;
     \draw[rotate around={-60:(0.75,1.3)},shift={(1.25,1.3)}]\doubleflechescindeeright;
     \draw[rotate around={180:(0.75,1.3)},shift={(1.25,1.3)}]\doublefleche;
     \draw [line width=0.5pt] (1.5,0.) circle (0.5cm);
     \shadedraw[rotate around={-180:(1.5,0)},shift={(2,0)}] \doubleflechescindeeleft;
     \shadedraw[rotate around={-180:(1.5,0)},shift={(2,0)}] \doubleflechescindeeright;
     \shadedraw[line width=1.1pt, rotate around={-180:(1.5,0)},shift={(2,0)}] \fleche;
     \shadedraw[rotate around={60:(1.5,0)},shift={(2,0)}] \doublefleche;
     \shadedraw[rotate around={-60:(1.5,0)},shift={(2,0)}] \doublefleche;
     \draw[line width =1.1pt, rotate around={120:(1.5,0)}][->, >= stealth] (2,0)--(2.5,0);
     \draw[rotate around={-120:(1.5,0)}][->, >= stealth](2,0)--(2.4,0);
      \draw[rotate around={0:(1.5,0)}][->, >= stealth](2,0)--(2.4,0);
\end{tikzpicture}
    \caption{A diagram which is not admissible.}
    \label{fig:composed-diag}
\end{figure}
\end{example}

\begin{definition}
    The \textbf{\textcolor{ultramarine}{size}} of an admissible diagram $(D=\{D_1,\dots,D_n\},R)$ is denoted by $|D|$ and given by 
    $|D|=\sum_{i=1}^n|D_i|-n+1$. 
    We will relabel the outgoing arrows of $(D,R)$ in clockwise direction from $1$ to $|D|$, such that the outgoing arrow labeled by $|D|$ is precisely the bold arrow of $(D,R)$ if the latter arrow is outgoing, and it is the outgoing arrow preceding the bold arrow of $(D,R)$ clockwise if the bold arrow of $(D,R)$ is incoming. 
\end{definition}

\begin{example}
    Consider the following three filled discs $D_1$, $D_2$ and $D_3$:

\begin{minipage}{21cm}
    \begin{tikzpicture}[line cap=round,line join=round,x=1.0cm,y=1.0cm]
\clip(-3,-1) rectangle (2,1);
     \draw [line width=0.5pt] (0.,0.) circle (0.5cm);
     \draw [line width=1.1pt,rotate=90] [->, >= stealth] (0.5,0) -- (0.9,0);
     \draw [rotate=-90] [->, >= stealth] (0.5,0) -- (0.9,0);
     \draw [rotate=0] [->, >= stealth] (0.5,0) -- (0.9,0);
     \draw [rotate=180] [->, >= stealth] (0.5,0) -- (0.9,0);
     \shadedraw[rotate=45,shift={(0.5,0)}] \doublefleche;
    \shadedraw[rotate=-45,shift={(0.5,0)}] \doublefleche;
    \shadedraw[rotate=-135,shift={(0.5,0)}] \doublefleche;
    \shadedraw[rotate=135,shift={(0.5,0)}] \doublefleche;
\draw (0,0.25) node[anchor=north]{$D_1$};
\end{tikzpicture}
\begin{tikzpicture}[line cap=round,line join=round,x=1.0cm,y=1.0cm]
\clip(-2,-1) rectangle (2,1);
     \draw [line width=0.5pt] (0.,0.) circle (0.5cm);
     \draw [line width=1.1pt,rotate=90] [->, >= stealth] (0.5,0) -- (0.9,0);
     \draw [rotate=-90] [->, >= stealth] (0.5,0) -- (0.9,0);
     \draw [rotate=0] [->, >= stealth] (0.5,0) -- (0.9,0);
     \draw [rotate=180] [->, >= stealth] (0.5,0) -- (0.9,0);
     \shadedraw[rotate=45,shift={(0.5,0)}] \doublefleche;
    \shadedraw[rotate=-45,shift={(0.5,0)}] \doublefleche;
    \shadedraw[rotate=-135,shift={(0.5,0)}] \doublefleche;
    \shadedraw[rotate=135,shift={(0.5,0)}] \doublefleche;
\draw (0,0.25) node[anchor=north]{$D_2$};
\end{tikzpicture}
\begin{tikzpicture}[line cap=round,line join=round,x=1.0cm,y=1.0cm]
\clip(-2,-1) rectangle (2,1);
     \draw [line width=0.5pt] (0.,0.) circle (0.5cm);
     \draw [line width=1.1pt,rotate=120] [->, >= stealth] (0.5,0) -- (0.9,0);
     \draw [rotate=-120] [->, >= stealth] (0.5,0) -- (0.9,0);
     \draw [rotate=0] [->, >= stealth] (0.5,0) -- (0.9,0);
     \shadedraw[rotate=60,shift={(0.5,0)}] \doublefleche;
    \shadedraw[rotate=-60,shift={(0.5,0)}] \doublefleche;
    \shadedraw[rotate=180,shift={(0.5,0)}] \doublefleche;
\draw (0,0.25) node[anchor=north]{$D_3$};
\end{tikzpicture}
\end{minipage}

    We connect the bold arrows of $D_1$ and $D_3$ to incoming arrows of $D_2$ and end with the following admissible diagram: 

\begin{tikzpicture}[line cap=round,line join=round,x=1.0cm,y=1.0cm]
\clip(-7,-2) rectangle (2,2);
     \draw [line width=0.5pt] (0.,0.) circle (0.5cm);
     \draw [line width=1.1pt,rotate=90] [->, >= stealth] (0.5,0) -- (0.9,0);
     \draw [rotate=-90] [->, >= stealth] (0.5,0) -- (0.9,0);
     \draw [rotate=0] [->, >= stealth] (0.5,0) -- (0.9,0);
     \draw [rotate=180] [->, >= stealth] (0.5,0) -- (0.9,0);
     \shadedraw[rotate=45,shift={(0.5,0)}] \doublefleche;
    \shadedraw[rotate=-135,shift={(0.5,0)}] \doublefleche;
    \shadedraw[rotate=135,shift={(0.5,0)}] \doubleflechescindeeleft;
    \shadedraw[rotate=135,shift={(0.5,0)}] \doubleflechescindeeright;
    \shadedraw[line width=1.1pt,rotate=135,shift={(0.5,0)}] \fleche;
    \draw [line width=0.5pt] (-1.06,1.06) circle (0.5cm);
    \draw[rotate around={45:(-1.06,1.06)}][->, >= stealth](-0.56,1.06)--(-0.16,1.06);
    \draw[rotate around={-135:(-1.06,1.06)}][->, >= stealth](-0.56,1.06)--(-0.16,1.06);
    \draw[rotate around={135:(-1.06,1.06)}][->, >= stealth](-0.56,1.06)--(-0.16,1.06);
    \shadedraw[rotate around={0:(-1.06,1.06)},shift={(-0.56,1.06)}]\doublefleche;
     \shadedraw[rotate around={90:(-1.06,1.06)},shift={(-0.56,1.06)}]\doublefleche;
      \shadedraw[rotate around={-90:(-1.06,1.06)},shift={(-0.56,1.06)}]\doublefleche;
       \shadedraw[rotate around={180:(-1.06,1.06)},shift={(-0.56,1.06)}]\doublefleche;
    \shadedraw[rotate=-45,shift={(0.5,0)}] \doubleflechescindeeleft;
    \shadedraw[rotate=-45,shift={(0.5,0)}] \doubleflechescindeeright;
    \shadedraw[line width=1.1pt,rotate=-45,shift={(0.5,0)}] \fleche;
    \draw [line width=0.5pt] (1.06,-1.06) circle (0.5cm);
    \draw[rotate around={0:(1.06,-1.06)}][->, >= stealth](1.56,-1.06)--(1.96,-1.06);
    \draw[rotate around={-120:(1.06,-1.06)}][->, >= stealth](1.56,-1.06)--(1.96,-1.06);
    \shadedraw[rotate around={60:(1.06,-1.06)},shift={(1.56,-1.06)}]\doublefleche;
    \shadedraw[rotate around={-60:(1.06,-1.06)},shift={(1.56,-1.06)}]\doublefleche;
    \shadedraw[rotate around={180:(1.06,-1.06)},shift={(1.56,-1.06)}]\doublefleche;
\draw (0,0.25) node[anchor=north]{$D_2$};
\draw (-1.05,1.32) node[anchor=north]{$D_1$};
\draw (1.1,-0.8) node[anchor=north]{$D_3$};
\end{tikzpicture}
    
    The remaining bold arrow of $D_2$ is now the last outgoing arrow of the diagram so the labelling of the outgoing arrows of the diagram is as follows:

\begin{tikzpicture}[line cap=round,line join=round,x=1.0cm,y=1.0cm]
\clip(-7,-2) rectangle (2.5,2);
     \draw [line width=0.5pt] (0.,0.) circle (0.5cm);
     \draw [line width=1.1pt,rotate=90] [->, >= stealth] (0.5,0) -- (0.9,0);
     \draw [rotate=-90] [->, >= stealth] (0.5,0) -- (0.9,0);
     \draw [rotate=0] [->, >= stealth] (0.5,0) -- (0.9,0);
     \draw [rotate=180] [->, >= stealth] (0.5,0) -- (0.9,0);
     \shadedraw[rotate=45,shift={(0.5,0)}] \doublefleche;
    \shadedraw[rotate=-135,shift={(0.5,0)}] \doublefleche;
    \shadedraw[rotate=135,shift={(0.5,0)}] \doubleflechescindeeleft;
    \shadedraw[rotate=135,shift={(0.5,0)}] \doubleflechescindeeright;
    \shadedraw[line width=1.1pt,rotate=135,shift={(0.5,0)}] \fleche;
    \draw [line width=0.5pt] (-1.06,1.06) circle (0.5cm);
    \draw[rotate around={45:(-1.06,1.06)}][->, >= stealth](-0.56,1.06)--(-0.16,1.06);
    \draw[rotate around={-135:(-1.06,1.06)}][->, >= stealth](-0.56,1.06)--(-0.16,1.06);
    \draw[rotate around={135:(-1.06,1.06)}][->, >= stealth](-0.56,1.06)--(-0.16,1.06);
    \shadedraw[rotate around={0:(-1.06,1.06)},shift={(-0.56,1.06)}]\doublefleche;
     \shadedraw[rotate around={90:(-1.06,1.06)},shift={(-0.56,1.06)}]\doublefleche;
      \shadedraw[rotate around={-90:(-1.06,1.06)},shift={(-0.56,1.06)}]\doublefleche;
       \shadedraw[rotate around={180:(-1.06,1.06)},shift={(-0.56,1.06)}]\doublefleche;
    \shadedraw[rotate=-45,shift={(0.5,0)}] \doubleflechescindeeleft;
    \shadedraw[rotate=-45,shift={(0.5,0)}] \doubleflechescindeeright;
    \shadedraw[line width=1.1pt,rotate=-45,shift={(0.5,0)}] \fleche;
    \draw [line width=0.5pt] (1.06,-1.06) circle (0.5cm);
    \draw[rotate around={0:(1.06,-1.06)}][->, >= stealth](1.56,-1.06)--(1.96,-1.06);
    \draw[rotate around={-120:(1.06,-1.06)}][->, >= stealth](1.56,-1.06)--(1.96,-1.06);
    \shadedraw[rotate around={60:(1.06,-1.06)},shift={(1.56,-1.06)}]\doublefleche;
    \shadedraw[rotate around={-60:(1.06,-1.06)},shift={(1.56,-1.06)}]\doublefleche;
    \shadedraw[rotate around={180:(1.06,-1.06)},shift={(1.56,-1.06)}]\doublefleche;
\draw (0,0.25) node[anchor=north]{$D_2$};
\draw (-1.05,1.32) node[anchor=north]{$D_1$};
\draw (1.1,-0.8) node[anchor=north]{$D_3$};
\draw (1,0.25) node[anchor=north]{$\scriptstyle{1}$};
\draw (2,-1) node[anchor=north]{$\scriptstyle{2}$};
\draw (0.45,-1.5) node[anchor=north]{$\scriptstyle{3}$};
\draw (0,-0.8) node[anchor=north]{$\scriptstyle{4}$};
\draw (-1,0.25) node[anchor=north]{$\scriptstyle{5}$};
\draw (-1.8,0.8) node[anchor=north]{$\scriptstyle{6}$};
\draw (-1.8,1.8) node[anchor=north]{$\scriptstyle{7}$};
\draw (-0.25,1.8) node[anchor=north]{$\scriptstyle{8}$};
\draw (0,1.25) node[anchor=north]{$\scriptstyle{9}$};
\end{tikzpicture}

\end{example}

\begin{definition}
The \textbf{\textcolor{ultramarine}{type}} of an admissible diagram $(D=\{D_1,\dots,D_n\},R)$ is the tuple \[
\doubar{x}=(\bar{x}^1,\dots,\bar{x}^m) \in \bar{\MO}^m\] where $m=|D|$ and $\bar{x}^i \in \bar{\MO}$ is the tuple composed of the objects of $(D,R)$, placed in counterclockwise order, that we can read between the outgoing arrows $i-1$ and $i$ of $(D,R)$. 
\end{definition}

\begin{remark}
    Given a diagram $(D,R)$ of type $\doubar{x}=(\bar{x}^1,\dots,\bar{x}^n) \in \bar{\MO}^n$, there is an order on the incoming arrows given by \[
    ({}_{x_1^1}\alpha_{x_2^1},\dots,{}_{x_{\llg(\Bar{x}^1)-1}^1}\alpha_{x_{\llg(\Bar{x}^1)}^1},\dots,{}_{x_1^n}\alpha_{x_2^n},\dots,{}_{x_{\llg(\Bar{x}^n)-1}^n}\alpha_{x_{\llg(\Bar{x}^n)}^n})
    \] 
    This allows us to define an order on the discs composing a diagram recursively: the first disc is the one carrying the first incoming arrow of $(D,R)$ and the following is the disc carrying the first incoming arrow of $(D,R)$ that is not one of a preceding disc.
\end{remark}

\begin{definition}
    A \textbf{\textcolor{ultramarine}{source}} (resp. \textbf{\textcolor{ultramarine}{sink}}) of an admissible diagram is a disc which shares none of its incoming (resp. outgoing) arrows with another one.
\end{definition}

\begin{remark}
    An admissible diagram has at least one source and one sink.
\end{remark}

\begin{definition}
    Given an admissible diagram $(D=\{D_1,\dots,D_n\},R)$, one can define its \textbf{\textcolor{ultramarine}{application order}} which takes the form of a tuple $App_D$ of names of discs that we read from right to left. The process starts with the first disc of $(D,R)$ and is as follows:

\begin{enumerate}[label=(App.\arabic*)]
        \item \label{item:app-1} if none of its incoming arrows is connected to an arrow of a disc that is not in $App_D$, add it as the first element of $App_D$ and restart the process with the following disc of $(D,R)$ that is not in $App_D$;
        \item \label{item:app-2} if some of its incoming arrows are shared with some discs not in $App_D$, consider the first disc among those and restart the process with it.
\end{enumerate}
\end{definition}

\begin{example}
    \label{example:app-order}
    We consider the following admissible diagram: 
    
    \begin{tikzpicture}[line cap=round,line join=round,x=1.0cm,y=1.0cm]
\clip(-7,-1) rectangle (2.5,1);
     \draw [line width=0.5pt] (0.,0.) circle (0.5cm);
    \shadedraw [rotate=90, shift={(0.5cm,0cm)}] \doublefleche;
    \shadedraw [rotate=-90, shift={(0.5cm,0cm)}] \doublefleche;
    \draw [line width=0.5pt] (1.5,0.) circle (0.5cm);
    \shadedraw[shift={(-1cm,0cm)}] \doubleflechescindeeleft;
    \shadedraw[shift={(-1cm,0cm)}] \doubleflechescindeeright;
    \shadedraw[line width=1.1pt,shift={(-1cm,0cm)}] \fleche;
    \shadedraw[shift={(1cm,0cm)},rotate=180] \doubleflechescindeeleft;
    \shadedraw[shift={(1cm,0cm)},rotate=180] \doubleflechescindeeright;
    \shadedraw[line width=1.1pt,shift={(1cm,0cm)},rotate=180] \fleche;
    \shadedraw[rotate around={0:(1.5,0)}, shift={(2cm,0cm)}] \doublefleche;
    \draw [rotate around={90:(1.5,0)}] [->, >= stealth](2,0) -- (2.4,0);
    \draw [rotate around={-90:(1.5,0)}] [->, >= stealth](2,0) -- (2.4,0);
    \draw [line width=0.5pt] (-1.5,0.) circle (0.5cm);
    \draw [line width=1.1pt, rotate around={90:(-1.5,0)}] [->, >= stealth](-1,0) -- (-0.6,0);
    \draw [rotate around={-90:(-1.5,0)}] [->, >= stealth](-1,0) -- (-0.6,0);
    \shadedraw [rotate around={180:(-1.5,0)}, shift={(-1cm,0cm)}] \doublefleche;
\draw(0,0.25)node[anchor=north]{$D_1$};
\draw(-1.5,0.25)node[anchor=north]{$D_2$};
\draw(1.5,0.25)node[anchor=north]{$D_3$};
\end{tikzpicture}

The first disc of $(D,R)$ is $D_2$. However, it shares one of its incoming arrows with $D_1$. We thus consider this latter disc. None of its incoming arrows is connected to an arrow of another disc so we set $App_D=(D_1)$. The first disc of $(D,R)$ which is not in $App_D$ is $D_2$. It shares one of its incoming arrows with $D_1$ which is in $App_D$, so we simply add $D_2$ at the beginning of $App_D$. We do the same with $D_3$ and end with $App_D=(D_3,D_2,D_1)$. 
\end{example}

We now explain how to determine the element of $\Multi_d^{\bullet}(\MA)[d+1]$ associated to an admissible diagram.

\begin{definition}
    Given an admissible diagram $(D = ( D_{1}, \dots, D_{n} ) ,R)$ with application order given by $App_d=(D_1,\dots,D_n)$ of type $\doubar{x}$ and a tuple 
$(s_{d+1}\mathbf{F}_{1},\dots, s_{d+1}\mathbf{F}_{n})$ with 
$s_{d+1}\mathbf{F}_{i} \in \Multi^{\bullet}_d(\MA)^{C_{\llg(\bullet)}}[d+1]$ for all $i \in \llbracket 1, n \rrbracket$, we define a map
\[
\mathcal{E}((D,R),s_{d+1}\mathbf{F}_1,\dots,s_{d+1}\mathbf{F}_n) \in \Multi^{\doubar{x}}_d(\MA)[d+1]
\]
as follows. 

We place each element $sa^{i}_{j} \in {}_{x^i_{j}}\MA_{x^i_{j+1}}[1]$ in the incoming arrow ${}_{x^i_{j}}\alpha_{x^i_{j+1}}$ of $(D,R)$ in counterclockwise order beginning at the bold arrow. 
        This will create a sign, as follows : if an element of degree $\ell$ pass through an element of degree $\ell'$ to go to its place, we add a sign $(-1)^{\ell\ell'}$.
Then, we apply the following process beginning with $D_n$:

\begin{enumerate}[label=(Ev.\arabic*)]  
    \item \label{item:ev-1} We transpose all the elements corresponding to incoming arrows of $D_s$ with the elements that do not correspond to incoming arrows of $D_s$ and that are before them 
    on $(D,R)$. We add the corresponding Koszul sign for those transpositions. 
    Moreover, if the first tuple of objects $\Bar{sa}^u$ on $(D,R)$ corresponds to incoming arrows of $D_s$ preceding an incoming arrow $\alpha$ connected with an outgoing arrow of another disc, we transpose the tuple of objects $\Bar{sa}^v$ between $\alpha$ and the last outgoing arrow of $D_s$ on $(D,R)$ with all the others corresponding to incoming arrows of $D_s$, again adding the corresponding sign.
    \item \label{item:ev-2} 
    For each incoming arrow of $D_s$ connected with an outgoing arrow of a disc $D_i$ whose corresponding element is $b_i^j$ between tuples of objects $\Bar{sa}^u$ and $\Bar{sa}^v$ on $D_s$ we transpose $\Bar{sa}^u$ with $\Bar{sa}^v$ and both with $b_i^j$ which gives a sign
     \[
        (-1)^{|b_i^j|(|\bar{sa}^u|+|\bar{sa}^v|)+|\bar{sa}^u||\bar{sa}^v|}.
        \] 
    \item \label{item:ev-3} We evaluate $\mathcal{E}(D_{s},s_{d+1}\mathbf{F}_{s})$ at the elements corresponding to the incoming arrows of $D_{s}$, to obtain an element of the form 
    \begin{itemize}
        \item  $(-1)^{(d+1)\hskip-2mm\sum\limits_{i=1}^{|D_s|-1}\hskip-2mm|s_{-d}b_s^i|}s_{-d}b_s^{1} \otimes \dots \otimes s_{-d}b_s^{|D_{s}|-1} \otimes sb_s^{|D_{s}|} \in {}_{y_{1}'}\MA_{y_{1}}[-d] \otimes \dots \otimes {}_{y'_{|D_{s}|}}\MA_{y_{|D_{s}|}}[1]$ if the bold arrow of $D_s$ is outgoing;
        \item  $(-1)^{(d+1)(|\mathbf{F}_s|+d+1+|\Bar{sc}|)}s_{-d}b_s^{1} \otimes \dots \otimes s_{-d}b_s^{|D_{1}|} \in {}_{y_{1}'}\MA_{y_{1}}[-d] \otimes \dots \otimes {}_{y'_{|D_{s}|}}\MA_{y_{|D_{s}|}}[-d]$
        if the bold arrow of $D_s$ is the incoming arrow just after the tuple $\Bar{sc}$ on $D_s$.
    \end{itemize}    
    \item \label{item:ev-4} We add a Koszul sign coming from transposing each element $b_s^i$ corresponding to a free outgoing arrow of $D_s$
    with the elements corresponding to incoming arrows that are on discs after $D_s$ in the application order.
    We also add a Koszul sign coming from transposing each element $b_s^i$ corresponding to a free outgoing arrow of $D_s$ with the elements $b_s^j$, $j>i$, corresponding to non-free outgoing arrows of $D_s$.
    \item \label{item:ev-5} We transpose each element $b_s^i$ corresponding to an outgoing arrow of $D_s$ connected to an incoming arrow $\alpha$ of another disc $D_r$ with all the elements corresponding to incoming arrows of discs after $D_s$ in the application order and that are before the tuple corresponding to incoming arrows preceding $\alpha$ on $D_r$. 
    We add the corresponding Koszul sign.
    \item \label{item:ev-6} Repeat the process with the following element of the application order. 
\end{enumerate}  
    
When we have applied the previous process $n$ times, we finally reorder the tensor factors obtained in steps \ref{item:ev-3} according to the labeling of the outgoing arrows of $(D,R)$, and add the respective Koszul sign. 
\end{definition}

\begin{example}
    We will apply the previous process with the admissible diagram of Example \ref{example:app-order}.
    We first place all the elements in their corresponding incoming arrows. We will regroup the elements corresponding to consecutive arrows in tuples, cutting the tuples when the arrows belong to different discs (see Figure \ref{fig:example-eval}). We end with a sign $(-1)^{\epsilon}$ with 
    \begin{small}
        \begin{equation}
            \epsilon=|\bar{sa}^1|(|\Bar{sa}^2|+|\Bar{sa}^3|)+|\Bar{sa}^2||\Bar{sa}^3|+|\bar{sa}^5|(|\Bar{sa}^6|+|\Bar{sa}^7|)+|\Bar{sa}^6||\Bar{sa}^7|.
        \end{equation}
    \end{small}

    \begin{figure}[H]
        \centering
     \begin{tikzpicture}[line cap=round,line join=round,x=1.0cm,y=1.0cm]
\clip(-7,-1.3) rectangle (4,1.3);
     \draw [line width=0.5pt] (0.,0.) circle (0.5cm);
    \shadedraw [rotate=90, shift={(0.5cm,0cm)}] \doublefleche;
    \shadedraw [rotate=-90, shift={(0.5cm,0cm)}] \doublefleche;
    \draw [line width=0.5pt] (1.5,0.) circle (0.5cm);
    \shadedraw[shift={(-1cm,0cm)}] \doubleflechescindeeleft;
    \shadedraw[shift={(-1cm,0cm)}] \doubleflechescindeeright;
    \shadedraw[line width=1.1pt,shift={(-1cm,0cm)}] \fleche;
    \shadedraw[shift={(1cm,0cm)},rotate=180] \doubleflechescindeeleft;
    \shadedraw[shift={(1cm,0cm)},rotate=180] \doubleflechescindeeright;
    \shadedraw[line width=1.1pt,shift={(1cm,0cm)},rotate=180] \fleche;
    \shadedraw[rotate around={0:(1.5,0)}, shift={(2cm,0cm)}] \doublefleche;
    \draw [rotate around={90:(1.5,0)}] [->, >= stealth](2,0) -- (2.4,0);
    \draw [rotate around={-90:(1.5,0)}] [->, >= stealth](2,0) -- (2.4,0);
    \draw [line width=0.5pt] (-1.5,0.) circle (0.5cm);
    \draw [line width=1.1pt, rotate around={90:(-1.5,0)}] [->, >= stealth](-1,0) -- (-0.6,0);
    \draw [rotate around={-90:(-1.5,0)}] [->, >= stealth](-1,0) -- (-0.6,0);
    \shadedraw [rotate around={180:(-1.5,0)}, shift={(-1cm,0cm)}] \doublefleche;
\draw(0,0.25)node[anchor=north]{$D_1$};
\draw(-1.5,0.25)node[anchor=north]{$D_2$};
\draw(1.5,0.25)node[anchor=north]{$D_3$};
\draw(-0.75,0.75)node[anchor=north]{$\Bar{sa}^1$};
\draw(0,1.3)node[anchor=north]{$\Bar{sa}^2$};
\draw(0.75,0.75)node[anchor=north]{$\Bar{sa}^3$};
\draw(2.5,0.35)node[anchor=north]{$\Bar{sa}^4$};
\draw(-0.75,-0.25)node[anchor=north]{$\Bar{sa}^7$};
\draw(0,-0.66)node[anchor=north]{$\Bar{sa}^6$};
\draw(0.75,-0.25)node[anchor=north]{$\Bar{sa}^5$};
\draw(-2.5,0.35)node[anchor=north]{$\Bar{sa}^8$};
\end{tikzpicture}
\caption{Place of the elements.}
    \label{fig:example-eval}
    \end{figure}
    
    We begin with $D_1$. After the step \ref{item:ev-1}, the sign is $(-1)^{\epsilon+\epsilon_1}$ with \begin{small}
        \begin{equation}
            \epsilon_1=|\Bar{sa}^2||\Bar{sa}^1|+|\Bar{sa}^6|(|\Bar{sa}^1|+|\Bar{sa}^3|+|\Bar{sa}^4|+|\Bar{sa}^5|)
        \end{equation}
    \end{small} and the elements are in the following order: 
    \[(\Bar{sa}^2,\Bar{sa}^6,\Bar{sa}^1,\Bar{sa}^3,\Bar{sa}^4,\Bar{sa}^5,\Bar{sa}^7,\Bar{sa}^8).
    \]
    Since none of the incoming arrows of $D_1$ is connected with the outgoing arrow of another disc, step \ref{item:ev-2} does nothing.
    In step \ref{item:ev-3}, we compute \[\mathcal{E}(D_1,s_{d+1}\mathbf{F}_1)(\Bar{sa}^2,\Bar{sa}^6)=(-1)^{\epsilon_1'}s_{-d}b_1^1\otimes s_{d+1}b_1^2\] with $\epsilon_1'=(d+1)|s_{-d}b_1^1|$  so the total sign is $(-1)^{\epsilon+\epsilon_1+\epsilon_1'}$.
    Since each of the outgoing arrows of $D_1$ is connected with the incoming arrow of another disc, step \ref{item:ev-4} does nothing.
    After step \ref{item:ev-5}, we end with the sign $(-1)^{\epsilon+\epsilon_1+\epsilon_1'+\epsilon_1\dprimeind}$ with $\epsilon_1\dprimeind=|s_{-d}b_1^1||\Bar{sa}^1|+|s_{d+1}b_1^2|(|\Bar{sa}^1|+|\Bar{sa}^3|+|\Bar{sa}^4|+|\Bar{sa}^5|)$ and the elements are now in the following order: 
    \[
    (\Bar{sa}^1,s_{-d}b_1^1,\Bar{sa}^3,\Bar{sa}^4,\Bar{sa}^5,s_{d+1}b_1^2,\Bar{sa}^7,\Bar{sa}^8).
    \]

    We now apply the same process to $D_2$. After the step \ref{item:ev-1}, the sign is $(-1)^{\epsilon+\epsilon_1+\epsilon_1'+\epsilon_1\dprimeind+\epsilon_2}$ with $\epsilon_2=(|s_{d+1}b_1^2|+|\Bar{sa}^7|+|\Bar{sa}^8|)(|s_{-d}b_1^1|+|\Bar{sa}^3|+|\Bar{sa}^4|+|\Bar{sa}^5|)$ and the elements are now in the following order: 
    \[
    (\Bar{sa}^1,s_{d+1}b_1^2,\Bar{sa}^7,\Bar{sa}^8,s_{-d}b_1^1,\Bar{sa}^3,\Bar{sa}^4,\Bar{sa}^5).
    \]
    Since the incoming arrow of $D_2$ between $\Bar{sa}^1$ and $\Bar{sa}^7$ is connected to an outgoing arrow of $D_1$ but the bold arrow of $D_2$ is outgoing, we end step \ref{item:ev-2} with the sign $(-1)^{\epsilon+\epsilon_1\epsilon_1'+\epsilon_1\dprimeind+\epsilon_2+\epsilon_2'}$ where \[\epsilon_2'=|s_{d+1}b_1^2|(|\Bar{sa}^1|+|\Bar{sa}^7|)+|\Bar{sa}^1||\Bar{sa}^7|\]
  and the order 
   \[
    (\Bar{sa}^7,s_{d+1}b_1^2,\Bar{sa}^1,\Bar{sa}^8,s_{-d}b_1^1,\Bar{sa}^3,\Bar{sa}^4,\Bar{sa}^5).
    \]
    In step \ref{item:ev-3}, we compute \[\mathcal{E}(D_2,s_{d+1}\mathbf{F}_2)(\Bar{sa}^7\otimes s_{d+1}b^2_1\otimes \Bar{sa}^1,\Bar{sa}^8)=(-1)^{\epsilon_2\dprimeind}s_{-d}b_2^1\otimes s_{d+1}b_2^2\] with $\epsilon_2\dprimeind=(d+1)|s_{-d}b_2^1|$ so the total sign is $(-1)^{\epsilon+\epsilon_1+\epsilon_1'+\epsilon_1\dprimeind+\epsilon_2+\epsilon_2'+\epsilon_2\dprimeind}$.
    After the step \ref{item:ev-4}, we multiply the latter sign by $(-1)^{\epsilon_2\tprime}$ where $\epsilon_2\tprime=(|s_{-d}b_2^1|+|s_{d+1}b_2^2|)(|s_{-d}b_1^1|+|\Bar{sa}^3|+|\Bar{sa}^4|+|\Bar{sa}^5|)$ and end with the elements in the following order: 
     \[
    (s_{-d}b_1^1,\Bar{sa}^3,\Bar{sa}^4,\Bar{sa}^5,s_{-d}b_2^1, s_{d+1}b_2^2).
    \]
    Since none of the outgoing arrows of $D_2$ is connected with the incoming arrow of another disc, step \ref{item:ev-5} does nothing.

    Then, we apply the process to $D_3$. 
    After the step \ref{item:ev-1}, the sign is \[(-1)^{\epsilon+\epsilon_1+\epsilon_1'+\epsilon_1\dprimeind+\epsilon_2+\epsilon_2'+\epsilon_2\dprimeind+\epsilon_2\tprime+\epsilon_3}\] with $\epsilon_3=|\Bar{sa}^5|(|s_{-d}b_1^1|+|\Bar{sa}^3|+|\Bar{sa}^4|)$ and the elements are now in the following order: 
     \[
    (\Bar{sa}^5,s_{-d}b_1^1,\Bar{sa}^3,\Bar{sa}^4,s_{-d}b_2^1, s_{d+1}b_2^2).
    \]
    Since the incoming arrow of $D_2$ between $\Bar{sa}^3$ and $\Bar{sa}^5$ is connected to an outgoing arrow of $D_1$, we end step \ref{item:ev-2} with the sign $(-1)^{\epsilon+\epsilon_1+\epsilon_1'+\epsilon_1\dprimeind+\epsilon_2+\epsilon_2'+\epsilon_2\dprimeind+\epsilon_2\tprime+\epsilon_3+\epsilon_3'}$ where 
    \begin{small}
        \begin{equation}
            \epsilon_3'=|s_{-d}b_2^1|(|\Bar{sa}^3|+|\Bar{sa}^5|)+|\Bar{sa}^3||\Bar{sa}^5|
        \end{equation}
    \end{small}
  and the order 
  \[
    (\Bar{sa}^3,s_{-d}b_1^1,\Bar{sa}^5,\Bar{sa}^4,s_{-d}b_2^1, s_{d+1}b_2^2).
    \]
    In step \ref{item:ev-3}, we compute \[\mathcal{E}(D_3,s_{d+1}\mathbf{F}_3)(\Bar{sa}^3\otimes s_{-d}b^1_1\otimes \Bar{sa}^5,\Bar{sa}^4)=(-1)^{\epsilon_3\dprimeind}s_{-d}b_2^1\otimes s_{-d}b_3^2 \] with $\epsilon_3\dprimeind=(d+1)(|\mathbf{F}_3|+d+1+|\Bar{sa}^3|)$  so the total sign is $(-1)^{\epsilon+\epsilon_1+\epsilon_1'+\epsilon_1\dprimeind+\epsilon_2+\epsilon_2'+\epsilon_2\dprimeind+\epsilon_2\tprime+\epsilon_3+\epsilon_3'+\epsilon_3\dprimeind}$.
    Since there is no more elements corresponding to incoming arrows of $(D,R)$ and since none of the outgoing arrows of $D_2$ is connected with the incoming arrow of another disc, steps \ref{item:ev-4} and \ref{item:ev-5} do nothing.
    
    We have applied the process $3$ times and ended with the sign \[(-1)^{\epsilon+\epsilon_1+\epsilon_1'+\epsilon_1\dprimeind+\epsilon_2+\epsilon_2'+\epsilon_2\dprimeind+\epsilon_2\tprime+\epsilon_3+\epsilon_3'+\epsilon_3\dprimeind}\] and the element 
    \[
    s_{-d}b_2^1\otimes s_{-d}b_3^2\otimes s_{-d}b_2^1\otimes s_{d+1}b_2^2.
    \]
According to the labeling of the outgoing arrows of $(D,R)$, we have the correct result.
\end{example}

\begin{definition}
    A \textbf{\textcolor{ultramarine}{filled diagram}} is an admissible diagram $(D=\{D_1,\dots,D_n\},R)$ together with elements $s_{d+1}\mathbf{F}_{i}\in\Multi^{\bullet}_d(\MA)^{C_{\llg(\bullet)}}[d+1]$ for $i\in\llbracket 1,n\rrbracket$.
    
    To each filled diagram $\mathcalboondox{D} = \{(D=\{D_1,\dots,D_n\},R),(s_{d+1}\mathbf{F}_{i})_{i\in\llbracket 1,n\rrbracket}\}$ with application order given by $App_D=(D_1,\dots,D_n)$ we associate the element 
    \[
    \mathcal{E}(\mathcalboondox{D})=\mathcal{E}((D,R),s_{d+1}\mathbf{F}_1,\dots,s_{d+1}\mathbf{F}_n)\in\Multi^{\doubar{x}}_d(\MA)[d+1] 
    \]
     where $\doubar{x}$ is the type of $(D,R)$. 
We depict a filled diagram as an admissible diagram replacing the names of the discs by the corresponding maps (see Figure \ref{fig:filled-diag}).

\begin{figure}[H]
    \centering
\begin{tikzpicture}
     \draw [line width=0.5pt] (0.,0.) circle (0.5cm);
    \draw [rotate=90] [line width=1.1pt,->, >= stealth] (0.5,0) -- (0.9,0);
    \draw [rotate=-90] [->, >= stealth](0.5,0) -- (0.9,0);
    \draw [rotate=180] [->, >= stealth](0.5,0) -- (0.9,0);
    \shadedraw [rotate=45, shift={(0.5cm,0cm)}] \doublefleche;
    \shadedraw [rotate=-45, shift={(0.5cm,0cm)}] \doublefleche;
    \draw [line width=0.5pt] (1.5,0.) circle (0.5cm);
    \shadedraw[shift={(1cm,0cm)},rotate=180] \doubleflechescindeeleft;
    \shadedraw[shift={(1cm,0cm)},rotate=180] \doubleflechescindeeright;
    \shadedraw[line width=1.1pt,shift={(1cm,0cm)},rotate=180] \fleche;
    \shadedraw[rotate around={-60:(1.5,0)}, shift={(2cm,0cm)}] \doublefleche;
    \draw [->, >= stealth](2,0) -- (2.4,0);
    \draw [rotate around={120:(1.5,0)}] [->, >= stealth](2,0) -- (2.4,0);
    \draw [rotate around={-120:(1.5,0)}] [->, >= stealth](2,0) -- (2.4,0);
\begin{scriptsize}
\draw [fill=black] (-0.45,0.4) circle (0.3pt);
\draw [fill=black] (-0.25,0.55) circle (0.3pt);
\draw [fill=black] (-0.55,0.2) circle (0.3pt);
\draw [rotate=90][fill=black] (-0.45,0.4) circle (0.3pt);
\draw [rotate=90][fill=black] (-0.25,0.55) circle (0.3pt);
\draw [rotate=90][fill=black] (-0.55,0.2) circle (0.3pt);
\draw [fill=black] (1.85,0.5) circle (0.3pt);
\draw [fill=black] (2,0.35) circle (0.3pt);
\draw [fill=black] (1.65,0.6) circle (0.3pt);
\end{scriptsize}
\draw (0,0.25) node[anchor=north]{$\mathbf{F}$};
\draw (1.5,0.25) node[anchor=north]{$\mathbf{G}$};
\end{tikzpicture}
\caption{A filled diagram.}
    \label{fig:filled-diag}
\end{figure}
\end{definition}

\begin{remark}
    In the rest of the thesis, we will omit to put in bold the arrows connecting two discs. We will often omit the bold arrow of the diagrams, meaning that we sum over all its possible positions. Moreover, when dealing with diagrams consisting of discs filled with different letters, by putting in bold some outgoing arrow of a disc filled with a given letter, we mean sum over all the possibilities to put in bold an outgoing arrow of discs filled with this letter. 
\end{remark}

\subsection{The necklace graded Lie algebra}
In order to recall what a pre-Calabi-Yau structure on a graded quiver $\MA$ is, one first defines a graded Lie algebra, called the \textbf{\textcolor{ultramarine}{necklace graded Lie algebra}} and appearing in \cite{ktv}. 
As a graded vector space, this graded Lie algebra is $\Multi^{\bullet}_d(\MA)^{C_{\llg(\bullet)}}[d+1]$.
To define a graded Lie bracket on this space, we first define a new operation as follows.

\begin{definition}
    Consider a graded quiver $\MA$ with set of objects $\MO$ as well as tuples of elements of $\bar{\MO}$ given by $\bar{\bar{x}}=(\bar{x}^1,...,\bar{x}^n),\bar{\bar{y}}=(\bar{y}^{1},...,\bar{y}^m)\in\bar{\bar{\MO}}$ such that $\rrt(\bar{y}^1)=x_{j}^v$ and $\llt(\bar{y}^{m})=x_{j-1}^v$ for some $v\in\llbracket 1,n \rrbracket$ and $j\in \llbracket 2, \llg(\bar{x}^v) \rrbracket$.
    The \textbf{\textcolor{ultramarine}{inner necklace composition at $v$,$j$}} of elements $s_{d+1}F^{\doubar{x}}\in\Multi^{\doubar{x}}_d(\MA)[d+1]$ and $s_{d+1}G^{\doubar{y}}\in\Multi^{\doubar{y}}_d(\MA)[d+1]$ is given by 
    \[
    s_{d+1}F^{\doubar{x}} \upperset{\substack{\nec,v,j\\ \inn}}{\circ} s_{d+1}G^{\doubar{y}}=\mathcal{E}(\mathcalboondox{D})\in \Multi^{{\bar{\bar{x}}\upperset{v,j,\inn}{\sqcup}\bar{\bar{y}}}}_d(\MA)[d+1]
    \]
with
\begin{equation}
\bar{\bar{x}}\upperset{v,j,\inn}{\sqcup}\bar{\bar{y}}=(\bar{x}^1,\dots,\bar{x}^{v-1},\bar{y}^1\sqcup\bar{x}^{v}_{> j},\bar{y}^2,\dots,\bar{y}^{m-1},\bar{x}^{v}_{<j-1}\sqcup\bar{y}^m,\bar{x}^{v+1},\dots,\bar{x}^n)
\end{equation}
and where $ \mathcalboondox{D}$ is the filled diagram of type ${\bar{\bar{x}}\upperset{v,j,\inn}{\sqcup}\bar{\bar{y}}}$ given by

\begin{tikzpicture}[line cap=round,line join=round,x=1.0cm,y=1.0cm]
\clip(-6,-1) rectangle (6,1);
    \draw [line width=0.5pt] (0.,0.) circle (0.5cm);
    \draw [rotate=90] [->,> = stealth] (0.5,0) -- (0.9,0);
    \draw [rotate=-90] [->,> = stealth] (0.5,0) -- (0.9,0);
    \shadedraw [rotate=45, shift={(0.5cm,0cm)}] \doublefleche;
    \shadedraw [rotate=-45, shift={(0.5cm,0cm)}] \doublefleche;
    \draw [line width=0.5pt] (2,0.) circle (0.5cm);
    \shadedraw[shift={(1.5cm,0cm)},rotate=180] \doubleflechescindeeleft;
    \shadedraw[shift={(1.5cm,0cm)},rotate=180] \doubleflechescindeeright;
    \shadedraw[ shift={(1.5cm,0cm)},rotate=180] \flechelong;
    \shadedraw[shift={(2.5cm,0cm)}]\doublefleche;
    \draw [rotate around={-45:(2,0)}] [->,> = stealth](2.5,0) -- (2.9,0);
    \draw [line width=1.1pt,rotate around={45:(2,0)}] [->,> = stealth](2.5,0) -- (2.9,0);
    \draw [rotate around={-135:(2,0)}] [->,> = stealth](2.5,0) -- (2.9,0);
    \draw [rotate around={135:(2,0)}] [->,> = stealth](2.5,0) -- (2.9,0);
\begin{scriptsize}
\draw [fill=black] (2,0.6) circle (0.3pt);
\draw [fill=black] (2.2,0.55) circle (0.3pt);
\draw [fill=black] (1.8,0.55) circle (0.3pt);
\draw [fill=black] (2,-0.6) circle (0.3pt);
\draw [fill=black] (2.2,-0.55) circle (0.3pt);
\draw [fill=black] (1.8,-0.55) circle (0.3pt);
\draw [fill=black] (-0.6,0) circle (0.3pt);
\draw [fill=black] (-0.55,0.2) circle (0.3pt);
\draw [fill=black] (-0.55,-0.2) circle (0.3pt);
\end{scriptsize}
\draw (0,0.25) node[anchor=north] {$\mathbf{G}$};
\draw (2,0.25) node[anchor=north] {$\mathbf{F}$};
\draw (2.7,0) node[anchor=north] {$\scriptstyle{\bar{x}^1}$};
\draw (1.1,0.7) node[anchor=north ] {$\scriptstyle{\bar{x}^v_{<j-1}}$};
\draw (1.1,-0.05) node[anchor=north ] {$\scriptstyle{\bar{x}^v_{>j}}$};
\draw (0.35,-0.35) node[anchor=north west] {$\scriptstyle{\bar{y}^1}$};
\draw (0.25,0.9) node[anchor=north west] {$\scriptstyle{\bar{y}^m}$};
\end{tikzpicture}
\end{definition}

\begin{definition}
   Consider a graded quiver $\MA$ with set of objects $\MO$ as well as tuples of elements of $\bar{\MO}$ given by $\bar{\bar{x}}=(\bar{x}^1,...,\bar{x}^n),\bar{\bar{y}}=(\bar{y}^{1},...,\bar{y}^m)\in\bar{\bar{\MO}}$ such that $\llt(\bar{y}^{v})=x_{j-1}^1$ and $\rrt(\bar{y}^{v+1})=x_{j}^1$ for some $v\in\llbracket 1,m-1\rrbracket$ and $j\in\llbracket 2,\llg(\bar{x}^1)\rrbracket$.
    We define the \textbf{\textcolor{ultramarine}{outer necklace composition at $v$,$j$}} of two elements $s_{d+1}F^{\doubar{x}}\in \Multi^{\doubar{x}}_d(\MA)[d+1]$ 
 and $s_{d+1}G^{\doubar{y}}\in \Multi^{\doubar{y}}_d(\MA)[d+1]$ as
 \[
 s_{d+1}F^{\doubar{x}}\upperset{\substack{\nec, v,j\\ \out}}{\circ } s_{d+1}G^{\doubar{y}}=\mathcal{E}(\mathcalboondox{D})\in\Multi_d^{\bar{\bar{x}}\upperset{v,j,\out}{\sqcup}\bar{\bar{y}}}(\MA)[d+1]
 \]
with
\begin{equation}
\bar{\bar{x}}\upperset{v,j,\out}{\sqcup}\bar{\bar{y}}=(\bar{y}^1,\dots,\bar{y}^{v-1},\bar{x}^1_{< j-1}\sqcup\bar{y}^v,\bar{x}^2,\dots,\bar{x}^n,\bar{y}^{v+1}\sqcup\bar{x}^1_{> j},\bar{y}^{v+2},\dots,\bar{y}^m)
\end{equation}
and where $\mathcalboondox{D}$ is the filled diagram of type $\bar{\bar{x}}\upperset{v,j,\out}{\sqcup}\bar{\bar{y}}$ given by

\begin{tikzpicture}[line cap=round,line join=round,x=1.0cm,y=1.0cm]
\clip(-6,-1) rectangle (6,1);
     \draw [line width=0.5pt] (0.,0.) circle (0.5cm);
    \draw [rotate=90] [->,> = stealth] (0.5,0) -- (0.9,0);
    \draw [rotate=-90] [->,> = stealth] (0.5,0) -- (0.9,0);
    \draw [line width=1.1pt,rotate=180] [->,> = stealth](0.5,0) -- (0.9,0);
    \shadedraw [rotate=-45, shift={(0.5cm,0cm)}] \doublefleche;
    \shadedraw [rotate=45, shift={(0.5cm,0cm)}] \doublefleche;
    \draw [line width=0.5pt] (2,0.) circle (0.5cm);
    \shadedraw[shift={(1.5cm,0cm)},rotate=180] \doubleflechescindeeleft;
    \shadedraw[shift={(1.5cm,0cm)},rotate=180] \doubleflechescindeeright;
    \shadedraw[shift={(1.5cm,0cm)},rotate=180] \flechelong;
    \shadedraw[rotate around={-60:(2,0)}, shift={(2.5cm,0cm)}] \doublefleche;
    \draw [->,> = stealth](2.5,0) -- (2.9,0);
    \draw [rotate around={120:(2,0)}] [->,> = stealth](2.5,0) -- (2.9,0);
    \draw [rotate around={-120:(2,0)}] [->,> = stealth](2.5,0) -- (2.9,0);
\begin{scriptsize}
\draw [fill=black] (2.3,0.5) circle (0.3pt);
\draw [fill=black] (2.1,0.57) circle (0.3pt);
\draw [fill=black] (2.45,0.38) circle (0.3pt);
\draw [rotate=75][fill=black] (0.3,0.5) circle (0.3pt);
\draw [rotate=75][fill=black] (0.1,0.57) circle (0.3pt);
\draw [rotate=75][fill=black] (0.45,0.38) circle (0.3pt);
\draw [rotate=165][fill=black] (0.3,0.5) circle (0.3pt);
\draw [rotate=165][fill=black] (0.1,0.57) circle (0.3pt);
\draw [rotate=165][fill=black] (0.45,0.38) circle (0.3pt);
\end{scriptsize}
\draw (0,0.25) node[anchor=north] {$\mathbf{G}$};
\draw (2,0.25) node[anchor=north] {$\mathbf{F}$};
\draw (2.4,-0.5) node[anchor=north] {$\scriptstyle{\bar{x}^n}$};
\draw (1.2,0.78) node[anchor=north] {$\scriptstyle{\bar{x}^1_{<j-1}}$};
\draw (1.2,-0.1) node[anchor=north] {$\scriptstyle{\bar{x}^1_{>j}}$};
\draw (0.7,-0.4) node[anchor=north] {$\scriptstyle{\bar{y}^{v+1}}$};
\draw (0.6,0.9) node[anchor=north] {$\scriptstyle{\bar{y}^v}$};
\end{tikzpicture}
\end{definition}

\begin{definition}
    Given a graded quiver $\MA$ with set of objects $\MO$, the \textbf{\textcolor{ultramarine}{necklace product}} of elements $s_{d+1}\mathbf{F},s_{d+1}\mathbf{G}\in\Multi^{\bullet}_d(\MA)^{C_{\llg(\bullet)}}[d+1]$ is the element 
    \[
        s_{d+1}\mathbf{F} \upperset{\nec}{\circ} s_{d+1}\mathbf{G}\in\Multi^{\bullet}_d(\MA)[d+1] 
    \]
    given by
\begin{equation}
    (s_{d+1}\mathbf{F} \upperset{\nec}{\circ} s_{d+1}\mathbf{G})^{\bar{\bar{z}}} 
    = 
    \sum\limits_{(\bar{\bar{x}}, 
    \bar{\bar{y}}, 
    v, j)\in\mathcal{I}_{\inn}^{\doubar{z}}}
     s_{d+1}F^{\bar{\bar{x}}}\displaystyle\upperset{\substack{\nec,v,j\\ \inn}}{\circ}s_{d+1}G^{\bar{\bar{y}}}  + \sum\limits_{(\bar{\bar{x}}, 
    \bar{\bar{y}}, 
    v, j)\in\mathcal{I}_{\out}^{\doubar{z}}}
     s_{d+1}F^{\bar{\bar{x}}}\displaystyle\upperset{\substack{\nec,v,j\\ \out}}{\circ}s_{d+1}G^{\bar{\bar{y}}}
\end{equation}
for all $\bar{\bar{z}}\in \doubar{\MO}$, 
    where 
\begin{equation}
    \label{eq:I-inn-out}
    \begin{split}
        \mathcal{I}_{\inn}^{\doubar{z}}&=\{(\bar{\bar{x}}, 
    \bar{\bar{y}}, 
    v, j)\in  \doubar{\MO}\times \doubar{\MO}\times\llbracket 1,\llg(\bar{\bar{x}}) \rrbracket \times \llbracket 2,\llg(\bar{x}^v) \rrbracket |
    \bar{\bar{x}} \hskip -2mm\upperset{v,j,\inn}{\sqcup} \hskip -2mm \bar{\bar{y}}=\bar{\bar{z}}\}\\
        \mathcal{I}_{\out}^{\doubar{z}}&=\{(\bar{\bar{x}}, 
    \bar{\bar{y}}, 
    v, j)\in   \doubar{\MO}\times \doubar{\MO}\times\llbracket 1,\llg(\bar{\bar{y}})-1
     \rrbracket \times
    \llbracket 2,\llg(\bar{x}^1) \rrbracket |
    \bar{\bar{x}} \hskip -2mm\upperset{v,j,\out}{\sqcup} \hskip -2mm \bar{\bar{y}}=\bar{\bar{z}}\}.
    \end{split}
\end{equation} 
\end{definition}

\begin{lemma}
    Consider two elements $s_{d+1}\mathbf{F},s_{d+1}\mathbf{G}\in\Multi^{\bullet}_d(\MA)^{C_{\llg(\bullet)}}[d+1]$. Then, we have that $s_{d+1}\mathbf{F} \upperset{\nec}{\circ} s_{d+1}\mathbf{G}\in\Multi^{\bullet}_d(\MA)^{C_{\llg(\bullet)}}[d+1]$.
\end{lemma}
\begin{proof}
    Consider $\doubar{z}=(\bar{z}^1,\dots,\bar{z}^p)\in\bar{\MO}^p$ and denote $\sigma_i=(1 2\dots i)$ for every $i\in\llbracket 1,n\rrbracket$. We have to show that 
    \begin{equation}
    \begin{split}
        &(s_{d+1}\mathbf{F} \upperset{\nec}{\circ} s_{d+1}\mathbf{G})^{\doubar{z}}
        \\&=\tau^{\sigma_p}_{{}_{\llt(\bar{z}^{1})}\MA_{\rrt(\bar{z}^2)}[-d],{}_{\llt(\bar{z}^{2})}\MA_{\rrt(\bar{z}^{3})}[-d],\dots, {}_{\llt(\bar{z}^p)}\MA_{\rrt(\bar{z}^1)}[1]}\circ (s_{d+1}\mathbf{F} \upperset{\nec}{\circ} s_{d+1}\mathbf{G})^{\doubar{z}\cdot\sigma_p} \circ \tau^{\sigma_p}_{\MA[1]^{\otimes \bar{z}^1},\dots,\MA[1]^{\otimes \bar{z}^p}}
        \end{split}
    \end{equation}

First, note that we have a bijection $\mathcal{I}_{\inn}^{\doubar{z}}\cup \mathcal{I}_{\out}^{\doubar{z}}\rightarrow \mathcal{I}_{\inn}^{\doubar{z}\cdot\sigma_p}\cup \mathcal{I}_{\out}^{\doubar{z}\cdot\sigma_p}$ sending $(\bar{\bar{x}}, 
    \bar{\bar{y}}, 
    v, j)\in\mathcal{I}_{\inn}^{\doubar{z}}$ to $(\bar{\bar{x}}\cdot\sigma_n, 
    \bar{\bar{y}}, v+1, j)\in\mathcal{I}_{\inn}^{\doubar{z}\cdot\sigma_p}$ if $v<n$ and to $(\bar{\bar{x}}\cdot\sigma_n, 
    \bar{\bar{y}}\cdot\sigma_m, 1, j)\in\mathcal{I}_{\out}^{\doubar{z}\cdot\sigma_p}$ if $v=n$ and sending $(\bar{\bar{x}}, 
    \bar{\bar{y}}, 
    v, j)\in\mathcal{I}_{\out}^{\doubar{z}}$ to $(\bar{\bar{x}}, 
    \bar{\bar{y}}\cdot\sigma_m, v+1, j)\in\mathcal{I}_{\out}^{\doubar{z}\cdot\sigma_p}$ if $v<m-1$ and to $(\bar{\bar{x}}, 
    \bar{\bar{y}}\cdot\sigma_m,  1, j)\in\mathcal{I}_{\inn}^{\doubar{z}\cdot\sigma_p}$ if $v=m-1$.

Moreover, given $(\bar{\bar{x}}\cdot\sigma_n, 
    \bar{\bar{y}}, v+1, j)\in\mathcal{I}_{\inn}^{\doubar{z}\cdot\sigma_p}$ with $v<n$, we have
\allowdisplaybreaks
\begin{align*}
    &\tau^{\sigma_p}_{{}_{\llt(\bar{z}^{1})}\MA_{\rrt(\bar{z}^2)}[-d],\dots, {}_{\llt(\bar{z}^p)}\MA_{\rrt(\bar{z}^1)}[1]}\circ (s_{d+1}F^{\bar{\bar{x}}\cdot\sigma_n}\displaystyle\upperset{\substack{\nec,v+1,j\\ \inn}}{\circ}s_{d+1}G^{\bar{\bar{y}}}) \circ \tau^{\sigma_p}_{\MA[1]^{\otimes \bar{z}^1},\dots,\MA[1]^{\otimes \bar{z}^p}}
    \\&=
    \tau^{\sigma_n}_{{}_{\llt(\bar{x}^{1})}\MA_{\rrt(\bar{x}^2)}[-d],\dots, {}_{\llt(\bar{x}^n)}\MA_{\rrt(\bar{x}^1)}[1]}\circ s_{d+1}F^{\bar{\bar{x}}\cdot\sigma_n}\circ\tau^{\sigma_n}_{\MA[1]^{\otimes \bar{x}^1},\dots,\MA[1]^{\otimes \bar{x}^{n}}}\displaystyle\upperset{\substack{\nec,v,j\\ \inn}}{\circ}s_{d+1}G^{\bar{\bar{y}}}
    \\&=s_{d+1}F^{\bar{\bar{x}}}\displaystyle\upperset{\substack{\nec,v,j\\ \inn}}{\circ}s_{d+1}G^{\bar{\bar{y}}}
\end{align*}    
using that $s_{d+1}\mathbf{F}\in\Multi^{\bullet}_d(\MA)^{C_{\llg(\bullet)}}[d+1]$. Similarly, given $(\doubar{x}\cdot\sigma_n,\doubar{y}\cdot\sigma_m,1,j)$ we have that 
\allowdisplaybreaks
\begin{align*}
    &\tau^{\sigma_p}_{{}_{\llt(\bar{z}^{1})}\MA_{\rrt(\bar{z}^2)}[-d],\dots, {}_{\llt(\bar{z}^p)}\MA_{\rrt(\bar{z}^1)}[1]}\circ (s_{d+1}F^{\bar{\bar{x}}\cdot\sigma_n}\displaystyle\upperset{\substack{\nec,1,j\\ \out}}{\circ}s_{d+1}G^{\bar{\bar{y}}\cdot\sigma_m}) \circ \tau^{\sigma_p}_{\MA[1]^{\otimes \bar{z}^1},\dots,\MA[1]^{\otimes \bar{z}^p}}
    \\&=
    (\tau^{\sigma_n}_{{}_{\llt(\bar{x}^{1})}\MA_{\rrt(\bar{x}^2)}[-d],\dots, {}_{\llt(\bar{x}^n)}\MA_{\rrt(\bar{x}^1)}[1]}\circ s_{d+1}F^{\bar{\bar{x}}\cdot\sigma_n}\circ\tau^{\sigma_n}_{\MA[1]^{\otimes \bar{x}^1},\dots,\MA[1]^{\otimes \bar{x}^{n}}})
    \\&\hspace{3cm}\displaystyle\upperset{\substack{\nec,n,j\\ \inn}}{\circ} (\tau^{\sigma_m}_{{}_{\llt(\bar{y}^{1})}\MA_{\rrt(\bar{y}^2)}[-d],\dots, {}_{\llt(\bar{y}^m)}\MA_{\rrt(\bar{y}^1)}[1]}\circ s_{d+1}G^{\bar{\bar{y}}\cdot\sigma_m}\circ\tau^{\sigma_m}_{\MA[1]^{\otimes \bar{y}^1},\dots,\MA[1]^{\otimes \Bar{y}^{m}}}) 
    \\&=s_{d+1}F^{\bar{\bar{x}}}\displaystyle\upperset{\substack{\nec,n,j\\ \inn}}{\circ}s_{d+1}G^{\bar{\bar{y}}}
\end{align*}
using that $s_{d+1}\mathbf{F},s_{d+1}\mathbf{G}\in\Multi^{\bullet}_d(\MA)^{C_{\llg(\bullet)}}[d+1]$. It is easy to check that we also have
\begin{equation}
    \begin{split}
        &\tau^{\sigma_p}_{{}_{\llt(\bar{z}^{1})}\MA_{\rrt(\bar{z}^2)}[-d],\dots, {}_{\llt(\bar{z}^p)}\MA_{\rrt(\bar{z}^1)}[1]}\circ (s_{d+1}F^{\bar{\bar{x}}}\displaystyle\upperset{\substack{\nec,v+1,j\\ \out}}{\circ}s_{d+1}G^{\bar{\bar{y}}\cdot\sigma_m}) \circ \tau^{\sigma_p}_{\MA[1]^{\otimes \bar{z}^1},\dots,\MA[1]^{\otimes \bar{z}^p}}
    \\&=s_{d+1}F^{\bar{\bar{x}}}\displaystyle\upperset{\substack{\nec,v,j\\ \out}}{\circ}s_{d+1}G^{\bar{\bar{y}}}
    \end{split}
\end{equation}
and
\begin{equation}
    \begin{split}
         &\tau^{\sigma_p}_{{}_{\llt(\bar{z}^{1})}\MA_{\rrt(\bar{z}^2)}[-d],\dots, {}_{\llt(\bar{z}^p)}\MA_{\rrt(\bar{z}^1)}[1]}\circ (s_{d+1}F^{\bar{\bar{x}}}\displaystyle\upperset{\substack{\nec,1,j\\ \inn}}{\circ}s_{d+1}G^{\bar{\bar{y}}\cdot\sigma_m}) \circ \tau^{\sigma_p}_{\MA[1]^{\otimes \bar{z}^1},\dots,\MA[1]^{\otimes \bar{z}^p}}
    \\&=s_{d+1}F^{\bar{\bar{x}}}\displaystyle\upperset{\substack{\nec,m-1,j\\ \out}}{\circ}s_{d+1}G^{\bar{\bar{y}}}
    \end{split}
\end{equation}
\end{proof}

\begin{definition}

\label{def:necklace-bracket}

    Given a graded quiver $\MA$ with set of objects $\MO$, the \textbf{\textcolor{ultramarine}{necklace bracket}} of two elements $s_{d+1}\mathbf{F},s_{d+1}\mathbf{G}\in\Multi^{\bullet}_d(\MA)^{C_{\llg(\bullet)}}[d+1]$ is defined as the element
    \[
        [s_{d+1}\mathbf{F}, s_{d+1}\mathbf{G}]_{\nec}\in\Multi^{\bullet}_d(\MA)^{C_{\llg(\bullet)}}[d+1] 
    \]
    where
    \[
        [s_{d+1}\mathbf{F},s_{d+1}\mathbf{G}]_{\nec}^{\doubar{z}}= (s_{d+1}\mathbf{F} \upperset{\nec}{\circ} s_{d+1}\mathbf{G})^{\bar{\bar{z}}}-(-1)^{(|\mathbf{F}|+d  +1)(|\mathbf{G}|+d+1)} (s_{d+1}\mathbf{G} \upperset{ \nec}{\circ} s_{d+1}\mathbf{F})^{\doubar{z}}
    \]
    for every $\doubar{z}\in\doubar{\MO}$.
\end{definition}

\begin{lemma}
\label{lemma:j}
Let $\MA$ be a graded quiver with set of objects $\MO$.
Then, we have a homogeneous linear map of degree $0$ 
\begin{equation}
    \label{eq:map-j} 
    \mathcalboondox{j} : \Multi^{\bullet}_d(\MA)[d+1] \rightarrow C(\MA\oplus\MA^*[d-1])[1]
\end{equation}
defined by $\mathcalboondox{j}(s_{d+1}\phi)=s\psi$ where
\begin{equation}
    \label{eq:map-j-x-A*} 
    \begin{split}
    &(\pi_{\MA}\circ \psi)^{\doubar{x}}(\bar{sa}^n,tf^{n-1},\bar{sa}^{n-2},tf^{n-2},...,\bar{sa}^{2},tf^{2},\bar{sa}^1)\\&\hspace{5cm}=(-1)^{\epsilon}\big( \bigotimes\limits_{i=1}^{n-1}(f^{i}\circ s_d)\otimes s_d\big)\big(\phi^{\doubar{x}}(\bar{sa}^1,\bar{sa}^{2},...,\bar{sa}^n)\big)
    \end{split}
\end{equation}
with
\begin{small}
\begin{equation}
    \begin{split}
        \epsilon=\sum\limits_{i=1}^{n-1}|tf^i|\sum\limits_{j=i+1}^n|\bar{sa}^j|+|\phi|\sum\limits_{i=1}^{n-1}|tf^i|+d(n-1)+\sum\limits_{1\leq i < j\leq n}|\bar{sa}^i||\bar{sa}^j|+\sum\limits_{1\leq i < j\leq n-1}|tf^i||tf^j|
    \end{split}
\end{equation}
\end{small}
and 
\begin{equation}
    \label{eq:map-j-x-A} 
    \begin{split}
    (\pi_{\MA^*[d-1]}\circ \psi)^{\doubar{x}}(\bar{sa}^n,tf^{n-1},&\bar{sa}^{n-1},tf^{n-2},...,\bar{sa}^{2},tf^{1},\bar{sa}^1)(s_{1-d}b)\\&=(-1)^{\delta} \big(\bigotimes\limits_{i=1}^{n-1}(f^{i}\circ s_d)\big)\big(\phi^{\doubar{x}'}(\bar{sa}^1\otimes sb \otimes\bar{sa}^n,\bar{sa}^{2},...,\bar{sa}^{n-1})\big)
    \end{split}
\end{equation}
for any $\doubar{x}=(\bar{x}^1,\dots,\bar{x}^n)$, $\doubar{x}'=(\bar{x}^1\sqcup\bar{x}^n,\bar{x}^2,\dots,\bar{x}^{n-1})$ and elements  $\bar{sa}^i\in\MA[1]^{\otimes\bar{x}^i}$ for every $i\in\llbracket 1,n\rrbracket$, $sb\in{}_{\rrt(\bar{x}^1)}\MA_{\llt(\bar{x}^n)}[1]$ and $tf^i\in{}_{\rrt(\bar{x}^{i+1})}\MA^{*}_{\llt(\bar{x}^{i})}[d]$ for every $i\in\llbracket 1,n-1\rrbracket$
with
\begin{small}
\begin{equation}
    \begin{split}
        \delta=\epsilon
        +d(|\phi|+\sum\limits_{i=1}^n|\bar{sa}^i|)+(|\bar{sa}^n|+|sb|)\sum\limits_{i=2}^{n-1}|\bar{sa}^i|+|\bar{sa}^n||sb|
    \end{split}
\end{equation}
\end{small}
and where we have denoted by $\pi_{\MA}$ (resp. $\pi_{\MA^*[d-1]}$) the canonical projection $\MA\oplus\MA^*[d-1]\rightarrow \MA$ (resp. $\MA\oplus\MA^*[d-1]\rightarrow \MA^*[d-1]$).
Moreover, $\mathcalboondox{j}$ is injective.
\end{lemma}
\begin{proof}
    It is clear that $\mathcalboondox{j}$ is a homogeneous linear map of degree $0$. We prove that $\mathcalboondox{j}$ is injective. Take $s_{d+1}\phi\in\Multi^{\bullet}_d(\MA)[d+1]$ such that $s\psi=\mathcalboondox{j}(s_{d+1}\phi)$ is the zero map, \textit{i.e.} 
    \begin{equation}
        \psi^{\doubar{x}}(\bar{sa}^n,tf^{n-1},\bar{sa}^{n-2},tf^{n-2},...,\bar{sa}^{2},tf^{2},\bar{sa}^1)=0
    \end{equation}
    for every $\doubar{x}=(\bar{x}^1,\dots,\bar{x}^n)$, $\bar{sa}^i\in\MA[1]^{\otimes\bar{x}^i}$ for $i\in\llbracket 1,n\rrbracket$ and $tf^i\in{}_{\rrt(\bar{x}^{i+1})}\MA^{*}_{\llt(\bar{x}^{i})}[d]$ for $i\in\llbracket 1,n-1\rrbracket$.
    In particular, 
    \begin{equation}
        (\pi_{\MA}\circ\psi^{\doubar{x}})(\bar{sa}^n,tf^{n-1},\bar{sa}^{n-2},tf^{n-2},...,\bar{sa}^{2},tf^{2},\bar{sa}^1)=0
    \end{equation}
    which gives
    \begin{equation}
        \bigotimes\limits_{i=1}^{n-1}\big((f^{i}\circ s_d)\otimes s_d\big)\big(\phi^{\doubar{x}}(\bar{sa}^1,\bar{sa}^{2},...,\bar{sa}^n)\big)=0.
    \end{equation}
    If $s_{d+1}\phi$ is nonzero, there exists $\bar{sa}^i\in\MA[1]^{\otimes\bar{x}^i}$ for $i\in\llbracket 1,n\rrbracket$ such that $\phi^{\doubar{x}}(\bar{sa}^1,\dots,\bar{sa}^n)=\phi_1\otimes\dots\otimes\phi_n$ where $\phi_i\neq 0$ for every $i\in\llbracket 1,n\rrbracket$. Therefore, there exists $tf^i\in{}_{\rrt(\bar{x}^{i+1})}\MA^{*}_{\llt(\bar{x}^{i})}[d]$ for $i\in\llbracket 1,n-1\rrbracket$ such that 
    \begin{equation}
        \bigotimes\limits_{i=1}^{n-1}\big((f^{i}\circ s_d)\otimes s_d\big)\big(\phi^{\doubar{x}}(\bar{sa}^1,\bar{sa}^{2},...,\bar{sa}^n)\big)\neq 0
    \end{equation}
    This leads to a contradiction. Then, $s_{d+1}\phi$ is zero and $\mathcalboondox{j}$ is injective.
\end{proof}

We have the following relation between the necklace product and the usual Gerstenhaber circle product, which does not seem to have been observed in the literature so far.  

\begin{proposition}
\label{proposition:j-bracket}
Let $s_{d+1}\mathbf{F}, s_{d+1}\mathbf{G}$ be elements in $\Multi^{\bullet}_d(\MA)^{C_{\llg(\bullet)}}[d+1]$.
Then, we have
\begin{equation}
\label{eq:j-bracket-1}
    \mathcalboondox{j}_{\bar{\bar{x}}\upperset{v,j,\inn}{\sqcup}\bar{\bar{y}}}(s_{d+1}F^{\bar{\bar{x}}}\displaystyle\upperset{\substack{\nec,v,j\\ \inn}}{\circ}s_{d+1}G^{\bar{\bar{y}}})
    =\mathcalboondox{j}_{\bar{\bar{x}}}(s_{d+1}F^{\bar{\bar{x}}})\upperset{G,p,q}{\circ}\pi_{\MA}(\mathcalboondox{j}_{\bar{\bar{y}}}(s_{d+1}G^{\bar{\bar{y}}}))
\end{equation}
for $\bar{\bar{x}}, \bar{\bar{y}} \in \bar{\bar{\MO}}$, $v\in \llbracket 1,\llg(\bar{\bar{x}})\rrbracket$ and $j\in \llbracket 2,\llg(\bar{x}^v)\rrbracket$
and
\begin{equation}
\label{eq:j-bracket-2}
    \mathcalboondox{j}_{\bar{\bar{x}}\upperset{v,j,\out}{\sqcup}\bar{\bar{y}}}(s_{d+1}F^{\bar{\bar{x}}}\displaystyle\upperset{\substack{\nec,v,j\\ \out}}{\circ}s_{d+1}G^{\bar{\bar{y}}})
    =-(-1)^{(|\mathbf{F}|+d+1)(|\mathbf{G}|+d+1)}\mathcalboondox{j}_{\bar{\bar{y}}}(s_{d+1}G^{\bar{\bar{y}}})\upperset{G,p,q}{\circ}\pi_{\MA^*}(\mathcalboondox{j}_{\bar{\bar{x}}}(s_{d+1}F^{\bar{\bar{x}}}))
\end{equation}
for $v\in \llbracket 1,\llg(\bar{\bar{y}})-1\rrbracket$ and $j\in \llbracket 2,\llg(\bar{x}^1)\rrbracket$, where $p=\llg(\bar{x}^1)+...+\llg(\bar{x}^v)-j$ and $q=p+N(\doubar{y})$.
\end{proposition}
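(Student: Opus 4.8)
The plan is to prove both identities by evaluating the single‑output maps on each side at a general homogeneous argument of $\mathcalboondox{C}_{\bullet}(\MA)[1]$ and comparing them termwise. Recall that $\mathcalboondox{j}_{\doubar{x}}(s_{d+1}\phi)=s\psi_{\doubar{x}}$, where by \eqref{eq:map-j-x} the map $\psi_{\doubar{x}}$ is obtained from $\phi$ by retaining its last ($\MA$‑valued) output and contracting the first $n-1$ outputs against the interleaved covector arguments $tf^{1},\dots,tf^{n-1}$. Hence both sides of each identity are honest single‑output maps on the boundary quiver $\partial_{d-1}\MA=\MA\oplus\MA^{*}[d-1]$, so the Gerstenhaber insertion $\underset{G,p,q}{\circ}$ of \eqref{eq:gerstenhaber-product}, applied with $\MA$ replaced by $\partial_{d-1}\MA$, is meaningful on the right‑hand sides. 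The conceptual content is that the \emph{inner} necklace composition connects the marked $\MA[1]$‑output of $G$ directly into an input of $F$ lying in the segment $\bar{x}^{v}$, whereas the \emph{outer} composition connects an output of $F$ into the cyclic (dual) part of $G$; contracting with covectors turns the former into a plain Gerstenhaber insertion and the latter into an insertion through the $\MA^{*}$‑component reconstructed by $\mathcal{R}^{-1}$.

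For \eqref{eq:j-bracket-1}, the composition $s_{d+1}F^{\doubar{x}}\underset{\substack{\nec,v,j\\ \inn}}{\circ}s_{d+1}G^{\doubar{y}}$ is by definition $\mathcal{E}(\mathcalboondox{D})$ for the filled diagram in which the bold output of $G$ enters the $j$‑th incoming arrow of the $v$‑th segment of $F$, with the bold arrow of the whole diagram sitting on the sink $F$. I would evaluate this using Definition \ref{def:ev-bold-sink}, whose steps (E.1)–(E.5) first evaluate the source $G$, plug its last output into $F$, then evaluate $F$ and reorder the outputs. Applying $\mathcalboondox{j}$ afterwards contracts the outputs $1,\dots,n_{F}-1$ of $F$ and $1,\dots,n_{G}-1$ of $G$ against covectors, leaving exactly $\psi_{F}$ with $\psi_{G}$ inserted at the slot determined by $(v,j)$, namely position $p$ spanning the $q-p$ input slots consumed by $\psi_{G}$. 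One then checks that the residual shifts $s_{d}$ and $s_{d+1}$ and the grouping of covectors coincide with those produced by $\mathcalboondox{j}_{\doubar{x}}(s_{d+1}F^{\doubar{x}})\underset{G,p,q}{\circ}\mathcalboondox{j}_{\doubar{y}}(s_{d+1}G^{\doubar{y}})$.

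For \eqref{eq:j-bracket-2}, the outer composition has its bold arrow on the source $F$, so I would use Definition \ref{def:ev-bold-source} and its steps (F.1)–(F.5). Here the output of $F$ that connects to $G$ does not feed an $\MA[1]$‑input of $G$; after contraction by $\mathcalboondox{j}$ it feeds an $\MA^{*}[d]$‑slot of the cyclic structure. The precise translation is furnished by Proposition \ref{prop:R-inv}: the coderivation $\mathcal{R}^{-1}(\mathcalboondox{j}_{\doubar{x}}(s_{d+1}F^{\doubar{x}}))$ has $\MA$‑component $\psi_{F}$ and an $\MA^{*}$‑component $\mathcal{R}_{\MA^{*}}^{-1}(\psi_{F})$ defined by the cyclicity relation \eqref{eq:R-inverse}. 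I would show that contracting the remaining outputs of $F$ and rewriting the connecting output through \eqref{eq:R-inverse} turns the diagram into $\psi_{G}$ with $\mathcal{R}_{\MA^{*}}^{-1}(\psi_{F})$ inserted at slot $p$, that is, into $\mathcalboondox{j}_{\doubar{y}}(s_{d+1}G^{\doubar{y}})\underset{G,p,q}{\circ}\mathcal{R}_{\MA^{*}}^{-1}(\mathcalboondox{j}_{\doubar{x}}(s_{d+1}F^{\doubar{x}}))$. The interchange of the roles of $F$ and $G$ relative to the left‑hand side produces the Koszul factor $(-1)^{(|\mathbf{F}|+d+1)(|\mathbf{G}|+d+1)}$, while the sign $\epsilon'$ of \eqref{eq:R-inverse} together with the reordering of outputs accounts for the remaining overall minus sign.

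The main obstacle in both cases is the sign bookkeeping, since four independent sources of signs must be reconciled: the sign $\epsilon$ in the definition \eqref{eq:map-j-x} of $\mathcalboondox{j}$; the Koszul and shift signs accumulated through steps (E.1)–(E.5), respectively (F.1)–(F.5), of the evaluation $\mathcal{E}$; the Gerstenhaber sign $(|\mathbf{G}|+1)\sum_{r<p}|sa_{r}|$ of \eqref{eq:gerstenhaber-product} on $\partial_{d-1}\MA$; and—in the outer case only—the sign $\epsilon'$ of \eqref{eq:R-inverse}. I expect the cleanest route is to fix a general homogeneous argument, push all shifts $s_{d+1}$, $s_{d}$ and $s$ into a common normal form using \eqref{eq:iso-shift-tensor-prod}, \eqref{eq:iso-shift-output} and \eqref{eq:iso-shift-input}, and verify that the degree‑dependent exponents on the two sides agree termwise. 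The cyclic invariance of $\mathbf{F}$ and $\mathbf{G}$, i.e. their membership in $\Multi^{\bullet}(\Br(\MA)[d])^{C_{\llg(\bullet)}}$, is precisely what guarantees that the single summand picked out on the left matches the single Gerstenhaber insertion appearing on the right.
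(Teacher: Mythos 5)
Your proposal follows essentially the same route as the paper's own proof: both identities are established by evaluating each side at a general homogeneous argument of $\mathcalboondox{C}_{\bullet}(\MA)[1]$, using the $C_{\llg(\bullet)}$-invariance to reduce to a single matching configuration, invoking the cyclicity relation \eqref{eq:R-inverse} defining $\mathcal{R}_{\MA^*}^{-1}$ to rewrite the connecting output in the outer case, and then verifying that the accumulated Koszul/shift signs agree modulo $2$ (the paper's $\epsilon+\epsilon'=\delta+\delta'+\delta\dprime$ and $\epsilon+\epsilon'+\gamma=1+(|\mathbf{F}|+d+1)(|\mathbf{G}|+d+1)$ checks). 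The only inaccuracy is a harmless bookkeeping slip in your description of the outer composition (in the paper's diagram the source carrying the diagram's bold arrow is $G$, whose $v$-th output feeds an input of $F$; the apparent reversal of roles only occurs after applying $\mathcalboondox{j}$, which is precisely the content of \eqref{eq:j-bracket-2}), and this does not affect the argument.
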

\begin{proof}
We first show the identity \eqref{eq:j-bracket-1}.
Given $\doubar{x}=(\bar{x}^1,\dots,\bar{x}^n),\doubar{y}=(\bar{y}^{1},\dots,\bar{y}^m)\in \doubar{\MO}$, both the compositions \[s_{d+1}F^{\bar{\bar{x}}}\displaystyle\upperset{\substack{\nec,v,j\\ \inn}}{\circ}s_{d+1}G^{\bar{\bar{y}}}\] and \[\mathcalboondox{j}_{\bar{\bar{x}}}(s_{d+1}F^{\bar{\bar{x}}})\upperset{G,p,q}{\circ}\pi_{\MA}(\mathcalboondox{j}_{\bar{\bar{y}}}(s_{d+1}G^{\bar{\bar{y}}}))\] are zero if 
there are no $v\in\llbracket 1,n\rrbracket$ and $j\in \llbracket 2,\llg(\bar{x}^v) \rrbracket$ such that
$\llt(\bar{y}^m)=x_j^v$ and $\rrt(\bar{y}^{1})=x_{j+1}^v$ with the convention that $\bar{x}^{n+1}=\bar{x}^1$. 
We will thus assume that there exist such $v\in\llbracket 1,n \rrbracket$ and $j\in \llbracket 2,\llg(\bar{x}^v) \rrbracket$. To simplify the expressions, we will denote the result of applying $F^{\doubar{x}}$ on any argument as a tensor product ${}_{\llt(\bar{x}^1)}F_{\rrt(\bar{x}^2)}^{-d}\otimes\dots\otimes{}_{\llt(\bar{x}^{n-1})}F_{\rrt(\bar{x}^n)}^{-d}\otimes {}_{\llt(\bar{x}^n)}F_{\rrt(\bar{x}^1)}^{-d}$, where we omit those arguments. We also write $s_{d+1}F^{\bar{\bar{x}}}\displaystyle\upperset{\substack{\nec,v,j\\ \inn}}{\circ}s_{d+1}G^{\bar{\bar{y}}}=s_{d+1}H_1$.
We have that 
\[
\mathcalboondox{j}_{\bar{\bar{x}}\upperset{v,j,\inn}{\sqcup}\bar{\bar{y}}}(s_{d+1}F^{\bar{\bar{x}}}\displaystyle\upperset{\substack{\nec,v,j\\ \inn}}{\circ}s_{d+1}G^{\bar{\bar{y}}})=s\psi^{\bar{\bar{x}}\upperset{v,j,\inn}{\sqcup}\bar{\bar{y}}}
\]
where 
\allowdisplaybreaks
\begin{small}
\begin{align*}
&(\pi_{\MA}\circ \psi)^{\bar{\bar{x}}\upperset{v,j,\inn}{\sqcup}\bar{\bar{y}}}\\&\hspace{0.7cm}(\bar{sa}^n,tf^{n-1},\dots,\bar{sa}^{v+1},tf^{v},\bar{sa}_{< j-1}^{v},\bar{sb}^m,tg^{m-1},\dots,\bar{sb}^{2},tg^{1},\bar{sb}^{1},\bar{sa}_{\geq j}^{v},tf^{v-1},\dots,\bar{sa}^{2},tf^{1},\bar{sa}^1)
        \\&=(-1)^{\epsilon}
         ((f^{1}\circ s_d)\otimes\dots\otimes (f^{v-1}\circ s_d)\otimes \bigotimes\limits_{i=1}^{m-1}(g^{i}\circ s_d)\otimes (f^{v}\circ s_d)\otimes \dots\otimes (f^{n-1}\circ s_d)\otimes \id)
        \\&\phantom{=}s_dH_1(\bar{sa}^1,\bar{sa}^2,\dots,\bar{sa}^{v-1},\bar{sa}^v_{\geq j},\bar{sb}^1,\dots,\bar{sb}^{m-1},\bar{sb}^{m},\bar{sa}_{<j-1}^{v},\dots,\bar{sa}^{n-1},\bar{sa}^n)
        \\
        &=(-1)^{\epsilon+\epsilon'}((f^{1}\circ s_d)\otimes\dots\otimes (f^{v-1}\circ s_d)\otimes \bigotimes\limits_{i=1}^{m-1}(g^{i}\circ s_d)\otimes (f^{v}\circ s_d)\otimes \dots\otimes (f^{n-1}\circ s_d)\otimes \id)
        \\&\phantom{=} s_{d}({}_{\llt(\bar{x}^1)}F_{\rrt(\bar{x}^2)}^{-d},\dots {}_{\llt(\bar{x}^{v-1})}F_{\rrt(\bar{x}^v)}^{-d},{}_{\llt(\bar{y}^{1})}G_{\rrt(\bar{y}^{2})}^{-d},...,{}_{\llt(\bar{y}^{m-1})}G_{\rrt(\bar{y}^m)}^{-d},{}_{\llt(\bar{x}^v)}F_{\rrt(\bar{x}^{v+1})}^{-d}\dots{}_{\llt(\bar{x}^{n})}F_{\rrt(\bar{x}^1)}^{-d})
\end{align*}
\end{small}
\hskip -0.6mm for elements $\bar{sa}^i\in\MA[1]^{\otimes\bar{x}^i}$, $\bar{sb}^i\in\MA[1]^{\otimes\bar{y}^i}$, $tf^i\in{}_{\rrt(\bar{x}^{i+1})}\MA^{*}_{\llt(\bar{x}^{i})}[d]$ and $tg^i\in{}_{\rrt(\bar{y}^{i+1})}\MA^{*}_{\llt(\bar{y}^{i})}[d]$,
with
\begin{small}
\allowdisplaybreaks
\begin{align*}
        \epsilon&=\sum\limits_{i=v}^{n-1}|tf^i|\sum\limits_{k=i+1}^n|\bar{sa}^k|+\sum\limits_{i=1}^{m-1}|tg^i|(\sum\limits_{k=v+1}^n|\bar{sa}^k|+|\bar{sa}^{v}_{<j-1}|+\sum\limits_{k=i+1}^{m}|\bar{sb}^k|)
        \\&\phantom{=}+\sum\limits_{i=1}^{v-1}|tf^i|(\sum\limits_{k=v+1}^n|\bar{sa}^k|+|\bar{sa}^{v}_{\geq j}|+\sum\limits_{k=1}^{m}|\bar{sb}^k|+|\bar{sa}^v_{< j-1}|+\sum\limits_{k=i+1}^{v-1}|\bar{sa}^k|)
        +d(n+m)
        \\
        \stepcounter{equation}\tag{\theequation}\label{fbn}
        &\phantom{=}+(|\textbf{F}|+|\textbf{G}|+d+1)(\sum\limits_{i=1}^{n-1}|tf^i|+\sum\limits_{i=1}^{m-1}|tg^i|)
        +\sum\limits_{i=1}^{n-1}|tf^i|\sum\limits_{i=1}^{m-1}|tg^i|
        +\hskip-2mm\sum\limits_{1\leq i<k\leq n-1}\hskip-2mm|tf^i||tf^k|\\&\phantom{=}+\hskip-2mm\sum\limits_{1\leq i<k\leq m-1}\hskip-2mm|tg^i||tg^k|
        +\hskip-2mm\sum\limits_{1\leq i<k\leq n}\hskip-2mm|\bar{sa}^i||\bar{sa}^k|+|\bar{sa}^v_{\geq j}||\bar{sa}^v_{<j-1}|+\hskip-2mm\sum\limits_{1\leq i<k\leq m}\hskip-2mm|\bar{sb}^i||\bar{sb}^k|+\sum\limits_{i=1}^{n}|\bar{sa}^i|\sum\limits_{i=1}^{m}|\bar{sb}^i|,
        \\
        \epsilon'&=(|\textbf{G}|+d+1)(\sum\limits_{i=1}^{v-1}|\bar{sa}^i|+|\bar{sa}^{v}_{\geq j}|)
        +|{}_{\llt(\Bar{y}^m)}G^1_{\rrt(\Bar{y}^1)}|(|\Bar{sa}^v_{\geq j}|+|\Bar{sa}^v_{<j-1}|)+|\Bar{sa}^v_{\geq j}||\Bar{sa}^v_{<j-1}|\\&+\phantom{=}\sum\limits_{i=1}^{m-1}|{}_{\llt(\bar{y}^i)}G_{\rrt(\bar{y}^{i+1})}^{-d}|(|\bar{sa}^v_{<j-1}|+\sum\limits_{i=v+1}^n|\bar{sa}^i|+\sum\limits_{i=v}^n|{}_{\llt(\bar{x}^i)}F_{\rrt(\bar{x}^{i+1})}^{-d}|+|{}_{\llt(\bar{y}^m)}G_{\rrt(\bar{y}^1)}^1|+d+1).
    \end{align*}    
\end{small}
On the other hand, we have that 
\[
\pi_{\MA[1]}(\mathcalboondox{j}_{\bar{\bar{x}}}(s_{d+1}F^{\bar{\bar{x}}}))\upperset{G,p,q}{\circ}\pi_{\MA[1]}(\mathcalboondox{j}_{\bar{\bar{y}}}(s_{d+1}G^{\bar{\bar{y}}}))=s\varphi^{\bar{\bar{x}}\upperset{v,j,\inn}{\sqcup}\bar{\bar{y}}}
\]
where
\allowdisplaybreaks
\begin{small}
\begin{align*}
        &(\pi_{\MA}\circ\varphi)^{\bar{\bar{x}}\upperset{v,j,\inn}{\sqcup}\bar{\bar{y}}}\\&\hspace{1cm}(\bar{sa}^n,tf^{n-1},\dots,\bar{sa}^{v+1},tf^{v},\bar{sa}_{< j-1}^{v},\bar{sb}^m,tg^{m-1},\dots,\bar{sb}^{2},tg^{1},\bar{sb}^{1},\bar{sa}_{\geq j}^{v},tf^{v-1},\dots,\bar{sa}^{2},tf^{1},\bar{sa}^1)
        \\&=(-1)^{\delta}\pi_{\MA}(\mathcalboondox{j}_{\bar{\bar{x}}}(s_{d+1}F^{\bar{\bar{x}}}))\\&\hspace{2cm}
        (\bar{sa}^n,tf^{n-1},\dots,\bar{sa}^{v+1},tf^{v},\bar{sa}_{<j-1}^{v},s\big((\bigotimes\limits_{i=1}^{m-1}(g^i\circ s_d)\otimes \id)s_{d}G^{\doubar{y}}(\bar{sb}^1,\dots,\bar{sb}^{m})\big),\\&\hspace{11cm}\bar{sa}_{\geq j}^{v},tf^{v-1},\dots,\bar{sa}^{2},tf^{1},\bar{sa}^1)
        \\&=(-1)^{\delta+\delta'} ((f^{1}\circ s_d)\otimes\dots\otimes (f^{v-1}\circ s_d)\otimes (f^{v}\circ s_d)\otimes \dots\otimes (f^{n-1}\circ s_d)\otimes \id)
        \\&\hspace{5cm}s_{d}({}_{\llt(\bar{x}^1)}F_{\rrt(\bar{x}^2)}^{-d},\dots {}_{\llt(\bar{x}^{v-1})}F_{\rrt(\bar{x}^v)}^{-d},\lambda_G,{}_{\llt(\bar{x}^v)}F_{\rrt(\bar{x}^{v+1})}^{-d}\dots{}_{\llt(\bar{x}^{n})}F_{\rrt(\bar{x}^1)}^{-d})
\end{align*}
\end{small}
where $\lambda_G=\big(\bigotimes\limits_{i=1}^{m-1}(g^i\circ s_d)\big)({}_{\llt(\bar{y}^{1})}G_{\rrt(\bar{y}^{2})}^{-d},...,{}_{\llt(\bar{y}^{m-1})}G_{\rrt(\bar{y}^m)}^{-d})\in \kk$ and 
\begin{small}
\allowdisplaybreaks
\begin{align*}
        \delta&=(|\textbf{G}|+d+1)(\sum\limits_{i=v+1}^{n}|\bar{sa}^i|+|\bar{sa}^{v}_{<j-1}|+\sum\limits_{i=v}^{n-1}|tf^i|)+\sum\limits_{i=1}^{m-1}|tg^i|\sum\limits_{k=i+1}^m|\bar{sb}^k|+d(m-1)
        \\&\phantom{=}+|\textbf{G}|\sum\limits_{i=1}^{m-1}|tg^i|+\hskip-2mm\sum\limits_{1\leq i<k\leq m-1}\hskip-2mm|tg^i||tg^k|+\hskip-2mm\sum\limits_{1\leq i<k\leq m}\hskip-2mm|\bar{sb}^i||\bar{sb}^k|
        \\
        \delta'&=\hskip-2mm\sum\limits_{1\leq i<k\leq n-1}\hskip-2mm|tf^i||tf^k|+\hskip-2mm\sum\limits_{1\leq i<k\leq n}\hskip-2mm|\bar{sa}^i||\bar{sa}^k|+\sum\limits_{\substack{i=1\\i\neq v}}^n|\bar{sa}^i||{}_{\llt(\bar{y}^m)}G^1_{\rrt(\bar{y}^1)}|
        \\&\phantom{=}+\sum\limits_{i=1}^{v-1}|tf^i|(\sum\limits_{k=i+1}^{n}|\bar{sa}^k|+|{}_{\llt(\bar{y}^m)}G^1_{\rrt(\bar{y}^1)}|)
        +\sum\limits_{i=v}^{n-1}|tf^i|\sum\limits_{k=i+1}^{n}|\bar{sa}^k|+d(n-1)
        \\&\phantom{=}+|\lambda_G|(\sum\limits_{i=1}^{v-1}|\bar{sa}^i|+|\bar{sa}^v_{\geq j}|+|{}_{\llt(\bar{y}^m)}G^1_{\rrt(\bar{y}^1)}|+\sum\limits_{i=1}^{v-1}|tf^i|+\sum\limits_{i=v}^n|{}_{\llt(\bar{x}^{i})}F_{\rrt(\bar{x}^{i+1})}^{-d}|)
        \\&\phantom{=}+|\mathbf{F}|\sum\limits_{i=1}^{n-1}|tf^i|
        +(d+1)\sum\limits_{i=1}^{m-1}|{}_{\llt(\bar{y}^i)}G_{\rrt(\bar{y}^{i+1})}^{-d}|+\sum\limits_{i=1}^{m-1}|tg^i|.
\end{align*}
\end{small}
Therefore, we have
\begin{small}
\allowdisplaybreaks
\begin{align*}
         &(\pi_{\MA}\circ\varphi)^{\bar{\bar{x}}\upperset{v,j,\inn}{\sqcup}\bar{\bar{y}}}
        \\&\hspace{1cm}(\bar{sa}^n,tf^{n-1},\dots,\bar{sa}^{v+1},tf^{v},\bar{sa}_{<j-1}^{v},\bar{sb}^m,tg^{m-1},\dots,\bar{sb}^{2},tg^{1},\bar{sb}^{1},\bar{sa}_{\geq j}^{v},tf^{v-1},\dots,\bar{sa}^{2},tf^{1},\bar{sa}^1)
        \\&=(-1)^{\delta+\delta'+\delta \dprime}
        ((f^{1}\circ s_d)\otimes\dots\otimes (f^{v-1}\circ s_d)\otimes \bigotimes\limits_{i=1}^{m-1}(g^{i}\circ s_d)\otimes (f^{v}\circ s_d)\otimes \dots\otimes (f^{n-1}\circ s_d)\otimes \id)
        \\&s_{d}({}_{\llt(\bar{x}^1)}F_{\rrt(\bar{x}^2)}^{-d},\dots {}_{\llt(\bar{x}^{v-1})}F_{\rrt(\bar{x}^v)}^{-d},{}_{\llt(\bar{y}^{1})}G_{\rrt(\bar{y}^{2})}^{-d},...,{}_{\llt(\bar{y}^{m-1})}G_{\rrt(\bar{y}^m)}^{-d},{}_{\llt(\bar{x}^v)}F_{\rrt(\bar{x}^{v+1})}^{-d}\dots{}_{\llt(\bar{x}^{n})}F_{\rrt(\bar{x}^1)}^{-d})
    \end{align*}    
\end{small}
with 
\begin{small}
    \begin{equation}
        \delta\dprime=\sum\limits_{i=1}^{m-1}|tg^i|(d+\sum\limits_{i=v}^{n-1}|tf^i|+\sum\limits_{i=1}^{v-1}|{}_{\llt(\bar{x}^i)}F^{-d}_{\rrt(\bar{x}^{i+1})}|).
    \end{equation}
\end{small}
One can easily check that $\epsilon + \epsilon' = \delta + \delta'+\delta\dprime$ $\operatorname{mod} 2$.

Now, write $s_{d+1}F^{\bar{\bar{x}}'}\displaystyle\upperset{\substack{\nec,v,j\\ \inn}}{\circ}s_{d+1}G^{\bar{\bar{y}}}=s_{d+1}H_2$ for $\doubar{x}'=(\bar{x}^1\sqcup\bar{x}^n,\dots,\bar{x}^{n-1})$ and $\doubar{z}=\bar{\bar{x}}\upperset{v,j,\inn}{\sqcup}\bar{\bar{y}}$.
We have that 
\begin{small}
\begin{equation}
    \begin{split}
        &(\pi_{\MA^*[d-1]}\circ \psi)^{\doubar{z}}
        \\&(\bar{sa}^n,tf^{n-1},\dots,\bar{sa}^{v+1},tf^{v},\bar{sa}_{<j-1}^{v},\bar{sb}^m,tg^{m-1},\dots,tg^{1},\bar{sb}^{1},\bar{sa}_{\geq j}^{v},tf^{v-1},\dots,\bar{sa}^{2},tf^{1},\bar{sa}^1)(s_{1-d}c)
        \\&=(-1)^{\gamma}
         \big((f^{1}\circ s_d)\otimes\dots\otimes (f^{v-1}\circ s_d)\otimes (g^{1}\circ s_d)\otimes \dots\otimes (g^{m-1} \circ s_d)\otimes (f^{v}\circ s_d)\otimes \dots\otimes (f^{n-1}\circ s_d)\big)
        \\&\phantom{=}H_2(\bar{sa}^1\otimes sc\otimes \bar{sa}^n,\bar{sa}^2,\dots,\bar{sa}^{v-1},\bar{sa}^v_{\geq j},\bar{sb}^1,\dots,\bar{sb}^{m-1},\bar{sb}^{m},\bar{sa}_{<j-1}^{v},\dots,\bar{sa}^{n-1})
        \\
        &=(-1)^{\gamma+\gamma'}\big((f^{1}\circ s_d)\otimes\dots\otimes (f^{v-1}\circ s_d)\otimes \bigotimes\limits_{i=1}^n(g^{i}\circ s_d)\otimes (f^{v}\circ s_d)\otimes \dots\otimes (f^{n-1}\circ s_d)\big)
        \\&\phantom{=} s_{d}({}_{\llt(\bar{x}^1)}F_{\rrt(\bar{x}^2)}^{-d},\dots {}_{\llt(\bar{x}^{v-1})}F_{\rrt(\bar{x}^v)}^{-d},{}_{\llt(\bar{y}^{1})}G_{\rrt(\bar{y}^{2})}^{-d},...,{}_{\llt(\bar{y}^{m-1})}G_{\rrt(\bar{y}^m)}^{-d},{}_{\llt(\bar{x}^v)}F_{\rrt(\bar{x}^{v+1})}^{-d}\dots{}_{\llt(\bar{x}^{n-1})}F_{\rrt(\bar{x}^n)}^{-d})
    \end{split}
\end{equation}
\end{small}
\hskip -0.6mm for $\bar{sa}^i\in\MA[1]^{\otimes\bar{x}^i}$, $\bar{sb}^i\in\MA[1]^{\otimes\bar{y}^i}$, $c\in {}_{\rrt(\bar{x}^1)}\MA_{\llt(\bar{x}^n)}$, $tf^i\in{}_{\rrt(\bar{x}^{i+1})}\MA^{*}_{\llt(\bar{x}^{i})}[d]$ and $tg^i\in{}_{\rrt(\bar{y}^{i+1})}\MA^{*}_{\llt(\bar{y}^{i})}[d]$,
with
\begin{small}
\allowdisplaybreaks
\begin{align*}
        \gamma&=\epsilon +d(|\mathbf{F}|+|\mathbf{G}|+d+1+\sum\limits_{i=1}^{n}|\bar{sa}^i|+\sum\limits_{i=1}^m|\bar{sb}^i|)+(|\bar{sa}^n|+|sc|)(\sum\limits_{i=2}^{n-1}|\bar{sa}^i|+\sum\limits_{i=1}^m|\bar{sb}^i|)+|\bar{sa}^n||sc|
        \\
        \gamma'&=(|\textbf{G}|+d+1)(\sum\limits_{i=1}^{v-1}|\bar{sa}^i|+|\bar{sa}^{v}_{\geq j}|+|\bar{sa}^n|+|sc|)
        \\&\phantom{=}+|{}_{\llt(\Bar{y}^m)}G^1_{\rrt(\Bar{y}^1)}|(|\Bar{sa}^v_{\geq j}|+|\Bar{sa}^v_{<j-1}|)+|\Bar{sa}^v_{\geq j}||\Bar{sa}^v_{<j-1}|
        \\&\phantom{=}+\sum\limits_{i=1}^{m-1}|{}_{\llt(\bar{y}^i)}G_{\rrt(\bar{y}^{i+1})}^{-d}|(|\bar{sa}^v_{<j-1}|+\sum\limits_{i=v+1}^{n-1}|\bar{sa}^i|+\sum\limits_{i=v}^{n}|{}_{\llt(\bar{x}^i)}F_{\rrt(\bar{x}^{i+1})}^{-d}|+|{}_{\llt(\bar{y}^m)}G_{\rrt(\bar{y}^1)}^1|+d+1).
    \end{align*}    
\end{small}
On the other hand, we have that 
\begin{small}
\allowdisplaybreaks
\begin{align*}
        &(\pi_{\MA^*[d-1]}\circ\varphi)^{\doubar{z}}\\&\hspace{1cm}(\bar{sa}^n,tf^{n-1},\dots,\bar{sa}^{v+1},tf^{v},\bar{sa}_{> j}^{v},\bar{sb}^m,tg^{m-1},\dots,tg^{1},\bar{sb}^{1},\bar{sa}_{\leq j}^{v},tf^{v-1},\dots,\bar{sa}^{2},tf^{1},\bar{sa}^1)(s_{1-d}c)
        \\&=(-1)^{\delta+\Delta'} \big((f^{1}\circ s_d)\otimes\dots\otimes (f^{v-1}\circ s_d)\otimes (f^{v}\circ s_d)\otimes \dots\otimes (f^{n-1}\circ s_d)\big)
        \\&\phantom{=}\hspace{4cm}s_{d}({}_{\llt(\bar{x}^1)}F_{\rrt(\bar{x}^2)}^{-d},\dots {}_{\llt(\bar{x}^{v-1})}F_{\rrt(\bar{x}^v)}^{-d},\lambda_G,{}_{\llt(\bar{x}^v)}F_{\rrt(\bar{x}^{v+1})}^{-d}\dots{}_{\llt(\bar{x}^{n-1})}F_{\rrt(\bar{x}^n)}^{-d})
\end{align*}
\end{small}
with 
\begin{small}
\begin{equation}
    \begin{split}
       \Delta'&=\delta'+d(|\mathbf{F}|+\sum\limits_{i=1}^n|\bar{sa}^i|+\sum\limits_{i=1}^m|\bar{sb}^i|+|\mathbf{G}|+d+1+\sum\limits_{i=1}^{m-1}|tg^i|)\\&\phantom{=}+(|\bar{sa}^n|+|sc|)(\sum\limits_{i=2}^{n-1}|\bar{sa}^i|+\sum\limits_{i=1}^m|\bar{sb}^i|+|\mathbf{G}|+d+1+\sum\limits_{i=1}^{m-1}|tg^i|)+|\bar{sa}^n||sc|+|\lambda_G||\bar{sa}^n|.
    \end{split}
\end{equation}
\end{small}
Therefore, we have
\begin{small}
\allowdisplaybreaks
\begin{align*}
         &\varphi_2^{\doubar{z}}(\bar{sa}^n,tf^{n-1},\dots,\bar{sa}^{v+1},tf^{v},\bar{sa}_{> j}^{v},\bar{sb}^m,tg^{m-1},\dots,tg^{1},\bar{sb}^{1},\bar{sa}_{\leq j}^{v},tf^{v-1},\dots,\bar{sa}^{2},tf^{1},\bar{sa}^1)(s_{1-d}c)
        \\&=(-1)^{\delta+\Delta'+\Delta \dprime}
        \big((f^{1}\circ s_d)\otimes\dots\otimes (f^{v-1}\circ s_d)\otimes \bigotimes\limits_{i=1}^{m-1}(g^{i}\circ s_d)\otimes (f^{v}\circ s_d)\otimes \dots\otimes (f^{n-1}\circ s_d)\big)
        \\&\phantom{=}s_{d}({}_{\llt(\bar{x}^1)}F_{\rrt(\bar{x}^2)}^{-d},\dots {}_{\llt(\bar{x}^{v-1})}F_{\rrt(\bar{x}^v)}^{-d},{}_{\llt(\bar{y}^{1})}G_{\rrt(\bar{y}^{2})}^{-d},...,{}_{\llt(\bar{y}^{m-1})}G_{\rrt(\bar{y}^m)}^{-d},{}_{\llt(\bar{x}^v)}F_{\rrt(\bar{x}^{v+1})}^{-d}\dots{}_{\llt(\bar{x}^{n-1})}F_{\rrt(\bar{x}^n)}^{-d})
    \end{align*}    
\end{small}
where 
\begin{small}
    \begin{equation}
        \Delta\dprime=\sum\limits_{i=1}^{m-1}|tg^i|(\sum\limits_{i=v}^{n-1}|tf^i|+\sum\limits_{i=1}^{v-1}|{}_{\llt(\bar{x}^i)}F^{-d}_{\rrt(\bar{x}^{i+1})}|).
    \end{equation}
\end{small}
It is straightforward to check that $\gamma + \gamma' = \delta + \Delta'+\Delta\dprime$ $\operatorname{mod} 2$, which proves \eqref{eq:j-bracket-1}. 

We now prove the identity \eqref{eq:j-bracket-2}.
Given $\doubar{x}=(\bar{x}^1,\dots,\bar{x}^n),\doubar{y}=(\bar{y}^{1},\dots,\bar{y}^m)\in \doubar{\MO}$, both the compositions \[s_{d+1}F^{\bar{\bar{x}}}\displaystyle\upperset{\substack{\nec,v,j\\ \out}}{\circ}s_{d+1}G^{\bar{\bar{y}}}\] and \[\mathcalboondox{j}_{\bar{\bar{y}}}(s_{d+1}G^{\bar{\bar{y}}})\upperset{G,p,q}{\circ}\pi_{\MA^*}(\mathcalboondox{j}_{\bar{\bar{x}}}(s_{d+1}F^{\bar{\bar{x}}}))\] are zero if there are no $v\in\llbracket 1,m-1\rrbracket$ and $j \in\llbracket 2,\llg(\bar{x}^1)\rrbracket$ such that $\bar{x}^1_{j-1}=\llt(\bar{y}^{v})$ and $\bar{x}^1_{j}=\rrt(\bar{y}^{v+1})$. Suppose that such $v\in\llbracket 1,m\rrbracket$ and $j \in\llbracket 2,\llg(\bar{x}^1)\rrbracket$ exist.

We write $s_{d+1}F^{\bar{\bar{x}}}\displaystyle\upperset{\substack{\nec,v,j\\ \out}}{\circ}s_{d+1}G^{\bar{\bar{y}}}=s_{d+1}H_3$.
Then, we have that 
\[
\mathcalboondox{j}_{\bar{\bar{x}}\upperset{v,j,\out}{\sqcup}\bar{\bar{y}}}(s_{d+1}F^{\bar{\bar{x}}}\displaystyle\upperset{\substack{\nec,v,j\\ \out}}{\circ}s_{d+1}G^{\bar{\bar{y}}})=s\psi_1^{\bar{\bar{x}}\upperset{v,j,\out}{\sqcup}\bar{\bar{y}}}
\]
with
\allowdisplaybreaks
\begin{small}
\begin{align*}
     &(\pi_{\MA}\circ \psi_1)^{\bar{\bar{x}}\upperset{v,j,\out}{\sqcup}\bar{\bar{y}}}
     (\bar{sb}^m,tg^{m-1},\dots,\bar{sb}^{v+1},\bar{sa}^{1}_{\geq j},tf^n,\bar{sa}^n,tf^{n-1},\dots,tf^1,\bar{sa}^1_{< j-1},\bar{sb}^{v},tg^{v-1},\dots,\bar{sb}^{1})
     \\&=(-1)^{\epsilon_1}
     ((g^{1}\circ s_d) \otimes \dots \otimes (g^{v-1}\circ s_d) \otimes \bigotimes\limits_{i=1}^n(f^i\circ s_d) \otimes (g^{v+1}\circ s_d) \otimes \dots \otimes (g^{m-1}\circ s_d)\otimes \id)
     \\& \phantom{=}s_dH_3(\bar{sb}^1,\dots,\bar{sb}^{v},\bar{sa}^1_{< j-1},\dots,\bar{sa}^n,\bar{sa}^{1}_{\geq j},\bar{sb}^{v+1},\dots,\bar{sb}^{m})
     \\&=(-1)^{\epsilon_1+\epsilon'_1}
      ((g^{1}\circ s_d) \otimes \dots \otimes (g^{v-1}\circ s_d) \otimes \bigotimes\limits_{i=1}^n(f^i\circ s_d) \otimes (g^{v+1}\circ s_d) \otimes \dots \otimes (g^{m-1}\circ s_d)\otimes \id)
     \\& \phantom{=}s_{d}({}_{\llt(\bar{y}^{1})}G_{\rrt(\bar{y}^{2})}^{-d},\dots,{}_{\llt(\bar{y}^{v-1})}G_{\rrt(\bar{y}^{v})}^{-d},{}_{\llt(\bar{x}^{1})}F_{\rrt(\bar{x}^2)}^{-d},\dots,{}_{\llt(\bar{x}^{n})}F_{\rrt(\bar{x}^{1})}^{-d},{}_{\llt(\bar{y}^{v+1})}G_{\rrt(\bar{y}^{v+2})}^{-d},\dots,{}_{\llt(\bar{y}^{m})}G_{\rrt(\bar{y}^1)}^{-d})
\end{align*}
\end{small}
with
\begin{small}
\begin{equation}
    \begin{split}
        \epsilon_1&=(|\mathbf{F}|+|\mathbf{G}|+d+1)(\sum\limits_{i=1}^{m-1}|tg^i|+\sum\limits_{i=1}^{n}|tf^i|)+d(n+m)+\sum\limits_{i=v+1}^{m-1}|tg^i|\sum\limits_{k=i+1}^m|\bar{sb}^k|+|\bar{sa}^{1}_{\geq j}||\bar{sa}^{1}_{<j-1}|
        \\&\phantom{=}+\hskip-2mm\sum\limits_{1\leq i < k \leq n}\hskip-2mm|\bar{sa}^i||\bar{sa}^k|+\sum\limits_{i=1}^{n}|tf^i|(\sum\limits_{k=i+1}^n|\bar{sa}^k|+|\bar{sa}^1_{\geq j}|+\hskip-1mm\sum\limits_{i=v+1}^m|\bar{sb}^i|)+\sum\limits_{i=1}^{v-1}|tg^i|(\sum\limits_{i=1}^n|\bar{sa}^i|+\hskip-1mm\sum\limits_{k=i+1}^m|\bar{sb}^k|)
        \\&\phantom{=}+\hskip-2mm\sum\limits_{1\leq i < k \leq m}\hskip-2mm|\bar{sb}^i||\bar{sb}^k|+\hskip-2mm\sum\limits_{1\leq i < k \leq m-1}\hskip-2mm|tg^i||tg^k|+\hskip-2mm\sum\limits_{1\leq i < k \leq n}\hskip-2mm|tf^i||tf^k|
        +\sum\limits_{i=1}^{m-1}|tg^i|\sum\limits_{i=1}^{n}|tf^i|+\sum\limits_{i=1}^{m}|\bar{sb}^i|\sum\limits_{i=1}^{n}|\bar{sa}^i|
        \\&(|\mathbf{F}|+d+1)(d+1),
        \\\epsilon'_1&=\sum\limits_{i=1}^n|\bar{sa}^i|(\sum\limits_{i=v+1}^m|\bar{sb}^i|+\sum\limits_{i=v+1}^m|{}_{\llt(\bar{y}^{i})}G_{\rrt(\bar{y}^{i+1})}^{-d}|)
        +(|\mathbf{F}|+d+1)\sum\limits_{i=1}^{v-1}|{}_{\llt(\bar{y}^{i})}G_{\rrt(\bar{y}^{i+1})}^{-d}|
        \\&\phantom{=}+(d+1)|\mathbf{F}|+d+1+|\bar{sa}^1_{\geq j}|\sum\limits_{i=2}^n|\bar{sa}^i|+|\bar{sa}^1_{<j-1}||{}_{\llt(\bar{y}^{v})}G_{\rrt(\bar{y}^{v+1})}^{1}|.
    \end{split}
\end{equation}
\end{small}
On the other hand, we have that 
\[ \pi_{\MA[1]}(\mathcalboondox{j}_{\bar{\bar{y}}}(s_{d+1}G^{\bar{\bar{y}}}))\upperset{G,p,q}{\circ}\pi_{\MA^*[d]}(\mathcalboondox{j}_{\bar{\bar{x}}}(s_{d+1}F^{\bar{\bar{x}}}))=s\varphi_1^{\bar{\bar{x}}\upperset{v,j,\out}{\sqcup}\bar{\bar{y}}}\]
where
\begin{small}
\begin{equation}
    \begin{split}
        &(\pi_{\MA}\circ \varphi_1)^{\bar{\bar{x}}\upperset{v,j,\out}{\sqcup}\bar{\bar{y}}}(\bar{sb}^m,tg^{m-1},\dots,\bar{sb}^{v+1},\bar{sa}^{1}_{\geq j},tf^n,\bar{sa}^n,tf^{n-1},\dots,\bar{sa}^1_{< j-1},\bar{sb}^{v},tg^{v-1},\dots,\bar{sb}^{1})
        \\&=(-1)^{\delta_1}\mathcalboondox{j}_{\bar{\bar{y}}}(s_{d+1}G^{\bar{\bar{y}}})
        (\bar{sb}^m,tg^{m-1},\dots,\bar{sb}^{v+1},tg,\bar{sb}^{v},tg^{v-1},\dots,\bar{sb}^{1})
    \end{split}
\end{equation}
\end{small}
where $tg=\pi_{\MA^{*}}(\mathcalboondox{j}_{\bar{\bar{x}}}(s_{d+1}F^{\bar{\bar{x}}}))(\bar{sa}^{1}_{\geq j},tf^n,\bar{sa}^n,tf^{n-1},\bar{sa}^{n-1},\dots,\bar{sa}^1_{< j-1})\in \MA^{*}[d]$
and 
\begin{small}
\begin{equation}
        \delta_1=(|\textbf{F}|+d+1)(\sum\limits_{i=v+1}^{m}|\bar{sb}^i|+\sum\limits_{i=v+1}^{m-1}|tg^i|).
        \\
\end{equation}
\end{small}
Therefore, we have that
\begin{small}
\begin{equation}
    \begin{split} &(\pi_{\MA}\circ \varphi_1)^{\bar{\bar{x}}\upperset{v,j,\out}{\sqcup}\bar{\bar{y}}}(\bar{sb}^m,tg^{m-1},\dots,\bar{sb}^{v+1},\bar{sa}^{1}_{\geq j},tf^n,\bar{sa}^n,tf^{n-1},\bar{sa}^{n-1},\dots,\bar{sa}^1_{< j-1},\bar{sb}^{v},tg^{v-1},\dots,\bar{sb}^{1})
        \\&=(-1)^{\delta_1+\delta'_1}((g^{1}\circ s_d) \otimes \dots \otimes (g^{v-1}\circ s_d) \otimes (g\circ s_d) \otimes (g^{v+1}\circ s_d) \otimes \dots \otimes (g^m\circ s_d)\otimes \id)
        \\&\phantom{=} s_{d}({}_{\llt(\bar{y}^{1})}G_{\rrt(\bar{y}^{2})}^{-d},\dots,{}_{\llt(\bar{y}^{v-1})}G_{\rrt(\bar{y}^{v})}^{-d},{}_{\llt(\bar{y}^{v})}G_{\rrt(\bar{y}^{v+1})}^{-d},\dots,{}_{\llt(\bar{y}^{m})}G_{\rrt(\bar{y}^1)}^{-d})
    \end{split}
\end{equation}
\end{small}
where 
\begin{small}
\begin{equation}
    \begin{split}
        \delta'_1&=|tg|\sum\limits_{k=v+1}^m|\bar{sb}^k|+\sum\limits_{i=1}^{m-1}|tg^i|\sum\limits_{k=i+1}^m|\bar{sb}^k|
        +\hskip-2mm\sum\limits_{1\leq i < k \leq m}\hskip-2mm|\bar{sb}^i||\bar{sb}^k|+\hskip-2mm\sum\limits_{1\leq i < k \leq m-1}\hskip-2mm|tg^i||tg^k|
        \\&\phantom{=}+\sum\limits_{i=1}^{m-1}|tg^i||tg|+|\mathbf{G}|(\sum\limits_{i=1}^{m-1}|tg^i|+|tg|)+d(m-1).
    \end{split}
\end{equation}
\end{small}
Furthermore, by definition, we have that 
\begin{equation}
    \begin{split}
        (g\circ s_d)({}_{\llt(\bar{y}^{v})}G_{\rrt(\bar{y}^{v+1})}^{-d})=(-1)^{\Delta_1}(\bigotimes\limits_{i=1}^n(f^{i}\circ s_d) )({}_{\llt(\bar{x}^1)}F_{\rrt(\bar{x}^2)}^{-d}\otimes\dots \otimes {}_{\llt(\bar{x}^{n})}F_{\rrt(\bar{x}^1)}^{-d})
    \end{split}
\end{equation}
where 
\begin{small}
\begin{equation}
    \begin{split}
        \Delta_1=&\sum\limits_{1\leq i < k \leq n}|\bar{sa}^i||\bar{sa}^k|+|\bar{sa}^1_{\geq j}||\bar{sa}^1_{< j-1}|+(|{}_{\llt(\bar{y}^v)}G^1_{\rrt(\bar{y}^{v+1})}|+|\Bar{sa}^1_{\geq j}|)\sum\limits_{i=2}^n|\bar{sa}^i|+\sum\limits_{1\leq i < k \leq n}|tf^i||tf^k|\\&\phantom{=}+dn+(|\mathbf{F}|+d+1)\sum\limits_{i=1}^{n}|tf^i|+\sum\limits_{i=1}^{n}|tf^i|(\sum\limits_{k=i+1}^n|\bar{sa}^k|+|\bar{sa}^1_{\geq j}|)+|\bar{sa}^{1}_{\geq j}||{}_{\llt(\bar{y}^{v})}G_{\rrt(\bar{y}^{v+1})}^1|.
    \end{split}
\end{equation}
\end{small}
Finally, we have that
\begin{small}
\begin{equation}
    \begin{split}
        &(\pi_{\MA}\circ \varphi_1)^{\bar{\bar{x}}\upperset{v,j,\out}{\sqcup}\bar{\bar{y}}}(\bar{sb}^m,tg^{m-1},\dots,\bar{sb}^{v+1},tg^{v+1},\bar{sa}^{1}_{\geq j},tf^n,\bar{sa}^n,tf^{n-1},\dots,\bar{sa}^1_{< j-1},\bar{sb}^{v},tg^{v-1},\dots,\bar{sb}^{1})
        \\&=(-1)^{\delta_1+\delta'_1+\Delta_1+\Delta'_1}
        \\&((g^{1}\circ s_d) \otimes \dots \otimes (g^{v-1}\circ s_d) \otimes \bigotimes\limits_{i=1}^n(f^i\circ s_d)\otimes (g^{v+1}\circ s_d) \otimes \dots \otimes (g^{m-1}\circ s_d)\otimes \id)
        \\&\phantom{=} s_{d}({}_{\llt(\bar{y}^{1})}G_{\rrt(\bar{y}^{2})}^{-d},\dots,{}_{\llt(\bar{y}^{v-1})}G_{\rrt(\bar{y}^{v})}^{-d},{}_{\llt(\bar{x}^{1})}F_{\rrt(\bar{x}^2)}^{-d},\dots,{}_{\llt(\bar{x}^{n})}F_{\rrt(\bar{x}^{1})}^{-d},{}_{\llt(\bar{y}^{v+1})}G_{\rrt(\bar{y}^{v+2})}^{-d},\dots,{}_{\llt(\bar{y}^{m-1})}G_{\rrt(\bar{y}^1)}^{-d})
    \end{split}
\end{equation}
\end{small}
where 
\begin{small}
\begin{equation}
    \begin{split}
    \Delta'_1=(|tg|+\sum\limits_{i=1}^n|tf^i|)(\sum\limits_{i=v+1}^{m-1}|tg^i|+d+\sum\limits_{i=1}^{v-1}|{}_{\llt(\bar{y}^i)}G_{\rrt(\bar{y}^{i+1})}^{-d}|).
\end{split}
\end{equation}
\end{small}
It is straightforward to check that \[\epsilon_1+\epsilon'_1+\delta_1+\delta'_1+\Delta_1+\Delta'_1 = 1+(|\mathbf{F}|+d+1)(|\mathbf{G}|+d+1)\operatorname{mod} 2.\]

We now write $s_{d+1}H_4=s_{d+1}F^{\bar{\bar{x}}}\displaystyle\upperset{\substack{\nec,v,j\\ \out}}{\circ}s_{d+1}G^{\bar{\bar{y}}}$ and $\doubar{z}'=\bar{\bar{x}}\upperset{v,j,\out}{\sqcup}\bar{\bar{y}}$.
We thus have that 
\allowdisplaybreaks
\begin{small}
\begin{align*}
     &(\pi_{\MA^*[d-1]}\circ\psi_1)^{\doubar{z}'}\\&\hspace{1cm}(\bar{sb}^m,tg^{m-1},\dots,\bar{sb}^{v+1},\bar{sa}^{1}_{\geq j},tf^n,\bar{sa}^n,tf^{n-1},\dots,tf^1,\bar{sa}^1_{< j-1},\bar{sb}^{v},tg^{v-1},\dots,\bar{sb}^{1})(s_{1-d}c)
     \\&=(-1)^{\gamma_1}
     ((g^{1}\circ s_d) \otimes \dots \otimes (g^{v-1}\circ s_d) \otimes \bigotimes\limits_{i=1}^n(f^i\circ s_d)\otimes (g^{v+1}\circ s_d) \otimes \dots \otimes (g^{m-1}\circ s_d))
     \\&\phantom{=} \hspace{3cm}H_4(\bar{sb}^1\otimes sc\otimes \bar{sb}^m,\dots,\bar{sb}^{v},\bar{sa}^1_{< j-1},\dots,\bar{sa}^n,\bar{sa}^{1}_{\geq j},\bar{sb}^{v+1},\dots,\bar{sb}^{m-1})
     \\&=(-1)^{\gamma_1+\gamma'_1}
     ((g^{1}\circ s_d) \otimes \dots \otimes (g^{v-1}\circ s_d) \otimes \bigotimes\limits_{i=1}^n(f^i\circ s_d)\otimes (g^{v+1}\circ s_d) \otimes \dots \otimes (g^{m-1}\circ s_d))
     \\&\phantom{=} ({}_{\llt(\bar{y}^{1})}G_{\rrt(\bar{y}^{2})}^{-d},\dots,{}_{\llt(\bar{y}^{v-1})}G_{\rrt(\bar{y}^{v})}^{-d},{}_{\llt(\bar{x}^{1})}F_{\rrt(\bar{x}^2)}^{-d},\dots,{}_{\llt(\bar{x}^{n})}F_{\rrt(\bar{x}^{1})}^{-d},{}_{\llt(\bar{y}^{v+1})}G_{\rrt(\bar{y}^{v+2})}^{-d},\dots,{}_{\llt(\bar{y}^{m})}G_{\rrt(\bar{y}^1)}^{-d})
\end{align*}
\end{small}
with 
\begin{small}
\begin{equation}
    \begin{split}
        \gamma_1&=\epsilon_1+d(|\mathbf{F}|+|\mathbf{G}|+\sum\limits_{i=1}^n|\bar{sa}^i|+\sum\limits_{i=1}^m|\bar{sb}^i|)+(|\bar{sb}^m|+|sc|)(\sum\limits_{i=1}^n|\bar{sa}^i|+\sum\limits_{i=2}^{m-1}|\bar{sb}^i|)+|\bar{sb}^m||sc|
        \\ \gamma'_1&=\sum\limits_{i=1}^n|\bar{sa}^i|(\sum\limits_{i=v+1}^{m-1}|\bar{sb}^i|+\sum\limits_{i=v+1}^{m-1}|{}_{\llt(\bar{y}^{i})}G_{\rrt(\bar{y}^{i+1})}^{-d}|)
        +(|\mathbf{F}|+d+1)\sum\limits_{i=1}^{v-1}|{}_{\llt(\bar{y}^{i})}G_{\rrt(\bar{y}^{i+1})}^{-d}|
        \\&+|\bar{sa}^1_{\geq j}|\sum\limits_{i=2}^n|\bar{sa}^i|+|\bar{sa}^1_{<j-1}||{}_{\llt(\bar{y}^{v})}G_{\rrt(\bar{y}^{v+1})}^{1}|+(|\mathbf{F}|+d+1)(d+1).
    \end{split}
\end{equation}
\end{small}
On the other hand, we have that 
\allowdisplaybreaks
\begin{align*}
        &(\pi_{\MA^*[d-1]}\circ\varphi_1)^{\doubar{z}'}
        \\&(\bar{sb}^m,tg^{m-1},\dots,\bar{sb}^{v+1},tg^{v+1},\bar{sa}^{1}_{\geq j},\bar{sa}^n,tf^{n-1},\bar{sa}^{n-1},\dots,\bar{sa}^1_{< j-1},\bar{sb}^{v},tg^{v-1},\dots,\bar{sb}^{1})(s_{1-d}c)
        \\&=(-1)^{\delta_1+\Gamma_1}((g^{1}\circ s_d) \otimes \dots \otimes (g^{v-1}\circ s_d) \otimes (g\circ s_d) \otimes (g^{v+1}\circ s_d) \otimes \dots \otimes (g^{m-1}\circ s_d))
        \\&\phantom{=}\hspace{3cm} s_{d}({}_{\llt(\bar{y}^{1})}G_{\rrt(\bar{y}^{2})}^{-d},\dots,{}_{\llt(\bar{y}^{v-1})}G_{\rrt(\bar{y}^{v})}^{-d},{}_{\llt(\bar{y}^{v})}G_{\rrt(\bar{y}^{v+1})}^{-d},\dots,{}_{\llt(\bar{y}^{m-1})}G_{\rrt(\bar{y}^m)}^{-d})
\end{align*}
where 
\begin{small}
\begin{equation}
    \begin{split}
        \Gamma_1&=\delta'_1+ d(|\mathbf{G}|+\sum\limits_{i=1}^m|\bar{sb}^i|)+(|\bar{sb}^m|+|sc|)(\sum\limits_{i=1}^n|\bar{sa}^i|+\sum\limits_{i=2}^{m-1}|\bar{sb}^i|)+|\bar{sb}^m||sc|.
    \end{split}
\end{equation}
\end{small}
Finally, we have that
\begin{small}
\begin{equation}
    \begin{split}
        &(\pi_{\MA^*[d-1]}\circ\varphi_1)^{\doubar{z}'}\\&(\bar{sb}^m,tg^{m-1},\dots,\bar{sb}^{v+1},tg^{v+1},\bar{sa}^{1}_{\geq j},tf^n,\bar{sa}^n,tf^{n-1},\dots,\bar{sa}^1_{< j-1},\bar{sb}^{v},tg^{v-1},\dots,\bar{sb}^{1})(s_{1-d}c)
        \\&=(-1)^{\delta_1+\Gamma_1+\Gamma'_1+\Delta_1}
        ((g^{1}\circ s_d) \otimes \dots \otimes (g^{v-1}\circ s_d) \otimes \bigotimes\limits_{i=1}^n(f^i\circ s_d)\otimes (g^{v+1}\circ s_d) \otimes \dots \otimes (g^{m-1}\circ s_d))
        \\&({}_{\llt(\bar{y}^{1})}G_{\rrt(\bar{y}^{2})}^{-d},\dots,{}_{\llt(\bar{y}^{v-1})}G_{\rrt(\bar{y}^{v})}^{-d},{}_{\llt(\bar{x}^{1})}F_{\rrt(\bar{x}^2)}^{-d},\dots,{}_{\llt(\bar{x}^{n})}F_{\rrt(\bar{x}^{1})}^{-d},{}_{\llt(\bar{y}^{v+1})}G_{\rrt(\bar{y}^{v+2})}^{-d},\dots,{}_{\llt(\bar{y}^{m})}G_{\rrt(\bar{y}^1)}^{-d})
    \end{split}
\end{equation}
\end{small}
where 
\begin{small}
\begin{equation}
    \begin{split}
    \Gamma'_1=(|tg|+\sum\limits_{i=1}^n|tf^i|)(\sum\limits_{i=v+1}^{m-1}|tg^i|+\sum\limits_{i=1}^{v-1}|{}_{\llt(\bar{y}^i)}G_{\rrt(\bar{y}^{i+1})}^{-d}|)
\end{split}
\end{equation}
\end{small}
which concludes the proof.
\end{proof}
The relation between the necklace product and the Gerstenhaber product gives us the following result.
\begin{corollary}
    \label{corollary:Lie-bracket-Bar}
    The necklace bracket $[-,-]_{\nec}$ introduced in Definition \ref{def:necklace-bracket} gives a graded Lie algebra structure on
    $\Multi^{\bullet}_d(\MA)^{C_{\llg(\bullet)}}[d+1]$.
\end{corollary}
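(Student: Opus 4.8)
The plan is to descend the statement from the Gerstenhaber bracket on the boundary quiver $\partial_{d-1}\MA=\MA\oplus\MA^*[d-1]$, where the graded Lie axioms are already available, along the injective map $\mathcalboondox{j}$. Recall that the Gerstenhaber bracket $[-,-]_G$ on $C(\partial_{d-1}\MA)[1]$ is by construction (eq.~\eqref{eq:gerstenhaber-bracket}) a graded Lie bracket, hence satisfies graded antisymmetry and the graded Jacobi identity. Combining the injective map $\mathcalboondox{j}$ of \eqref{eq:map-j} with the isomorphism $\mathcal{R}$ of Proposition \ref{prop:R-inv} produces an injective map
\[
\iota=\mathcal{R}^{-1}\circ\mathcalboondox{j}:\Multi^{\bullet}(\Br(\MA)[d])^{C_{\llg(\bullet)}}[d+1]\longrightarrow \Coder_{cyc,0}(\bar{T}^c(\partial_{d-1}\MA[1]))\subseteq C(\partial_{d-1}\MA)[1],
\]
whose $\MA$-corestriction is $(\iota\mathbf{F})_{\MA}=\mathcalboondox{j}(\mathbf{F})$ and whose $\MA^*$-corestriction is $(\iota\mathbf{F})_{\MA^*}=\mathcal{R}_{\MA^*}^{-1}(\mathcalboondox{j}(\mathbf{F}))$. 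The goal is to show that $\iota$ intertwines $[-,-]_{\nec}$ with $[-,-]_G$, after which injectivity forces the Lie axioms on $[-,-]_{\nec}$.

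To establish the intertwining it suffices, since a cyclic coderivation is determined by its $\MA$-corestriction via $\mathcal{R}$, to compare $\MA$-outputs, i.e.\ to prove $\mathcalboondox{j}([s_{d+1}\mathbf{F},s_{d+1}\mathbf{G}]_{\nec})=\big([\iota\mathbf{F},\iota\mathbf{G}]_G\big)_{\MA}$. I would expand the right-hand side through the definition of the Gerstenhaber product: in $\big(\iota\mathbf{F}\underset{G}{\circ}\iota\mathbf{G}\big)_{\MA}$ the element $\iota\mathbf{G}$ is fed into the inputs of $(\iota\mathbf{F})_{\MA}=\mathcalboondox{j}(\mathbf{F})$, its $\MA[1]$-output entering $\MA[1]$-slots and its $\MA^*[d]$-output entering $\MA^*[d]$-slots. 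By \eqref{eq:j-bracket-1} the first family reproduces the inner necklace compositions of $s_{d+1}\mathbf{F}\underset{\nec}{\circ}s_{d+1}\mathbf{G}$, while by \eqref{eq:j-bracket-2}, applied with $\mathbf{F}$ and $\mathbf{G}$ exchanged, the second family reproduces $-(-1)^{(|\mathbf{F}|+d+1)(|\mathbf{G}|+d+1)}$ times the outer necklace compositions of $s_{d+1}\mathbf{G}\underset{\nec}{\circ}s_{d+1}\mathbf{F}$. Carrying out the same expansion for $\big(\iota\mathbf{G}\underset{G}{\circ}\iota\mathbf{F}\big)_{\MA}$ and forming the commutator, the ``mismatched'' outer contributions combine with the inner ones: since the antisymmetrisation sign is symmetric in $\mathbf{F},\mathbf{G}$ and squares to $1$, the cross-terms reorganise exactly into $\mathcalboondox{j}(s_{d+1}\mathbf{F}\underset{\nec}{\circ}s_{d+1}\mathbf{G})-(-1)^{(|\mathbf{F}|+d+1)(|\mathbf{G}|+d+1)}\mathcalboondox{j}(s_{d+1}\mathbf{G}\underset{\nec}{\circ}s_{d+1}\mathbf{F})$, which is $\mathcalboondox{j}([s_{d+1}\mathbf{F},s_{d+1}\mathbf{G}]_{\nec})$ by Definition \ref{def:necklace-bracket}.

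With the intertwining in hand the proof concludes formally. The element $\big([\iota\mathbf{F},\iota\mathbf{G}]_G\big)_{\MA}\in\mathcalboondox{C}_{\bullet}(\MA)[1]$ is the $\MA$-corestriction of a commutator of cyclic coderivations; once one checks that $\Coder_{cyc,0}$ is closed under the coderivation commutator, this commutator is again cyclic and hence $\iota([\mathbf{F},\mathbf{G}]_{\nec})=[\iota\mathbf{F},\iota\mathbf{G}]_G$. In particular $[\mathbf{F},\mathbf{G}]_{\nec}$ lies again in the cyclically invariant subspace, so $[-,-]_{\nec}$ is a well-defined operation on $\Multi^{\bullet}(\Br(\MA)[d])^{C_{\llg(\bullet)}}[d+1]$, and graded antisymmetry is immediate from its definition as a signed commutator. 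Applying the injective $\iota$ to the necklace Jacobiator of any triple $\mathbf{F},\mathbf{G},\mathbf{H}$ then yields, by the intertwining, the Gerstenhaber Jacobiator of $\iota\mathbf{F},\iota\mathbf{G},\iota\mathbf{H}$, which vanishes; injectivity of $\iota$ forces the necklace Jacobiator to vanish.

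I expect the main obstacle to be the second paragraph: verifying that, after antisymmetrisation, the inner contributions of \eqref{eq:j-bracket-1} and the outer contributions of \eqref{eq:j-bracket-2} assemble with matching Koszul signs into the necklace bracket, the delicate point being that the Gerstenhaber \emph{product} does not match the necklace product term by term (the outer terms carry the roles of $\mathbf{F}$ and $\mathbf{G}$ swapped) and only the fully antisymmetrised brackets coincide. This hinges on the sign congruences already recorded in Proposition \ref{proposition:j-bracket} together with the symmetry and squaring of the antisymmetrisation sign. A secondary point to secure is the closure of $\Coder_{cyc,0}$ under the commutator, guaranteeing that $[-,-]_{\nec}$ preserves cyclic invariance; this uses the cyclicity equations \eqref{eq:cyclic-coder-1}--\eqref{eq:cyclic-coder-2} and the nondegeneracy of the natural bilinear form $\Gamma$ on $\partial_{d-1}\MA$.
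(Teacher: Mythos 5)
Your proposal is correct and takes essentially the same route as the paper: the paper's entire proof consists of the identity $\mathcalboondox{j}([s_{d+1}\mathbf{F},s_{d+1}\mathbf{G}]_{\nec})=\pi_{\MA}([\mathcalboondox{j}(s_{d+1}\mathbf{F}),\mathcalboondox{j}(s_{d+1}\mathbf{G})]_{G})$ — which is exactly your intertwining statement, with the $\MA^*$-output components supplied implicitly by $\mathcal{R}^{-1}$ as in Proposition \ref{proposition:j-bracket} — followed by the injectivity of $\mathcalboondox{j}$. Your version merely makes explicit, via $\iota=\mathcal{R}^{-1}\circ\mathcalboondox{j}$ and the closure of $\Coder_{cyc,0}(\bar{T}^c(\partial_{d-1}\MA[1]))$ under commutators, the bookkeeping that the paper compresses into the projection $\pi_{\MA}$.
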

\begin{proof}
    For $s_{d+1}\mathbf{F}, s_{d+1}\mathbf{G}\in\Multi^{\bullet}_d(\MA)^{C_{\llg(\bullet)}}[d+1]$, we have that
\begin{equation}
    \mathcalboondox{j}([s_{d+1}\textbf{F},s_{d+1}\textbf{G}]_{\nec})=[\mathcalboondox{j}(s_{d+1}\textbf{F}),\mathcalboondox{j}(s_{d+1}\textbf{G})]_{G}.
\end{equation}
Therefore, since the Gerstenhaber bracket satisfies the Jacobi identity and $\mathcalboondox{j}$ is injective we conclude that $[-,-]_{\nec}$ is a graded Lie bracket.
\end{proof}

\begin{remark}
    In terms of diagrams, given an element $s_{d+1}\mathbf{F}\in\Multi_d^{\bullet}(\MA)[d+1]$, the two elements $\pi_{\MA}\circ \mathcalboondox{j}(s_{d+1}\mathbf{F})$ and $\pi_{\MA^*}\circ \mathcalboondox{j}(s_{d+1}\mathbf{F})$ are equal to $\mathcal{E}(\mathcalboondox{D})$ and $-\mathcal{E}(\mathcalboondox{D'})$ where $\mathcalboondox{D}$ and $\mathcalboondox{D'}$ are the diagrams

    \begin{minipage}{21cm}
        \begin{tikzpicture}[line cap=round,line join=round,x=1.0cm,y=1.0cm]
\clip(-4,-1.2) rectangle (3.7,1.7);
     \draw [line width=0.5pt] (0.,0.) circle (0.5cm);
     \draw [line width=1.1pt,rotate=0] [->, >= stealth] (0.5,0) -- (0.9,0);
     \draw [rotate=-90] [->, >= stealth] (0.5,0) -- (0.9,0);
     \draw [rotate=90] [->, >= stealth] (0.5,0) -- (0.9,0);
     \draw [rotate=180] [->, >= stealth] (0.5,0) -- (0.9,0);
     \shadedraw[rotate=45,shift={(0.5,0)}] \doublefleche;
    \shadedraw[rotate=-45,shift={(0.5,0)}] \doublefleche;
    \shadedraw[rotate=-135,shift={(0.5,0)}] \doublefleche;
\begin{scriptsize}
\draw [fill=black] (-0.45,0.4) circle (0.3pt);
\draw [fill=black] (-0.25,0.55) circle (0.3pt);
\draw [fill=black] (-0.55,0.2) circle (0.3pt);
\end{scriptsize}
\draw (0,0.25) node[anchor=north]{$\mathbf{F}$};
\draw (3,0.25) node[anchor=north]{and};
\end{tikzpicture}
\begin{tikzpicture}[line cap=round,line join=round,x=1.0cm,y=1.0cm]
\clip(-2.5,-1.2) rectangle (10.586743541133627,1.7);
     \draw [line width=0.5pt] (0.,0.) circle (0.5cm);
     \draw [rotate=0] [->, >= stealth] (0.5,0) -- (0.9,0);
     \draw [rotate=-90] [->, >= stealth] (0.5,0) -- (0.9,0);
     \draw [rotate=90] [->, >= stealth] (0.5,0) -- (0.9,0);
     \draw [rotate=180] [->, >= stealth] (0.5,0) -- (0.9,0);
     \shadedraw[rotate=45,shift={(0.5,0)}] \doublefleche;
    \shadedraw[rotate=-45,shift={(0.5,0)}] \doublefleche;
     \shadedraw[rotate=-135,shift={(0.5,0)}] \doubleflechescindeeleft;
     \shadedraw[rotate=-135,shift={(0.5,0)}] \doubleflechescindeeright;
     \shadedraw[line width=1.1pt, rotate=-135,shift={(0.5,0)}] \fleche;
\begin{scriptsize}
\draw [fill=black] (-0.45,0.4) circle (0.3pt);
\draw [fill=black] (-0.25,0.55) circle (0.3pt);
\draw [fill=black] (-0.55,0.2) circle (0.3pt);
\end{scriptsize}
\draw (0,0.25) node[anchor=north]{$\mathbf{F}$};
\end{tikzpicture}
    \end{minipage}

    \noindent respectively, where the non-bold outgoing arrows are seen as incoming arrows of $\MA^*[d]$ and where the bold incoming arrow in $\mathcalboondox{D'}$ is seen as an outgoing arrow of $\MA^*[d]$.
\end{remark}
\subsection{Pre-Calabi-Yau structures}
We now recall the definition of pre-Calabi-Yau categories given in \cite{ktv}.
\begin{definition}
A \textbf{\textcolor{ultramarine}{$d$-pre-Calabi-Yau structure}} on a graded quiver $\MA$ is an element
\[
s_{d+1}M_{\MA}\in\Multi^{\bullet}_d(\MA)^{C_{\llg(\bullet)}}[d+1]
\]
of degree $1$, solving the Maurer-Cartan equation 
\begin{equation}
    \label{eq:MC}
    [s_{d+1}M_{\MA},s_{d+1}M_{\MA}]_{\nec}=0.
\end{equation}
Note that, since $s_{d+1}M_{\MA}$ has degree $1$, this is tantamount to requiring that $s_{d+1}M_{\MA}\upperset{\nec}{\circ} s_{d+1}M_{\MA}=0$. A\\\vskip-3.8mm\noindent graded quiver endowed with a $d$-pre-Calabi-Yau structure is called a \textbf{\textcolor{ultramarine}{$d$-pre-Calabi-Yau category}}. 
\end{definition}

\begin{example}
    An $A_{\infty}$-category $(\MA,sm_{\MA})$ is a $d$-pre-Calabi-Yau category for every $d$. Indeed, it suffices to consider $s_{d+1}M_{\MA}^{\bar{x}}=sm_{\MA}^{\bar{x}}$ for $\bar{x}\in\bar{\MO}$ and $s_{d+1}M_{\MA}^{\doubar{x}}=0$ for $\doubar{x}\in\bar{\MO}^n$, $n>1$.
\end{example}
The following example given in \cite{yeung} relates double Poisson structure on dg categories and $d$-pre-Calabi-Yau categories.
\begin{example}
    Given a dg category $\MA$, a shifted double Poisson structure 
    \[
    \{\{-,-\}\}_{(x,y),(z,t)} : {}_{x}\MA_{y}[1]\otimes {}_{z}\MA_{t}[1]\rightarrow \MA[-d]\otimes \MA[-d]
    \]
    induces a $d$-pre-Calabi-Yau structure $s_{d+1}M_{\MA}$ on $\MA$ whose only nonzero components are $s_{d+1}M_{\MA}^{x,y}$ and $s_{d+1}M_{\MA}^{x,y,z}$ which are induced by the differential and the product of $\MA$ respectively (see Example \ref{example:dg-cat-pCY}) and  $s_{d+1}M_{\MA}^{(x,y),(z,t)}=\{\{-,-\}\}_{(x,y),(z,t)}$.
\end{example}

We now recall the following result of \cite{ktv} (Proposition 4.26) which states the link beteween $d$-pre-Calabi-Yau structures on a $\Hom$-finite graded quiver $\MA$ and cyclic $A_{\infty}$-structures on the graded quiver $\MA\oplus\MA^*[d-1]$.

\begin{proposition}
\label{prop:equiv-pCY-Ainf}
A $d$-pre-Calabi-Yau structure on a graded quiver $\MA$ induces a cyclic $A_{\infty}$-structure on $\MA\oplus \MA^{*}[d-1]$ that restricts to $\MA$ and that is almost cyclic with respect to $\Gamma$. Moreover, if the graded quiver $\MA$ is $\Hom$-finite, then the data of a $d$-pre-Calabi-Yau structure on $\MA$ is equivalent to the data of a cyclic $A_{\infty}$-structure on $\MA\oplus\MA^*[d-1]$ that restricts to $\MA$.
\end{proposition}
\begin{proof}
Consider $s_{d+1}M_{\MA} \in \Multi^{\bullet}_d(\MA)^{C_{\llg(\bullet)}}[d+1]$ of degree $1$ and define $sm_{\MA\oplus\MA^*}=\mathcalboondox{j}(s_{d+1}M_{\MA})$.
By Proposition \ref{proposition:j-bracket}, $s_{d+1}M_{\MA}$ defines a $d$-pre-Calabi-Yau structure if and only if $sm_{\MA\oplus\MA^*}$
defines an $A_{\infty}$-structure on $\MA\oplus \MA^{*}[d-1]$. Moreover, it is straightforward to show that this $A_{\infty}$-structure is almost cyclic with respect to $\Gamma$ and that $(sm_{\MA\oplus\MA^*})^{\bar{x}}_{|\MA[1]^{\otimes \bar{x}}}\subseteq \MA[1]$ for every $\bar{x}\in\bar{\MO}$.

If $\MA$ is $\Hom$-finite, the bijectivity of $\mathcalboondox{j}$ tells us that the collection of maps $m_{\MA\oplus\MA^*}$ are in correspondence with maps of the form $M_{\MA}$, 
which shows the equivalence. 
\end{proof}

\subsection{Pre-Calabi-Yau morphisms} \label{pcy-morphisms}
Following \cite{ktv} (cf. \cite{lv}) we give the following definition of $d$-pre-Calabi-Yau morphisms.
\begin{definition}
Given graded quivers $\MA$ and $\MB$ with respective sets of objects $\MO_{\MA}$ and $\MO_{\MB}$ and a map $F_0 : \MO_{\MA}\rightarrow \MO_{\MB}$, we define the graded vector space 
\begin{small}
    \begin{equation}
    \begin{split}
&\Multi^{\bullet}_d(\MA,\MB)\\&=\prod_{n\in\NN^*}\prod_{\doubar{x}\in\bar{\MO}_{\MA}^n}\Homgr_{\kk}\bigg(\bigotimes\limits_{i=1}^n\MA[1]^{\otimes \bar{x}^i},\bigotimes\limits_{i=1}^{n-1}{}_{F_0(\llt(\bar{x}^i))}\MB_{F_0(\rrt(\bar{x}^{i+1}))}[-d]\otimes{}_{F_0(\llt(\bar{x}^n))}\MB_{F_0(\rrt(\bar{x}^1))}[-d])\bigg).
    \end{split}
    \end{equation}
\end{small}
The action of $\sigma=(\sigma_n)_{n\in\NN^*}\in\prod_{n\in\NN^*} C_n$ on an element $\mathbf{F} = (F^{\doubar{x}})_{\doubar{x} \in \doubar{\MO}_{\MA}}\in\Multi^{\bullet}_d(\MA,\MB)$ is the element $\sigma\cdot\mathbf{F}\in\Multi^{\bullet}_d(\MA,\MB)$ given by 
\begin{equation}
    \begin{split}
(\sigma\cdot\mathbf{F})^{\doubar{x}}=\tau^{\sigma}_{{}_{F_0(\llt(\bar{x}^{1}))}\MB_{F_0(\rrt(\bar{x}^2))}[-d],\dots, {}_{F_0(\llt(\bar{x}^n))}\MB_{F_0(\rrt(\bar{x}^1))}[-d]}\circ F^{\doubar{x}\cdot\sigma} \circ \tau^{\sigma}_{\MA[1]^{\otimes \bar{x}^1},\MA[1]^{\otimes\bar{x}^2}, \dots ,\MA[1]^{\otimes \bar{x}^n}}
    \end{split}
\end{equation}
for $\doubar{x}=(\bar{x}^1,\dots,\bar{x}^n)\in\bar{\MO}_{\MA}^n$.
We will denote by $\Multi^{\bullet}_d(\MA,\MB)^{C_{\llg(\bullet)}}$ the space of elements of $\Multi^{\bullet}_d(\MA,\MB)$ that are invariant under the action of $\prod_{n\in\NN^*} C_n$.
\end{definition}

\begin{definition}
    Given $d$-pre-Calabi-Yau categories $(\MA,s_{d+1}M_{\MA})$ and $(\MB,s_{d+1}M_{\MB})$ as well as a map $F_0:\MO_{\MA} \rightarrow\MO_{\MB}$ between their respective sets of objects and $s_{d+1}\mathbf{F}\in \Multi^{\bullet}_{d,F_0}(\MA,\MB)[d+1]$ of degree $0$, the \textbf{\textcolor{ultramarine}{multinecklace composition of $s_{d+1}M_{\MA}$ and $s_{d+1}\mathbf{
F}$}} is the element 
    \[s_{d+1}\mathbf{F}\upperset{\multinec}{\circ} s_{d+1}M_{\MA}\in \Multi^{\bullet}_{d,F_0}(\MA,\MB)[d+1]
    \]
    given by 
    \[(s_{d+1}\mathbf{F}\upperset{\multinec}{\circ}s_{d+1}M_{\MA})^{\doubar{x}}=\sum\mathcal{E}(\mathcalboondox{D})\] for $\doubar{x}\in\MO_{\MA}$, where the sum is over all the filled diagrams $\mathcalboondox{D}$ of type $\doubar{x}$ of the form
\begin{equation}
\begin{tikzpicture}[line cap=round,line join=round,x=1.0cm,y=1.0cm]
\clip(-7.4,-1.2) rectangle (10.586743541133627,1.7);
 \draw [line width=0.5pt] (0.,0.) circle (0.5cm);
     \shadedraw[rotate=30,shift={(0.5cm,0cm)}] \doublefleche;
     \shadedraw[rotate=150,shift={(0.5cm,0cm)}] \doublefleche;
     \draw [line width=0.5pt] (0,1.3) circle (0.3cm);
     \shadedraw [shift={(0cm,1cm)},rotate=-90] \doubleflechescindeeleft;
     \shadedraw [shift={(0cm,1cm)},rotate=-90] \doubleflechescindeeright;
     \shadedraw [shift={(0cm,1cm)},rotate=-90] \fleche;
     \draw [->,> = stealth] (0.3,1.3)--(0.6,1.3);
     \draw [->,> = stealth] (-0.3,1.3)--(-0.6,1.3);
     \draw [line width=0.5pt] (1.12,-0.65) circle (0.3cm);
     \shadedraw[shift={(0.86cm,-0.5cm)},rotate=150] \doubleflechescindeeleft;
     \shadedraw[shift={(0.86cm,-0.5cm)},rotate=150] \doubleflechescindeeright;
     \shadedraw[shift={(0.86cm,-0.5cm)},rotate=150] \fleche;
     \draw [rotate around ={60:(1.12,-0.65)}] [->,> = stealth] (1.43,-0.65)--(1.73,-0.65);
     \draw [rotate around ={-120:(1.12,-0.65)}] [->,> = stealth] (1.43,-0.65)--(1.73,-0.65);
     \draw [line width=0.5pt] (-1.12,-0.65) circle (0.3cm);
     \shadedraw[shift={(-0.86cm,-0.5cm)},rotate=30] \doubleflechescindeeleft;
      \shadedraw[shift={(-0.86cm,-0.5cm)},rotate=30] \doubleflechescindeeright;
       \shadedraw[shift={(-0.86cm,-0.5cm)},rotate=30] \fleche;
      \draw [rotate around ={-60:(-1.12,-0.65)}] [->,> = stealth] (-1.43,-0.65)--(-1.73,-0.65);
     \draw [rotate around ={120:(-1.12,-0.65)}] [->,> = stealth] (-1.43,-0.65)--(-1.73,-0.65);
\begin{scriptsize}
\draw [fill=black] (0,1.7) circle (0.3pt);
\draw [fill=black] (0.2,1.65) circle (0.3pt);
\draw [fill=black] (-0.2,1.65) circle (0.3pt);
\draw [fill=black] (0,-0.6) circle (0.3pt);
\draw [fill=black] (0.2,-0.55) circle (0.3pt);
\draw [fill=black] (-0.2,-0.55) circle (0.3pt);
\draw [fill=black] (1.45,-0.85) circle (0.3pt);
\draw [fill=black] (1.5,-0.67) circle (0.3pt);
\draw [fill=black] (1.33,-0.97) circle (0.3pt);
\draw [fill=black] (-1.45,-0.85) circle (0.3pt);
\draw [fill=black] (-1.5,-0.67) circle (0.3pt);
\draw [fill=black] (-1.33,-0.97) circle (0.3pt);
\end{scriptsize}
\draw (0,0.25)node[anchor=north]{$M_{\MA}$};
\draw (0,1.55)node[anchor=north]{$\mathbf{F}$};
\draw (-1.12,-0.4)node[anchor=north]{$\mathbf{F}$};
\draw (1.12,-0.4)node[anchor=north]{$\mathbf{F}$};
\end{tikzpicture}
\label{fig:morph-1}
\end{equation}
\noindent
\end{definition}

\begin{definition}
 Given $d$-pre-Calabi-Yau categories $(\MA,s_{d+1}M_{\MA})$ and $(\MB,s_{d+1}M_{\MB})$ as well as a map $F_0:\MO_{\MA} \rightarrow\MO_{\MB}$ between their respective sets of objects and $s_{d+1}\mathbf{F}\in \Multi^{\bullet}_{d,F_0}(\MA,\MB)[d+1]$ of degree $0$, the \textbf{\textcolor{ultramarine}{pre composition of $s_{d+1}\mathbf{F}$ and $s_{d+1}M_{\MB}$}} is the element \[s_{d+1}M_{\MB}\upperset{\pre}{\circ}s_{d+1}\mathbf{F}\in \Multi^{\bullet}_{d,F_0}(\MA,\MB)[d+1]\] given by \[(s_{d+1}M_{\MB}\upperset{\pre}{\circ}s_{d+1}\mathbf{F})^{\doubar{x}}=\sum\mathcal{E}(\mathcal{D'})\] for $\doubar{x}\in\MO_{\MA}$, where the sum is over all the filled diagrams $\mathcalboondox{D'}$ of type $\doubar{x}$ of the form

\begin{equation}
\begin{tikzpicture}[line cap=round,line join=round,x=1.0cm,y=1.0cm]
\clip(-4.6,-1) rectangle (4.579407206898253,1.5);
     \draw [line width=0.5pt] (0.,0.) circle (0.5cm);
     \draw [rotate=90] [->,> = stealth] (0.5,0)--(0.9,0);
     \draw [rotate=-30] [->,> = stealth] (0.5,0)--(0.9,0);
     \draw [rotate=-150] [->,> = stealth] (0.5,0)--(0.9,0);
     \draw [rotate=0] [<-,> = stealth] (0.5,0)--(0.9,0);
     \draw [line width=0.5pt] (1.15,0.) circle (0.25cm);
     \draw[rotate around={60:(1.15,0)}] [->,> = stealth] (1.4,0)--(1.7,0);
      \draw[rotate around={-60:(1.15,0)}] [->,> = stealth] (1.4,0)--(1.7,0);
      \shadedraw[rotate around={120:(1.15,0)}, shift={(1.4cm,0cm)}] \doublefleche;
       \shadedraw[shift={(1.4cm,0cm)}] \doublefleche;
     \draw [rotate=60] [<-,> = stealth] (0.5,0)--(0.9,0);
       \draw [rotate=180] [<-,> = stealth] (0.5,0)--(0.9,0);
      \draw [line width=0.5pt] (-1.15,0.) circle (0.25cm);
     \draw[rotate around={-60:(-1.15,0)}] [->,> = stealth] (-1.4,0)--(-1.7,0);
      \draw[rotate around={60:(-1.15,0)}] [->,> = stealth] (-1.4,0)--(-1.7,0);
      \shadedraw[rotate around={-60:(-1.15,0)}, shift={(-0.9cm,0cm)}] \doublefleche;
       \shadedraw[rotate around={180:(-1.15,0)},shift={(-0.9cm,0cm)}] \doublefleche;
        \draw [rotate=120] [<-,> = stealth] (0.5,0)--(0.9,0);
        \draw [line width=0.5pt] (0.575,1) circle (0.25cm);
        \draw[rotate around={120:(0.575,1)}] [->,> = stealth] (0.825,1)--(1.125,1);
      \draw[rotate around={0:(0.575,1)}] [->,> = stealth] (0.825,1)--(1.125,1);
      \shadedraw[rotate around={60:(0.575,1)}, shift={(0.825cm,1cm)}] \doublefleche;
       \shadedraw[rotate around={180:(0.575,1)},shift={(0.825cm,1cm)}] \doublefleche;
        \draw [line width=0.5pt] (-0.575,1) circle (0.25cm);
      \draw[rotate around={-120:(-0.575,1)}] [->,> = stealth] (-0.825,1)--(-1.125,1);
      \draw[rotate around={0:(-0.575,1)}] [->,> = stealth] (-0.825,1)--(-1.125,1);
      \shadedraw[rotate around={-120:(-0.575,1)}, shift={(-0.325cm,1cm)}] \doublefleche;
       \shadedraw[rotate around={120:(-0.575,1)},shift={(-0.325cm,1cm)}] \doublefleche;
\begin{scriptsize}
\draw [fill=black] (0.65,0.7) circle (0.3pt);
\draw [fill=black] (0.79,0.75) circle (0.3pt);
\draw [fill=black] (0.88,0.85) circle (0.3pt);
\draw [fill=black] (-0.25,1) circle (0.3pt);
\draw [fill=black] (-0.29,0.85) circle (0.3pt);
\draw [fill=black] (-0.29,1.15) circle (0.3pt);
\draw [fill=black] (-1,0.3) circle (0.3pt);
\draw [fill=black] (-0.9,0.2) circle (0.3pt);
\draw [fill=black] (-1.15,0.3) circle (0.3pt);
\draw [fill=black] (1,-0.3) circle (0.3pt);
\draw [fill=black] (1.15,-0.3) circle (0.3pt);
\draw [fill=black] (0.9,-0.2) circle (0.3pt);
\draw [fill=black] (0,-0.6) circle (0.3pt);
\draw [fill=black] (0.2,-0.55) circle (0.3pt);
\draw [fill=black] (-0.2,-0.55) circle (0.3pt);
\draw [rotate=120][fill=black] (0,-0.6) circle (0.3pt);
\draw [rotate=120][fill=black] (0.15,-0.55) circle (0.3pt);
\draw [rotate=120][fill=black] (-0.15,-0.55) circle (0.3pt);
\draw [rotate=240][fill=black] (0,-0.6) circle (0.3pt);
\draw [rotate=240][fill=black] (0.15,-0.55) circle (0.3pt);
\draw [rotate=240][fill=black] (-0.15,-0.55) circle (0.3pt);
\end{scriptsize}
\draw (0,0.25)node[anchor=north]{$M_{\MB}$};
\draw (-1.15,0.25)node[anchor=north]{$\mathbf{F}$};
\draw (1.15,0.25)node[anchor=north]{$\mathbf{F}$};
\draw (0.575,1.25)node[anchor=north]{$\mathbf{F}$};
\draw (-0.575,1.25)node[anchor=north]{$\mathbf{F}$};
\end{tikzpicture}
\label{fig:morph-2}
\end{equation}
\noindent
\end{definition}

\begin{definition}
\label{def:pcY-morphism}
Given $d$-pre-Calabi-Yau categories $(\MA,s_{d+1}M_{\MA})$ and $(\MB,s_{d+1}M_{\MB})$ with respective sets of objects $\MO_{\MA}$ and $\MO_{\MB}$ a \textbf{\textcolor{ultramarine}{$d$-pre-Calabi-Yau morphism}} is a pair $(F_0,s_{d+1}\mathbf{F})$ where $F_0$ is a map $\MO_{\MA}\rightarrow \MO_{\MB}$ and $s_{d+1}\mathbf{F}\in \Multi^{\bullet}_{d,F_0}(\MA,\MB)^{C_{\llg(\bullet)}}[d+1]$
is of degree $0$ satisfying the following equation
\begin{equation}
\label{eq:morphism-pCY}
\tag{$\operatorname{MI}^{\doubar{x}}$}
(s_{d+1}\mathbf{F}\upperset{\multinec}{\circ} s_{d+1}M_{\MA})^{\doubar{x}}= (s_{d+1}M_{\MB}\upperset{\pre}{\circ} s_{d+1}\mathbf{F})^{\doubar{x}}
\end{equation}
for all $\doubar{x}\in\doubar{\MO}_{\MA}$.
Note that the left member and right member of the previous identity belong to
\[
\Homgr_{\kk}\bigg(\bigotimes\limits_{i=1}^{n}\MA[1]^{\otimes\bar{x}^{i}}, \bigotimes\limits_{i=1}^{n-1}{}_{F_0(\llt(\bar{x}^{i}))}\MB_{F_0(\rrt(\bar{x}^{i+1}))}[-d]\otimes{}_{F_0(\llt(\bar{x}^{n}))}\MB_{F_0(\rrt(\bar{x}^{1}))}[1]\bigg).
\]
\end{definition}

We now recall how to compose $d$-pre-Calabi-Yau morphisms.
\begin{definition}
\label{def:cp-pCY}
Let $(\MA,s_{d+1}M_{\MA})$, $(\MB,s_{d+1}M_{\MB})$ and $(\mathcal{C},s_{d+1}M_{\mathcal{C}})$ be $d$-pre-Calabi-Yau categories and consider $(F_0,s_{d+1}\mathbf{F}) :(\MA,s_{d+1}M_{\MA})\rightarrow (\MB,s_{d+1}M_{\MB})$ and $(G_0,s_{d+1}\mathbf{G}) :(\MB,s_{d+1}M_{\MB})\rightarrow (\mathcal{C},s_{d+1}M_{\mathcal{C}})$ two $d$-pre-Calabi-Yau morphisms.
The \textbf{\textcolor{ultramarine}{composition of $s_{d+1}\mathbf{F}$ and $s_{d+1}\mathbf{G}$}} is the pair \[(G_0\circ F_0,s_{d+1}\mathbf{G}\circ s_{d+1}\mathbf{F})\] where
\[s_{d+1}\mathbf{G}\circ s_{d+1}\mathbf{F}\in \Multi^{\bullet}_{d,G_0\circ F_0}(\MA,\mathcal{C})^{C_{\llg(\bullet)}}[d+1]\] is of degree $0$ and is given by $(s_{d+1}\mathbf{G}\circ s_{d+1}\mathbf{F})^{\doubar{x}}=\sum\mathcal{E}(\mathcalboondox{D})$ where the sum is over all filled diagrams $\mathcalboondox{D}$ of type $\doubar{x}\in\doubar{\MO}_{\MA}$ of the form
\begin{equation}
\begin{tikzpicture}[line cap=round,line join=round,x=1.0cm,y=1.0cm]
\clip(-7.5,-3) rectangle (11.248895821273946,1);
     \draw [line width=0.5pt] (0.,0.) circle (0.5cm);
     \draw [rotate=30] [->,> = stealth] (0.5,0)--(0.9,0);
     \draw [rotate=-90] [->,> = stealth] (0.5,0)--(0.9,0);
     \draw [rotate=150] [->,> = stealth] (0.5,0)--(0.9,0);
     \draw [rotate=0] [<-,> = stealth] (0.5,0)--(0.9,0);
     \draw [line width=0.5pt] (1.2,0.) circle (0.3cm);
     \draw [->,> = stealth] (1.5,0)--(1.9,0);
     \shadedraw[rotate around={90:(1.2,0)},shift={(1.5cm,0cm)}] \doublefleche;
     \shadedraw[rotate around={-90:(1.2,0)},shift={(1.5cm,0cm)}] \doublefleche;
      \draw [line width=0.5pt] (2.4,0.) circle (0.5cm);
      \draw [rotate around={-90:(2.4,0)}] [<-,> = stealth] (2.9,0)--(3.3,0);
      \draw [rotate around={-45:(2.4,0)}] [->,> = stealth] (2.9,0)--(3.3,0);
    \draw [rotate around={135:(2.4,0)}] [->,> = stealth] (2.9,0)--(3.3,0);
     \draw [line width=0.5pt] (2.4,-1.2) circle (0.3cm);
     \shadedraw[rotate around={-90:(2.4,-1.2)},shift={(2.7cm,-1.2cm)}] \doublefleche;
     \draw [rotate=-60] [<-,> = stealth] (0.5,0)--(0.9,0);
      \draw [line width=0.5pt] (0.6,-1.03) circle (0.3cm);
     \shadedraw[rotate around={-60:(0.6,-1.03)},shift={(0.9cm,-1.03cm)}] \doublefleche;
     \draw [line width=0.5pt] (-1.2,0.) circle (0.3cm);
     \draw [<-,> = stealth] (-0.5,0)--(-0.9,0);
     \shadedraw[rotate around={180:(-1.2,0)},shift={(-0.9cm,0cm)}] \doublefleche;
     \draw [rotate=-120] [<-,> = stealth] (0.5,0)--(0.9,0);
    \draw [line width=0.5pt] (-0.6,-1.03) circle (0.3cm);
     \shadedraw[rotate around={-120:(-0.6,-1.03)},shift={(-0.3cm,-1.03cm)}] \doublefleche;
     \shadedraw[rotate around={-240:(-0.6,-1.03)},shift={(-0.3cm,-1.03cm)}] \doublefleche;
     \draw[rotate around={180:(-0.6,-1.03)}][->,> = stealth] (-0.3,-1.03)--(0.1,-1.03);
     \draw[rotate around={-60:(-0.6,-1.03)}][->,> = stealth] (-0.3,-1.03)--(0.1,-1.03);
      \draw [line width=0.5pt] (-1.8,-1.03) circle (0.5cm);
      \draw [rotate around={120:(-1.8,-1.03)}] [<-,> = stealth] (-1.3,-1.03)--(-0.9,-1.03);
       \draw [rotate around={-60:(-1.8,-1.03)}] [->,> = stealth] (-1.3,-1.03)--(-0.9,-1.03);
        \draw [rotate around={180:(-1.8,-1.03)}] [->,> = stealth] (-1.3,-1.03)--(-0.9,-1.03);
     \draw [line width=0.5pt] (-2.4,0.) circle (0.3cm);
     \shadedraw[rotate around={120:(-2.4,0)},shift={(-2.1cm,0cm)}] \doublefleche;
     \draw [line width=0.5pt] (0,-2.06) circle (0.5cm);
      \draw [rotate around={-60:(0,-2.06)}] [->,> = stealth] (0.5,-2.06)--(0.9,-2.06);
\begin{scriptsize}
\draw [fill=black] (0,0.6) circle (0.3pt);
\draw [fill=black] (0.2,0.55) circle (0.3pt);
\draw [fill=black] (-0.2,0.55) circle (0.3pt);
\draw [fill=black] (0.5,-0.3) circle (0.3pt);
\draw [fill=black] (0.4,-0.4) circle (0.3pt);
\draw [fill=black] (0.55,-0.18) circle (0.3pt);
\draw [fill=black] (-0.5,-0.3) circle (0.3pt);
\draw [fill=black] (-0.4,-0.4) circle (0.3pt);
\draw [fill=black] (-0.55,-0.18) circle (0.3pt);
\draw [fill=black] (1.98,-0.4) circle (0.3pt);
\draw [fill=black] (1.88,-0.26) circle (0.3pt);
\draw [fill=black] (2.12,-0.5) circle (0.3pt);
\draw [fill=black] (2.8,0.4) circle (0.3pt);
\draw [fill=black] (2.95,0.2) circle (0.3pt);
\draw [fill=black] (2.6,0.55) circle (0.3pt);
\draw [fill=black] (-0.2,-1.03) circle (0.3pt);
\draw [fill=black] (-0.25,-0.9) circle (0.3pt);
\draw [fill=black] (-0.25,-1.16) circle (0.3pt);
\draw [fill=black] (0.3,-1.55) circle (0.3pt);
\draw [fill=black] (0.5,-1.7) circle (0.3pt);
\draw [fill=black] (0.6,-1.9) circle (0.3pt);
\draw [rotate around={180:(0,-2.06)}][fill=black] (0.6,-1.9) circle (0.3pt);
\draw [rotate around={180:(0,-2.06)}][fill=black] (0.5,-1.7) circle (0.3pt);
\draw [rotate around={180:(0,-2.06)}][fill=black] (0.3,-1.55) circle (0.3pt);
\draw [fill=black] (-1.3,-0.7) circle (0.3pt);
\draw [fill=black] (-1.45,-0.55) circle (0.3pt);
\draw [fill=black] (-1.67,-0.45) circle (0.3pt);
\draw [rotate around={180:(-1.8,-1.03)}][fill=black] (-1.3,-0.7) circle (0.3pt);
\draw [rotate around={180:(-1.8,-1.03)}][fill=black] (-1.45,-0.55) circle (0.3pt); circle (0.3pt);
\draw [rotate around={180:(-1.8,-1.03)}][fill=black] (-1.67,-0.45) circle (0.3pt);
\end{scriptsize}
\draw (0,0.25)node[anchor=north]{$\mathbf{G}$};
\draw (0,-1.8)node[anchor=north]{$\mathbf{G}$};
\draw (2.4,0.25)node[anchor=north]{$\mathbf{G}$};
\draw (-1.8,-0.78)node[anchor=north]{$\mathbf{G}$};
\draw (1.2,0.25)node[anchor=north]{$\mathbf{F}$};
\draw (-1.2,0.25)node[anchor=north]{$\mathbf{F}$};
\draw (2.4,-0.95)node[anchor=north]{$\mathbf{F}$};
\draw (-0.6,-0.78)node[anchor=north]{$\mathbf{F}$};
\draw (-2.4,0.25)node[anchor=north]{$\mathbf{F}$};
\draw (0.6,-0.78)node[anchor=north]{$\mathbf{F}$};
\end{tikzpicture}
    \label{fig:composition}
\end{equation}
More precisely, the sum is over all the diagrams $\mathcalboondox{D}$ of type $\doubar{x}$ where the discs are filled with either $\mathbf{F}$ or $\mathbf{G}$ such that each outgoing arrow of a disc filled with $\mathbf{F}$ is connected to an incoming arrow of a disc filled with $\mathbf{G}$ and each incoming arrow of a disc filled with $\mathbf{G}$ is connected to an outgoing arrow of a disc filled with $\mathbf{F}$. In particular, the incoming (resp. outgoing) arrows of the diagram $\mathcalboondox{D}$ are incoming (resp. outgoing) arrows of a disc filled with $\mathbf{F}$ (resp. $\mathbf{G}$).
\end{definition}

\begin{proposition}
\label{prop:pCY-category}
The data of $d$-pre-Calabi-Yau categories together with $d$-pre-Calabi-Yau morphisms and their composition define a category, denoted by $\pCY$. Given a graded quiver $\MA$ with set of objects $\MO$, the identity morphism $(\MA,s_{d+1}M_{\MA})\rightarrow (\MA,s_{d+1}M_{\MA})$ is given by $\operatorname{
Id}^{x,y}=\id_{{}_{x}\MA_{y}}$ for $x,y\in\MO$ and $\operatorname{Id}^
{\bar{x}^1,\dots,\bar{x}^n}=0$ for $(\bar{x}^1,\dots,\bar{x}^n)\in\bar{\MO}^n$ such that $n\neq 1$ or $n=1$ and $\llg(\bar{x}^1)>2$.
\end{proposition}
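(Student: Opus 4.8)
The plan is to verify the three category axioms: that the composite of two $d$-pre-Calabi-Yau morphisms again satisfies the morphism equation \eqref{eq:morphism-pCY}, that $\underset{pCY}{\circ}$ is associative, and that $\mathbf{\operatorname{Id}}$ is a two-sided unit; I would also check that the composite stays in the invariant subspace $\Multi^{\bullet}(\MA[1],\mathcal{C}[-d])^{C_{\llg(\bullet)}}[d+1]$, which is immediate since the defining sum of $\underset{pCY}{\circ}$ ranges over all rotations of its filled diagrams. I would start with the identity, as it is the most transparent. Since $\operatorname{Id}^x=\id_{{}_x\MA_x}$ and all higher components vanish, the only filled diagrams contributing to either side of \eqref{eq:morphism-pCY} for $(\id,\mathbf{\operatorname{Id}})$ are those in which every $\mathbf{\operatorname{Id}}$-disc has length one; both $(s_{d+1}\mathbf{\operatorname{Id}}\underset{\multinec}{\circ}M_{\MA})^{\doubar{x}}$ and $(M_{\MA}\underset{\pre}{\circ}s_{d+1}\mathbf{\operatorname{Id}})^{\doubar{x}}$ then collapse to $s_{d+1}M_{\MA}^{\doubar{x}}$, so $\mathbf{\operatorname{Id}}$ is a morphism.

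The central step is closure of composition. Given $(F_0,\mathbf{F}):\MA\to\MB$ and $(G_0,\mathbf{G}):\MB\to\mathcal{C}$, I would expand $\bigl(s_{d+1}(\mathbf{F}\underset{pCY}{\circ}\mathbf{G})\underset{\multinec}{\circ}M_{\MA}\bigr)^{\doubar{x}}$ as a sum over filled diagrams in which a central $\mathbf{G}$-disc has its incoming arrows fed by copies of $\mathbf{F}$, with $M_{\MA}$-discs inserted at the outermost inputs, and then propagate the $M_{\MA}$-insertions inwards. Applying \eqref{eq:morphism-pCY} for $\mathbf{F}$ locally at each $\mathbf{F}$-disc replaces $\mathbf{F}\underset{\multinec}{\circ}M_{\MA}$ by $M_{\MB}\underset{\pre}{\circ}\mathbf{F}$, thereby moving each outer $M_{\MA}$ past the $\mathbf{F}$'s and converting it into an $M_{\MB}$ lying between the $\mathbf{F}$-discs and the central $\mathbf{G}$. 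Collecting these $M_{\MB}$-discs around $\mathbf{G}$ and applying \eqref{eq:morphism-pCY} for $\mathbf{G}$ converts $\mathbf{G}\underset{\multinec}{\circ}M_{\MB}$ into $M_{\mathcal{C}}\underset{\pre}{\circ}\mathbf{G}$, which is exactly $\bigl(M_{\mathcal{C}}\underset{\pre}{\circ}s_{d+1}(\mathbf{F}\underset{pCY}{\circ}\mathbf{G})\bigr)^{\doubar{x}}$. I expect this to be the main obstacle: one must verify that the combinatorics of how the $M_{\MA}$-insertions distribute over the nested $\mathbf{F}$- and $\mathbf{G}$-discs matches that of the $M_{\mathcal{C}}$-insertions after the substitutions, and that the Koszul signs accumulated by the evaluation rules of Definitions \ref{def:ev-bold-sink} and \ref{def:ev-bold-source} agree term by term on both sides.

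For associativity I would take three composable morphisms $\mathbf{F}:\MA\to\MB$, $\mathbf{G}:\MB\to\mathcal{C}$, $\mathbf{H}:\mathcal{C}\to\mathcal{D}$ and compute both $(\mathbf{F}\underset{pCY}{\circ}\mathbf{G})\underset{pCY}{\circ}\mathbf{H}$ and $\mathbf{F}\underset{pCY}{\circ}(\mathbf{G}\underset{pCY}{\circ}\mathbf{H})$. Each unfolds into a sum over the same family of three-layer filled diagrams, in which copies of $\mathbf{F}$ feed copies of $\mathbf{G}$ which in turn feed a central $\mathbf{H}$-disc; since the evaluation $\mathcal{E}$ of a filled diagram is defined inductively by removing one disc at a time and is independent of the order of removal, the two bracketings yield the same element, the remaining point being that they index the same diagrams with identical signs. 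Finally, the unit laws $\mathbf{\operatorname{Id}}\underset{pCY}{\circ}\mathbf{F}=\mathbf{F}=\mathbf{F}\underset{pCY}{\circ}\mathbf{\operatorname{Id}}$ follow as in the identity check: the only surviving composition diagrams are those in which the inserted $\mathbf{\operatorname{Id}}$-discs have length one, and contracting them away leaves every $\mathbf{F}$-disc untouched with trivial sign, recovering $s_{d+1}\mathbf{F}^{\doubar{x}}$.
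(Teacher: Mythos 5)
Your proposal is correct and follows essentially the same route as the paper: the paper proves closure under composition by exactly your two-step substitution, reorganizing each diagram into an ``inner'' part (the $M_{\MA}$-disc together with all $\mathbf{F}$-discs attached to it) and an ``outer'' part, summing over inner configurations of fixed type so that the morphism identity for $\mathbf{F}$ replaces the inner part by $M_{\MB}$ fed by $\mathbf{F}$-discs, and then repeating the argument with $\mathbf{G}$ to produce $M_{\mathcal{C}}$ fed by $\mathbf{G}$-discs. The only difference is that the paper dismisses associativity as clear and never writes out the identity and unit checks, whereas you verify these (correctly, and easily, since all higher components of $\mathbf{\operatorname{Id}}$ vanish); note only the minor slips that a multinecklace composition contains a single $M_{\MA}$-disc, not several, and that the morphism identity is applied to the whole inner diagram at once rather than ``locally at each $\mathbf{F}$-disc.''
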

\begin{proof}
We have to show that the composition is associative and that the composition of any two $d$-pre-Calabi-Yau morphisms is a $d$-pre-Calabi-Yau morphism. 
We first consider $d$-pre-Calabi-Yau morphisms $(F_0,s_{d+1}\mathbf{F}) :(\MA,s_{d+1}M_{\MA})\rightarrow (\MB,s_{d+1}M_{\MB})$, $(G_0,s_{d+1}\mathbf{G}) :(\MB,s_{d+1}M_{\MB})\rightarrow (\mathcal{C},s_{d+1}M_{\mathcal{C}})$ and $(H_0,s_{d+1}\mathbf{H}) :(\mathcal{C},s_{d+1}M_{\mathcal{C}})\rightarrow (\mathcal{D},s_{d+1}M_{\mathcal{D}})$. It is clear that $H_0\circ(G_0\circ F_0)=(H_0\circ G_0)\circ F_0$. By definition, we have 
\[\big(s_{d+1}\mathbf{H}\circ(s_{d+1}\mathbf{G}\circ s_{d+1}\mathbf{F})\big)^{\doubar{x}}=\sum\mathcal{E}(\mathcalboondox{D})\] where the sum is over all the filled diagrams $\mathcalboondox{D}$ of type $\doubar{x}$ and of the form

\begin{tikzpicture}[line cap=round,line join=round,x=1.0cm,y=1.0cm]
\clip(-7.5,-3) rectangle (11.248895821273946,1);
     \draw [line width=0.5pt] (0.,0.) circle (0.5cm);
     \draw [rotate=30] [->,> = stealth] (0.5,0)--(0.9,0);
     \draw [rotate=-90] [->,> = stealth] (0.5,0)--(0.9,0);
     \draw [rotate=150] [->,> = stealth] (0.5,0)--(0.9,0);
     \draw [rotate=0] [<-,> = stealth] (0.5,0)--(0.9,0);
     \draw [line width=0.5pt] (1.2,0.) circle (0.3cm);
     \draw [->,> = stealth] (1.5,0)--(1.9,0);
     \shadedraw[rotate around={90:(1.2,0)},shift={(1.5cm,0cm)}] \doublefleche;
     \shadedraw[rotate around={-90:(1.2,0)},shift={(1.5cm,0cm)}] \doublefleche;
      \draw [line width=0.5pt] (2.4,0.) circle (0.5cm);
      \draw [rotate around={-90:(2.4,0)}] [<-,> = stealth] (2.9,0)--(3.3,0);
      \draw [rotate around={-45:(2.4,0)}] [->,> = stealth] (2.9,0)--(3.3,0);
    \draw [rotate around={135:(2.4,0)}] [->,> = stealth] (2.9,0)--(3.3,0);
     \draw [line width=0.5pt] (2.4,-1.2) circle (0.3cm);
     \shadedraw[rotate around={-90:(2.4,-1.2)},shift={(2.7cm,-1.2cm)}] \doublefleche;
     \draw [rotate=-60] [<-,> = stealth] (0.5,0)--(0.9,0);
      \draw [line width=0.5pt] (0.6,-1.03) circle (0.3cm);
     \shadedraw[rotate around={-60:(0.6,-1.03)},shift={(0.9cm,-1.03cm)}] \doublefleche;
     \draw [line width=0.5pt] (-1.2,0.) circle (0.3cm);
     \draw [<-,> = stealth] (-0.5,0)--(-0.9,0);
     \shadedraw[rotate around={180:(-1.2,0)},shift={(-0.9cm,0cm)}] \doublefleche;
     \draw [rotate=-120] [<-,> = stealth] (0.5,0)--(0.9,0);
    \draw [line width=0.5pt] (-0.6,-1.03) circle (0.3cm);
     \shadedraw[rotate around={-120:(-0.6,-1.03)},shift={(-0.3cm,-1.03cm)}] \doublefleche;
     \shadedraw[rotate around={-240:(-0.6,-1.03)},shift={(-0.3cm,-1.03cm)}] \doublefleche;
     \draw[rotate around={180:(-0.6,-1.03)}][->,> = stealth] (-0.3,-1.03)--(0.1,-1.03);
     \draw[rotate around={-60:(-0.6,-1.03)}][->,> = stealth] (-0.3,-1.03)--(0.1,-1.03);
      \draw [line width=0.5pt] (-1.8,-1.03) circle (0.5cm);
      \draw [rotate around={120:(-1.8,-1.03)}] [<-,> = stealth] (-1.3,-1.03)--(-0.9,-1.03);
       \draw [rotate around={-60:(-1.8,-1.03)}] [->,> = stealth] (-1.3,-1.03)--(-0.9,-1.03);
        \draw [rotate around={180:(-1.8,-1.03)}] [->,> = stealth] (-1.3,-1.03)--(-0.9,-1.03);
     \draw [line width=0.5pt] (-2.4,0.) circle (0.3cm);
     \shadedraw[rotate around={120:(-2.4,0)},shift={(-2.1cm,0cm)}] \doublefleche;
     \draw [line width=0.5pt] (0,-2.06) circle (0.5cm);
      \draw [rotate around={-60:(0,-2.06)}] [->,> = stealth] (0.5,-2.06)--(0.9,-2.06);
\begin{scriptsize}
\draw [fill=black] (0,0.6) circle (0.3pt);
\draw [fill=black] (0.2,0.55) circle (0.3pt);
\draw [fill=black] (-0.2,0.55) circle (0.3pt);
\draw [fill=black] (0.5,-0.3) circle (0.3pt);
\draw [fill=black] (0.4,-0.4) circle (0.3pt);
\draw [fill=black] (0.55,-0.18) circle (0.3pt);
\draw [fill=black] (-0.5,-0.3) circle (0.3pt);
\draw [fill=black] (-0.4,-0.4) circle (0.3pt);
\draw [fill=black] (-0.55,-0.18) circle (0.3pt);
\draw [fill=black] (1.98,-0.4) circle (0.3pt);
\draw [fill=black] (1.88,-0.26) circle (0.3pt);
\draw [fill=black] (2.12,-0.5) circle (0.3pt);
\draw [fill=black] (2.8,0.4) circle (0.3pt);
\draw [fill=black] (2.95,0.2) circle (0.3pt);
\draw [fill=black] (2.6,0.55) circle (0.3pt);
\draw [fill=black] (-0.2,-1.03) circle (0.3pt);
\draw [fill=black] (-0.25,-0.9) circle (0.3pt);
\draw [fill=black] (-0.25,-1.16) circle (0.3pt);
\draw [fill=black] (0.3,-1.55) circle (0.3pt);
\draw [fill=black] (0.5,-1.7) circle (0.3pt);
\draw [fill=black] (0.6,-1.9) circle (0.3pt);
\draw [rotate around={180:(0,-2.06)}][fill=black] (0.6,-1.9) circle (0.3pt);
\draw [rotate around={180:(0,-2.06)}][fill=black] (0.5,-1.7) circle (0.3pt);
\draw [rotate around={180:(0,-2.06)}][fill=black] (0.3,-1.55) circle (0.3pt);
\draw [fill=black] (-1.3,-0.7) circle (0.3pt);
\draw [fill=black] (-1.45,-0.55) circle (0.3pt);
\draw [fill=black] (-1.67,-0.45) circle (0.3pt);
\draw [rotate around={180:(-1.8,-1.03)}][fill=black] (-1.3,-0.7) circle (0.3pt);
\draw [rotate around={180:(-1.8,-1.03)}][fill=black] (-1.45,-0.55) circle (0.3pt); circle (0.3pt);
\draw [rotate around={180:(-1.8,-1.03)}][fill=black] (-1.67,-0.45) circle (0.3pt);
\end{scriptsize}
\draw (0,0.2)node[anchor=north]{$\mathbf{H}$};
\draw (0,-1.85)node[anchor=north]{$\mathbf{H}$};
\draw (2.4,0.2)node[anchor=north]{$\mathbf{H}$};
\draw (-1.8,-0.83)node[anchor=north]{$\mathbf{H}$};
\draw (1.2,0.2)node[anchor=north]{$\scriptscriptstyle{\mathbf{G}\circ\mathbf{F}}$};
\draw (-1.2,0.2)node[anchor=north]{$\scriptscriptstyle{\mathbf{G}\circ\mathbf{F}}$};
\draw (2.4,-1)node[anchor=north]{$\scriptscriptstyle{\mathbf{G}\circ\mathbf{F}}$};
\draw (-0.6,-0.83)node[anchor=north]{$\scriptscriptstyle{\mathbf{G}\circ\mathbf{F}}$};
\draw (-2.4,0.2)node[anchor=north]{$\scriptscriptstyle{\mathbf{G}\circ\mathbf{F}}$};
\draw (0.6,-0.83)node[anchor=north]{$\scriptscriptstyle{\mathbf{G}\circ\mathbf{F}}$};
\end{tikzpicture}
\noindent where a disc filled with $\mathbf{G}\circ \mathbf{F}$ of a given type is the sum of the diagrams of the same type and of the form \eqref{fig:composition}. 
On the other hand, we have that \[\big((s_{d+1}\mathbf{H}\circ s_{d+1}\mathbf{G})\circ s_{d+1}\mathbf{F})\big)^{\doubar{x}}=\sum\mathcal{E}(\mathcalboondox{D'})\] where the sum is over all the filled diagrams $\mathcalboondox{D'}$ of type $\doubar{x}$ and of the form

\begin{tikzpicture}[line cap=round,line join=round,x=1.0cm,y=1.0cm]
\clip(-7,-5.5) rectangle (11.402841460158584,1.5);
     \draw [line width=0.5pt] (0.,0.) circle (0.5cm);
     \draw [rotate=30] [->,> = stealth] (0.5,0)--(0.9,0);
     \draw [rotate=-90] [->,> = stealth] (0.5,0)--(0.9,0);
     \draw [rotate=150] [->,> = stealth] (0.5,0)--(0.9,0);
     \draw [rotate=0] [<-,> = stealth] (0.5,0)--(1.2,0);
     \draw [line width=0.5pt] (1.5,0.) circle (0.3cm);
     \draw [->,> = stealth] (1.8,0)--(2.5,0);
     \draw[rotate around={60:(1.5,0)}] [<-,> = stealth] (1.8,0)--(2.1,0);
     \draw [line width=0.5pt] (1.5+0.425,0.74) circle (0.25cm);
      \shadedraw[rotate around={60:(1.5+0.425,0.74)}, shift={(2.175cm,0.74cm)}] \doublefleche;
     \draw[rotate around={120:(1.5,0)}] [<-,> = stealth] (1.8,0)--(2.1,0);
     \draw [line width=0.5pt] (1.5-0.425,0.74) circle (0.25cm);
      \shadedraw[rotate around={120:(1.5-0.425,0.74)}, shift={(1.325cm,0.74cm)}] \doublefleche;
      \draw [line width=0.5pt] (3,0.) circle (0.5cm);
      \draw [rotate around={-90:(3,0)}] [<-,> = stealth] (3.5,0)--(3.9,0);
      \draw [rotate around={-45:(3,0)}] [->,> = stealth] (3.5,0)--(3.9,0);
    \draw [rotate around={135:(3,0)}] [->,> = stealth] (3.5,0)--(3.9,0);
     \draw [line width=0.5pt] (3,-1.2) circle (0.3cm);
     \shadedraw[rotate around={-45:(3,-1.2)}] [<-,> = stealth] (3.3,-1.2)--(3.6,-1.2);
    \draw [line width=0.5pt] (3+0.6,-1.2-0.6) circle (0.25cm);
    \shadedraw[rotate around={-45:(3+0.6,-1.2-0.6)},shift={(3.85cm,-1.8cm)}] \doublefleche;
     \shadedraw[rotate around={-135:(3,-1.2)}] [<-,> = stealth] (3.3,-1.2)--(3.6,-1.2);
     \draw [line width=0.5pt] (3-0.6,-1.2-0.6) circle (0.25cm);
     \shadedraw[rotate around={-135:(3-0.6,-1.2-0.6)},shift={(2.65cm,-1.8cm)}] \doublefleche;
     \draw [rotate=-60] [<-,> = stealth] (0.5,0)--(0.9,0);
      \draw [line width=0.5pt] (0.6,-1.03) circle (0.3cm);
     \draw[rotate around={-60:(0.6,-1.03)}] [<-,> = stealth] (0.9,-1.03)--(1.2,-1.03);
      \draw [line width=0.5pt] (0.6+0.425,-1.03-0.74) circle (0.25cm);
     \shadedraw[rotate around={-60:(0.6+0.425,-1.03-0.74)},shift={(1.275cm,-1.77cm)}] \doublefleche;
     \draw[rotate around={-60:(0.6,-1.03)}] [<-,> = stealth] (0.9,-1.03)--(1.2,-1.03);
     \draw [line width=0.5pt] (-1.2,0.) circle (0.3cm);
     \draw [<-,> = stealth] (-0.5,0)--(-0.9,0);
     \shadedraw[rotate around={120:(-1.2,0)}] [<-,> = stealth] (-0.9,0)--(-0.6,0);
      \draw [line width=0.5pt] (-1.2-0.425,0.74) circle (0.25cm);
      \shadedraw[rotate around={120:(-1.2-0.425,0.74)},shift={(-1.375cm,0.74cm)}] \doublefleche;
     \shadedraw[rotate around={-120:(-1.2,0)}] [<-,> = stealth] (-0.9,0)--(-0.6,0);
     \draw [line width=0.5pt] (-1.2-0.425,-0.74) circle (0.25cm);
     \shadedraw[rotate around={-120:(-1.2-0.425,-0.74)},shift={(-1.375cm,-0.74cm)}] \doublefleche;
     \draw [rotate=-120] [<-,> = stealth] (0.5,0)--(0.9,0);
     \draw [line width=0.5pt] (-0.6,-1.03) circle (0.3cm);
     \draw[rotate around={-120:(-0.6,-1.03)}][<-,> = stealth] (-0.3,-1.03)--(0,-1.03);
     \draw [line width=0.5pt] (-0.6-0.425,-1.03-0.74) circle (0.25cm);
     \shadedraw[rotate around={-30:(-0.6-0.425,-1.03-0.74)},shift={(-0.775cm,-1.77cm)}] \doublefleche;
     \draw[rotate around={-120:(-0.6-0.425,-1.03-0.74)}][->,> = stealth] (-0.775,-1.77)--(-0.475,-1.77);
     \draw [line width=0.5pt] (-0.6-0.425-0.425,-1.03-0.74-0.74) circle (0.3cm); 
     \draw[rotate around={-120:(-0.6-0.425-0.425,-1.03-0.74-0.74)}][->,> = stealth](-0.6-0.425-0.425+0.3,-1.03-0.74-0.74)--(-0.6-0.425-0.425+0.7,-1.03-0.74-0.74) ;
     \draw [line width=0.5pt] (-0.6-0.425-0.425-0.6,-1.03-0.74-0.74-1.04) circle (0.5cm);
     \draw [rotate around={105:(-0.6-0.425-0.425-0.6,-1.03-0.74-0.74-1.04)}] [->,> = stealth] (-0.6-0.425-0.425-0.6+0.5,-1.03-0.74-0.74-1.04) -- (-0.6-0.425-0.425-0.6+0.9,-1.03-0.74-0.74-1.04); 
      \draw [rotate around={285:(-0.6-0.425-0.425-0.6,-1.03-0.74-0.74-1.04)}] [->,> = stealth] (-0.6-0.425-0.425-0.6+0.5,-1.03-0.74-0.74-1.04) -- (-0.6-0.425-0.425-0.6+0.9,-1.03-0.74-0.74-1.04); 
       \draw [rotate around={-30:(-0.6-0.425-0.425-0.6,-1.03-0.74-0.74-1.04)}] [<-,> = stealth] (-0.6-0.425-0.425-0.6+0.5,-1.03-0.74-0.74-1.04) -- (-0.6-0.425-0.425-0.6+0.9,-1.03-0.74-0.74-1.04); 
     \draw [line width=0.5pt] (-0.6-0.425-0.425-0.6+1.04,-1.03-0.74-0.74-1.04-0.6) circle (0.3cm);
     \draw[rotate around={30:(-0.6-0.425-0.425-0.6+1.04,-1.03-0.74-0.74-1.04-0.6)}] [<-,> = stealth] (-0.6-0.425-0.425-0.6+1.04+0.3,-1.03-0.74-0.74-1.04-0.6)-- (-0.6-0.425-0.425-0.6+1.04+0.6,-1.03-0.74-0.74-1.04-0.6);
     \draw [line width=0.5pt] (-0.6-0.425-0.425-0.6+1.04+0.74,-1.03-0.74-0.74-1.04-0.6+0.425) circle (0.25cm);
     \shadedraw[rotate around ={30:(-0.6-0.425-0.425-0.6+1.04+0.74,-1.03-0.74-0.74-1.04-0.6+0.425)},shift={(-0.6-0.425-0.425-0.6+1.04+0.74+0.25,-1.03-0.74-0.74-1.04-0.6+0.425)}] \doublefleche;
     \draw[rotate around={-90:(-0.6-0.425-0.425-0.6+1.04,-1.03-0.74-0.74-1.04-0.6)}] [<-,> = stealth] (-0.6-0.425-0.425-0.6+1.04+0.3,-1.03-0.74-0.74-1.04-0.6)-- (-0.6-0.425-0.425-0.6+1.04+0.6,-1.03-0.74-0.74-1.04-0.6);
      \draw [line width=0.5pt] (-0.6-0.425-0.425-0.6+1.04,-1.03-0.74-0.74-1.04-0.6-0.85) circle (0.25cm);
     \shadedraw[rotate around ={-90:(-0.6-0.425-0.425-0.6+1.04,-1.03-0.74-0.74-1.04-0.6-0.85)},shift={(-0.6-0.425-0.425-0.6+1.04+0.25,-1.03-0.74-0.74-1.04-0.6-0.85)}] \doublefleche;
\begin{scriptsize}
\draw [fill=black] (0,0.6) circle (0.3pt);
\draw [fill=black] (0.2,0.55) circle (0.3pt);
\draw [fill=black] (-0.2,0.55) circle (0.3pt);
\draw [fill=black] (0.5,-0.3) circle (0.3pt);
\draw [fill=black] (0.4,-0.4) circle (0.3pt);
\draw [fill=black] (0.55,-0.18) circle (0.3pt);
\draw [fill=black] (-0.5,-0.3) circle (0.3pt);
\draw [fill=black] (-0.4,-0.4) circle (0.3pt);
\draw [fill=black] (-0.55,-0.18) circle (0.3pt);
\draw [fill=black] (-1.6,0) circle (0.3pt);
\draw [fill=black] (-1.55,0.2) circle (0.3pt);
\draw [fill=black] (-1.55,-0.2) circle (0.3pt);
\draw [fill=black] (1.5,-0.4) circle (0.3pt);
\draw [fill=black] (1.35,-0.35) circle (0.3pt);
\draw [fill=black] (1.65,-0.35) circle (0.3pt);
\draw [fill=black] (1.5,0.4) circle (0.3pt);
\draw [fill=black] (1.4,0.38) circle (0.3pt);
\draw [fill=black] (1.6,0.38) circle (0.3pt);
\draw [fill=black] (2.58,-0.4) circle (0.3pt);
\draw [fill=black] (2.48,-0.26) circle (0.3pt);
\draw [fill=black] (2.72,-0.5) circle (0.3pt);
\draw [fill=black] (3.4,0.4) circle (0.3pt);
\draw [fill=black] (3.55,0.2) circle (0.3pt);
\draw [fill=black] (3.2,0.55) circle (0.3pt);
\draw [fill=black] (3,-1.6) circle (0.3pt);
\draw [fill=black] (2.85,-1.55) circle (0.3pt);
\draw [fill=black] (3.15,-1.55) circle (0.3pt);
\draw [fill=black] (3.4,0.4) circle (0.3pt);
\draw [fill=black] (3.55,0.2) circle (0.3pt);
\draw [fill=black] (3.2,0.55) circle (0.3pt);
\draw [fill=black] (0.95,-0.85) circle (0.3pt);
\draw [fill=black] (0.8,-0.7) circle (0.3pt);
\draw [fill=black] (0.97,-1.05) circle (0.3pt);
\draw [rotate around={180:(0.6,-1.03)}][fill=black] (0.95,-0.85) circle (0.3pt);
\draw [rotate around={180:(0.6,-1.03)}][fill=black]  (0.8,-0.7) circle (0.3pt);
\draw [rotate around={180:(0.6,-1.03)}][fill=black] (0.97,-1.05) circle (0.3pt);
\draw [fill=black] (-0.95,-0.85) circle (0.3pt);
\draw [fill=black] (-0.8,-0.7) circle (0.3pt);
\draw [fill=black] (-0.97,-1.05) circle (0.3pt);
\draw [rotate around={180:(-0.6,-1.03)}][fill=black] (-0.95,-0.85) circle (0.3pt);
\draw [rotate around={180:(-0.6,-1.03)}][fill=black]  (-0.8,-0.7) circle (0.3pt);
\draw [rotate around={180:(-0.6,-1.03)}][fill=black] (-0.97,-1.05) circle (0.3pt);
\draw [fill=black] (-0.95-0.37,-0.85-0.74) circle (0.3pt);
\draw [fill=black] (-0.8-0.37,-0.7-0.74) circle (0.3pt);
\draw [fill=black] (-0.97-0.38,-1.05-0.74) circle (0.3pt);
\draw [fill=black] (-0.95-0.425-0.425,-0.85-0.74-0.74) circle (0.3pt);
\draw [fill=black] (-0.8-0.425-0.425,-0.7-0.74-0.74) circle (0.3pt);
\draw [fill=black] (-0.97-0.425-0.425,-1.05-0.74-0.74) circle (0.3pt);
\draw[rotate around={180:(-0.6-0.425-0.425,-1.03-0.74-0.74)}] [fill=black] (-0.95-0.425-0.425,-0.85-0.74-0.74) circle (0.3pt);
\draw[rotate around={180:(-0.6-0.425-0.425,-1.03-0.74-0.74)}] [fill=black] (-0.8-0.425-0.425,-0.7-0.74-0.74) circle (0.3pt);
\draw[rotate around={180:(-0.6-0.425-0.425,-1.03-0.74-0.74)}] [fill=black] (-0.97-0.425-0.425,-1.05-0.74-0.74) circle (0.3pt);
\draw [fill=black] (-0.6-0.425-0.425-0.6+0.5,-1.03-0.74-0.74-1.04+0.3) circle (0.3pt);
\draw [fill=black] (-0.6-0.425-0.425-0.6+0.58,-1.03-0.74-0.74-1.04+0.15) circle (0.3pt);
\draw [fill=black] (-0.6-0.425-0.425-0.6+0.6,-1.03-0.74-0.74-1.04) circle (0.3pt);
\draw [rotate around={180:(-0.6-0.425-0.425-0.6,-1.03-0.74-0.74-1.04)}][fill=black] (-0.6-0.425-0.425-0.6+0.4,-1.03-0.74-0.74-1.04+0.45) circle (0.3pt);
\draw[rotate around={180:(-0.6-0.425-0.425-0.6,-1.03-0.74-0.74-1.04)}] [fill=black] (-0.6-0.425-0.425-0.6+0.58,-1.03-0.74-0.74-1.04+0.23) circle (0.3pt);
\draw [rotate around={180:(-0.6-0.425-0.425-0.6,-1.03-0.74-0.74-1.04)}][fill=black] (-0.6-0.425-0.425-0.6+0.6,-1.03-0.74-0.74-1.04-0.1) circle (0.3pt);
\draw [fill=black] (-0.6-0.425-0.425-0.6+1.04+0.26,-1.03-0.74-0.74-1.04-0.6-0.3) circle (0.3pt);
\draw [fill=black] (-0.6-0.425-0.425-0.6+1.04+0.35,-1.03-0.74-0.74-1.04-0.6-0.2) circle (0.3pt);
\draw [fill=black] (-0.6-0.425-0.425-0.6+1.04+0.4,-1.03-0.74-0.74-1.04-0.6-0.05) circle (0.3pt);
\end{scriptsize}
\draw (0,0.25)node[anchor=north]{$\mathbf{H}$};
\draw (-0.6-0.425-0.425-0.6,-1.03-0.74-0.74-1.04+0.25)node[anchor=north]{$\mathbf{H}$};
\draw (3,0.25)node[anchor=north]{$\mathbf{H}$};
\draw (1.5,0.25)node[anchor=north]{$\mathbf{G}$};
\draw (-1.2,0.25)node[anchor=north]{$\mathbf{G}$};
\draw (3,-0.95)node[anchor=north]{$\mathbf{G}$};
\draw (-0.6,-0.78)node[anchor=north]{$\mathbf{G}$};
\draw (0.6,-0.78)node[anchor=north]{$\mathbf{G}$};
\draw (-0.6-0.425-0.425-0.6+1.04,-1.03-0.74-0.74-1.04-0.6+0.25)node[anchor=north]{$\mathbf{G}$};
\draw (-0.6-0.425-0.425,-1.03-0.74-0.74+0.25)node[anchor=north]{$\mathbf{G}$};
\draw (-0.6-0.425-0.425-0.6+1.04,-1.03-0.74-0.74-1.04-0.6-0.85+0.25)node[anchor=north]{$\mathbf{F}$};
\draw(-0.6-0.425-0.425-0.6+1.04+0.74,-1.03-0.74-0.74-1.04-0.6+0.425+0.25)node[anchor=north]{$\mathbf{F}$};
\draw (-0.6-0.425,-1.03-0.74+0.25)node[anchor=north]{$\mathbf{F}$};
\draw (-1.2-0.425,-0.74+0.25)node[anchor=north]{$\mathbf{F}$};
\draw (-1.2-0.425,0.74+0.25)node[anchor=north]{$\mathbf{F}$};
\draw (0.6+0.425,-1.03-0.74+0.25)node[anchor=north]{$\mathbf{F}$};
\draw (3-0.6,-1.2-0.6+0.25)node[anchor=north]{$\mathbf{F}$};
\draw (3-0.6,-1.2-0.6+0.25)node[anchor=north]{$\mathbf{F}$};
\draw (3+0.6,-1.2-0.6+0.25)node[anchor=north]{$\mathbf{F}$};
\draw (1.5-0.425,0.74+0.25)node[anchor=north]{$\mathbf{F}$};
\draw (1.5+0.425,0.74+0.25)node[anchor=north]{$\mathbf{F}$};
\end{tikzpicture}

Therefore, the composition is associative. Now, consider two $d$-pre-Calabi-Yau morphisms $(F_0,s_{d+1}\mathbf{F})$ and $(G_0,s_{d+1}\mathbf{G})$. 
Their composition is the pair formed by $G_0\circ F_0$ and the sum of diagrams of the form \eqref{fig:composition}.
The multinecklace composition of this sum of diagrams and the $d$-pre-Calabi-Yau structure $s_{d+1}M_{\MA}$ is a sum of diagrams of the form

\begin{equation}
\begin{tikzpicture}[line cap=round,line join=round,x=1.0cm,y=1.0cm]
\clip(-7.7,-2.7) rectangle (11.428671538829317,2.8);
     \draw [line width=0.5pt] (0.,0.) circle (0.5cm);
     \shadedraw[rotate=45,shift={(0.5cm,0cm)}] \doublefleche;
     \shadedraw[rotate=135,shift={(0.5cm,0cm)}] \doublefleche;
     \shadedraw [shift={(0cm,1cm)},rotate=-90] \doubleflechescindeeleft;
     \shadedraw [shift={(0cm,1cm)},rotate=-90] \doubleflechescindeeright;
     \shadedraw [shift={(0cm,1cm)},rotate=-90] \fleche;
     \shadedraw[shift={(0.86cm,-0.5cm)},rotate=150] \doubleflechescindeeleft;
     \shadedraw[shift={(0.86cm,-0.5cm)},rotate=150] \doubleflechescindeeright;
     \shadedraw[shift={(0.86cm,-0.5cm)},rotate=150] \fleche;
     \shadedraw[shift={(-0.86cm,-0.5cm)},rotate=30] \doubleflechescindeeleft;
      \shadedraw[shift={(-0.86cm,-0.5cm)},rotate=30] \doubleflechescindeeright;
       \shadedraw[shift={(-0.86cm,-0.5cm)},rotate=30] \fleche;
     \draw [line width=0.5pt] (0,1.3) circle (0.3cm);
     \draw [rotate around={90:(0,1.3)}] [->,> = stealth] (0.3,1.3)--(0.6,1.3);
     \draw [line width=0.5pt] (0,2.2) circle (0.3cm);
     \draw [rotate around={90:(0,2.2)}] [->,> = stealth] (0.3,2.2)--(0.6,2.2);
     \draw [line width=0.5pt] (1.12,-0.65) circle (0.3cm);
     \draw [rotate around ={60:(1.12,-0.65)}] [->,> = stealth] (1.43,-0.65)--(1.73,-0.65);
     \draw [line width=0.5pt] (1.57,0.13) circle (0.3cm);
     \draw[rotate around={-90:(1.57,0.13)}] [->,> = stealth] (1.87,0.13)--(2.17,0.13);
     \draw[rotate around={90:(1.57,0.13)}] [->,> = stealth] (1.87,0.13)--(2.17,0.13);
     \draw[rotate around={120:(1.57,0.13)}] [<-,> = stealth] (1.87,0.13)--(2.17,0.13);
     \draw [line width=0.5pt] (1.12,0.91) circle (0.3cm);
     \shadedraw[rotate around={120:(1.12,0.91)},shift={(1.42cm,0.91cm)}] \doublefleche;
     \draw [rotate around ={-120:(1.12,-0.65)}] [->,> = stealth] (1.43,-0.65)--(1.73,-0.65);
     \draw [line width=0.5pt] (0.67,-1.43) circle (0.3cm);
      \draw[rotate around={90:(0.67,-1.43)}] [->,> = stealth] (0.97,-1.43)--(1.27,-1.43);
      \draw[rotate around={-90:(0.67,-1.43)}] [->,> = stealth] (0.97,-1.43)--(1.27,-1.43);
     \draw[rotate around={-60:(0.67,-1.43)}] [<-,> = stealth] (0.97,-1.43)--(1.27,-1.43);
     \draw [line width=0.5pt] (0.67+0.45,-1.43-0.78) circle (0.3cm);
     \shadedraw[rotate around={-60:(0.67+0.45,-1.43-0.78)},shift={(1.42cm,-2.21cm)}] \doublefleche;
     \draw [line width=0.5pt] (-1.12,-0.65) circle (0.3cm);
     \draw [rotate around ={-60:(-1.12,-0.65)}] [->,> = stealth] (-1.43,-0.65)--(-1.73,-0.65);
     \draw [line width=0.5pt] (-1.12+0.45,-0.65-0.78) circle (0.3cm);
     \draw[rotate around={-60:(-1.12+0.45,-0.65-0.78)}][->,> = stealth] (-1.12+0.75,-0.65-0.78)--(-1.12+1.05,-0.65-0.78);
     \draw [rotate around ={120:(-1.12,-0.65)}] [->,> = stealth] (-1.43,-0.65)--(-1.73,-0.65);
     \draw [line width=0.5pt] (-1.12-0.45,-0.65+0.78) circle (0.3cm);
     \draw[rotate around={0:(-1.12-0.45,-0.65+0.78)}][->,> = stealth] (-1.12-0.15,-0.65+0.78)--(-1.12+0.15,-0.65+0.78);
     \draw[rotate around={180:(-1.12-0.45,-0.65+0.78)}][->,> = stealth] (-1.12-0.15,-0.65+0.78)--(-1.12+0.15,-0.65+0.78);
     \draw[rotate around={-120:(-1.12-0.45,-0.65+0.78)}][<-,> = stealth] (-1.12-0.15,-0.65+0.78)--(-1.12+0.15,-0.65+0.78);
    \draw [line width=0.5pt] (-1.12-0.9,-0.65) circle (0.3cm);
    \shadedraw[rotate around={-120:(-1.12-0.9,-0.65)},shift={(-1.12-0.6,-0.65)}] \doublefleche;
\begin{scriptsize}
\draw [fill=black] (0,-0.6) circle (0.3pt);
\draw [fill=black] (0.2,-0.55) circle (0.3pt);
\draw [fill=black] (-0.2,-0.55) circle (0.3pt);
\draw [fill=black] (1.45,-0.85) circle (0.3pt);
\draw [fill=black] (1.5,-0.67) circle (0.3pt);
\draw [fill=black] (1.33,-0.97) circle (0.3pt);
\draw [fill=black] (-1.45,-0.85) circle (0.3pt);
\draw [fill=black] (-1.5,-0.67) circle (0.3pt);
\draw [fill=black] (-1.33,-0.97) circle (0.3pt);
\draw [fill=black] (0.4,2.2) circle (0.3pt);
\draw [fill=black] (0.35,2) circle (0.3pt);
\draw [fill=black] (0.35,2.4) circle (0.3pt);
\draw [rotate around={180:(0,2.2)}] [fill=black] (0.4,2.2) circle (0.3pt);
\draw [rotate around={180:(0,2.2)}][fill=black] (0.35,2) circle (0.3pt);
\draw [rotate around={180:(0,2.2)}][fill=black] (0.35,2.4) circle (0.3pt);
\draw [fill=black] (1.97,0.1) circle (0.3pt);
\draw [fill=black] (1.93,0.25) circle (0.3pt);
\draw [fill=black] (1.92,-0.07) circle (0.3pt);
\draw [rotate around={180:(1.57,0.13)}] [fill=black] (1.97,0.1) circle (0.3pt);
\draw [rotate around={180:(1.57,0.13)}] [fill=black] (1.93,0.25) circle (0.3pt);
\draw [rotate around={180:(1.57,0.13)}] [fill=black] (1.92,-0.05) circle (0.3pt);
\draw [fill=black] (1.05,-1.43) circle (0.3pt);
\draw [fill=black] (1,-1.23) circle (0.3pt);
\draw [fill=black] (1,-1.63) circle (0.3pt);
\draw [rotate around={180:(0.67,-1.43)}] [fill=black] (1.05,-1.43) circle (0.3pt);
\draw [rotate around={180:(0.67,-1.43)}] [fill=black] (1,-1.23) circle (0.3pt);
\draw [rotate around={180:(0.67,-1.43)}] [fill=black] (1,-1.63) circle (0.3pt);
\draw [fill=black] (-1.12-0.45,-0.65+0.78+0.4) circle (0.3pt);
\draw [fill=black] (-1.12-0.45-0.2,-0.65+0.78+0.34) circle (0.3pt);
\draw [fill=black] (-1.12-0.45+0.2,-0.65+0.78+0.34) circle (0.3pt);
\draw [rotate around={180:(-1.12-0.45,-0.65+0.78)}] [fill=black] (-1.12-0.45,-0.65+0.78+0.4) circle (0.3pt);
\draw [rotate around={180:(-1.12-0.45,-0.65+0.78)}][fill=black] (-1.12-0.45-0.1,-0.65+0.78+0.38) circle (0.3pt);
\draw [rotate around={180:(-1.12-0.45,-0.65+0.78)}][fill=black] (-1.12-0.45+0.1,-0.65+0.78+0.38) circle (0.3pt);
\draw [fill=black] (-1.45+0.65-0.18,-0.85-0.65-0.18) circle (0.3pt);
\draw [fill=black] (-1.5+0.65-0.2,-0.67-0.65-0.15) circle (0.3pt);
\draw [fill=black] (-1.33+0.65-0.15,-0.97-0.65-0.18) circle (0.3pt);
\draw [rotate around={180:(-1.12+0.45,-0.65-0.78)}][fill=black] (-1.45+0.65-0.18,-0.85-0.65-0.18) circle (0.3pt);
\draw [rotate around={180:(-1.12+0.45,-0.65-0.78)}][fill=black] (-1.5+0.65-0.2,-0.67-0.65-0.15) circle (0.3pt);
\draw [rotate around={180:(-1.12+0.45,-0.65-0.78)}][fill=black] (-1.33+0.65-0.15,-0.97-0.65-0.18) circle (0.3pt);
\end{scriptsize}
\draw (0,0.25)node[anchor=north]{$M_{\MA}$};
\draw (0,1.55)node[anchor=north]{$\mathbf{F}$};
\draw (1.12,-0.4)node[anchor=north]{$\mathbf{F}$};
\draw (-1.12,-0.4)node[anchor=north]{$\mathbf{F}$};
\draw (-1.12-0.9,-0.4)node[anchor=north]{$\mathbf{F}$};
\draw (1.12,1.16)node[anchor=north]{$\mathbf{F}$};
\draw (1.12,-1.95)node[anchor=north]{$\mathbf{F}$};

\draw (0,2.45)node[anchor=north]{$\mathbf{G}$};
\draw (0.67,-1.16)node[anchor=north]{$\mathbf{G}$};
\draw (-1.12-0.45,-0.65+0.78+0.25)node[anchor=north]{$\mathbf{G}$};
\draw (1.12+0.45,-0.65+0.78+0.25)node[anchor=north]{$\mathbf{G}$};
\draw (-0.67,-1.43+0.25)node[anchor=north]{$\mathbf{G}$};
\end{tikzpicture}
    \label{fig:proof-cp-1}
\end{equation}

Given $\doubar{x}\in\doubar{\MO}_{\MA}$, we have to sum over all the diagrams of such type and we have several possibilities for the type of the inner diagram, defined as the subdiagram consisting of the disc filled with $M_{\MA}$ together with all the discs sharing an arrow with it. Note that if we fix the type of the outer diagram given as the complement of the inner diagram, the type of the inner one is fixed. Moreover, changing the inner diagram for one of same type does not change the type of the whole diagram.
Therefore, taking the sum over all diagrams of type $\doubar{x}\in\doubar{\MO}_{\MA}$ is the same as taking the sum over all the possible types for the outer diagram and for each of those, taking the sum over all the inner diagrams of the suitable type. 
This second sum allows us to use that $(F_0,s_{d+1}\mathbf{F})$ is a pre-Calabi-Yau morphism to replace the inner diagram by one consisting of a disc filled with $M_{\MB}$ whose incoming arrows are connected with outgoing arrows of discs filled with \textbf{F}.
Then, the sum of all the diagrams of type $\doubar{x}$ of the form \eqref{fig:proof-cp-1} is equal to the sum of all the diagrams of type $\doubar{x}$ of the form
\begin{equation}
\label{fig:proof-cp-2}
\begin{tikzpicture}[line cap=round,line join=round,x=1.0cm,y=1.0cm]
\clip(-8,-1.4) rectangle (15.94,3);
     \draw [line width=0.5pt] (0.,0.) circle (0.5cm);
     \draw[rotate=-45][->,> = stealth] (0.5,0)--(1,0);
     \draw [line width=0.5pt] (0.92,-0.92) circle (0.3cm);
     \draw[rotate around={180:(0.92,-0.92)}][->,> = stealth] (1.22,-0.92)--(1.52,-0.92);
     \draw[rotate around={0:(0.92,-0.92)}][->,> = stealth] (1.22,-0.92)--(1.52,-0.92);
     \draw[rotate around={30:(0.92,-0.92)}][<-,> = stealth] (1.22,-0.92)--(1.52,-0.92);
     \draw [line width=0.5pt] (0.92+0.78,-0.92+0.45) circle (0.3cm);
     \shadedraw[rotate around={30:(0.92+0.78,-0.92+0.45)}, shift={(0.92+0.78+0.3,-0.92+0.45)}] \doublefleche;
    \draw[rotate=-135][->,> = stealth] (0.5,0)--(1,0);
    \draw [line width=0.5pt] (-0.92,-0.92) circle (0.3cm);
     \draw[rotate around={180:(-0.92,-0.92)}][->,> = stealth] (-1.22,-0.92)--(-1.52,-0.92);
     \draw[rotate around={0:(-0.92,-0.92)}][->,> = stealth] (-1.22,-0.92)--(-1.52,-0.92);
     \draw[rotate around={150:(-0.92,-0.92)}][<-,> = stealth] (-0.62,-0.92)--(-0.32,-0.92);
     \draw [line width=0.5pt] (-0.92-0.78,-0.92+0.45) circle (0.3cm);
     \shadedraw[rotate around={150:(-0.92-0.78,-0.92+0.45)}, shift={(-0.92-0.78+0.3,-0.92+0.45)}] \doublefleche;
     \draw[rotate=90][<-,> = stealth] (0.5,0)--(0.9,0);
     \draw [line width=0.5pt] (0,1.2) circle (0.3cm);
     \draw[rotate around={90:(0,1.2)}][->,> = stealth] (0.3,1.2)--(0.7,1.2);
     \draw [line width=0.5pt] (0,2.2) circle (0.3cm);
     \draw[rotate around={90:(0,2.2)}][->,> = stealth] (0.3,2.2)--(0.7,2.2);
     \shadedraw[rotate around={0:(0,1.2)}, shift={(0.3,1.2)}] \doublefleche;
     \shadedraw[rotate around={180:(0,1.2)}, shift={(0.3,1.2)}] \doublefleche;
     \draw[rotate=15][<-,> = stealth] (0.5,0)--(0.9,0);
     \draw [line width=0.5pt] (1.16,0.31) circle (0.3cm);
     \shadedraw[rotate around={15:(1.16,0.31)},shift={(1.46,0.31)}] \doublefleche;
    \draw[rotate=165][<-,> = stealth] (0.5,0)--(0.9,0);
     \draw [line width=0.5pt] (-1.16,0.31) circle (0.3cm);
     \shadedraw[rotate around={165:(-1.16,0.31)},shift={(-0.86,0.31)}] \doublefleche;
\begin{scriptsize}
\draw [fill=black] (0,-0.6) circle (0.3pt);
\draw [fill=black] (0.2,-0.55) circle (0.3pt);
\draw [fill=black] (-0.2,-0.55) circle (0.3pt);
\draw [rotate=150][fill=black] (-0.05,-0.6) circle (0.3pt);
\draw [rotate=150][fill=black] (0.15,-0.55) circle (0.3pt);
\draw [rotate=150][fill=black] (-0.22,-0.55) circle (0.3pt);
\draw [rotate=210][fill=black] (0.05,-0.6) circle (0.3pt);
\draw [rotate=210][fill=black] (0.2,-0.55) circle (0.3pt);
\draw [rotate=210][fill=black] (-0.12,-0.58) circle (0.3pt);
\draw [fill=black] (0.35,2) circle (0.3pt);
\draw [fill=black] (0.4,2.2) circle (0.3pt);
\draw [fill=black] (0.35,2.4) circle (0.3pt);
\draw [rotate around={180:(0,2.2)}][fill=black] (0.35,2) circle (0.3pt);
\draw [rotate around={180:(0,2.2)}][fill=black] (0.4,2.2) circle (0.3pt);
\draw [rotate around={180:(0,2.2)}][fill=black] (0.35,2.4) circle (0.3pt);
\draw [fill=black] (0.92,-1.32) circle (0.3pt);
\draw [fill=black] (0.72,-1.28) circle (0.3pt);
\draw [fill=black] (1.12,-1.28) circle (0.3pt);
\draw [rotate around={180:(0.92,-0.92)}][fill=black] (0.92,-1.32) circle (0.3pt);
\draw [rotate around={180:(0.92,-0.92)}][fill=black] (0.74,-1.28) circle (0.3pt);
\draw [rotate around={180:(0.92,-0.92)}][fill=black]  (1.1,-1.28) circle (0.3pt);
\draw [fill=black] (-0.92,-1.32) circle (0.3pt);
\draw [fill=black] (-0.72,-1.28) circle (0.3pt);
\draw [fill=black] (-1.12,-1.28) circle (0.3pt);
\draw [rotate around={180:(-0.92,-0.92)}][fill=black] (-0.92,-1.32) circle (0.3pt);
\draw [rotate around={180:(-0.92,-0.92)}][fill=black] (-0.74,-1.28) circle (0.3pt);
\draw [rotate around={180:(-0.92,-0.92)}][fill=black]  (-1.1,-1.28) circle (0.3pt);
\end{scriptsize}
\draw (0,0.25)node[anchor=north]{$M_{\MB}$};
\draw (0,1.45)node[anchor=north]{$\mathbf{F}$};
\draw (-1.16,0.56)node[anchor=north]{$\mathbf{F}$};
\draw (1.16,0.56)node[anchor=north]{$\mathbf{F}$};
\draw  (-0.92-0.78,-0.92+0.7)node[anchor=north]{$\mathbf{F}$};
\draw  (0.92+0.78,-0.92+0.7)node[anchor=north]{$\mathbf{F}$};
\draw (0,2.45)node[anchor=north]{$\mathbf{G}$};
\draw (-0.92,-0.67)node[anchor=north]{$\mathbf{G}$};
\draw (0.92,-0.67)node[anchor=north]{$\mathbf{G}$};
\end{tikzpicture}
\end{equation}
\noindent
and we now define the inner diagram as the filled diagram consisting of the disc filled with $M_{\MB}$ and of all the discs connected to it. The previous remarks on the types of the inner and outer diagrams still hold.
Thus, the sum over all possible types for the whole diagram is again the sum over all the possible types for the outer diagram and for each of those, taking the sum over all the suitable types for the inner one.
$(G_0,s_{d+1}\mathbf{G})$ being a pre-Calabi-Yau morphism, the sum of all the diagrams of type $\doubar{x}$ of the form \eqref{fig:proof-cp-2} is now equal to the sum of all the diagrams of type $\doubar{x}$ of the form 

\begin{tikzpicture}[line cap=round,line join=round,x=1.0cm,y=1.0cm]
\clip(-7.5,-1.2) rectangle (9.613451596865602,4);
     \draw [line width=0.5pt] (0.,0.) circle (0.5cm);
     \draw[rotate=-45][->,> = stealth] (0.5,0)--(1,0);
    \draw[rotate=-135][->,> = stealth] (0.5,0)--(1,0);
     \draw[rotate=90][<-,> = stealth] (0.5,0)--(0.9,0);
     \draw [line width=0.5pt] (0,1.2) circle (0.3cm);
     \draw[rotate around={90:(0,1.2)}][<-,> = stealth] (0.3,1.2)--(0.6,1.2);
     \draw [line width=0.5pt] (0,2.1) circle (0.3cm);
     \draw[rotate around={90:(0,2.1)}][->,> = stealth] (0.3,2.1)--(0.6,2.1);
     \draw [line width=0.5pt] (0,3) circle (0.3cm);
     \draw[rotate around={90:(0,3)}][->,> = stealth] (0.3,3)--(0.7,3);
     \shadedraw[rotate around={0:(0,2.1)}, shift={(0.3,2.1)}] \doublefleche;
     \shadedraw[rotate around={180:(0,2.1)}, shift={(0.3,2.1)}] \doublefleche;
     \draw[rotate=15][<-,> = stealth] (0.5,0)--(0.9,0);
     \draw [line width=0.5pt] (1.16,0.31) circle (0.3cm);
     \draw[rotate around={45:(1.16,0.31)}][<-,> = stealth] (1.46,0.31)--(1.76,0.31);
     \draw[rotate around={-15:(1.16,0.31)}][<-,> = stealth] (1.46,0.31)--(1.76,0.31);
     \draw [line width=0.5pt] (1.8,0.95) circle (0.3cm);
     \shadedraw[rotate around={45:(1.8,0.95)}, shift={(2.1,0.95)}] \doublefleche;
     \draw [line width=0.5pt] (1.16+0.87,0.08) circle (0.3cm);
     \shadedraw[rotate around={-15:(1.16+0.87,0.08)}, shift={(1.46+0.87,0.08)}] \doublefleche;
    \draw[rotate=165][<-,> = stealth] (0.5,0)--(0.9,0);
     \draw [line width=0.5pt] (-1.16,0.31) circle (0.3cm);
    \draw[rotate around={135:(-1.16,0.31)}][<-,> = stealth] (-0.86,0.31)--(-0.56,0.31);
     \draw[rotate around={195:(-1.16,0.31)}][<-,> = stealth] (-0.86,0.31)--(-0.56,0.31);
     \draw [line width=0.5pt] (-1.8,0.95) circle (0.3cm);
     \shadedraw[rotate around={135:(-1.8,0.95)}, shift={(-1.5,0.95)}] \doublefleche;
     \draw [line width=0.5pt] (-1.16-0.87,0.08) circle (0.3cm);
     \shadedraw[rotate around={195:(-1.16-0.87,0.08)}, shift={(-1.16-0.57,0.08)}] \doublefleche;
\begin{scriptsize}
\draw [fill=black] (0,-0.6) circle (0.3pt);
\draw [fill=black] (0.2,-0.55) circle (0.3pt);
\draw [fill=black] (-0.2,-0.55) circle (0.3pt);
\draw [rotate=150][fill=black] (-0.05,-0.6) circle (0.3pt);
\draw [rotate=150][fill=black] (0.15,-0.55) circle (0.3pt);
\draw [rotate=150][fill=black] (-0.22,-0.55) circle (0.3pt);
\draw [rotate=210][fill=black] (0.05,-0.6) circle (0.3pt);
\draw [rotate=210][fill=black] (0.2,-0.55) circle (0.3pt);
\draw [rotate=210][fill=black] (-0.12,-0.58) circle (0.3pt);
\draw [fill=black] (0.35,1) circle (0.3pt);
\draw [fill=black] (0.4,1.2) circle (0.3pt);
\draw [fill=black] (0.35,1.4) circle (0.3pt);
\draw [rotate around={180:(0,1.2)}][fill=black] (0.35,1) circle (0.3pt);
\draw [rotate around={180:(0,1.2)}][fill=black] (0.4,1.2) circle (0.3pt);
\draw [rotate around={180:(0,1.2)}][fill=black] (0.35,1.4) circle (0.3pt);
\draw [fill=black] (0.4,3) circle (0.3pt);
\draw [fill=black] (0.34,2.8) circle (0.3pt);
\draw [fill=black] (0.35,3.2) circle (0.3pt);
\draw [rotate around={180:(0,3)}][fill=black] (0.4,3) circle (0.3pt);
\draw [rotate around={180:(0,3)}][fill=black] (0.34,2.8) circle (0.3pt);
\draw [rotate around={180:(0,3)}][fill=black] (0.35,3.2) circle (0.3pt);
\draw [fill=black] (1.51,0.5) circle (0.3pt);
\draw [fill=black] (1.55,0.42) circle (0.3pt);
\draw [fill=black] (1.54,0.33) circle (0.3pt);
\draw [rotate=150][fill=black] (1.51,0.5) circle (0.3pt);
\draw [rotate=150][fill=black] (1.55,0.42) circle (0.3pt);
\draw [rotate=150][fill=black] (1.54,0.33) circle (0.3pt);
(1.16,0.31)
\end{scriptsize}
\draw (0,0.25)node[anchor=north]{$M_{\mathcal{C}}$};
\draw (0,1.45)node[anchor=north]{$\mathbf{G}$};
\draw (0,3.25)node[anchor=north]{$\mathbf{G}$};
\draw (1.16,0.56)node[anchor=north]{$\mathbf{G}$};
\draw (-1.16,0.56)node[anchor=north]{$\mathbf{G}$};
\draw (0,2.35)node[anchor=north]{$\mathbf{F}$};
\draw (-1.16-0.87,0.08+0.25)node[anchor=north]{$\mathbf{F}$};
\draw (-1.8,0.95+0.25)node[anchor=north]{$\mathbf{F}$};
\draw (1.16+0.87,0.08+0.25)node[anchor=north]{$\mathbf{F}$};
\draw (1.8,0.95+0.25)node[anchor=north]{$\mathbf{F}$};
\end{tikzpicture}

Therefore, $(G_0\circ F_0,s_{d+1}\mathbf{G}\circ s_{d+1}\mathbf{F})$ is a $d$-pre-Calabi-Yau morphism.
\end{proof}
We end this subsection with the definitions of two classes of morphisms which will be useful in the next section.
\begin{definition}
\label{def:good-morphism}
Given $d$-pre-Calabi-Yau categories $(\MA,s_{d+1}M_{\MA})$ and $(\MB,s_{d+1}M_{\MB})$ with respective sets of objects $\MO_{\MA}$ and $\MO_{\MB}$ and a $d$-pre-Calabi-Yau morphism $F=(F_0,s_{d+1}\mathbf{F})$ between them, we say that $F$ is \textbf{\textcolor{ultramarine}{good}} if $\sum\mathcal{E}(\mathcalboondox{D})= \sum\mathcal{E}(\mathcalboondox{D'})$ where the sums are over all the filled diagrams $\mathcalboondox{D}$ and $\mathcalboondox{D'}$ of type $\doubar{x}$ of the form

\begin{minipage}{21cm}
\begin{tikzpicture}[line cap=round,line join=round,x=1.0cm,y=1.0cm]
\clip(-4,-2) rectangle (4,2);
     \draw [line width=0.5pt] (0.,0.) circle (0.5cm);
     \shadedraw[rotate=45,shift={(0.5cm,0cm)}] \doublefleche;
     \shadedraw[rotate=135,shift={(0.5cm,0cm)}] \doublefleche;
     \draw [line width=0.5pt] (0,1.3) circle (0.3cm);
     \shadedraw [shift={(0cm,1cm)},rotate=-90] \fleche;
     \draw [line width=0.5pt] (1.12,-0.65) circle (0.3cm);
     \shadedraw[shift={(0.86cm,-0.5cm)},rotate=150] \doubleflechescindeeleft;
     \shadedraw[shift={(0.86cm,-0.5cm)},rotate=150] \doubleflechescindeeright;
     \shadedraw[shift={(0.86cm,-0.5cm)},rotate=150] \fleche;
     \draw [rotate around ={60:(1.12,-0.65)}] [->,> = stealth] (1.43,-0.65)--(1.73,-0.65);
     \draw [rotate around ={-120:(1.12,-0.65)}] [->,> = stealth] (1.43,-0.65)--(1.73,-0.65);
     \draw [line width=0.5pt] (-1.12,-0.65) circle (0.3cm);
     \shadedraw[shift={(-0.86cm,-0.5cm)},rotate=30] \doubleflechescindeeleft;
      \shadedraw[shift={(-0.86cm,-0.5cm)},rotate=30] \doubleflechescindeeright;
       \shadedraw[shift={(-0.86cm,-0.5cm)},rotate=30] \fleche;
     \draw[line width=1.1pt,rotate around={90:(0,1.3)}] [->,> = stealth] (0.3,1.3)--(0.6,1.3);
     \draw [line width=0.5pt] (1.12,-0.65) circle (0.3cm);
     \draw [rotate around ={60:(1.12,-0.65)}] [->,> = stealth] (1.43,-0.65)--(1.73,-0.65);
     \draw [rotate around ={-120:(1.12,-0.65)}] [->,> = stealth] (1.43,-0.65)--(1.73,-0.65);
     \draw [line width=0.5pt] (-1.12,-0.65) circle (0.3cm);
      \draw [rotate around ={-60:(-1.12,-0.65)}] [->,> = stealth] (-1.43,-0.65)--(-1.73,-0.65);
     \draw [rotate around ={120:(-1.12,-0.65)}] [->,> = stealth] (-1.43,-0.65)--(-1.73,-0.65);
\begin{scriptsize}
\draw [fill=black] (0,-0.6) circle (0.3pt);
\draw [fill=black] (0.2,-0.55) circle (0.3pt);
\draw [fill=black] (-0.2,-0.55) circle (0.3pt);
\draw [fill=black] (1.45,-0.85) circle (0.3pt);
\draw [fill=black] (1.5,-0.67) circle (0.3pt);
\draw [fill=black] (1.33,-0.97) circle (0.3pt);
\draw [fill=black] (-1.45,-0.85) circle (0.3pt);
\draw [fill=black] (-1.5,-0.67) circle (0.3pt);
\draw [fill=black] (-1.33,-0.97) circle (0.3pt);
\end{scriptsize}
\draw (0,0.25)node[anchor=north]{$M_{\MA}$};
\draw (0,1.55)node[anchor=north]{$\mathbf{F}$};
\draw (-1.12,-0.4)node[anchor=north]{$\mathbf{F}$};
\draw (1.12,-0.4)node[anchor=north]{$\mathbf{F}$};
\draw (3,0.25)node[anchor=north]{and};
\end{tikzpicture}
\label{fig:good-1}
\begin{tikzpicture}[line cap=round,line join=round,x=1.0cm,y=1.0cm]
\clip(-2,-2) rectangle (7.635492124039547,2);
     \draw [line width=0.5pt] (0.,0.) circle (0.5cm);
     \draw [rotate=90] [->,> = stealth] (0.5,0)--(0.9,0);
     \draw [rotate=-30] [->,> = stealth] (0.5,0)--(0.9,0);
     \draw [rotate=-150] [->,> = stealth] (0.5,0)--(0.9,0);
     \draw [rotate=0] [<-,> = stealth] (0.5,0)--(0.9,0);
     \draw [line width=0.5pt] (1.15,0) circle (0.25cm);
     \draw[rotate around={60:(1.15,0)}] [->,> = stealth] (1.4,0)--(1.7,0);
      \draw[rotate around={-60:(1.15,0)}] [->,> = stealth] (1.4,0)--(1.7,0);
      \shadedraw[rotate around={120:(1.15,0)}, shift={(1.4cm,0cm)}] \doublefleche;
       \shadedraw[shift={(1.4cm,0cm)}] \doublefleche;
     \draw [rotate=60] [<-,> = stealth] (0.5,0)--(0.9,0);
       \draw [rotate=185] [<-,> = stealth] (0.5,0)--(0.9,0);
      \draw [line width=0.5pt] (-1.15,-0.1) circle (0.25cm);
      \draw[line width=1.1pt,rotate around={120:(-1.15,-0.1)}] [->,> = stealth] (-0.9,-0.1)--(-0.6,-0.1);
      \draw[rotate around={-120:(-1.15,-0.1)}] [->,> = stealth] (-0.9,-0.1)--(-0.6,-0.1);
      \shadedraw[rotate around={60:(-1.15,-0.1)}, shift={(-0.9cm,-0.1cm)}] \doublefleche;
       \shadedraw[rotate around={180:(-1.15,-0.1)},shift={(-0.9cm,-0.1cm)}] \doublefleche;
        \draw [rotate=120] [<-,> = stealth] (0.5,0)--(0.9,0);
        \draw [line width=0.5pt] (0.575,1) circle (0.25cm);
        \draw[rotate around={120:(0.575,1)}] [->,> = stealth] (0.825,1)--(1.125,1);
      \draw[rotate around={0:(0.575,1)}] [->,> = stealth] (0.825,1)--(1.125,1);
      \shadedraw[rotate around={60:(0.575,1)}, shift={(0.825cm,1cm)}] \doublefleche;
       \shadedraw[rotate around={180:(0.575,1)},shift={(0.825cm,1cm)}] \doublefleche;
        \draw [line width=0.5pt] (-0.575,1) circle (0.25cm);
      \draw[rotate around={-120:(-0.575,1)}] [->,> = stealth] (-0.825,1)--(-1.125,1);
      \draw[rotate around={0:(-0.575,1)}] [->,> = stealth] (-0.825,1)--(-1.125,1);
      \shadedraw[rotate around={120:(-0.575,1)}, shift={(-0.325cm,1cm)}] \doublefleche;
       \shadedraw[rotate around={0:(-0.575,1)},shift={(-0.325cm,1cm)}] \doublefleche;
      \draw[rotate around={150:(0,0)}] [<-,> = stealth] (0.5,0)--(0.9,0);
    \draw [line width=0.5pt] (-1,0.575) circle (0.25cm);
    \draw[rotate around={150:(-1,0.575)}] [<-,> = stealth] (-0.75,0.575)--(-0.45,0.575);
\begin{scriptsize}
\draw [fill=black] (0.65,0.7) circle (0.3pt);
\draw [fill=black] (0.79,0.75) circle (0.3pt);
\draw [fill=black] (0.88,0.85) circle (0.3pt);
\draw [fill=black] (1,-0.3) circle (0.3pt);
\draw [fill=black] (1.15,-0.3) circle (0.3pt);
\draw [fill=black] (0.9,-0.2) circle (0.3pt);
\draw [fill=black] (0,-0.6) circle (0.3pt);
\draw [fill=black] (0.2,-0.55) circle (0.3pt);
\draw [fill=black] (-0.2,-0.55) circle (0.3pt);
\draw [rotate=120][fill=black] (0,-0.6) circle (0.3pt);
\draw [rotate=120][fill=black] (0.15,-0.55) circle (0.3pt);
\draw [rotate=120][fill=black] (-0.15,-0.55) circle (0.3pt);
\draw [rotate around={-120:(-0.575,1)}][fill=black] (-0.28,1) circle (0.3pt);
\draw [rotate around={-120:(-0.575,1)}][fill=black] (-0.3,0.85) circle (0.3pt);
\draw [rotate around={-120:(-0.575,1)}][fill=black] (-0.3,1.15) circle (0.3pt);
\draw [rotate=-32][fill=black] (-0.38,0.47) circle (0.3pt);
\draw [rotate=-32][fill=black] (-0.44,0.43) circle (0.3pt);
\draw [rotate=-32][fill=black] (-0.47,0.37) circle (0.3pt);
\draw [fill=black] (-0.59,-0.14) circle (0.3pt);
\draw [fill=black] (-0.58,-0.20) circle (0.3pt);
\draw [fill=black] (-0.56,-0.26) circle (0.3pt);
\draw [fill=black] (-0.95,-0.35) circle (0.3pt);
\draw [fill=black] (-0.85,-0.25) circle (0.3pt);
\draw [fill=black] (-1.1,-0.4) circle (0.3pt);
\end{scriptsize}
\draw (0,0.25)node[anchor=north]{$M_{\MB}$};
\draw (-1.15,0.15)node[anchor=north]{$\mathbf{F}$};
\draw (1.15,0.25)node[anchor=north]{$\mathbf{F}$};
\draw (0.575,1.25)node[anchor=north]{$\mathbf{F}$};
\draw (-0.575,1.25)node[anchor=north]{$\mathbf{F}$};
\draw (-0.98,0.82)node[anchor=north]{$\mathbf{F}$};
\end{tikzpicture}
\end{minipage}
respectively. The set of diagrams of type $\doubar{x}$ and of the left (resp. right) hand form is the subset of the set of diagrams of type $\doubar{x}$ and of the form \eqref{fig:morph-1} (resp. \eqref{fig:morph-2}) consisting of all diagrams whose last outgoing arrow is the outgoing arrow (resp. just before the incoming arrow) of a disc filled with $\mathbf{F}$ having only one incoming and one outgoing arrow.
\end{definition}
Since this condition is not closed under the pre-Calabi-Yau composition, we restrict it to the following.

\begin{definition}
\label{def:nice-subcat}
Given $d$-pre-Calabi-Yau categories $(\MA,s_{d+1}M_{\MA})$ and $(\MB,s_{d+1}M_{\MB})$ with respective sets of objects $\MO_{\MA}$ and $\MO_{\MB}$ and a $d$-pre-Calabi-Yau morphism $F=(F_0,s_{d+1}\mathbf{F})$ between them, we say that $F$ is \textbf{\textcolor{ultramarine}{nice}} if $\sum\mathcal{E}(\mathcalboondox{D})= \sum\mathcal{E}(\mathcalboondox{D'})$ where the sums are over all the filled diagrams $\mathcalboondox{D}$ and $\mathcalboondox{D'}$ of type $\doubar{x}$ of the form

\begin{minipage}{21cm}
\begin{tikzpicture}[line cap=round,line join=round,x=1.0cm,y=1.0cm]
\clip(-4,-2) rectangle (4,2);
    \draw [line width=0.5pt] (0.,0.) circle (0.5cm);
     \shadedraw[rotate=45,shift={(0.5cm,0cm)}] \doublefleche;
     \shadedraw[rotate=135,shift={(0.5cm,0cm)}] \doublefleche;
     \draw [line width=0.5pt] (0,1.3) circle (0.3cm);
     \shadedraw [shift={(0cm,1cm)},rotate=-90] \fleche;
     \draw [line width=0.5pt] (1.12,-0.65) circle (0.3cm);
     \shadedraw[shift={(0.86cm,-0.5cm)},rotate=150] \doubleflechescindeeleft;
     \shadedraw[shift={(0.86cm,-0.5cm)},rotate=150] \doubleflechescindeeright;
     \shadedraw[shift={(0.86cm,-0.5cm)},rotate=150] \fleche;
     \draw [rotate around ={60:(1.12,-0.65)}] [->,> = stealth] (1.43,-0.65)--(1.73,-0.65);
     \draw [rotate around ={-120:(1.12,-0.65)}] [->,> = stealth] (1.43,-0.65)--(1.73,-0.65);
     \draw [line width=0.5pt] (-1.12,-0.65) circle (0.3cm);
     \shadedraw[shift={(-0.86cm,-0.5cm)},rotate=30] \doubleflechescindeeleft;
      \shadedraw[shift={(-0.86cm,-0.5cm)},rotate=30] \doubleflechescindeeright;
       \shadedraw[shift={(-0.86cm,-0.5cm)},rotate=30] \fleche;
     \draw[line width=1.1pt,rotate around={90:(0,1.3)}] [->,> = stealth] (0.3,1.3)--(0.6,1.3);
     \draw [line width=0.5pt] (1.12,-0.65) circle (0.3cm);
     \draw [rotate around ={60:(1.12,-0.65)}] [->,> = stealth] (1.43,-0.65)--(1.73,-0.65);
     \draw [rotate around ={-120:(1.12,-0.65)}] [->,> = stealth] (1.43,-0.65)--(1.73,-0.65);
     \draw [line width=0.5pt] (-1.12,-0.65) circle (0.3cm);
      \draw [rotate around ={-60:(-1.12,-0.65)}] [->,> = stealth] (-1.43,-0.65)--(-1.73,-0.65);
     \draw [rotate around ={120:(-1.12,-0.65)}] [->,> = stealth] (-1.43,-0.65)--(-1.73,-0.65);
\begin{scriptsize}
\draw [fill=black] (0,-0.6) circle (0.3pt);
\draw [fill=black] (0.2,-0.55) circle (0.3pt);
\draw [fill=black] (-0.2,-0.55) circle (0.3pt);
\draw [fill=black] (1.45,-0.85) circle (0.3pt);
\draw [fill=black] (1.5,-0.67) circle (0.3pt);
\draw [fill=black] (1.33,-0.97) circle (0.3pt);
\draw [fill=black] (-1.45,-0.85) circle (0.3pt);
\draw [fill=black] (-1.5,-0.67) circle (0.3pt);
\draw [fill=black] (-1.33,-0.97) circle (0.3pt);
\end{scriptsize}
\draw (0,0.25)node[anchor=north]{$M_{\MA}$};
\draw (0,1.55)node[anchor=north]{$\mathbf{F}$};
\draw (-1.12,-0.4)node[anchor=north]{$\mathbf{F}$};
\draw (1.12,-0.4)node[anchor=north]{$\mathbf{F}$};
\draw (3,0.25)node[anchor=north]{and};
\end{tikzpicture}
\begin{tikzpicture}[line cap=round,line join=round,x=1.0cm,y=1.0cm]
\clip(-2,-2) rectangle (5,2);
     \draw [line width=0.5pt] (0.,0.) circle (0.5cm);
     \draw [rotate=90] [->,> = stealth] (0.5,0)--(0.9,0);
     \draw [rotate=-30] [->,> = stealth] (0.5,0)--(0.9,0);
     \draw [line width=1.1pt,rotate=-150] [->,> = stealth] (0.5,0)--(0.9,0);
     \draw [rotate=120] [<-,> = stealth] (0.5,0)--(0.9,0);
     \draw [rotate=0] [<-,> = stealth] (0.5,0)--(0.9,0);
     \draw [line width=0.5pt] (1.15,0.) circle (0.25cm);
     \draw[rotate around={60:(1.15,0)}] [->,> = stealth] (1.4,0)--(1.7,0);
      \draw[rotate around={-60:(1.15,0)}] [->,> = stealth] (1.4,0)--(1.7,0);
      \shadedraw[rotate around={120:(1.15,0)}, shift={(1.4cm,0cm)}] \doublefleche;
       \shadedraw[shift={(1.4cm,0cm)}] \doublefleche;
     \draw [rotate=60] [<-,> = stealth] (0.5,0)--(0.9,0);
       \draw [rotate=180] [<-,> = stealth] (0.5,0)--(0.9,0);
      \draw [line width=0.5pt] (-1.15,0.) circle (0.25cm);
     \draw[rotate around={0:(-1.15,0)}] [<-,> = stealth] (-1.4,0)--(-1.7,0);
        \draw [line width=0.5pt] (0.575,1) circle (0.25cm);
        \draw[rotate around={120:(0.575,1)}] [->,> = stealth] (0.825,1)--(1.125,1);
      \draw[rotate around={0:(0.575,1)}] [->,> = stealth] (0.825,1)--(1.125,1);
      \shadedraw[rotate around={60:(0.575,1)}, shift={(0.825cm,1cm)}] \doublefleche;
       \shadedraw[rotate around={180:(0.575,1)},shift={(0.825cm,1cm)}] \doublefleche;
        \draw [line width=0.5pt] (-0.575,1) circle (0.25cm);
      \draw[rotate around={-120:(-0.575,1)}] [->,> = stealth] (-0.825,1)--(-1.125,1);
      \draw[rotate around={0:(-0.575,1)}] [->,> = stealth] (-0.825,1)--(-1.125,1);
      \shadedraw[rotate around={-120:(-0.575,1)}, shift={(-0.325cm,1cm)}] \doublefleche;
       \shadedraw[rotate around={120:(-0.575,1)},shift={(-0.325cm,1cm)}] \doublefleche;
\begin{scriptsize}
\draw [fill=black] (0.65,0.7) circle (0.3pt);
\draw [fill=black] (0.79,0.75) circle (0.3pt);
\draw [fill=black] (0.88,0.85) circle (0.3pt);
\draw [fill=black] (-0.25,1) circle (0.3pt);
\draw [fill=black] (-0.29,0.85) circle (0.3pt);
\draw [fill=black] (-0.29,1.15) circle (0.3pt);
\draw [fill=black] (1,-0.3) circle (0.3pt);
\draw [fill=black] (1.15,-0.3) circle (0.3pt);
\draw [fill=black] (0.9,-0.2) circle (0.3pt);
\draw [fill=black] (0,-0.6) circle (0.3pt);
\draw [fill=black] (0.2,-0.55) circle (0.3pt);
\draw [fill=black] (-0.2,-0.55) circle (0.3pt);
\draw [rotate=120][fill=black] (0,-0.6) circle (0.3pt);
\draw [rotate=120][fill=black] (0.15,-0.55) circle (0.3pt);
\draw [rotate=120][fill=black] (-0.15,-0.55) circle (0.3pt);
\draw [rotate=240][fill=black] (0,-0.6) circle (0.3pt);
\draw [rotate=240][fill=black] (0.15,-0.55) circle (0.3pt);
\draw [rotate=240][fill=black] (-0.15,-0.55) circle (0.3pt);
\end{scriptsize}
\draw (0,0.25)node[anchor=north]{$M_{\MB}$};
\draw (-1.15,0.25)node[anchor=north]{$\mathbf{F}$};
\draw (1.15,0.25)node[anchor=north]{$\mathbf{F}$};
\draw (0.575,1.25)node[anchor=north]{$\mathbf{F}$};
\draw (-0.575,1.25)node[anchor=north]{$\mathbf{F}$};
\end{tikzpicture}
\end{minipage}

\noindent respectively.  The set of diagrams of type $\doubar{x}$ and of the left hand form is the same as the one appearing in the definition of a good morphism. 
The set of diagrams of type $\doubar{x}$ and of the right hand form is the subset of the set of diagrams of type $\doubar{x}$ and of the form \eqref{fig:morph-2} consisting of all diagrams whose first incoming arrow is the incoming arrow of a disc filled with $\mathbf{F}$ place just after (in the clockwise order) an output of the disc filled with $M_{\MB}$ and having only one incoming and one outgoing arrow.
\end{definition}

\section{\texorpdfstring{The relation between pre-Calabi-Yau mor\-phisms and $A_{\infty}$-mor\-phisms}{Relation-pCY-morphisms-A-infinity-morphisms}} \label{main section}
In this section, we study the relation between $d$-pre-Calabi-Yau morphisms and $A_{\infty}$-morphisms.
\subsection{The mixed necklace graded Lie algebra}
\label{mixed necklace}
We first define a ``mixed" necklace bracket, which will be useful in the next subsection for the construction of the $A_{\infty}$-structure on $\MA\oplus\MB^*[d-1]$.
\begin{definition}
    Given two graded quivers $\MA$ and $\MB$ with respective sets of objects $\MO_{\MA}$ and $\MO_{\MB}$ and a map $\Phi : \MO_{\MA}\rightarrow\MO_{\MB}$,
    we define the graded vector spaces
    \[
    \mathcalboondox{B}^{\bullet}_{d,\Phi}(\MA,\MB)^{\MA}=\prod_{n\in\NN^*}\prod_{\doubar{x}\in\bar{\MO}_{\MA}^n}\Homgr_{\kk}\big(\bigotimes\limits_{i=1}^n\MA[1]^{\otimes \bar{x}^i},\bigotimes\limits_{i=1}^{n-1}{}_{\Phi(\llt(\bar{x}^i))}\MB_{\Phi(\rrt(\bar{x}^{i+1}))}[-d]\otimes {}_{\llt(\bar{x}^n)}\MA_{\rrt(\bar{x}^1)}[1]\big)
    \]
    and
    \begin{small}
    \begin{equation}
        \begin{split}
            \mathcalboondox{B}^{\bullet}_{d,\Phi}(\MA,\MB)^{\MB}&=\prod_{n\in\NN^*}\prod_{\doubar{x}\in\bar{\MO}_{\MA}^n}\Homgr_{\kk}\big(\MA[1]^{\otimes \bar{x}^1}\otimes {}_{\Phi(\rrt(\bar{x}^1))}\MB_{\Phi(\llt(\bar{x}^n))}[-d] \otimes \MA[1]^{\otimes \bar{x}^n}\otimes\dots\otimes \MA[1]^{\otimes \bar{x}^{n-1}},\\&\hskip8cm\bigotimes\limits_{i=1}^{n-1}{}_{\Phi(\llt(\bar{x}^i))}\MB_{\Phi(\rrt(\bar{x}^{i+1}))}[-d] \big).
        \end{split}
    \end{equation}
     \end{small}
We denote $\mathcalboondox{B}^{\bullet}_{d,\Phi}(\MA,\MB)=\mathcalboondox{B}^{\bullet}_{d,\Phi}(\MA,\MB)^{\MA}\oplus \mathcalboondox{B}^{\bullet}_{d,\Phi}(\MA,\MB)^{\MB}$.
\end{definition}

\begin{remark}
\label{remark:quiver-q-phi}
    We identify elements of $\mathcalboondox{B}_{d,\Phi}(\MA,\MB)^{\MA}$ and $\mathcalboondox{B}_{d,\Phi}(\MA,\MB)^{\MB}$ with elements of the graded vector space $\Multi_{d,\Phi}^{\bullet}(\mathcal{Q}_{\Phi})[d+1]$ where $\mathcal{Q}_{\Phi}$ is the graded quiver whose set of objects is $\MO_{\MA}$ and whose spaces of morphisms are defined as ${}_y(\mathcal{Q}_{\Phi})_x={}_{y}\MA_x\oplus{}_{\Phi(y)}\MB_{\Phi(x)}$. It suffices to use the inverse of the isomorphism \eqref{eq:iso-shift-tensor-prod} together with the inverse of either the isomorphism \eqref{eq:iso-shift-output} or the isomorphism \eqref{eq:iso-shift-input}. Moreover, we will depict elements of $\Multi_{d,\Phi}^{\bullet}(\mathcal{Q}_{\Phi})[d+1]$ by diagrams as we did for elements of $\Multi_{d}^{\bullet}(\MA)[d+1]$.
\end{remark}

\begin{definition}
\label{def:mixed-necklace}
Let $\MA$ and $\MB$ be graded quivers with respective sets of objects $\MO_{\MA}$ and $\MO_{\MB}$ and consider a map $\Phi : \MO_{\MA}\rightarrow\MO_{\MB}$.
Given $s_{d+1}\mathbf{F}, s_{d+1}\mathbf{G} \in \mathcalboondox{B}^{\bullet}_{d,\Phi}(\MA,\MB)$, we define their \textbf{\textcolor{ultramarine}{$\Phi$-mixed necklace product}} as the element $s_{d+1}\mathbf{F}\upperset{\Phi\nec}{\circ}s_{d+1}\mathbf{G}\in \mathcalboondox{B}^{\bullet}_{d,\Phi}(\MA,\MB)$ given by
\[
(s_{d+1}\mathbf{F}\upperset{\Phi\nec}{\circ}s_{d+1}\mathbf{G})^{\doubar{x}}=\sum\mathcal{E}(\mathcalboondox{D})\]
where the sum is over all the filled diagrams $\mathcalboondox{D}$ of type $\doubar{x}$ and of the form

 \begin{tikzpicture}[line cap=round,line join=round,x=1.0cm,y=1.0cm]
\clip(-7,-1.5) rectangle (4.5,1.5);
    \draw [line width=0.5pt] (0.,0.) circle (0.5cm);
    \draw [rotate=90] [->, >= stealth] (0.5,0) -- (0.9,0);
    \draw [rotate=-90] [->, >= stealth] (0.5,0) -- (0.9,0);
    \shadedraw [rotate=45, shift={(0.5cm,0cm)}] \doublefleche;
    \shadedraw [rotate=-45, shift={(0.5cm,0cm)}] \doublefleche;
    \draw [line width=0.5pt] (1.5,0.) circle (0.5cm);
    \shadedraw[shift={(1cm,0cm)},rotate=180] \doubleflechescindeeleft;
    \shadedraw[shift={(1cm,0cm)},rotate=180] \doubleflechescindeeright;
    \shadedraw[shift={(1cm,0cm)},rotate=180] \fleche;
    \draw [rotate around={0:(1.5,0)}] [->, >= stealth, >= stealth](2,0) -- (2.4,0);
    \draw [rotate around={-135:(1.5,0)}] [->, >= stealth, >= stealth](2,0) -- (2.4,0);
    \draw [rotate around={135:(1.5,0)}] [->, >= stealth, >= stealth](2,0) -- (2.4,0);
\begin{scriptsize}
\draw [fill=black] (1.5,0.6) circle (0.3pt);
\draw [fill=black] (1.7,0.55) circle (0.3pt);
\draw [fill=black] (1.3,0.55) circle (0.3pt);
\draw [fill=black] (1.5,-0.6) circle (0.3pt);
\draw [fill=black] (1.7,-0.55) circle (0.3pt);
\draw [fill=black] (1.3,-0.55) circle (0.3pt);
\draw [fill=black] (-0.6,0) circle (0.3pt);
\draw [fill=black] (-0.55,0.2) circle (0.3pt);
\draw [fill=black] (-0.55,-0.2) circle (0.3pt);
\end{scriptsize}
\draw(0,0.25)node[anchor=north]{$\mathbf{G}$};
\draw(1.5,0.25)node[anchor=north]{$\mathbf{F}$};
\end{tikzpicture}
\end{definition}

\begin{definition}
Consider two graded quivers $\MA$ and $\MB$ with respective sets of objects $\MO_{\MA}$ and $\MO_{\MB}$ as well as a map $\Phi : \MO_{\MA}\rightarrow\MO_{\MB}$. The \textbf{\textcolor{ultramarine}{$\Phi$-mixed necklace bracket}} is the bracket which is defined for elements  $s_{d+1}\mathbf{F},s_{d+1}\mathbf{G}\in \mathcalboondox{B}^{\bullet}_{d,\Phi}(\MA,\MB)$ by 
\[
[s_{d+1}\mathbf{F},s_{d+1}\mathbf{G}]_{\Phi\nec}=s_{d+1}\mathbf{F}\upperset{\Phi\nec}{\circ} s_{d+1}\mathbf{G}-(-1)^{(|\mathbf{F}|+d+1)(|\mathbf{G}|+d+1)} s_{d+1}\mathbf{G}\upperset{\Phi\nec}{\circ} s_{d+1}\mathbf{F}.
\]
\end{definition}

\begin{remark}
If $\MB=\MA$ and $\Phi=id$, we have isomorphisms $\mathcalboondox{B}_{d,\Phi}^{\bullet}(\MA,\MB)^{\MA}\simeq\Multi^{\bullet}_d(\MA)[d+1]$ and $\mathcalboondox{B}_{d,\Phi}^{\bullet}(\MA,\MB)^{\MB}\simeq\Multi^{\bullet}_d(\MA)[d+1]$ that are a combination of the isomorphism \eqref{eq:iso-shift-tensor-prod} with either the isomorphism \eqref{eq:iso-shift-output} or the isomorphism \eqref{eq:iso-shift-input}. 
In this case, we thus set $\mathcalboondox{B}^{\bullet}_{d,\Phi}(\MA,\MB)=\Multi^{\bullet}_d(\MA)[d+1]$ and the $\Phi$-mixed necklace bracket is in this case the necklace bracket as defined in Definition \ref{def:necklace-bracket}.
\end{remark}

As we did for the necklace bracket, we now investigate the relation between the $\Phi$-mixed necklace bracket and the Gerstenhaber bracket in order to show that the first one is a graded Lie bracket.

\begin{lemma}
\label{lemma:j-mixed}
Let $\MA$ and $\MB$ be graded quivers with respective sets of objects $\MO_{\MA}$ and $\MO_{\MB}$ and consider a map $\Phi : \MO_{\MA}\rightarrow\MO_{\MB}$.
Then, we have an injective map
\begin{equation}
    \label{eq:map-j-mixed} 
    \mathcalboondox{j}^{\Phi} : \mathcalboondox{B}^{\bullet}_{d,\Phi}(\MA,\MB) \rightarrow C(\MA\oplus\MB^*[d-1])[1]
\end{equation}
sending $s_{d+1}\phi\in \mathcalboondox{B}_{d,\Phi}^{\doubar{x}}(\MA,\MB)^{\MA}$ to $s\psi^{\MA}_{\doubar{x}}$ where 
\begin{small}
\begin{equation}
    \psi^{\MA}_{\doubar{x}} : \bigotimes\limits_{i=1}^{n-1}(\MA[1]^{\otimes \bar{x}^{n-i+1}}\otimes {}_{\Phi(\rrt(\bar{x}^{n-i+1}))}\MB^*_{\Phi(\llt(\bar{x}^{n-i}))})[d]\otimes\MA[1]^{\otimes \bar{x}^1} \rightarrow {}_{\llt(\bar{x}^n)}\MA_{\rrt(\bar{x}^1)}
\end{equation}
\end{small} is given by 
\begin{equation}
    \label{eq:map-j-x-mixed} 
    \psi_{\doubar{x}}^{\MA}(\bar{sa}^n,tf^{n-1},\bar{sa}^{n-2},tf^{n-2},...,\bar{sa}^{2},tf^{1},\bar{sa}^1)=(-1)^{\epsilon}\big( \bigotimes\limits_{i=1}^{n-1}(f^{i}\circ s_d)\otimes s_d\big)\big(\phi(\bar{sa}^1,\bar{sa}^{2},...,\bar{sa}^n)\big)
\end{equation}
with
\begin{small}
\begin{equation}
    \begin{split}
        \epsilon=\sum\limits_{i=1}^{n-1}|tf^i|\sum\limits_{j=i+1}^n|\Bar{sa}^i|+|\phi|\sum\limits_{i=1}^{n-1}|tf^i|+d(n-1)+\sum\limits_{1\leq i < j\leq n}|\bar{sa}^i||\bar{sa}^j|+\sum\limits_{1\leq i < j\leq n-1}|tf^i||tf^j|
    \end{split}
\end{equation}
\end{small}
and sending $s_{d+1}\phi\in \mathcalboondox{B}_{d,\Phi}^{\doubar{x}}(\MA,\MB)^{\MB}$ to $s\psi^{\MB}_{\doubar{x}}$ where 
\begin{small}
\begin{equation}
    \psi^{\MB}_{\doubar{x}} : \bigotimes\limits_{i=1}^{n-1}(\MA[1]^{\otimes \bar{x}^{n-i+1}}\otimes {}_{\Phi(\rrt(\bar{x}^{n-i+1}))}\MB^*_{\Phi(\llt(\bar{x}^{n-i}))}[d])\otimes\MA[1]^{\otimes \bar{x}^1} \rightarrow {}_{\Phi(\llt(\bar{x}^n))}\MB^*_{\Phi(\rrt(\bar{x}^1))}[d-1]
\end{equation}
\end{small} is given by 
\begin{equation}
    \label{eq:map-j-x-mixed2} 
    \begin{split}
    &\psi_{\doubar{x}}^{\MB}(\bar{sa}^n,tf^{n-1},\bar{sa}^{n-2},tf^{n-2},...,\bar{sa}^{2},tf^{1},\bar{sa}^1)(s_{1-d}b)
    \\& \hspace{4cm}=(-1)^{\delta}\big( \bigotimes\limits_{i=1}^{n-1}(f^{i}\circ s_d)\big)\big(\phi(\bar{sa}^1\otimes s_{-d}b \otimes\bar{sa}^n,\bar{sa}^{2},...,\bar{sa}^{n-1})\big)
    \end{split}
\end{equation}
for $n\in\NN^*$, $\doubar{x}=(\Bar{x}^1,\dots,\Bar{x}^n)$ and elements $\bar{sa}^i\in\MA[1]^{\otimes\bar{x}^i}$, $s_{1-d}b\in {}_{\Phi(\rrt(\bar{x}^1))}\MB^*_{\Phi(\llt(\bar{x}^n))}[d-1]$ and $tf^i\in{}_{\Phi(\rrt(\bar{x}^{i+1}))}\MB^{*}_{\Phi(\llt(\bar{x}^{i}))}[d]$
with
\begin{small}
\begin{equation}
    \begin{split}
        \delta=\epsilon+d(|\phi|+\sum\limits_{i=1}^n|\Bar{sa}^i|+\sum\limits_{i=1}^{n-1}|tf^i|)+(|\bar{sa}^n|+|sb|)\sum\limits_{i=2}^{n-1}|\bar{sa}^i|+|\bar{sa}^n||sb|.
    \end{split}
\end{equation}
\end{small}
for $n=\llg(\doubar{x})$, $\bar{sa}^i\in\MA[1]^{\otimes\bar{x}^i}$ and $tf^i\in{}_{\Phi(\rrt(\bar{x}^{i+1}))}\MB^{*}_{\Phi(\llt(\bar{x}^{i}))}[d]$.
\end{lemma}
\begin{proof}
    The proof is completely similar to the proof of Lemma \ref{lemma:j}.
\end{proof}

\begin{proposition}
\label{prop:Lie-bracket-mixed}
    Let $\MA$ and $\MB$ be graded quivers with respective sets of objects $\MO_{\MA}$ and $\MO_{\MB}$ and consider a map $\Phi : \MO_{\MA}\rightarrow\MO_{\MB}$.
Then, we have
\begin{equation}
    \mathcalboondox{j}^{\Phi}([s_{d+1}\mathbf{F},s_{d+1}\mathbf{G}]_{\Phi\nec})
    =[\mathcalboondox{j}^{\Phi}(s_{d+1}\mathbf{F}),\mathcalboondox{j}^{\Phi}(s_{d+1}\mathbf{G})]_G
\end{equation}
for $s_{d+1}\mathbf{F}, s_{d+1}\mathbf{G} \in\mathcalboondox{B}^{\bullet}_{d,\Phi}(\MA,\MB)$.
\end{proposition}
\begin{proof}
    The proof is completely similar to the proof of Proposition \ref{proposition:j-bracket}.
\end{proof}
\subsection{The case of strict morphisms} \label{strict case}
In this subsection, we study the relation between strict $d$-pre-Calabi-Yau morphisms and $A_{\infty}$-morphisms.
We will denote 
\[ 
\xoverline{\Phi_0(x)}=(\Phi_0(x_1),\dots,\Phi_0(x_n))
\] for $\bar{x}=(x_1,\dots,x_n)\in\MO_{\MA}^n$ and 
\[ 
\doubarl{\Phi_0(x)}=(\xoverline{\Phi_0(\bar{x}^1)},\dots,\xoverline{\Phi_0(\bar{x}^n)})
\]
for $\doubar{x}=(\bar{x}^1,\dots,\bar{x}^n)\in\bar{\MO}_{\MA}^n$.
Moreover, we will denote by $\pi_{\MA}$ (resp. $\pi_{\MB^*[d-1]}$) the canonical projection $\MA\oplus\MB^*[d-1]\rightarrow \MA$ (resp. $\MA\oplus\MB^*[d-1]\rightarrow \MB^*[d-1]$). We define $\pi_{\MA[1]}$ and $\pi_{\MB^*[d]}$ in a similar manner.
We first recall the definition of strict $d$-pre-Calabi-Yau morphism.
\begin{definition}
    Let $(\MA,s_{d+1}M_{\MA})$, $(\MB,s_{d+1}M_{\MB})$ be $d$-pre-Calabi-Yau categories with respective sets of objects $\MO_{\MA}$ and $\MO_{\MB}$. A $d$-pre-Calabi-Yau morphism $(\Phi_0,s_{d+1}\Phi) :(\MA,s_{d+1}M_{\MA})\rightarrow (\MB,s_{d+1}M_{\MB})$ is \textbf{\textcolor{ultramarine}{strict}} if $\Phi^{\doubar{x}}$ vanishes for each $\doubar{x}\in\bar{\MO}_{\MA}^n$ with $n>1$ and $\llg(\bar{x}^1)\neq 2$ if $n=1$.
    
    Equivalently, a strict $d$-pre-Calabi-Yau morphism between $d$-pre-Calabi-Yau categories $(\MA,s_{d+1}M_{\MA})$ and $(\MB,s_{d+1}M_{\MB})$ is the data of a map between their sets of objects $\Phi_0 : \MO_{\MA}\rightarrow\MO_{\MB}$ together with a collection $s_{d+1}\Phi=(\Phi^{x,y} : {}_{x}\MA_{y}[1] \rightarrow {}_{\Phi_0(x)}\MB_{\Phi_0(y)}[1])_{x,y\in\MO_{\MA}}$ of maps of degree 0 that satisfies  
\begin{equation}
    (\Phi^{\llt(\bar{x}^1),\rrt(\bar{x}^2)}\otimes\dots\otimes\Phi^{\llt(\bar{x}^n),\rrt(\bar{x}^1)})\circ s_{d+1}M_{\MA}^{\doubar{x}}=s_{d+1}M_{\MB}^{\scriptsize{\doubarl{\Phi_0(x)}}}\circ (\Phi^{\otimes \llg(\bar{x}^1)-1}\otimes\dots\otimes\Phi^{\otimes \llg(\bar{x}^n)-1})
\end{equation}
for each $n\in\NN^*$, $\doubar{x}=(\bar{x}^1,\dots,\bar{x}^n)\in\bar{\MO}_{\MA}^n$.
\end{definition}
For simplicity, we will omit the elements when writing the map $\Phi$.
\begin{definition}
    We denote by $SpCY_d$ the subcategory of $pCY_d$ whose objects are $d$-pre-Calabi-Yau categories and whose morphisms are strict $d$-pre-Calabi-Yau morphisms.
\end{definition}

Given $d$-pre-Calabi-Yau categories $(\MA,s_{d+1}M_{\MA})$ and $(\MB,s_{d+1}M_{\MB})$ with respective sets of objects $\MO_{\MA}$ and $\MO_{\MB}$ and a strict $d$-pre-Calabi-Yau morphism $(\Phi_0,s_{d+1}\Phi) :(\MA,s_{d+1}M_{\MA})\rightarrow (\MB,s_{d+1}M_{\MB})$, we now construct an $A_{\infty}$-structure on $\MA\oplus\MB^{*}[d-1]$.
For $\doubar{x}\in\doubar{\MO}_{\MA}$, we define 
\begin{equation}
    \begin{split}
        sm_{\MA\oplus\MA^*\to \MA}^{\doubar{x}}=\pi_{\MA[1]}\circ \mathcalboondox{j}_{\doubar{x}^{-1}}(s_{d+1}M_{\MA}^{\doubar{x}^{-1}})
        \text{ and }sm_{\MB\oplus\MB^*\to \MB^*}^{\scriptsize{\doubarl{\Phi_0(x)}}}&=\pi_{\MB^*[d]}\circ \mathcalboondox{j}_{\scriptsize{\doubarl{\Phi_0(x)}}^{-1}}(s_{d+1}M_{\MB}^{\scriptsize{\doubarl{\Phi_0(x)}}^{-1}}).
    \end{split}
\end{equation} 

\begin{definition}
\label{def:m-A-B}
    We define the element $sm_{\MA\oplus\MB^*}\in C(\MA\oplus\MB^*[d-1])[1]$ as the unique element such that $\pi_{\MA}\circ m_{\MA\oplus\MB^*}= m_{\MA\oplus\MB^*\to \MA}$ where
\begin{equation}
m_{\MA\oplus\MB^*\to \MA}^{\bar{x}^1,\dots,\bar{x}^n}= m_{\MA\oplus\MA^*\to \MA}^{\bar{x}^1,\dots,\bar{x}^n}\circ (\id^{\otimes \llg(\bar{x}^1)-1}\otimes \Phi^*\otimes\id^{\otimes \llg(\bar{x}^2)-1}\otimes\dots\otimes \id^{\otimes \llg(\bar{x}^{n-1})-1}\otimes\Phi^*\otimes\id^{\otimes \llg(\bar{x}^n)-1})
\end{equation}
and $\pi_{\MB^*[d-1]}\circ m_{\MA\oplus\MB^*}=m_{\MA\oplus\MB^*\to\MB^*}$ where
\begin{equation}
m_{\MA\oplus\MB^*\to \MB^*}^{\bar{x}^1,\dots,\bar{x}^n}= m_{\MB\oplus\MB^*\to \MB^*}^{\scriptsize{\xoverline{\Phi_0(\bar{x}^1)}},\dots,\scriptsize{\xoverline{\Phi_0(\bar{x}^n)}}}\circ (\Phi^{\otimes \llg(\bar{x}^1)-1}\otimes \id\otimes \Phi^{\otimes \llg(\bar{x}^{2})-1}\otimes\dots\otimes \id\otimes \Phi^{\otimes \llg(\bar{x}^n)-1})
\end{equation}
for every $n\in\NN^*$ and $\doubar{x}=(\bar{x}^1,\dots,\bar{x}^n)\in\bar{\MO}^n$.
\end{definition}

\begin{proposition}
\label{prop:struct-strict-case}
The element $sm_{\MA\oplus\MB^*}\in C(\MA\oplus\MB^*[d-1])[1]$ defines an $A_{\infty}$-structure on $\MA\oplus\MB^*[d-1]$ that is almost cyclic with respect to the $\Phi$-mixed bilinear form $\Gamma^{\Phi}$, defined in Example \ref{example:natural-mixed-bilinear-form}.
\end{proposition}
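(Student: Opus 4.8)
The plan is to deduce the Maurer--Cartan equation for $sm_{\MA\oplus\MB^*}$ from a Maurer--Cartan equation in the $\Phi$-mixed necklace graded Lie algebra, and then to check almost cyclicity by hand. First I would assemble $M_{\MA}$ and $M_{\MB}$ into a single element $s_{d+1}\mathbf{M}\in\mathcalboondox{B}_{\bullet}(\MA[1],\MB[-d])[d+1]$ of degree $1$ using the splitting of Lemma~\ref{lemma:B-split}: let its $\mathcalboondox{B}_{\bullet}^{\MA}$-component be obtained from $M_{\MA}$ by postcomposing the $n-1$ intermediate $\MA[-d]$-outputs with $\Phi$, and let its $\mathcalboondox{B}_{\bullet}^{\MB}$-component be obtained from $M_{\MB}$ by precomposing every $\MA[1]$-input with $\Phi$. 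These two prescriptions define $\mathbf{M}$ unambiguously through the direct sum decomposition; strictness will only be needed later, when proving that $\mathbf{M}$ squares to zero.

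The first step is to verify that $\mathcalboondox{j}^{\Phi}(s_{d+1}\mathbf{M})=sm_{\MA\oplus\MB^*}$, where $\mathcalboondox{j}^{\Phi}$ is the injective map \eqref{eq:map-j-mixed}. Unwinding \eqref{eq:map-j-x-mixed}, dualizing the intermediate $\MB$-outputs of $\mathbf{M}$ against the $\MB^{*}$-inputs of the $A_{\infty}$-operation reproduces precisely the factors $\Phi^{*}$ and $\Phi$ appearing in the two components $m_{\MA\oplus\MB^*\to\MA}$ and $m_{\MA\oplus\MB^*\to\MB^*}$ of Definition~\ref{def:m-A-B}; the key point is the adjunction between $\Phi$ and $\Phi^{*}$, which identifies dualizing a $\Phi$-image against $\MB^{*}$ with dualizing against $\Phi^{*}(\MB^{*})\subseteq\MA^{*}$. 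This is a sign-tracking comparison of $\mathcalboondox{j}^{\Phi}$ with the unmixed $\mathcalboondox{j}$ used to define $m_{\MA\oplus\MA^*\to\MA}$ and $m_{\MB\oplus\MB^*\to\MB^*}$. Granting this identification, Proposition~\ref{prop:Lie-bracket-mixed} gives $\mathcalboondox{j}^{\Phi}([s_{d+1}\mathbf{M},s_{d+1}\mathbf{M}]_{\Phi\nec})=[sm_{\MA\oplus\MB^*},sm_{\MA\oplus\MB^*}]_{G}$, and since $s_{d+1}\mathbf{M}$ has degree $1$ the left bracket equals $2\,s_{d+1}\mathbf{M}\underset{\Phi\nec}{\circ}s_{d+1}\mathbf{M}$. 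Hence it suffices to prove the mixed Maurer--Cartan equation $s_{d+1}\mathbf{M}\underset{\Phi\nec}{\circ}s_{d+1}\mathbf{M}=0$, as injectivity of $\mathcalboondox{j}^{\Phi}$ then forces $[sm_{\MA\oplus\MB^*},sm_{\MA\oplus\MB^*}]_{G}=0$, which is the Stasheff identity \eqref{eq:Stasheff-id}.

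To establish the mixed Maurer--Cartan equation I would expand $s_{d+1}\mathbf{M}\underset{\Phi\nec}{\circ}s_{d+1}\mathbf{M}$ as the sum over the inner and outer filled diagrams of Definition~\ref{def:mixed-necklace}, and sort the resulting terms according to whether the glued output is of type $\MA$ or of type $\MB$. The terms in which both discs interact through $\MA$-outputs reassemble, after pulling the copies of $\Phi$ outward, into the components of the honest necklace product $s_{d+1}M_{\MA}\underset{\nec}{\circ}s_{d+1}M_{\MA}$, while the terms interacting through $\MB$-outputs reassemble into $s_{d+1}M_{\MB}\underset{\nec}{\circ}s_{d+1}M_{\MB}$; both vanish because $M_{\MA}$ and $M_{\MB}$ are pre-Calabi--Yau structures, i.e.\ satisfy \eqref{eq:MC}. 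The genuinely mixed terms, where an $\MA$-valued output is fed into an $\MB$-flavoured composition or vice versa, are precisely where strictness intervenes: the strictness relation intertwining $M_{\MA}$ and $M_{\MB}$ through $\Phi$ lets one slide $\Phi$ across the gluing arrow and recognize these contributions as already counted in the two pure sums. This reorganization, together with its Koszul signs, is the main obstacle of the argument, and is the step that genuinely fails for a non-strict morphism.

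Finally, for almost cyclicity with respect to $\Gamma^{\Phi}$ I would verify \eqref{eq:cyclicity-pcy} directly, splitting by the output type of $sm_{\MA\oplus\MB^*}^{\bar{x}}$. Since $\Gamma^{\Phi}$ is supported on $\MA[1]\otimes\MB^{*}[d]$, both sides of \eqref{eq:cyclicity-pcy} pair an $\MA$-valued operation against an $\MB^{*}$-argument or an $\MB^{*}$-valued operation against an $\MA$-argument. The definition of $\Gamma^{\Phi}$ in Example~\ref{example:natural-mixed-bilinear-form} yields the adjunction identities $\Gamma^{\MA}(sa,t\Phi^{*}f)=\pm\,\Gamma^{\Phi}(sa,tf)$ and $\Gamma^{\MB}(s\Phi a,tf)=\pm\,\Gamma^{\Phi}(sa,tf)$, which convert each such pairing into one for the natural form of $\MA$ or of $\MB$; the cyclic symmetry of $M_{\MA}$ and $M_{\MB}$ encoded in \eqref{eq:cyclic-coder-1} and \eqref{eq:cyclic-coder-2} then produces the required cyclic rotation of the arguments, after which the adjunction is applied once more to return to $\Gamma^{\Phi}$. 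Here the remaining work is purely to match the sign $(-1)^{\epsilon}$ of \eqref{eq:cyclicity-pcy} with those of the cyclicity conditions \eqref{eq:cyclic-coder-1} and \eqref{eq:cyclic-coder-2}.
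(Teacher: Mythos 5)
Your overall strategy coincides with the paper's: you build the same element $s_{d+1}M_{\MA\oplus\MB^*}\in\mathcalboondox{B}_{\bullet}(\MA[1],\MB[-d])[d+1]$, reduce the Stasheff identities to the $\Phi$-mixed Maurer--Cartan equation via $\mathcalboondox{j}^{\Phi}$ and Proposition \ref{prop:Lie-bracket-mixed}, argue diagrammatically using strictness, and check almost cyclicity by hand. The problem is in your third step, which is the heart of the proof: your accounting of the terms of $s_{d+1}\mathbf{M}\underset{\Phi\nec}{\circ}s_{d+1}\mathbf{M}$ is wrong. The expansion has four families of terms: (a) $\MA$-component glued into $\MA$-component along an $\MA[1]$-arrow, (b) $\MA$-component glued into $\MB$-component along an $\MA[1]$-arrow, (c) $\MA$-component glued into $\MB$-component along a $\MB[-d]$-arrow, and (d) $\MB$-component glued into $\MB$-component along a $\MB[-d]$-arrow. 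Your claim that the pure families reassemble by themselves into $s_{d+1}M_{\MA}\underset{\nec}{\circ}s_{d+1}M_{\MA}$ and $s_{d+1}M_{\MB}\underset{\nec}{\circ}s_{d+1}M_{\MB}$ is false: after pulling the copies of $\Phi$ outward, family (a) yields only the \emph{inner} necklace compositions of $M_{\MA}$ with itself (the gluings that correspond under $\mathcalboondox{j}$ to insertions into $\MA$-input slots). The \emph{outer} compositions --- insertions into $\MA^*$-slots, which dualize to $\MB[-d]$-gluings --- are missing, and they are produced exactly by family (c) once strictness is applied to the factor $M_{\MB}\circ\Phi^{\otimes}$. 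Neither pure family vanishes on its own (just as one cannot split a Gerstenhaber square into independently vanishing pieces), and the mixed families are not ``already counted'': they are the complements needed to complete each pure family to a full necklace square. The correct grouping, which is what the paper carries out for the $\MA$-projection, is $(a)+(c)=(\Phi^{\otimes}\otimes\id)\circ\big(s_{d+1}M_{\MA}\underset{\nec}{\circ}s_{d+1}M_{\MA}\big)=0$, and dually $(b)+(d)$ gives the decorated $M_{\MB}$-square. As written, your argument simultaneously double-counts the mixed terms and asserts the vanishing of sums that do not vanish.

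Two further points of comparison. First, the paper never computes the $\MB^*$-projection of the mixed Maurer--Cartan equation: it proves only the $\MA$-projection diagrammatically, then proves almost cyclicity, and uses cyclicity to obtain the remaining Stasheff components. Your plan of treating both projections diagrammatically is viable once regrouped as above, and is arguably more symmetric, but it is a different organization from the paper's. Second, your cyclicity paragraph is missing an ingredient: the adjunction identities relating $\Gamma^{\Phi}$ to $\Gamma^{\MA}$ and $\Gamma^{\MB}$, together with the cyclicity of $M_{\MA}$ and $M_{\MB}$, are not sufficient, because the cyclic rotation in \eqref{eq:cyclicity-pcy} exchanges the two components of the structure: one must convert an expression built from $M_{\MB}$ evaluated on $\Phi$-ed arguments (that is, $sm_{\MB\oplus\MB^*\to\MB}$) into one built from $M_{\MA}$ (that is, $sm_{\MA\oplus\MB^*\to\MA}$). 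That conversion is precisely the strictness relation, as in \eqref{eq:cyclicity-mA-to-mB}, and it cannot be absorbed into Koszul sign bookkeeping.
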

\begin{proof}
By Proposition \ref{prop:Lie-bracket-mixed},
the fact that $[sm_{\MA\oplus\MB^*},sm_{\MA\oplus\MB^*}]_G=0$ is tantamount to the fact that $[s_{d+1}M_{\MA\oplus\MB^*}, s_{d+1}M_{\MA\oplus\MB^*}]_{\Phi \nec}=0$ where 
$s_{d+1}M_{\MA\oplus\MB^*}\in \mathcalboondox{B}^{\bullet}_{d,\Phi_0}(\MA,\MB)$ is given by  
\[
p_{\MA}(s_{d+1}M_{\MA\oplus\MB^*}^{\doubar{x}})=(\Phi^{\otimes (n-1)}\otimes \id)\circ s_{d+1}M_{\MA}^{\doubar{x}}\in \mathcalboondox{B}^{\doubar{x}}_{d,\Phi_0}(\MA,\MB)^{\MA}\]
 and by $p_{\MB}(s_{d+1}M_{\MA\oplus\MB^*}^{\doubar{x}})$ equals to
 \begin{small}
     \begin{equation}
     \begin{split}
s_{d+1}M_{\MB}^{\scriptsize{\doubarl{\Phi_0(\doubar{y})}}}\circ(\Phi^{\otimes \llg(\bar{x}^1)-1}\otimes\id\otimes\Phi^{\otimes \llg(\bar{x}^n)-1}\otimes\Phi^{\otimes \llg(\bar{x}^2)-1}\otimes\dots\otimes \Phi^{\otimes \llg(\bar{x}^{n-1})-1})\in \mathcalboondox{B}^{\doubar{x}}_{d,\Phi_0}(\MA,\MB)^{\MB}
 \end{split}
     \end{equation}
 \end{small}
for $\doubar{x}=(\bar{x}^1,\dots,\bar{x}^n)\in\bar{\MO}_{\MA}^n$ and $\doubar{y}=(\bar{x}^1\sqcup\bar{x}^n,\bar{x}^2,\dots,\bar{x}^{n-1})$.
Moreover, 
\[
p_{\MA}(s_{d+1}M_{\MA\oplus\MB^*}\upperset{\Phi \nec}{\circ} s_{d+1}M_{\MA\oplus\MB^*})^{\doubar{x}}=\sum\mathcal{E}(\mathcalboondox{D})+\sum\mathcal{E}(\mathcalboondox{D'})\] where the sums are over all the filled diagrams $\mathcalboondox{D}$ and $\mathcalboondox{D'}$ of type $\doubar{x}$ that are of the form

\begin{minipage}{20cm}
\begin{tikzpicture}[line cap=round,line join=round,x=1.0cm,y=1.0cm]
\clip(-2.5,-1.5) rectangle (5.5,2.6);
     \draw [line width=0.5pt] (0.,0.) circle (0.5cm);
     \shadedraw[rotate=150,shift={(0.5cm,0cm)}] \doublefleche;
     \draw[line width=1.1pt,rotate=90][->, >= stealth, >= stealth](0.5,0)--(0.9,0);
     \shadedraw[rotate=30,shift={(0.5cm,0cm)}] \doubleflechescindeeleft;
     \shadedraw[rotate=30,shift={(0.5cm,0cm)}] \doubleflechescindeeright;
     \shadedraw[rotate=30,shift={(0.5cm,0cm)}] \fleche;
      \draw [line width=0.5pt] (1.3,0.75) circle (0.5cm);
     \draw [line width=0.5pt] (1.03,-0.6) circle (0.3cm);
    \draw[rotate=-30][->, >= stealth, >= stealth](0.5,0)--(0.9,0);
     \draw [rotate around ={-30:(1.03,-0.6)}] [->, >= stealth, >= stealth] (1.33,-0.6)--(1.63,-0.6);
    \draw[rotate=-150][->, >= stealth, >= stealth](0.5,0)--(0.9,0);
    \draw [line width=0.5pt] (-1.03,-0.6) circle (0.3cm);
     \draw [rotate around ={30:(-1.03,-0.6)}] [->, >= stealth, >= stealth] (-1.33,-0.6)--(-1.63,-0.6);
     \draw [rotate around ={-30:(1.3,0.75)}] [->, >= stealth, >= stealth] (1.8,0.75)--(2.2,0.75);
     \draw [rotate around ={90:(1.3,0.75)}] [->, >= stealth, >= stealth] (1.8,0.75)--(2.2,0.75);
     \draw [rotate around ={30:(1.3,0.75)}, shift={(1.8,0.75)}]\doublefleche;
     \draw [rotate around ={150:(1.3,0.75)}, shift={(1.8,0.75)}]\doublefleche;
     \draw [line width=0.5pt] (2.33,0.15) circle (0.3cm);
      \draw [rotate around ={-30:(2.33,0.15)}] [->, >= stealth, >= stealth] (2.63,0.15)--(2.93,0.15);
    \draw [line width=0.5pt] (1.3,1.95) circle (0.3cm);
      \draw [rotate around ={90:(1.3,1.95)}] [->, >= stealth, >= stealth] (1.6,1.95)--(1.9,1.95);  
\begin{scriptsize}
\draw [fill=black] (0,-0.6) circle (0.3pt);
\draw [fill=black] (0.2,-0.55) circle (0.3pt);
\draw [fill=black] (-0.2,-0.55) circle (0.3pt);
\draw [fill=black] (1.3,0.75-0.6) circle (0.3pt);
\draw [fill=black] (1.5,0.75-0.55) circle (0.3pt);
\draw [fill=black] (1.1,0.75-0.55) circle (0.3pt);
\end{scriptsize}
\draw(0,0.25)node[anchor=north]{$M_{\MA}$};
\draw(1.3,1)node[anchor=north]{$M_{\MA}$};
\draw(2.33,0.4)node[anchor=north]{$\Phi$};
\draw(1.3,2.2)node[anchor=north]{$\Phi$};
\draw(1.03,-0.35)node[anchor=north]{$\Phi$};
\draw(-1.03,-0.35)node[anchor=north]{$\Phi$};
\draw(4.5,0.25)node[anchor=north]{and};
\end{tikzpicture}
\begin{tikzpicture}[line cap=round,line join=round,x=1.0cm,y=1.0cm]
\clip(-4,-1.5) rectangle (5,2.6);
   \draw [line width=0.5pt] (0.,0.) circle (0.5cm);
     \draw [rotate=90] [->, >= stealth, >= stealth] (0.5,0)--(0.9,0);
     \draw [rotate=-30] [->, >= stealth, >= stealth] (0.5,0)--(0.9,0);
     \draw [rotate=-150] [->, >= stealth, >= stealth] (0.5,0)--(0.9,0);
     \draw [rotate=0] [<-, >= stealth, >= stealth] (0.5,0)--(0.9,0);
     \draw [line width=0.5pt] (1.15,0.) circle (0.25cm);
     \draw[rotate around={0:(1.15,0)}] [<-, >= stealth, >= stealth, >= stealth] (1.4,0)--(1.7,0);
     \draw [rotate=60] [<-, >= stealth, >= stealth] (0.5,0)--(0.9,0);
       \draw [rotate=180] [<-, >= stealth, >= stealth] (0.5,0)--(0.9,0);
      \draw [line width=0.5pt] (-1.15,0.) circle (0.25cm);
     \draw[rotate around={0:(-1.15,0)}] [<-, >= stealth, >= stealth] (-1.4,0)--(-1.7,0);
        \draw [rotate=120] [<-, >= stealth, >= stealth] (0.5,0)--(0.9,0);
        \draw [line width=0.5pt] (0.575,1) circle (0.25cm);
        \draw[rotate around={60:(0.575,1)}] [<-, >= stealth, >= stealth] (0.825,1)--(1.125,1);
        \draw [line width=0.5pt] (-0.575,1) circle (0.25cm);
      \draw[rotate around={300:(-0.575,1)}] [<-, >= stealth, >= stealth] (-0.825,1)--(-1.125,1);
      \draw[rotate around={150:(0,0)}] [<-, >= stealth, >= stealth] (0.5,0)--(0.9,0);
    \draw [line width=0.5pt] (-1,0.575) circle (0.25cm);
    \draw[rotate around={150:(-1,0.575)}] [<-, >= stealth, >= stealth] (-0.75,0.575)--(-0.35,0.575);
     \draw [line width=0.5pt] (-2,1.15) circle (0.5cm);
     \shadedraw[rotate around={30:(-2,1.15)},shift={(-1.5,1.15)}]\doublefleche;
     \shadedraw[rotate around={150:(-2,1.15)},shift={(-1.5,1.15)}]\doublefleche;
     \draw[rotate around={-150:(-2,1.15)}] [->, >= stealth, >= stealth] (-1.5,1.15)--(-1.1,1.15);
      \draw [line width=0.5pt] (-2-1.03,1.15-0.6) circle (0.3cm);
     \draw[rotate around={-150:(-2-1.03,1.15-0.6)}] [->, >= stealth, >= stealth] (-2-1.03+0.3,1.15-0.6)--(-2-1.03+0.6,1.15-0.6);
     \draw[line width=1.1pt,rotate around={90:(-2,1.15)}] [->, >= stealth, >= stealth] (-1.5,1.15)--(-1.1,1.15);
\begin{scriptsize}
\draw [fill=black] (0,-0.6) circle (0.3pt);
\draw [fill=black] (0.2,-0.55) circle (0.3pt);
\draw [fill=black] (-0.2,-0.55) circle (0.3pt);
\draw [rotate=120][fill=black] (0,-0.6) circle (0.3pt);
\draw [rotate=120][fill=black] (0.15,-0.55) circle (0.3pt);
\draw [rotate=120][fill=black] (-0.15,-0.55) circle (0.3pt);
\draw [fill=black] (-0.38,0.47) circle (0.3pt);
\draw [fill=black] (-0.44,0.43) circle (0.3pt);
\draw [fill=black] (-0.47,0.37) circle (0.3pt);
\draw [fill=black] (-0.56,0.22) circle (0.3pt);
\draw [fill=black] (-0.6,0.165) circle (0.3pt);
\draw [fill=black] (-0.6,0.1) circle (0.3pt);
\draw [fill=black] (-2,1.15-0.6) circle (0.3pt);
\draw [fill=black] (-2.2,1.15-0.55) circle (0.3pt);
\draw [fill=black] (-1.8,1.15-0.55) circle (0.3pt);
\end{scriptsize}
\draw(0,0.25)node[anchor=north]{$M_{\MB}$};
\draw(-2,1.4)node[anchor=north]{$M_{\MA}$};
\draw(-1,0.575+0.25)node[anchor=north]{$\Phi$};
\draw(0.575,1.25)node[anchor=north]{$\Phi$};
\draw(-0.575,1.25)node[anchor=north]{$\Phi$};
\draw(1.15,0.25)node[anchor=north]{$\Phi$};
\draw(-1.15,0.25)node[anchor=north]{$\Phi$};
\draw(-2-1.03,1.15-0.6+0.25)node[anchor=north]{$\Phi$};
\end{tikzpicture}
\end{minipage}

\noindent respectively.
Using that $\Phi$ is a strict $d$-pre-Calabi-Yau morphism, the diagram composed of the disc filled with $M_{\MB}$ and of the discs sharing an arrow with it on the right can be replaced by one with a disc filled with $M_{\MA}$ whose outgoing arrows are connected with the unique incoming arrow of discs of size $1$ filled with $\Phi$.
We thus obtain that 
\[
p_{\MA}(s_{d+1}M_{\MA\oplus\MB^*}\upperset{\Phi \nec}{\circ} s_{d+1}M_{\MA\oplus\MB^*})^{\doubar{x}}=\sum\mathcal{E}(\mathcalboondox{D})+\sum\mathcal{E}(\mathcalboondox{D}\dprime)\] where the sums are over all the filled diagrams $\mathcalboondox{D}$ and $\mathcalboondox{D}\dprime$ of type $\doubar{x}$ that are of the form

\begin{minipage}{20cm}
\begin{tikzpicture}[line cap=round,line join=round,x=1.0cm,y=1.0cm]
\clip(-2.5,-1.5) rectangle (5.5,2.6);
     \draw [line width=0.5pt] (0.,0.) circle (0.5cm);
     \shadedraw[rotate=150,shift={(0.5cm,0cm)}] \doublefleche;
     \draw[line width=1.1pt,rotate=90][->, >= stealth, >= stealth](0.5,0)--(0.9,0);
     \shadedraw[rotate=30,shift={(0.5cm,0cm)}] \doubleflechescindeeleft;
     \shadedraw[rotate=30,shift={(0.5cm,0cm)}] \doubleflechescindeeright;
     \shadedraw[rotate=30,shift={(0.5cm,0cm)}] \fleche;
      \draw [line width=0.5pt] (1.3,0.75) circle (0.5cm);
     \draw [line width=0.5pt] (1.03,-0.6) circle (0.3cm);
    \draw[rotate=-30][->, >= stealth, >= stealth](0.5,0)--(0.9,0);
     \draw [rotate around ={-30:(1.03,-0.6)}] [->, >= stealth, >= stealth] (1.33,-0.6)--(1.63,-0.6);
    \draw[rotate=-150][->, >= stealth, >= stealth](0.5,0)--(0.9,0);
    \draw [line width=0.5pt] (-1.03,-0.6) circle (0.3cm);
     \draw [rotate around ={30:(-1.03,-0.6)}] [->, >= stealth, >= stealth] (-1.33,-0.6)--(-1.63,-0.6);
     \draw [rotate around ={-30:(1.3,0.75)}] [->, >= stealth, >= stealth] (1.8,0.75)--(2.2,0.75);
     \draw [rotate around ={90:(1.3,0.75)}] [->, >= stealth, >= stealth] (1.8,0.75)--(2.2,0.75);
     \draw [rotate around ={30:(1.3,0.75)}, shift={(1.8,0.75)}]\doublefleche;
     \draw [rotate around ={150:(1.3,0.75)}, shift={(1.8,0.75)}]\doublefleche;
     \draw [line width=0.5pt] (2.33,0.15) circle (0.3cm);
      \draw [rotate around ={-30:(2.33,0.15)}] [->, >= stealth, >= stealth] (2.63,0.15)--(2.93,0.15);
    \draw [line width=0.5pt] (1.3,1.95) circle (0.3cm);
      \draw [rotate around ={90:(1.3,1.95)}] [->, >= stealth, >= stealth] (1.6,1.95)--(1.9,1.95);  
\begin{scriptsize}
\draw [fill=black] (0,-0.6) circle (0.3pt);
\draw [fill=black] (0.2,-0.55) circle (0.3pt);
\draw [fill=black] (-0.2,-0.55) circle (0.3pt);
\draw [fill=black] (1.3,0.75-0.6) circle (0.3pt);
\draw [fill=black] (1.5,0.75-0.55) circle (0.3pt);
\draw [fill=black] (1.1,0.75-0.55) circle (0.3pt);
\end{scriptsize}
\draw(0,0.25)node[anchor=north]{$M_{\MA}$};
\draw(1.3,1)node[anchor=north]{$M_{\MA}$};
\draw(2.33,0.4)node[anchor=north]{$\Phi$};
\draw(1.3,2.2)node[anchor=north]{$\Phi$};
\draw(1.03,-0.35)node[anchor=north]{$\Phi$};
\draw(-1.03,-0.35)node[anchor=north]{$\Phi$};
\draw(4.5,0.25)node[anchor=north]{and};
\end{tikzpicture}
\begin{tikzpicture}[line cap=round,line join=round,x=1.0cm,y=1.0cm]
\clip(-2,-1.5) rectangle (5,2.6);
 \draw [line width=0.5pt] (0.,0.) circle (0.5cm);
     \shadedraw[rotate=150,shift={(0.5cm,0cm)}] \doublefleche;
     \draw[rotate=90][->, >= stealth, >= stealth](0.5,0)--(0.9,0);
     \shadedraw[rotate=30,shift={(0.5cm,0cm)}] \doubleflechescindeeleft;
     \shadedraw[rotate=30,shift={(0.5cm,0cm)}] \doubleflechescindeeright;
     \shadedraw[rotate=30,shift={(0.5cm,0cm)}] \fleche;
      \draw [line width=0.5pt] (1.3,0.75) circle (0.5cm);
     \draw [line width=0.5pt] (1.03,-0.6) circle (0.3cm);
    \draw[rotate=-30][->, >= stealth, >= stealth](0.5,0)--(0.9,0);
     \draw [rotate around ={-30:(1.03,-0.6)}] [->, >= stealth, >= stealth] (1.33,-0.6)--(1.63,-0.6);
    \draw[rotate=-150][->, >= stealth, >= stealth](0.5,0)--(0.9,0);
    \draw [line width=0.5pt] (-1.03,-0.6) circle (0.3cm);
     \draw [rotate around ={30:(-1.03,-0.6)}] [->, >= stealth, >= stealth] (-1.33,-0.6)--(-1.63,-0.6);
     \draw [rotate around ={-30:(1.3,0.75)}] [->, >= stealth, >= stealth] (1.8,0.75)--(2.2,0.75);
     \draw [line width=1.1pt,rotate around ={90:(1.3,0.75)}] [->, >= stealth, >= stealth] (1.8,0.75)--(2.2,0.75);
     \draw [rotate around ={30:(1.3,0.75)}, shift={(1.8,0.75)}]\doublefleche;
     \draw [rotate around ={150:(1.3,0.75)}, shift={(1.8,0.75)}]\doublefleche;
     \draw [line width=0.5pt] (2.33,0.15) circle (0.3cm);
      \draw [rotate around ={-30:(2.33,0.15)}] [->, >= stealth, >= stealth] (2.63,0.15)--(2.93,0.15);
    \draw [line width=0.5pt] (0,1.2) circle (0.3cm);
      \draw [rotate around ={90:(0,1.2)}] [->, >= stealth, >= stealth] (0.3,1.2)--(0.6,1.2);  
\begin{scriptsize}
\draw [fill=black] (0,-0.6) circle (0.3pt);
\draw [fill=black] (0.2,-0.55) circle (0.3pt);
\draw [fill=black] (-0.2,-0.55) circle (0.3pt);
\draw [fill=black] (1.3,0.75-0.6) circle (0.3pt);
\draw [fill=black] (1.5,0.75-0.55) circle (0.3pt);
\draw [fill=black] (1.1,0.75-0.55) circle (0.3pt);
\end{scriptsize}
\draw(0,0.25)node[anchor=north]{$M_{\MA}$};
\draw(1.3,1)node[anchor=north]{$M_{\MA}$};
\draw(2.33,0.4)node[anchor=north]{$\Phi$};
\draw(0,1.45)node[anchor=north]{$\Phi$};
\draw(1.03,-0.35)node[anchor=north]{$\Phi$};
\draw(-1.03,-0.35)node[anchor=north]{$\Phi$};
\end{tikzpicture}
\end{minipage}
\noindent respectively.
Moreover, $\sum\mathcal{E}(\mathcalboondox{D})+\sum\mathcal{E}(\mathcalboondox{D}\dprime)=0$ since $s_{d+1}M_{\MA}$ is a $d$-pre-Calabi-Yau structure.
Thus, if we show that $sm_{\MA\oplus\MB^*}$ satisfies the cyclicity condition \eqref{eq:cyclicity-pcy} it will satisfy the Stasheff identities for every $\Bar{x}\in\Bar{\MO}_{\MA}$.

Using the definition of $\Gamma^{\Phi}$ and of $sm_{\MB\oplus\MB^*}$ together with the fact that the latter is cyclic with respect to $\Gamma^{\MB}$, we have that
\begin{equation}
\label{eq:stric-morph-1}
    \begin{split}
        &\Gamma^{\Phi}(sm_{\MA\oplus\MB^*\to \MB^*}^{\bar{x}^1,\dots,\bar{x}^n}(\bar{sa}^1,tf^1,\dots,\bar{sa}^{n-1},tf^{n-1},\bar{sa}^n),sb)\\&= \Gamma^{\MB}(sm_{\MB\oplus\MB^*\to \MB^*}^{\scriptsize{\xoverline{\Phi_0(\bar{x}^1)}},\dots,\scriptsize{\xoverline{\Phi_0(\bar{x}^n)}}}(\Phi^{\otimes \llg(\bar{x}^1)-1}(\bar{sa}^1),tf^{1},\dots,tf^{n-1},\Phi^{\otimes \llg(\bar{x}^n)-1}(\bar{sa}^n)),\Phi(sb))
        \\&=(-1)^{\epsilon}\Gamma^{\MB}(sm_{\MB\oplus\MB^*\to \MB}^{\scriptsize{\doubarl{\Phi_0(\doubar{y})}}}\big(\Phi^{\otimes \llg(\bar{x}^n)-1}(\bar{sa}^n)\otimes\Phi(sb)\otimes\Phi^{\otimes \llg(\bar{x}^1)-1}(\bar{sa}^1),tf^{1},\dots,\\&\hspace{8cm}tf^{n-2},\Phi^{\otimes \llg(\bar{x}^{n-1})-1}(\bar{sa}^{n-1})\big),tf^{n-1})
        \\&=(-1)^{\epsilon+\delta}
        \Gamma^{\MB}\big((f^{n-2}\circ s_d)\otimes\dots\otimes(f^{1}\circ s_d)\otimes s_d)
        \\&(M_{\MB}^{\scriptsize{\doubarl{\Phi_0(\doubar{y})}^{-1}}}(\Phi^{\otimes\llg(\bar{x}^{n-1})}(\bar{sa}^{n-1}),\dots,\Phi^{\otimes\llg(\bar{x}^n)}(\bar{sa}^n)\otimes\Phi(sb)\otimes\Phi^{\otimes\llg(\bar{x}^1)}(\bar{sa}^1)),tf^{n-1}\big)
    \end{split}
\end{equation}
for $\doubar{x}=(\Bar{x}^1,\dots,\Bar{x}^n)\in\Bar{\MO}_{\MA}^n$, $\doubar{y}=(\Bar{x}^n\sqcup\Bar{x}^1,\dots,\Bar{x}^{n-1})$ and elements $\bar{sa}^i\in \MA[1]^{\otimes \bar{x}^i}$, $sb\in{}_{\rrt(\bar{x}^n)}\MA_{\llt(\bar{x}^1)}[1]$, $tf^i\in {}_{\Phi_0(\rrt(\bar{x}^{i+1}))}\MB^*_{\Phi_0(\llt(\bar{x}^{i}))}[d]$
with 
\begin{small}
\begin{equation}
\begin{split}
\epsilon&=(|\bar{sa}^n|+|sb|)(\sum\limits_{i=1}^{n-1}(|\bar{sa}^i|+|tf^i|),
    \\\delta&=(|\bar{sa}^1|+|sb|+|\bar{sa}^n|)(\sum\limits_{i=2}^{n-1}|\bar{sa}^i|+\sum\limits_{i=1}^{n-2}|tf^i|)+\hskip-2mm\sum\limits_{2\leq i\leq j\leq n-2}\hskip-2mm|\bar{sa}^i||tf^j|\\&+dn
    +\hskip-2mm\sum\limits_{2\leq i<j\leq n-1}\hskip-2mm|\bar{sa}^i||\bar{sa}^j|+\hskip-2mm\sum\limits_{1\leq i<j\leq n-2}\hskip-2mm|tf^i||tf^j|.
    \end{split}
\end{equation}
\end{small}

Moreover, using that $\Phi$ is a $d$-pre-Calabi-Yau morphism, we have that the last member of \eqref{eq:stric-morph-1} is equal to
\begin{equation}
\label{eq:stric-morph-3}
    \begin{split}
    &(-1)^{\epsilon+\delta}\Gamma^{\MA}\big((\bigotimes\limits_{i=2}^{n-1}(f^{n-i}\circ \Tilde{\Phi})\otimes \id)\circ s_{d+1}M_{\MA}^{\doubar{y}^{-1}}
        (\bar{sa}^{n-1},\dots,\bar{sa}^2,\bar{sa}^n\otimes sb\otimes \bar{sa}^1),tf^{n-1}\circ\Tilde{\Phi}\big)
        \\&=(-1)^{\epsilon}\Gamma^{\MA}
        \big(sm_{\MA\oplus\MA^*\to \MA}^{\doubar{y}}(\bar{sa}^n\otimes sb\otimes \bar{sa}^1,tf^{1}\circ \Tilde{\Phi},\bar{sa}^{2},\dots,tf^{n-2}\circ \Tilde{\Phi},\bar{sa}^{n-1}),tf^{n-1}\circ\Tilde{\Phi}\big)
    \end{split}
\end{equation}
where $\Tilde{\Phi}$ denotes the morphism $\Phi[-d-1]$.
Thus, comparing \eqref{eq:stric-morph-1} and \eqref{eq:stric-morph-3} we get that 
\begin{equation}
    \label{eq:cyclicity-mA-to-mB}
    \begin{split}
    &\Gamma^{\MB}(sm_{\MB\oplus\MB^*\to \MB}^{\scriptsize{\doubarl{\Phi_0(\doubar{y})}^{-1}}}\big(\Phi^{\otimes \llg(\bar{x}^n)-1}(\bar{sa}^n)\otimes\Phi(sb)\otimes\Phi^{\otimes \llg(\bar{x}^1)-1}(\bar{sa}^1),tf^{1},\dots,\\&\hspace{9cm}tf^{n-2},\Phi^{\otimes \llg(\bar{x}^{n-1})-1}(\bar{sa}^{n-1})\big),tf^{n-1})
    \\&=\Gamma^{\MA}(sm_{\MA\oplus\MA^*\to \MA}^{\bar{x}^n\sqcup\bar{x}^1,\bar{x}^2,\dots,\bar{x}^{n-1}}(\bar{sa}^n\otimes sb\otimes \bar{sa}^1,tf^{1}\circ \Tilde{\Phi},\bar{sa}^{2},\dots,tf^{n-2}\circ \Tilde{\Phi},\bar{sa}^{n-1}),tf^{n-1}\circ \Tilde{\Phi}).
    \end{split}
\end{equation}
Finally, we have that 
\begin{equation}
\begin{split}
    &(-1)^{\epsilon}\Gamma^{\MA}(sm_{\MA\oplus\MA^*\to \MA}^{\bar{x}^n\sqcup\bar{x}^1,\bar{x}^2,\dots,\bar{x}^{n-1}}(\bar{sa}^n\otimes sb\otimes \bar{sa}^1,tf^{1}\circ \Tilde{\Phi},\bar{sa}^{2},\dots,tf^{n-2}\circ \Tilde{\Phi},\bar{sa}^{n-1}),tf^{n-1}\circ \Tilde{\Phi})
    \\&=(-1)^{\epsilon}\Gamma^{\Phi}(sm_{\MA\oplus\MB^*\to \MA}^{\bar{x}^n\sqcup\bar{x}^1,\bar{x}^2,\dots,\bar{x}^{n-1}}(\bar{sa}^n\otimes sb\otimes \bar{sa}^1,tf^{1},\bar{sa}^{2},\dots,tf^{n-2},\bar{sa}^{n-1}),tf^{n-1})
    \end{split}
\end{equation}
by definition of $sm_{\MA\oplus\MB^*\to\MA}$. Therefore, $\MA\oplus\MB^*[d-1]$ together with $sm_{\MA\oplus\MB^*}$ is an $A_{\infty}$-category that is almost cyclic with respect to $\Gamma^{\Phi}$.
\end{proof}

\begin{definition}
\label{def:maps-varphi}
 Let $(\MA,s_{d+1}M_{\MA})$, $(\MB,s_{d+1}M_{\MB})$ be $d$-pre-Calabi-Yau categories with respective sets of objects $\MO_{\MA}$ and $\MO_{\MB}$ and let $(\Phi_0,s_{d+1}\Phi) :(\MA,s_{d+1}M_{\MA})\rightarrow (\MB,s_{d+1}M_{\MB})$ be a strict $d$-pre-Calabi-Yau morphism.
   We define maps of graded vector spaces \[\varphi_{\MA}^{x,y} : {}_x\MA_y[1]\oplus{}_{\Phi_0(x)}\MB^{*}_{\Phi_0(y)}[d]\rightarrow {}_x(\MA[1]\oplus{}\MA^{*}[d])_y\]and \[\varphi^{x,y}_{\MB} : {}_x\MA_y[1]\oplus{}_{\Phi_0(x)}\MB^{*}_{\Phi_0(y)}[d] \rightarrow {}_{\Phi_0(x)}(\MB[1]\oplus\MB^{*}[d])_{\Phi_0(y)}\] given by
    $\varphi^{x,y}_{\MA}(sa)=sa$, $\varphi^{x,y}_{\MB}(sa)=\Phi^{x,y}(sa)$, $\varphi^{x,y}_{\MA}(tf)=tf\circ\Phi^{y,x}[-d-1]$ and $\varphi^{x,y}_{\MB}(tf)=tf$
    for $x,y\in\MO_{\MA}$, $sa\in {}_x\MA_y[1]$ and $tf\in {}_{\Phi_0(x)}\MB^*_{\Phi_0(y)}[d]$.
\end{definition}

\begin{proposition}
\label{prop:morphisms-strict-case}
The two maps $s\varphi_{\MA} : (\MA\oplus\MB^*[d-1],sm_{\MA\oplus\MB^*})\rightarrow (\MA\oplus\MA^*[d-1],sm_{\MA\oplus\MA^*})$ and $s\varphi_{\MB} : (\MA\oplus\MB^*[d-1],sm_{\MA\oplus\MB^*})\rightarrow (\MB\oplus\MB^*[d-1],sm_{\MB\oplus\MB^*})$ defined in Definition \ref{def:maps-varphi} are strict cyclic $A_{\infty}$-morphisms.
\end{proposition}
\begin{proof}
We only check that $s\varphi_{\MA}$ is a morphism since the case of $s\varphi_{\MB}$ is similar. We omit the objects when writing the map $s\varphi_{\MA}$ for simplicity.
We have to verify that 
\begin{equation}
    \label{eq:pf-strict-morphism-1}
    s\varphi_{\MA}\circ sm_{\MA\oplus\MB^*\to \MA}^{\bar{x}^1,\dots,\bar{x}^n}=sm_{\MA\oplus\MA^*\to \MA}^{\bar{x}^1,\dots,\bar{x}^n}\circ(s\varphi_{\MA}^{\otimes \llg(\bar{x}^1)-1}\otimes s\varphi_{\MA}\otimes s\varphi_{\MA}^{\otimes \llg(\bar{x}^2)-1}\otimes\dots\otimes 
 s\varphi_{\MA}^{\otimes \llg(\bar{x}^n)-1})
\end{equation}
and
\begin{equation}
    \label{eq:pf-strict-morphism-2}
    s\varphi_{\MA}\circ sm_{\MA\oplus\MB^*\to \MB^*}^{\bar{x}^1,\dots,\bar{x}^n}=sm_{\MA\oplus\MA^*\to \MA^*}^{\bar{x}^1,\dots,\bar{x}^n}\circ(s\varphi_{\MA}^{\otimes \llg(\bar{x}^1)-1}\otimes s\varphi_{\MA}\otimes s\varphi_{\MA}^{\otimes \llg(\bar{x}^2)-1}\otimes \dots\otimes 
 s\varphi_{\MA}^{\otimes \llg(\bar{x}^n)-1})
\end{equation}
 for $\doubar{x}=(\bar{x}^1,\dots,\bar{x}^n)\in\bar{\MO}_{\MA}^n$.
 
First, note that  
\begin{equation}
    \begin{split}
    s\varphi_{\MA}(sm_{\MA\oplus\MB^*\to \MA}^{\bar{x}^1,\dots,\bar{x}^n}&(\bar{sa}^1,tf^1,\bar{sa}^2,\dots,\bar{sa}^{n-1},tf^{n-1},\bar{sa}^n))
    \\ &=sm_{\MA\oplus\MB^*\to \MA}^{\bar{x}^1,\dots,\bar{x}^n}(\bar{sa}^1,tf^1,\bar{sa}^2,\dots,\bar{sa}^{n-1},tf^{n-1},\bar{sa}^n)
    \\&=sm_{\MA\oplus\MA^*\to \MA}^{\bar{x}^1,\dots,\bar{x}^n}(\bar{sa}^1,s\varphi_{\MA}(tf^1),\bar{sa}^2,\dots,\bar{sa}^{n-1},s\varphi_{\MA}(tf^{n-1}),\bar{sa}^n)
    \end{split}
\end{equation}
for $\bar{sa}^i\in\MA[1]^{\otimes \bar{x}^i}$ and $tf^i\in {}_{\Phi_0(\rrt(\bar{x}^{i+1}))}\MB^*_{\Phi_0(\llt(\bar{x}^{i}))}[d]$,
so that \eqref{eq:pf-strict-morphism-1} holds.

We also have 
\begin{equation}
    \begin{split}
        &s\varphi_{\MA}\circ sm_{\MA\oplus\MB^*\to \MB^*}^{\bar{x}^1,\dots,\bar{x}^n}(\bar{sa}^1,tf^1,\bar{sa}^2,\dots,\bar{sa}^{n-1},tf^{n-1},\bar{sa}^n)
        \\&\hskip3cm=sm_{\MA\oplus\MB^*\to \MB^*}^{\bar{x}^1,\dots,\bar{x}^n}(\bar{sa}^1,tf^1,\bar{sa}^2,\dots,\bar{sa}^{n-1},tf^{n-1},\bar{sa}^n)\circ \Phi[-d-1]
    \end{split}
\end{equation}
and given $s_{-d}b\in{}_{\llt(\Bar{x}^n)}\MA_{\rrt(\Bar{x}^1)}[-d]$, we have that
\begin{equation}
    \begin{split}
        &sm_{\MA\oplus\MB^*\to \MB^*}^{\doubar{x}}(\bar{sa}^1,tf^1,\bar{sa}^2,\dots,\bar{sa}^{n-1},tf^{n-1},\bar{sa}^n)(\Phi[-d-1](s_{-d}b))
        \\&=sm_{\MB\oplus\MB^*\to\MB^*}^{\scriptsize{\doubarl{\Phi_0(\doubar{x})}}}(\Phi^{\otimes \llg(\bar{x}^1)-1}(\bar{sa}^1),tf^1,\Phi^{\otimes \llg(\bar{x}^2)-1}(\bar{sa}^2),\dots,tf^{n-1},\Phi^{\otimes \llg(\bar{x}^n)-1}(\bar{sa}^n))(s_{-d}\Phi(sb))
        \\&=(-1)^{\epsilon}\Gamma^{\MB}(sm_{\MB\oplus\MB^*\to \MB}^{\scriptsize{\doubarl{\Phi_0(\doubar{y})}}}\big(\Phi^{\otimes \llg(\bar{x}^n)-1}(\bar{sa}^n)\otimes\Phi(sb)\otimes\Phi^{\otimes \llg(\bar{x}^1)-1}(\bar{sa}^1),tf^{1},\dots,\\&\hskip8cmtf^{n-2},\Phi^{\otimes \llg(\bar{x}^{n-1})-1}(\bar{sa}^{n-1})\big),tf^{n-1})
    \end{split}
\end{equation}
with $\epsilon=(|\bar{sa}^n|+|sb|)(\sum\limits_{i=1}^{n-1}(|\bar{sa}^i|+|tf^i|)+d(\sum\limits_{i=1}^{n}|\bar{sa}^i|+\sum\limits_{i=1}^{n-1}|tf^i|)$ and $\doubar{y}=(\Bar{x}^n\sqcup\Bar{x}^1,\dots,\Bar{x}^{n-1})$.

On the other hand, we have 
\begin{small}
\begin{equation}
    \begin{split}
        &sm_{\MA\oplus\MA^*\to \MA^*}^{\bar{x}^1,\dots,\bar{x}^n}(s\varphi_{\MA}^{\otimes \llg(\bar{x}^1)-1}(\bar{sa}^1)\otimes s\varphi_{\MA}(tf^1)\otimes s\varphi_{\MA}^{\otimes \llg(\bar{x}^2)-1}(\bar{sa}^2)\dots\otimes 
 s\varphi_{\MA}^{\otimes \llg(\bar{x}^n)-1}(\bar{sa}^n))(sb)
        \\&=(-1)^{d(\sum\limits_{i=1}^{n}|\bar{sa}^i|+\sum\limits_{i=1}^{n-1}|tf^i|)}
        \\&\phantom{=}\Gamma^{\MA}(sm_{\MA\oplus\MA^*\to \MA^*}^{\bar{x}^1,\dots,\bar{x}^n}(s\varphi_{\MA}^{\otimes \llg(\bar{x}^1)-1}(\bar{sa}^1)\otimes s\varphi_{\MA}(tf^1)\otimes s\varphi_{\MA}^{\otimes \llg(\bar{x}^2)-1}(\bar{sa}^2)\dots\otimes 
 s\varphi_{\MA}^{\otimes \llg(\bar{x}^n)-1}(\bar{sa}^n)),sb)
        \\&=(-1)^{\epsilon}\Gamma^{\MA}(sm_{\MA\oplus\MA^*\to \MA}^{\bar{x}^n\sqcup\bar{x}^1,\bar{x}^2,\dots,\bar{x}^{n-1}}(\bar{sa}^n\otimes sb\otimes \bar{sa}^1,tf^{1}\circ \Tilde{\Phi},\bar{sa}^{2},\dots,tf^{n-2}\circ \Tilde{\Phi},\bar{sa}^{n-1}),tf^{n-1}\circ \Tilde{\Phi}).
    \end{split}
\end{equation}
\end{small}
Using the identity \eqref{eq:cyclicity-mA-to-mB}, we thus get \eqref{eq:pf-strict-morphism-2}. 

It remains to show that the morphisms are cyclic. 
To prove it, we note that
\begin{equation}
    \begin{split}
    {}_{y}\Gamma^{\MA}_{x}(s\varphi_{\MA}^{y,x}(tf),s\varphi_{\MA}^{x,y}(sa))&={}_{y}\Gamma^{\MA}_{x}(t(f\circ\Phi^{x,y}[-1]),sa) =f(\Phi^{x,y}[-1](a)) ={}_{y}\Gamma^{\Phi[-1]}_{x}(tf,sa)
    \end{split}
\end{equation}
as well as  
\begin{equation}
    \begin{split}
    {}_{y}\Gamma^{\MB}_{x}(\varphi_{\MB}^{y,x}(tf),\varphi_{\MB}^{x,y}(sa))={}_{y}\Gamma^{\MB}_{x}(tf,\Phi^{x,y}[-1](a)) =f(\Phi^{x,y}[-1](a)) ={}_{y}\Gamma^{\Phi[-1]}_{x}(tf,sa)
    \end{split}
\end{equation}
for $sa\in{}_{x}\MA_{y}[1]$, $tf\in{}_{\Phi_0(y)}\MB^*_{\Phi_0(x)}[d]$. 
The second condition to be a cyclic morphism is obviously satisfied since $\varphi_{\MA}^{\bar{x}}$ and $\varphi_{\MB}^{\bar{x}}$ vanish for $\bar{x}\in\MO^n$ with $n>2$.
\end{proof}

\begin{definition}
    Let $(\MA\oplus\MA^*[d-1],sm_{\MA\oplus\MA^*})$, $(\MB\oplus\MB^*[d-1],sm_{\MB\oplus\MB^*})$ be $A_{\infty}$-categories and consider a map $\Phi : \MO_{\MA}\rightarrow \MO_{\MB}$.
    A \textbf{\textcolor{ultramarine}{hat morphism}} from $\MA\oplus\MA^*[d-1]$ to $\MB\oplus\MB^*[d-1]$ is a span $(sm_{\MA\oplus\MB^*},\varphi_{\MA},\varphi_{\MB})$ in the category of $A_{\infty}$-categories, \textit{i.e.} a triple  $(sm_{\MA\oplus\MB^*},s\varphi_{\MA},s\varphi_{\MB})$ where $sm_{\MA\oplus\MB^*}$ is an $A_{\infty}$-structure on the graded quiver  $\MA\oplus\MB^*[d-1]$ defined for $x,y\in\MO_{\MA}$ by ${}_x(\MA\oplus\MB^*[d-1])_y={}_x\MA_y\oplus {}_{\Phi(x)}\MB^*[d-1]_{\Phi(y)}$ and $(\id_{\MA},s\varphi_{\MA})$, $(\Phi,s\varphi_{\MB})$ are $A_{\infty}$-morphisms
     \begin{equation}
    \label{eq:hat-morphisms-2}
\begin{tikzcd}
&(\MA \oplus \MB^*[d-1],sm_{\MA\oplus\MB^*}) \arrow[swap,"s\varphi_{\MA}"]{dl} \arrow[swap,"s\varphi_{\MB}"]{dr}\\
(\MA \oplus \MA^*[d-1],sm_{\MA\oplus\MA^*})&& (\MB \oplus \MB^*[d-1], sm_{\MB\oplus\MB^*})
\end{tikzcd}
\end{equation}
\end{definition}

\begin{definition}
     Consider $A_{\infty}$-categories $(\MA\oplus\MA^*[d-1],sm_{\MA\oplus\MA^*})$, $(\MB\oplus\MB^*[d-1],sm_{\MB\oplus\MB^*})$ and $(\mathcal{C}\oplus\mathcal{C}^*[d-1],sm_{\mathcal{C}\oplus\mathcal{C}^*})$.
     We say that two hat morphisms 
     \begin{small}
     \begin{equation}
         (sm_{\MA\oplus\MB^*},s\varphi_{\MA},s\varphi_{\MB}) : \MA\oplus\MA^*[d-1]\rightarrow \MB\oplus\MB^*[d-1] \text{, }(sm_{\MB\oplus\mathcal{C}^*},s\psi_{\MB},s\psi_{\mathcal{C}}) : \MB\oplus\MB^*[d-1]\rightarrow \mathcal{C}\oplus\mathcal{C}^*[d-1]
     \end{equation}
     \end{small}  
     are \textbf{\textcolor{ultramarine}{composable}} if there is a triple $(sm_{\MA\oplus\mathcal{C}^*},s\chi_{\MA},s\chi_{\mathcal{C}})$ where $sm_{\MA\oplus\mathcal{C}^*}$ is an $A_{\infty}$-structure on $\MA\oplus\mathcal{C}^*[d-1]$ and where $ s\chi_{\MA}: \MA\oplus\mathcal{C}^*[d-1] \rightarrow \MA\oplus\MB^*[d-1]$ and $s\chi_{\mathcal{C}} : \MA\oplus\mathcal{C}^*[d-1] \rightarrow \MB\oplus\mathcal{C}^*[d-1]$ are $A_{\infty}$-morphisms such that $s\varphi_{\MB}\circ s\chi_{\MA}=s\psi_{\MB}\circ s\chi_{\mathcal{C}}$.
     The \textbf{\textcolor{ultramarine}{composition}} of $(sm_{\MA\oplus\MB^*},s\varphi_{\MA},s\varphi_{\MB})$ and $(sm_{\MB\oplus\mathcal{C}^*},s\psi_{\MB},s\psi_{\mathcal{C}})$ is then given by $(sm_{\MA\oplus\mathcal{C}^*},s\varphi_{\MA}\circ s\chi_{\MA},s\psi_{\mathcal{C}}\circ s\chi_{\mathcal{C}})$.
\end{definition}

\begin{definition}
    A \textbf{\textcolor{ultramarine}{partial category}} is an $A_{\infty}$-pre-category as defined in \cite{ks} where the multiplications $m_n$ vanish for $n>2$. Explicitly, a partial category $\mathcal{A}$ consists of a class of objects $\MO$, a subclass $\MO_2^{tr}$ of $\MO^2$ such that for every $(x,y)\in \MO_2^{tr}$ there exists a graded vector space ${}_{y}\MA_{x}$ and a subclass $\MO^{tr}_3$ of $\MO^3$ such that for every $(x,y,z)\in \MO_3^{tr}$ there exists an associative map $\circ : {}_{z}\MA_{y}\otimes {}_{y}\MA_{x}\rightarrow {}_{z}\MA_{x}$. It is required that if $(x,y,z)\in \MO^{tr}_3$, then $(x,y), (y,z) and (x,z)\in \MO^{tr}_2$. Elements of $\MO^{tr}_2$ and $\MO^{tr}_3$ are called \textbf{\textcolor{ultramarine}{transevrsal sequences}}. 
\end{definition}

\begin{definition}
    The \textbf{\textcolor{ultramarine}{$A_{\infty}$-hat category}} is the partial category $\Ahat$ whose objects are $A_{\infty}$-categories of the form $\MA\oplus\MA^*[d-1]$ and whose morphisms are hat morphisms.
\end{definition}

\begin{definition}
    A \textbf{\textcolor{ultramarine}{functor}} between partial categories $\MA$ and $\MB$ with respective sets of objects $\MO_{\MA}$ and $\MO_{\MB}$ is the data of a map $F_0 : \MO_{\MA}\rightarrow \MO_{\MB}$ together with a family $F=({}_yF_x)_{x,y\in\MO_{\MA}}$ sending a morphism $f:x\rightarrow y$ to a morphism ${}_yF_x(f) : F_0(x)\rightarrow F_0(y)$ such that if two morphisms $f:x\rightarrow y$ and $g : y\rightarrow z$ are composable, then ${}_yF_x(f)$ and ${}_zF_y(g)$ are composable and their composition is given by the morphism ${}_zF_y(g)\circ {}_yF_x(f)={}_zF_x(g\circ f)$.
\end{definition}

\begin{definition}
    We define the partial subcategory $\cyc$ of $\Ahat$ whose objects are cyclic $A_{\infty}$-categories of the form $\MA\oplus\MA^*[d-1]$ and whose morphisms $\MA\oplus\MA^*[d-1]\rightarrow \MB\oplus\MB^*[d-1]$ are the data of an almost cyclic $A_{\infty}$-structure on $\MA\oplus\MB^*[d-1]$ together with a diagram of the form \eqref{eq:hat-morphisms-2} where $s\varphi_{\MA}$ and $s\varphi_{\MB}$ are $A_{\infty}$-morphisms.
\end{definition}

\begin{definition}
    We define the partial subcategory $\Scyc$ of $\cyc$ whose objects are the ones of $\cyc$ and whose morphisms are morphisms $(sm_{\MA\oplus\MB^*},s\varphi_{\MA},s\varphi_{\MB})$ of $\cyc$ such that $s\varphi_{\MA}$ and $s\varphi_{\MB}$ are strict and cyclic.
\end{definition}
We now use the results of Propositions \ref{prop:struct-strict-case} and \ref{prop:morphisms-strict-case} to construct a functor between the category of strict $d$-pre-Calabi-Yau morphisms $\SpCY$ and the partial category $\Scyc$.

\begin{corollary}
\label{coro:strict-case}
There exists a functor $\mathcal{S} : \SpCY\rightarrow \Scyc$ which sends a $d$-pre-Calabi-Yau category $(\MA,s_{d+1}M_{\MA})$ to the cyclic $A_{\infty}$-category $(\MA\oplus\MA^*[d-1],sm_{\MA\oplus\MA^*})$ defined in Proposition \ref{prop:equiv-pCY-Ainf} and a strict $d$-pre-Calabi-Yau morphism $(\Phi_0,s_{d+1}\Phi) : (\MA,s_{d+1}M_{\MA})\rightarrow(\MB,s_{d+1}M_{\MB})$ to the almost cyclic $A_{\infty}$-structure $sm_{\MA\oplus\MB^*}$ given in Definition \ref{def:m-A-B} together with the strict cyclic $A_{\infty}$-morphisms 
 \begin{equation}
\begin{tikzcd}
&(\MA \oplus \MB^*[d-1],sm_{\MA\oplus\MB^*}) \arrow[swap,"s\varphi_{\MA}"]{dl} \arrow[swap,"s\varphi_{\MB}"]{dr}\\
(\MA \oplus \MA^*[d-1],sm_{\MA\oplus\MA^*})&& (\MB \oplus \MB^*[d-1], sm_{\MB\oplus\MB^*})
\end{tikzcd}
\end{equation}
given in Definition \ref{def:maps-varphi}.
\end{corollary}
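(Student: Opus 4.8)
The plan is to build the functor from the ingredients already in place and then verify the two functoriality axioms. On objects, the assignment sending $(\MA,s_{d+1}M_\MA)$ to $(\MA\oplus\MA^*[d-1],sm_{\MA\oplus\MA^*})$ is well defined and lands among the objects of $Scyc\widehat{A_\infty}_d$ by Proposition \ref{prop:equiv-pCY-Ainf}, which produces a cyclic $A_\infty$-structure. On a strict $d$-pre-Calabi-Yau morphism $(\Phi_0,\Phi)\colon\MA\to\MB$, the assignment outputs the triple $(sm_{\MA\oplus\MB^*},\varphi_\MA,\varphi_\MB)$: Proposition \ref{prop:struct-strict-case} shows $sm_{\MA\oplus\MB^*}$ is almost cyclic with respect to $\Gamma^\Phi$, and Proposition \ref{prop:morphisms-strict-case} shows $\varphi_\MA,\varphi_\MB$ are cyclic $A_\infty$-morphisms forming the diagram \eqref{eq:hat-morphisms-2}. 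Since $\varphi_\MA^{\bar{x}}$ and $\varphi_\MB^{\bar{x}}$ vanish for $\bar{x}\in\MO^n$ with $n>2$, these morphisms are strict, so the triple is a morphism of $Scyc\widehat{A_\infty}_d$.

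For the identity axiom I would specialize to $\MB=\MA$ and $\Phi^{x,y}=\id$. Then the dual insertions $\Phi^*$ in Definition \ref{def:m-A-B} become identities, so $sm_{\MA\oplus\MB^*}$ coincides with $sm_{\MA\oplus\MA^*}$, and by Definition \ref{def:maps-varphi} both $\varphi_\MA$ and $\varphi_\MB$ reduce to the identity of $\MA[1]\oplus\MA^*[d]$; hence the image is the identity hat morphism.

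The core is the composition axiom. Let $(\Phi_0,\Phi)\colon\MA\to\MB$ and $(\Psi_0,\Psi)\colon\MB\to\mathcal{C}$ be strict, with $SpCY_d$-composite having components $(\Psi\circ\Phi)^{x,y}=\Psi^{\Phi_0(x),\Phi_0(y)}\circ\Phi^{x,y}$. I would exhibit the connecting triple $(sm_{\MA\oplus\mathcal{C}^*},\chi_\MA,\chi_\mathcal{C})$ required for composability, taking $sm_{\MA\oplus\mathcal{C}^*}$ to be the structure attached to $\Psi\circ\Phi$ by Definition \ref{def:m-A-B}, and defining $\chi_\MA\colon\MA\oplus\mathcal{C}^*\to\MA\oplus\MB^*$ by $\chi_\MA(sa)=sa$, $\chi_\MA(th)=th\circ\Psi^{y,x}[d]$ and $\chi_\mathcal{C}\colon\MA\oplus\mathcal{C}^*\to\MB\oplus\mathcal{C}^*$ by $\chi_\mathcal{C}(sa)=\Phi^{x,y}(sa)$, $\chi_\mathcal{C}(th)=th$. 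Evaluating on the two summands $\MA[1]$ and $\mathcal{C}^*[d]$ shows at once that $\varphi_\MB\circ\chi_\MA$ and $\psi_\MB\circ\chi_\mathcal{C}$ both send $sa\mapsto\Phi^{x,y}(sa)$ and $th\mapsto th\circ\Psi^{y,x}[d]$, which gives composability; the same computation, together with $\Psi[d]\circ\Phi[d]=(\Psi\circ\Phi)[d]$ and $\Psi^{x,y}\circ\Phi^{x,y}=(\Psi\circ\Phi)^{x,y}$, identifies $\varphi_\MA\circ\chi_\MA$ and $\psi_\mathcal{C}\circ\chi_\mathcal{C}$ with the maps $\varphi'_\MA,\varphi'_\mathcal{C}$ of Definition \ref{def:maps-varphi} attached to $\Psi\circ\Phi$. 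This shows the composite hat morphism equals the image of $\Psi\circ\Phi$.

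The main obstacle is verifying that $\chi_\MA$ and $\chi_\mathcal{C}$ are genuine $A_\infty$-morphisms for the structures $sm_{\MA\oplus\mathcal{C}^*}$, $sm_{\MA\oplus\MB^*}$ and $sm_{\MB\oplus\mathcal{C}^*}$, since composability presupposes this. As these maps are strict and of the same shape as $\varphi_\MA,\varphi_\MB$, I would re-run the argument of Proposition \ref{prop:morphisms-strict-case}: the component landing in $\MA[1]$ (resp. $\MB[1]$) reduces immediately using the contravariant functoriality $(\Psi\circ\Phi)^*=\Phi^*\circ\Psi^*$ of the dual insertions of Definition \ref{def:m-A-B}, while the component landing in $\mathcal{C}^*[d]$ requires the cyclicity relation \eqref{eq:cyclicity-mA-to-mB}, now applied to $\Psi\circ\Phi$, exactly as \eqref{eq:pf-strict-morphism-2} was derived. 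The only care needed is in matching the dual insertions under the factorization $\Phi^*\circ\Psi^*$ and in tracking the cyclicity signs. Once this is done, associativity of the composition follows from associativity in $SpCY_d$ and in the $A_\infty$-setting, completing the verification that we have a functor.
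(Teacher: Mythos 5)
Your proposal is correct, but it is worth noting that it does substantially more than the paper itself: the paper treats this corollary as a pure summary of Propositions \ref{prop:equiv-pCY-Ainf}, \ref{prop:struct-strict-case} and \ref{prop:morphisms-strict-case} and gives no proof of functoriality at all. Your object and morphism assignments, and the verification that they land in $Scyc\widehat{A_{\infty}}_d$, cite exactly the same propositions as the paper, so that part is the same route. Your added value is the explicit check of the identity axiom and, above all, of the composition axiom in the partial-category sense: you construct the connecting triple $(sm_{\MA\oplus\mathcal{C}^*},\chi_{\MA},\chi_{\mathcal{C}})$ from the composite $\Psi\circ\Phi$ and verify both the composability condition $\varphi_{\MB}\circ\chi_{\MA}=\psi_{\MB}\circ\chi_{\mathcal{C}}$ and that the resulting composite hat morphism equals the image of $\Psi\circ\Phi$; this is precisely the point the paper leaves implicit, and it is the only genuinely nontrivial step. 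One small imprecision in your sketch: when checking that $\chi_{\MA}$ is an $A_{\infty}$-morphism, the needed identity for the $\MB^*$-valued component is
\begin{equation}
m_{\mathcal{C}\oplus\mathcal{C}^*\to\mathcal{C}^*}\bigl(\Psi\Phi(\bar{sa}),th,\dots\bigr)\circ\Psi[d]
= m_{\MB\oplus\MB^*\to\MB^*}\bigl(\Phi(\bar{sa}),th\circ\Psi[d],\dots\bigr),
\end{equation}
which is the relation \eqref{eq:cyclicity-mA-to-mB} (equivalently \eqref{eq:pf-strict-morphism-2}) applied to the single strict morphism $\Psi$ evaluated at $\Phi$-images of the arguments, not to the composite $\Psi\circ\Phi$; similarly, for $\chi_{\mathcal{C}}$ the $\mathcal{C}^*$-component is immediate from $(\Psi\circ\Phi)^{\otimes}=\Psi^{\otimes}\circ\Phi^{\otimes}$, while the $\MB$-component is the morphism identity for $\varphi_{\MB}$ attached to $\Phi$ evaluated at $th\circ\Psi[d]$. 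With that bookkeeping corrected, your reduction goes through, and you should also record (it is easy from Definition \ref{def:cp-pCY}, since all discs involved have one input and one output) that the $SpCY_d$-composite of two strict morphisms is strict with component $\Psi^{\Phi_0(x),\Phi_0(y)}\circ\Phi^{x,y}$, which you use but do not justify.
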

\begin{proof}
    The fact that $sm_{\MA\oplus\MB^*}$ is an almost cyclic $A_{\infty}$-structure on $\MA\oplus\MB^*[d-1]$ is part of Proposition \ref{prop:struct-strict-case}. The fact that $s\varphi_{\MA}$ and $s\varphi_{\MB}$ are strict cyclic $A_{\infty}$-morphisms is part of Proposition \ref{prop:morphisms-strict-case}. We show that $\mathcal{S}$ is compatible with the partial products. 
    
    Consider $d$-pre-Calabi-Yau categories $(\MA,s_{d+1}M_{\MA})$, $(\MB,s_{d+1}M_{\MB})$ and $(\mathcal{C},s_{d+1}M_{\mathcal{C}})$ as well as two strict $d$-pre-Calabi-Yau morphisms that we denote $(F_0,s_{d+1}F) : (\MA,s_{d+1}M_{\MA})\rightarrow (\MB,s_{d+1}M_{\MB})$ and $(G_0,s_{d+1}G) : (\MB,s_{d+1}M_{\MB})\rightarrow (\mathcal{C},s_{d+1}M_{\mathcal{C}})$. Write  $\mathcal{S}(F_0,s_{d+1}F)=(sm_{\MA\oplus\MB^*},s\varphi_{\MA},s\varphi_{\MB})$ and $\mathcal{S}(G_0,s_{d+1}G)=(sm_{\MB\oplus\mathcal{C}^*},s\psi_{\MB},s\psi_{\mathcal{C}})$.
    To simplify, we denote $H_0 : \MO_{\MA}\rightarrow \MO_{\mathcal{C}}$ the composition of $F_0$ and $G_0$ and $H : \MA[1]\rightarrow \mathcal{C}[1]$ the composition of $s_{d+1}F$ and $s_{d+1}G$.
    By definition, $\mathcal{S}$ sends $H$ to the almost cyclic $A_{\infty}$-structure $sm_{\MA\oplus\mathcal{C}^*}$ whose composition of with the canonical projection on $\MA[1]$
    is given by the map $m_{\MA\oplus\mathcal{C}^*\to \MA}$ where $m_{\MA\oplus\mathcal{C}^*\to \MA}^{\doubar{x}}$ equals to
    \begin{small}
\begin{equation}
\begin{split}
m_{\MA\oplus\MA^*\to \MA}^{\doubar{x}}\circ (\id^{\otimes \llg(\bar{x}^1)-1}\otimes H^*[d+1]\otimes\id^{\otimes \llg(\bar{x}^2)-1}\otimes\dots\otimes \id^{\otimes \llg(\bar{x}^{n-1})-1}\otimes H^*[d+1]\otimes\id^{\otimes \llg(\bar{x}^n)-1})
\end{split}
\end{equation}
 \end{small}
for $\doubar{x}=(\bar{x}^1,\dots,\bar{x}^n)$ and whose composition with the canonical projection on $\mathcal{C}^*[d]$ is defined by 
\begin{small}
\begin{equation}
m_{\MA\oplus\mathcal{C}^*\to \mathcal{C}^*}^{\doubar{x}}= m_{\mathcal{C}\oplus\mathcal{C}^*\to \mathcal{C}^*}^{\scriptsize{\doubarl{H_0(\doubar{x})}}}\circ (H^{\otimes \llg(\bar{x}^1)-1}\otimes \id\otimes H^{\otimes \llg(\bar{x}^{2})-1}\otimes\dots\otimes \id\otimes H^{\otimes \llg(\bar{x}^n)-1})
\end{equation}
\end{small}
    together with the cyclic $A_{\infty}$-morphisms $s\xi_{\MA}$ and $s\xi_{\mathcal{C}}$ defined by 
    \begin{equation}
        \begin{split}
            s\xi_{\MA}^{x,y}(sa)=sa, \; s\xi_{\mathcal{C}}^{x,y}(sa)=s_{d+1}G^{x,y}\circ s_{d+1}F^{x,y}(sa)
        \end{split}
    \end{equation}
    for $sa\in {}_{x}\MA_{y}[1]$ and 
    \begin{equation}
        \begin{split}
            s\xi_{\MA}^{x,y}(tf)&=tf\circ (s_{d+1}G^{F_0(y),F_0(x)}\circ s_{d+1}F^{y,x})[-d-1], \;
            s\xi_{\mathcal{C}}^{x,y}(tf)=tf
        \end{split}
    \end{equation}
    for $tf\in {}_{H_0(x)}\mathcal{C}^*_{H_0(y)}[d]$. 

    Set $s\chi_{\MA} :\MA[1]\oplus\mathcal{C}^*[d]\rightarrow \MA[1]\oplus\MB^*[d]$ and $s\chi_{\mathcal{C}} :\MA[1]\oplus\mathcal{C}^*[d]\rightarrow \MB[1]\oplus\mathcal{C}^*[d]$ to be given by 
    \begin{equation}
        \begin{split}
            &\hskip2cm s\chi_{\MA}^{x,y}(sa)=sa\text{, }s\chi_{\mathcal{C}}^{x,y}(sa)=s_{d+1}F^{x,y}(sa)\text{ for }sa\in {}_{x}\MA_{y}[1]
            \\& \text{ and } s\chi_{\MA}^{x,y}(tf)=tf\circ s_{d+1}G^{F_0(y),F_0(x)}[-d-1]\text{, } s\chi_{\mathcal{C}}^{x,y}(tf)=tf\text{ for }tf\in {}_{H_0(x)}\mathcal{C}^*_{H_0(y)}[d].
        \end{split}
    \end{equation} 
    For simplicity, we will omit the elements when writing the maps $s\xi_{\MA}$ and $s\xi_{\mathcal{C}}$ and the maps $s_{d+1}G$ and $s_{d+1}F$.
    We have to show that $s\chi_{\MA}$ and $s\chi_{\mathcal{C}}$ are $A_{\infty}$-morphisms satisfying that $s\varphi_{\MA}\circ 
 s\chi_{\MA}=s\xi_{\MA}$ and $s\varphi_{\mathcal{C}}\circ 
 s\chi_{\mathcal{C}}=s\xi_{\mathcal{C}}$.
    We have that
    \begin{equation}
        \begin{split}
            &sm_{\MA\oplus\MB^*}^{\doubar{x}}\circ(s\chi_{\MA}^{\otimes \llg(\bar{x}^1)-1}\otimes\dots\otimes 
 s\chi_{\MA}^{\otimes\llg(\bar{x}^n)-1})(\bar{sa}^1,tf^1,\dots,\bar{sa}^{n-1},tf^{n-1},\bar{sa}^n)
            \\&=sm_{\MA\oplus\MB^*}^{\doubar{x}}(\bar{sa}^1,tf^1\circ s_{d+1}G[-d-1],\dots,\bar{sa}^{n-1},tf^{n-1}\circ s_{d+1}G[-d-1],\bar{sa}^n)
        \end{split}
    \end{equation}
    so we have 
    \begin{small}
    \begin{equation}
        \begin{split}
            &\pi_{\MA[1]}\circ sm_{\MA\oplus\MB^*}^{\doubar{x}}\circ(s\chi_{\MA}^{\otimes \llg(\bar{x}^1)-1}\otimes\dots\otimes 
 s\chi_{\MA}^{\otimes\llg(\bar{x}^n)-1})(\bar{sa}^1,tf^1,\dots,\bar{sa}^{n-1},tf^{n-1},\bar{sa}^n)
            \\&=sm_{\MA\oplus\MA^*\to \MA}^{\doubar{x}}(\bar{sa}^1,tf^1\circ (s_{d+1}G\circ s_{d+1}F)[-d-1],\dots,\bar{sa}^{n-1},tf^{n-1}\circ (s_{d+1}G\circ s_{d+1}F)[-d-1],\bar{sa}^n)
        \end{split}
    \end{equation}
    \end{small}
    and 
    \begin{equation}
        \begin{split}
            &\pi_{\MB^*[-d]}\circ sm_{\MA\oplus\MB^*}^{\doubar{x}}\circ(s\chi_{\MA}^{\otimes \llg(\bar{x}^1)-1}\otimes\dots\otimes s\chi_{\MA}^{\otimes\llg(\bar{x}^n)-1})(\bar{sa}^1,tf^1,\dots,\bar{sa}^{n-1},tf^{n-1},\bar{sa}^n)
            \\&=sm_{\MB\oplus\MB^*\to \MB^*}^{\scriptsize{\doubarl{F_0(\doubar{x})}}}(s_{d+1}F^{\otimes \llg(\bar{x}^1)-1}(\bar{sa}^1),tf^1\circ s_{d+1}G[-d-1],\dots,
            \\& \hspace{3cm} s_{d+1}F^{\otimes \llg(\bar{x}^{n-1})-1}(\bar{sa}^{n-1}),tf^{n-1}\circ s_{d+1}G[-d-1],s_{d+1}F^{\otimes \llg(\bar{x}^n)-1}(\bar{sa}^n))
        \end{split}
    \end{equation}
    for $n\in\NN^*$, $\doubar{x}=(\bar{x}^1,\dots,\bar{x}^n)\in\bar{\MO}_{\MA}^n$, $\bar{sa}^i\in\MA[1]^{\otimes\bar{x}^i}$ and $tf^i\in{}_{H_0(\rrt(\bar{x}^{i+1}))}\mathcal{C}^*_{H_0(\llt(\bar{x}^{i}))}[d]$.
    Thus, we have that 
    \begin{small}
        \begin{equation}
            \big(\pi_{\MB^*[-d]}\circ sm_{\MA\oplus\MB^*}^{\bar{x}^1,\dots,\bar{x}^n}\circ(s\chi_{\MA}^{\otimes \llg(\bar{x}^1)-1}\otimes s\chi_{\MA}\otimes\dots\otimes 
 s\chi_{\MA}^{\otimes\llg(\bar{x}^n)-1})(\bar{sa}^1,tf^1,\dots,\bar{sa}^{n-1},tf^{n-1},\bar{sa}^n)\big)
        \end{equation}
    \end{small}
    sends $(s_{-d}b)$ to
    \begin{small}
    \begin{equation}
        \begin{split}
            &(-1)^{\epsilon}(\bigotimes\limits_{i=1}^{n-1}(f^{n-i}\circ s_d \circ s_{d+1}G[-d-1]))
            \\&\phantom{=}s_{d+1}M_{\MB}^{\doubar{y}^{-1}}\big(s_{d+1}F^{\otimes\llg(\bar{x}^{n-1})-1}(\bar{sa}^{n-1})\otimes\dots\otimes s_{d+1}F^{\otimes\llg(\bar{x}^{n})-1}(\bar{sa}^n)\otimes sb\otimes s_{d+1}F^{\otimes\llg(\bar{x}^{1})-1}(\bar{sa}^1)\big)
            \\&=(-1)^{\epsilon}(\bigotimes\limits_{i=1}^{n-1}(f^{n-i}\circ s_d))
            \\&\phantom{=}s_{d+1}M_{\mathcal{C}}^{\scriptsize{\doubarl{G_0(\doubar{y}^{-1})}}}\big(H^{\otimes\llg(\bar{x}^{n-1})-1}(\bar{sa}^{n-1})\otimes\dots\otimes H^{\otimes\llg(\bar{x}^n)-1}(\bar{sa}^n)\otimes s_{d+1}G(sb)\otimes H^{\otimes\llg(\bar{x}^1)-1}(\bar{sa}^1)\big)
        \end{split}
    \end{equation}
    \end{small}
    with $\doubar{y}=\doubarl{F_0(\bar{x}^n\sqcup\bar{x}^1,\dots,\bar{x}^{n-1})}$ and
    \begin{small}
    \begin{equation}
        \begin{split}
            \epsilon&=\sum\limits_{i=1}^{n-1}|tf^i|(\sum\limits_{k=1}^i|\bar{sa}^k|+d+1)+d(n-1)
    +\hskip -3mm\sum\limits_{1\leq i<j\leq n-1}\hskip -3mm|\bar{sa}^i||\bar{sa}^j|+\hskip -3mm\sum\limits_{1\leq i<j\leq n-1}\hskip -3mm|tf^i||tf^j|
    \\&\phantom{=}+(d+1)\sum\limits_{i=1}^n|\bar{sa}^i|+(|\bar{sa}^n|+|sb|)|\bar{sa}^1|
        \end{split}
    \end{equation}
    \end{small}
    since $(G_0,s_{d+1}G)$ is a strict $d$-pre-Calabi-Yau morphism.

    On the other hand, we have 
    \begin{equation}
        \begin{split}
            &\pi_{\MA[1]}\circ s\chi_{\MA}\circ sm_{\MA\oplus\mathcal{C}^*}^{\bar{x}^1,\dots,\bar{x}^n}(\bar{sa}^1,tf^1,\dots,\bar{sa}^{n-1},tf^{n-1},\bar{sa}^n)
            \\&=sm_{\MA\oplus\MA^*\to \MA}^{\bar{x}^1,\dots,\bar{x}^n}(\bar{sa}^1,tf^1\circ H[-d-1],\dots,\bar{sa}^{n-1},tf^{n-1}\circ H[-d-1],\bar{sa}^n)
        \end{split}
    \end{equation}
    and
\begin{equation}
    \begin{split}
        &\pi_{\MB^*[d]}\circ s\chi_{\MA}\circ sm_{\MA\oplus\mathcal{C}^*}^{\bar{x}^1,\dots,\bar{x}^n}(\bar{sa}^1,tf^1,\dots,\bar{sa}^{n-1},tf^{n-1},\bar{sa}^n)
        \\&=sm_{\mathcal{C}\oplus\mathcal{C}^*\to \mathcal{C}^*}^{\scriptsize{\doubarl{H_0(\doubar{x})}}}(\bar{sa}^1,tf^1\circ H[-d-1],\dots,\bar{sa}^{n-1},tf^{n-1}\circ H[-d-1],\bar{sa}^n)\circ s_{d+1}G[-d-1]
    \end{split}
\end{equation}
    Thus, we have that
    \begin{small}
    \begin{equation}
    \begin{split}
        &\big(\pi_{\MB^*[d]}\circ s\chi_{\MA}\circ sm_{\MA\oplus\mathcal{C}^*}^{\bar{x}^1,\dots,\bar{x}^n}(\bar{sa}^1,tf^1,\dots,\bar{sa}^{n-1},tf^{n-1},\bar{sa}^n)\big)(s_{-d}b)\\&=sm_{\mathcal{C}\oplus\mathcal{C}^*\to \mathcal{C}^*}^{\scriptsize{\doubarl{H_0(\doubar{x})}}}\big(H^{\otimes\llg(\bar{x}^1)-1}(\bar{sa}^1),tf^1,\dots,H^{\otimes\llg(\bar{x}^{n-1})-1}(\bar{sa}^{n-1}),tf^{n-1},H^{\otimes\llg(\bar{x}^n)-1}(\bar{sa}^n)\big)\big(s_{d+1}G(sb)\big)
        \\&=(-1)^{\epsilon} \big(\bigotimes\limits_{i=1}^{n-1}(f^i\circ s_d)\big)
        \\&\phantom{=}s_{d+1}M_{\mathcal{C}}^{\scriptsize{\doubarl{G_0(\doubar{y}^{-1})}}}\big(H^{\otimes \llg(\bar{x}^{n-1})-1}(\bar{sa}^{n-1})\otimes\dots\otimes H^{\otimes\llg(\bar{x}^n)-1}(\bar{sa}^n)\otimes s_{d+1}G(sb)\otimes H^{\otimes\llg(\bar{x}^1)-1}(\bar{sa}^1)\big)
    \end{split}
\end{equation}
\end{small}
Therefore, $s\chi_{\MA}$ is an $A_{\infty}$-morphism. The fact that $s\chi_{\mathcal{C}}$ is an $A_{\infty}$-morphism can be proved in a similar manner.
    
    Moreover, we have that 
    $(s\varphi_{\MB}\circ s\chi_{\MA})(sa)=s_{d+1}F^{x,y}(sa)=(s\psi_{\MB}\circ 
 s\chi_{\mathcal{C}})(sa)$
    for an element $sa\in{}_{x}\MA_{y}[1]$ and 
    \[
    (s\varphi_{\MB}\circ s\chi_{\MA})(tf)=tf\circ s_{d+1}G^{F_0(y),F_0(x)}[-d-1]=(s\psi_{\MB}\circ s\chi_{\mathcal{C}})(tf)
    \]
    for $tf\in{}_{H_0(x)}\mathcal{C}^*_{H_0(y)}[d]$ meaning that $(sm_{\MA\oplus \mathcal{C}^*},s\xi_{\MA},s\xi_{\mathcal{C}})$ is the composition of the morphisms $\mathcal{S}(F_0,s_{d+1}F)$ and $\mathcal{S}(G_0,s_{d+1}G)$.
    Therefore, $\mathcal{S}$ is compatible with partial products.
    \end{proof}
\subsection{General case} \label{general case}
We now present the relation between non necessarily strict $d$-pre-Calabi-Yau morphisms and $A_{\infty}$-morphisms.
Consider $d$-pre-Calabi-Yau categories $(\MA,s_{d+1}M_{\MA})$, $(\MB,s_{d+1}M_{\MB})$ as well as a $d$-pre-Calabi-Yau morphism $(\Phi_0,s_{d+1}\Phi) :(\MA,s_{d+1}M_{\MA})\rightarrow (\MB,s_{d+1}M_{\MB})$ as defined in Definition \ref{def:pcY-morphism}.
We first construct an $A_{\infty}$-structure on $\MA\oplus\MB^{*}[d-1]$. 

\begin{definition}
\label{def:structure-pCY-case}
We define $s_{d+1}M_{\MA\oplus\MB^*\to\MA}\in\mathcalboondox{B}^{\bullet}_{d,\Phi_0}(\MA,\MB)^{\MA}$ by $s_{d+1}M_{\MA\oplus\MB^*\to\MA}^{\doubar{x}}=\sum\mathcal{E}(\mathcalboondox{D})$ where the sum is over all the filled diagrams $\mathcalboondox{D}$ of type $\doubar{x}$ and of the form 

    \begin{equation}
    \label{eq:maps-to-A}
       \begin{tikzpicture}[line cap=round,line join=round,x=1.0cm,y=1.0cm]
\clip(-7.5,-1.3) rectangle (10.573959328125309,1);
      \draw [line width=0.5pt] (0.,0.) circle (0.5cm);
     \shadedraw[rotate=30,shift={(0.5cm,0cm)}] \doublefleche;
     \shadedraw[rotate=150,shift={(0.5cm,0cm)}] \doublefleche;
     \draw[line width=1.1pt,rotate=90][->, >= stealth, >= stealth](0.5,0)--(0.9,0);
     \draw [line width=0.5pt] (1.12,-0.65) circle (0.3cm);
     \shadedraw[shift={(0.86cm,-0.5cm)},rotate=150] \doubleflechescindeeleft;
     \shadedraw[shift={(0.86cm,-0.5cm)},rotate=150] \doubleflechescindeeright;
     \shadedraw[shift={(0.86cm,-0.5cm)},rotate=150] \fleche;
     \draw [rotate around ={60:(1.12,-0.65)}] [->, >= stealth, >= stealth] (1.43,-0.65)--(1.73,-0.65);
     \draw [rotate around ={-120:(1.12,-0.65)}] [->, >= stealth, >= stealth] (1.43,-0.65)--(1.73,-0.65);
     \draw [line width=0.5pt] (-1.12,-0.65) circle (0.3cm);
     \shadedraw[shift={(-0.86cm,-0.5cm)},rotate=30] \doubleflechescindeeleft;
      \shadedraw[shift={(-0.86cm,-0.5cm)},rotate=30] \doubleflechescindeeright;
       \shadedraw[shift={(-0.86cm,-0.5cm)},rotate=30] \fleche;
      \draw [rotate around ={-60:(-1.12,-0.65)}] [->, >= stealth, >= stealth] (-1.43,-0.65)--(-1.73,-0.65);
     \draw [rotate around ={120:(-1.12,-0.65)}] [->, >= stealth, >= stealth] (-1.43,-0.65)--(-1.73,-0.65);
\begin{scriptsize}
\draw [fill=black] (0,-0.6) circle (0.3pt);
\draw [fill=black] (0.2,-0.55) circle (0.3pt);
\draw [fill=black] (-0.2,-0.55) circle (0.3pt);
\draw [fill=black] (1.45,-0.85) circle (0.3pt);
\draw [fill=black] (1.5,-0.67) circle (0.3pt);
\draw [fill=black] (1.33,-0.97) circle (0.3pt);
\draw [fill=black] (-1.45,-0.85) circle (0.3pt);
\draw [fill=black] (-1.5,-0.67) circle (0.3pt);
\draw [fill=black] (-1.33,-0.97) circle (0.3pt);
\end{scriptsize}
\draw (0,0.25) node[anchor=north ] {$M_{\mathcal{A}}$};
\draw (-1.12,-0.4) node[anchor=north ] {$\Phi$};
\draw (1.12,-0.4) node[anchor=north ] {$\Phi$};
\end{tikzpicture}
    \end{equation}
    and $s_{d+1}M_{\MA\oplus\MB^*\to\MB^*}\in\mathcalboondox{B}^{\bullet}_{d,\Phi_0}(\MA,\MB)^{\MB}$ by $s_{d+1}M_{\MA\oplus\MB^*\to\MB^*}^{\doubar{x}}=\sum\mathcal{E}(\mathcalboondox{D'})$ where the sum is over all the filled diagrams $\mathcalboondox{D'}$ of type $\doubar{x}$ and of the form 
\begin{equation}
    \label{eq:maps-to-B}
\begin{tikzpicture}[line cap=round,line join=round,x=1.0cm,y=1.0cm]
\clip(-5,-0.7) rectangle (5.104458484699738,1.45);
  \draw [line width=0.5pt] (0.,0.) circle (0.5cm);
     \draw [rotate=90] [->, >= stealth, >= stealth] (0.5,0)--(0.9,0);
     \draw [rotate=-30] [->, >= stealth, >= stealth] (0.5,0)--(0.9,0);
     \draw [rotate=-150] [->, >= stealth, >= stealth] (0.5,0)--(0.9,0);
     \draw [rotate=0] [<-, >= stealth, >= stealth] (0.5,0)--(0.9,0);
     \draw [line width=0.5pt] (1.15,0.) circle (0.25cm);
     \draw[rotate around={60:(1.15,0)}] [->, >= stealth, >= stealth] (1.4,0)--(1.7,0);
      \draw[rotate around={-60:(1.15,0)}] [->, >= stealth, >= stealth] (1.4,0)--(1.7,0);
      \shadedraw[rotate around={120:(1.15,0)}, shift={(1.4cm,0cm)}] \doublefleche;
       \shadedraw[shift={(1.4cm,0cm)}] \doublefleche;
     \draw [rotate=60] [<-, >= stealth, >= stealth] (0.5,0)--(0.9,0);
       \draw [rotate=180] [<-, >= stealth, >= stealth] (0.5,0)--(0.9,0);
      \draw [line width=0.5pt] (-1.15,0.) circle (0.25cm);
     \draw[rotate around={-60:(-1.15,0)}] [->, >= stealth, >= stealth] (-1.4,0)--(-1.7,0);
      \draw[rotate around={60:(-1.15,0)}] [->, >= stealth, >= stealth] (-1.4,0)--(-1.7,0);
      \shadedraw[rotate around={-60:(-1.15,0)}, shift={(-0.9cm,0cm)}] \doublefleche;
       \shadedraw[rotate around={180:(-1.15,0)},shift={(-0.9cm,0cm)}] \doublefleche;
        \draw [rotate=120] [<-, >= stealth, >= stealth] (0.5,0)--(0.9,0);
        \draw [line width=0.5pt] (0.575,1) circle (0.25cm);
        \draw[rotate around={120:(0.575,1)}] [->, >= stealth, >= stealth] (0.825,1)--(1.125,1);
      \draw[rotate around={0:(0.575,1)}] [->, >= stealth, >= stealth] (0.825,1)--(1.125,1);
      \shadedraw[rotate around={60:(0.575,1)}, shift={(0.825cm,1cm)}] \doublefleche;
       \shadedraw[rotate around={180:(0.575,1)},shift={(0.825cm,1cm)}] \doublefleche;
        \draw [line width=0.5pt] (-0.575,1) circle (0.25cm);
      \draw[rotate around={-120:(-0.575,1)}] [->, >= stealth, >= stealth] (-0.825,1)--(-1.125,1);
      \draw[rotate around={0:(-0.575,1)}] [->, >= stealth, >= stealth] (-0.825,1)--(-1.125,1);
      \shadedraw[rotate around={120:(-0.575,1)}, shift={(-0.325cm,1cm)}] \doublefleche;
       \shadedraw[rotate around={0:(-0.575,1)},shift={(-0.325cm,1cm)}] \doublefleche;
      \draw[line width=1.1pt,rotate around={150:(0,0)}] [<-, >= stealth, >= stealth] (0.5,0)--(0.9,0);
\begin{scriptsize}
\draw [fill=black] (0.65,0.7) circle (0.3pt);
\draw [fill=black] (0.79,0.75) circle (0.3pt);
\draw [fill=black] (0.88,0.85) circle (0.3pt);
\draw [fill=black] (-1,0.27) circle (0.3pt);
\draw [fill=black] (-0.9,0.2) circle (0.3pt);
\draw [fill=black] (-1.15,0.3) circle (0.3pt);
\draw [fill=black] (1,-0.3) circle (0.3pt);
\draw [fill=black] (1.15,-0.3) circle (0.3pt);
\draw [fill=black] (0.9,-0.2) circle (0.3pt);
\draw [fill=black] (0,-0.6) circle (0.3pt);
\draw [fill=black] (0.2,-0.55) circle (0.3pt);
\draw [fill=black] (-0.2,-0.55) circle (0.3pt);
\draw [rotate=120][fill=black] (0,-0.6) circle (0.3pt);
\draw [rotate=120][fill=black] (0.15,-0.55) circle (0.3pt);
\draw [rotate=120][fill=black] (-0.15,-0.55) circle (0.3pt);
\draw [rotate around={-120:(-0.575,1)}][fill=black] (-0.28,1) circle (0.3pt);
\draw [rotate around={-120:(-0.575,1)}][fill=black] (-0.3,0.85) circle (0.3pt);
\draw [rotate around={-120:(-0.575,1)}][fill=black] (-0.3,1.15) circle (0.3pt);
\draw [fill=black] (-0.38,0.47) circle (0.3pt);
\draw [fill=black] (-0.44,0.43) circle (0.3pt);
\draw [fill=black] (-0.47,0.37) circle (0.3pt);
\draw [fill=black] (-0.56,0.22) circle (0.3pt);
\draw [fill=black] (-0.6,0.165) circle (0.3pt);
\draw [fill=black] (-0.6,0.1) circle (0.3pt);
\end{scriptsize}
\draw (0,0.25) node[anchor=north ] {$M_{\mathcal{B}}$};
\draw (-1.15,0.25) node[anchor=north ] {$\Phi$};
\draw (1.15,0.25) node[anchor=north ] {$\Phi$};
\draw (0.575,1.25) node[anchor=north ] {$\Phi$};
\draw (-0.575,1.25) node[anchor=north ] {$\Phi$};
\end{tikzpicture}
\end{equation}
This defines an element $s_{d+1}M_{\MA\oplus\MB^*}\in\mathcalboondox{B}^{\bullet}_{d,\Phi_0}(\MA,\MB)$ and we thus define $sm_{\MA\oplus\MB^*}$ of $ C(\MA\oplus\MB^*[d-1])[1]$ as $sm_{\MA\oplus\MB^*}^{\doubar{x}}=\mathcalboondox{j}_{\doubar{x}^{-1}}^{\Phi}(s_{d+1}M_{\MA\oplus\MB^*})\in C(\MA\oplus\MB^*[d-1])[1]$. We will denote by $m_{\MA\oplus\MB^*\to\MA}$ (resp. $m_{\MA\oplus\MB^*\to\MB^*}$)
    the composition of $m_{\MA\oplus\MB^*}$ with the canonical projection on $\MA$ (resp. on $\MB^*[d-1]$).
\end{definition}

\begin{proposition}
\label{prop:struct-general-case}
The element $sm_{\MA\oplus\MB^*}$ defines an $A_{\infty}$-structure on $\MA\oplus\MB^*[d-1]$.
Moreover, if the morphism $(\Phi_0,s_{d+1}\Phi)$ is good (see Definition \ref{def:good-morphism}), $sm_{\MA\oplus\MB^*}$ satisfies the cyclicity condition \eqref{eq:cyclicity-pcy}. 
\end{proposition}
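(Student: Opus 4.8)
The plan is to prove the two assertions in turn, modelling the argument on the strict case (Proposition~\ref{prop:struct-strict-case}) and on the composition proof of Definition~\ref{def:cp-pCY}, but now with the strict maps replaced by the general pre-Calabi-Yau morphism discs $\Phi$. Since $sm_{\MA\oplus\MB^*}$ has degree $1$, the Stasheff identities \eqref{eq:Stasheff-id} are equivalent to $sm_{\MA\oplus\MB^*}\underset{G}{\circ}sm_{\MA\oplus\MB^*}=0$. By Proposition~\ref{prop:Lie-bracket-mixed} together with the injectivity of the map $\mathcalboondox{j}^{\Phi}$ of \eqref{eq:map-j-mixed}, this is in turn equivalent to $s_{d+1}M_{\MA\oplus\MB^*}\underset{\Phi\nec}{\circ}s_{d+1}M_{\MA\oplus\MB^*}=0$. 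Thus the whole first assertion reduces to the vanishing of this $\Phi$-mixed necklace self-composition.

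To establish that vanishing, I would split the self-composition, via Lemma~\ref{lemma:B-split}, into its $\MA$-component and its $\MB^*$-component and analyse each as a sum of filled diagrams. By Definition~\ref{def:structure-pCY-case}, each copy of $s_{d+1}M_{\MA\oplus\MB^*}$ is a central $M_{\MA}$-disc dressed by $\Phi$-discs on its outgoing arrows (the $\MA$-component, \eqref{eq:maps-to-A}) or a central $M_{\MB}$-disc dressed by $\Phi$-discs on its incoming arrows (the $\MB^*$-component, \eqref{eq:maps-to-B}); hence the $\Phi$-mixed necklace product expands into diagrams carrying two such central discs joined along a single arrow, with $\Phi$-discs inserted. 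Grouping the diagrams by their outer type and summing over the compatible inner types exactly as in the composition proof, I would invoke the defining relation \eqref{eq:morphism-pCY} of a pre-Calabi-Yau morphism, $s_{d+1}\Phi\underset{\multinec}{\circ}M_{\MA}=M_{\MB}\underset{\pre}{\circ}s_{d+1}\Phi$, to convert every inner block built from $M_{\MB}$ into one built from $M_{\MA}$ in the $\MA$-component, and dually in the $\MB^*$-component. This collapses the two components into $\Phi$-dressed copies of $s_{d+1}M_{\MA}\underset{\nec}{\circ}s_{d+1}M_{\MA}$ and of $s_{d+1}M_{\MB}\underset{\nec}{\circ}s_{d+1}M_{\MB}$ respectively, both of which vanish because $M_{\MA}$ and $M_{\MB}$ solve the Maurer-Cartan equation \eqref{eq:MC}.

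For the cyclicity under the additional hypothesis that $\Phi$ is good, I would verify \eqref{eq:cyclicity-pcy} with respect to $\Gamma^{\Phi}$. Since $\Gamma^{\Phi}$ pairs $\MA[1]$ only against $\MB^*[d]$ (Example~\ref{example:natural-mixed-bilinear-form}), the two sides of \eqref{eq:cyclicity-pcy} couple $m_{\MA\oplus\MB^*\to\MA}$ paired against a $\MB^*$-argument with $m_{\MA\oplus\MB^*\to\MB^*}$ paired against an $\MA$-argument. Expanding both through $\mathcalboondox{j}^{\Phi}$ and the defining diagrams \eqref{eq:maps-to-A} and \eqref{eq:maps-to-B}, I would reduce the desired cyclic symmetry to the almost-cyclicity of $sm_{\MA\oplus\MA^*}$ with respect to $\Gamma^{\MA}$ and of $sm_{\MB\oplus\MB^*}$ with respect to $\Gamma^{\MB}$ (both available from Proposition~\ref{prop:equiv-pCY-Ainf}), combined with the identity $\Gamma^{\Phi}(tf,sa)=(-1)^{|tf|+1}(f\circ\Phi)(a)$. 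The goodness hypothesis is precisely what bridges the $\MA$-side and the $\MB^*$-side: it guarantees that the two families of $\Phi$-dressed filled diagrams, one built from $M_{\MA}$ and one from $M_{\MB}$, agree after transporting the distinguished bold arrow, which is exactly the equality needed to pass between the $\MA$-valued and $\MB^*$-valued halves of \eqref{eq:cyclicity-pcy}.

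The main obstacle I anticipate is the combinatorial reorganisation underlying the first part: one must justify that summing over all admissible diagrams of a fixed global type equals summing first over all outer types and then, for each of these, over all compatible inner types, so that the morphism identity \eqref{eq:morphism-pCY}, which is itself a sum over inner configurations, may be applied blockwise without disturbing the decoration or the type of the ambient diagram. Running alongside this is the matching of the Koszul signs generated by the evaluation procedures of Definitions~\ref{def:ev-bold-sink} and~\ref{def:ev-bold-source} with the signs appearing in the $\Phi$-mixed necklace bracket; this is delicate but follows the pattern already verified in Propositions~\ref{proposition:j-bracket} and~\ref{prop:struct-strict-case}, so I would carry it out by checking the relevant sign congruences modulo $2$ rather than recomputing each contribution.
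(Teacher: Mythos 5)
Your proposal follows essentially the same route as the paper: the reduction via Proposition \ref{prop:Lie-bracket-mixed} to the vanishing of the $\Phi$-mixed necklace self-composition, the diagrammatic regrouping in which the morphism identity \eqref{eq:morphism-pCY} converts the mixed ($M_{\MB}$-centered) blocks into $M_{\MA}$-centered ones (with the sign coming from the relabeling of outgoing arrows) so that everything collapses to $\Phi$-dressed copies of the Maurer--Cartan equation, and the observation that goodness is, by its very definition, the equality of the two diagram families computing $\Gamma^{\Phi}\circ(sm_{\MA\oplus\MB^*\to\MA}\otimes\id)$ and $\Gamma^{\Phi}\circ(sm_{\MA\oplus\MB^*\to\MB^*}\otimes\id)$. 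The only cosmetic deviation is that for cyclicity you additionally route through the almost-cyclicity of $sm_{\MA\oplus\MA^*}$ and $sm_{\MB\oplus\MB^*}$ as in the strict case, whereas the paper's argument at this point is purely diagrammatic; both hinge on the same use of the goodness hypothesis.
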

\begin{proof}
Using Proposition \ref{prop:Lie-bracket-mixed}, it suffices to show that $s_{d+1}M_{\MA\oplus\MB^*}\upperset{\Phi \nec}{\circ}s_{d+1}M_{\MA\oplus\MB^*}=0$.
Moreover, $\pi_{\MA[1]}\circ (s_{d+1}M_{\MA\oplus\MB^*}\upperset{\Phi \nec}{\circ}s_{d+1}M_{\MA\oplus\MB^*})=0$
is tantamount to $\sum\mathcal{E}(\mathcalboondox{D})-\sum\mathcal{E}(\mathcalboondox{D'})+\sum\mathcal{E}(\mathcalboondox{D}\dprime)=0$ where the sums are over all the filled diagrams $\mathcalboondox{D}$, $\mathcalboondox{D'}$ and $\mathcalboondox{D}\dprime$ of type $\doubar{x}$ of the form

\begin{minipage}{14cm}
\begin{tikzpicture}[line cap=round,line join=round,x=1.0cm,y=1.0cm]
\clip(-1.45,-2.5) rectangle (3.3,3);
    \draw [line width=0.5pt] (0.,0.) circle (0.5cm);
     \shadedraw[rotate=150,shift={(0.5cm,0cm)}] \doublefleche;
     \draw[line width=1.1pt,rotate=90][->, >= stealth, >= stealth](0.5,0)--(0.9,0);
     \shadedraw[rotate=30,shift={(0.5cm,0cm)}] \doubleflechescindeeleft;
     \shadedraw[rotate=30,shift={(0.5cm,0cm)}] \doubleflechescindeeright;
     \shadedraw[rotate=30,shift={(0.5cm,0cm)}] \fleche;
      \draw [line width=0.5pt] (1.3,0.75) circle (0.5cm);
     \draw [line width=0.5pt] (1.03,-0.6) circle (0.3cm);
    \draw[rotate=-30][->, >= stealth, >= stealth](0.5,0)--(0.9,0);
     \draw [rotate around ={60:(1.03,-0.6)}] [->, >= stealth, >= stealth] (1.33,-0.6)--(1.63,-0.6);
     \draw [rotate around ={-120:(1.03,-0.6)}] [->, >= stealth, >= stealth] (1.33,-0.6)--(1.63,-0.6);
    \draw[rotate=-150][->, >= stealth, >= stealth](0.5,0)--(0.9,0);
    \draw [line width=0.5pt] (-1.03,-0.6) circle (0.3cm);
     \draw [rotate around ={-60:(-1.03,-0.6)}] [->, >= stealth, >= stealth] (-1.33,-0.6)--(-1.63,-0.6);
     \draw [rotate around ={120:(-1.03,-0.6)}] [->, >= stealth, >= stealth] (-1.33,-0.6)--(-1.63,-0.6);
     \draw [rotate around ={-30:(1.3,0.75)}] [->, >= stealth, >= stealth] (1.8,0.75)--(2.2,0.75);
     \draw [rotate around ={90:(1.3,0.75)}] [->, >= stealth, >= stealth] (1.8,0.75)--(2.2,0.75);
     \draw [rotate around ={30:(1.3,0.75)}, shift={(1.8,0.75)}]\doublefleche;
     \draw [rotate around ={150:(1.3,0.75)}, shift={(1.8,0.75)}]\doublefleche;
     \draw [line width=0.5pt] (2.33,0.15) circle (0.3cm);
      \draw [rotate around ={60:(2.33,0.15)}] [->, >= stealth, >= stealth] (2.63,0.15)--(2.93,0.15);
       \draw [rotate around ={-120:(2.33,0.15)}] [->, >= stealth, >= stealth] (2.63,0.15)--(2.93,0.15);
    \draw [line width=0.5pt] (1.3,1.95) circle (0.3cm);
      \draw [rotate around ={180:(1.3,1.95)}] [->, >= stealth, >= stealth] (1.6,1.95)--(1.9,1.95);
      \draw [rotate around ={0:(1.3,1.95)}] [->, >= stealth, >= stealth] (1.6,1.95)--(1.9,1.95);
     \shadedraw[rotate around={150:(2.33,0.15)},shift={(2.63cm,0.15cm)}] \doubleflechescindeeleft;
     \shadedraw[rotate around={150:(2.33,0.15)},shift={(2.63cm,0.15cm)}] \doubleflechescindeeright;
    \shadedraw[rotate around={-90:(1.3,1.95)},shift={(1.6,1.95)}] \doubleflechescindeeleft;
     \shadedraw[rotate around={-90:(1.3,1.95)},shift={(1.6,1.95)}] \doubleflechescindeeright;
    \shadedraw[rotate around={30:(-1.03,-0.6)},shift={(-0.73,-0.6)}] \doubleflechescindeeleft;
     \shadedraw[rotate around={30:(-1.03,-0.6)},shift={(-0.73,-0.6)}] \doubleflechescindeeright;
     \shadedraw[rotate around={150:(1.03,-0.6)},shift={(1.33,-0.6)}] \doubleflechescindeeleft;
     \shadedraw[rotate around={150:(1.03,-0.6)},shift={(1.33,-0.6)}] \doubleflechescindeeright;
\begin{scriptsize}
\draw [fill=black] (0,-0.6) circle (0.3pt);
\draw [fill=black] (0.2,-0.55) circle (0.3pt);
\draw [fill=black] (-0.2,-0.55) circle (0.3pt);
\draw [fill=black] (1.3,0.75-0.6) circle (0.3pt);
\draw [fill=black] (1.5,0.75-0.55) circle (0.3pt);
\draw [fill=black] (1.1,0.75-0.55) circle (0.3pt);
\draw [fill=black] (1.3,2.35) circle (0.3pt);
\draw [fill=black] (1.5,2.3) circle (0.3pt);
\draw [fill=black] (1.1,2.3) circle (0.3pt);
\draw [fill=black] (2.7,-0.02) circle (0.3pt);
\draw [fill=black] (2.73,0.18) circle (0.3pt);
\draw [fill=black] (2.55,-0.18) circle (0.3pt);
\draw [fill=black] (1.4,-0.6) circle (0.3pt);
\draw [fill=black] (1.35,-0.81) circle (0.3pt);
\draw [fill=black] (1.2,-0.95) circle (0.3pt);
\draw [fill=black] (-1.4,-0.6) circle (0.3pt);
\draw [fill=black] (-1.35,-0.81) circle (0.3pt);
\draw [fill=black] (-1.2,-0.95) circle (0.3pt);
\end{scriptsize}
\draw(0,0.25)node[anchor=north]{$M_{\MA}$};
\draw(1.3,1)node[anchor=north]{$M_{\MA}$};
\draw(2.33,0.4)node[anchor=north]{$\Phi$};
\draw(1.3,2.2)node[anchor=north]{$\Phi$};
\draw(1.03,-0.35)node[anchor=north]{$\Phi$};
\draw(-1.03,-0.35)node[anchor=north]{$\Phi$};
\draw(3,0.25)node[anchor=north]{,};
\end{tikzpicture}
\begin{tikzpicture}[line cap=round,line join=round,x=1.0cm,y=1.0cm]
\clip(-1.45,-3) rectangle (2.3,3);
  \draw [line width=0.5pt] (0.,0.) circle (0.3cm);
     \shadedraw[rotate=90,shift={(0.3cm,0cm)}] \doubleflechescindeeleft;
     \shadedraw[rotate=90,shift={(0.3cm,0cm)}] \doubleflechescindeeright;
     \shadedraw[rotate=90,shift={(0.3cm,0cm)}] \fleche;
     \shadedraw[rotate=-90,shift={(0.3cm,0cm)}] \doubleflechescindeeleft;
     \shadedraw[rotate=-90,shift={(0.3cm,0cm)}] \doubleflechescindeeright;
     \shadedraw[rotate=-90,shift={(0.3cm,0cm)}] \fleche;
     \draw[->, >= stealth,>=stealth](0.3,0)--(0.6,0);
     \draw [line width=0.5pt] (0,1.3) circle (0.5cm);
    \draw [rotate around ={-30:(0,1.3)}, shift={(0.5,1.3)}]\doublefleche;
     \draw [rotate around ={-150:(0,1.3)}, shift={(0.5,1.3)}]\doublefleche;
      \draw [line width=0.5pt] (-1.03,1.9) circle (0.3cm);
     \draw[rotate around={-30:(-1.03,1.9)}][<-, >= stealth,>=stealth](-0.73,1.9)--(-0.33,1.9);
    \shadedraw[rotate around={-30:(-1.03,1.9)},shift={(-0.73cm,1.9cm)}] \doubleflechescindeeleft;
     \shadedraw[rotate around={-30:(-1.03,1.9)},shift={(-0.73cm,1.9cm)}] \doubleflechescindeeright;
     \draw[rotate around={60:(-1.03,1.9)}][->, >= stealth,>=stealth](-0.73,1.9)--(-0.43,1.9);
     \draw[rotate around={-120:(-1.03,1.9)}][->, >= stealth,>=stealth](-0.73,1.9)--(-0.43,1.9);
     \draw [line width=0.5pt] (1.03,1.9) circle (0.3cm);
     \draw[rotate around={-150:(1.03,1.9)}][<-, >= stealth,>=stealth](1.33,1.9)--(1.73,1.9);
    \shadedraw[rotate around={-150:(1.03,1.9)},shift={(1.33cm,1.9cm)}] \doubleflechescindeeleft;
     \shadedraw[rotate around={-150:(1.03,1.9)},shift={(1.33cm,1.9cm)}] \doubleflechescindeeright;
     \draw[rotate around={-60:(1.03,1.9)}][->, >= stealth,>=stealth](1.33,1.9)--(1.63,1.9);
     \draw[rotate around={120:(1.03,1.9)}][->, >= stealth,>=stealth](1.33,1.9)--(1.63,1.9);
     \draw [line width=0.5pt] (0,-1.3) circle (0.5cm);
      \draw [rotate around ={30:(0,-1.3)}, shift={(0.5,-1.3)}]\doublefleche;
     \draw [rotate around ={150:(0,-1.3)}, shift={(0.5,-1.3)}]\doublefleche;
     \draw[line width=1.1pt,rotate around={-150:(0,-1.3)}][->, >= stealth,>=stealth](0.5,-1.3)--(0.9,-1.3);
      \draw [line width=0.5pt] (1.03,-1.9) circle (0.3cm);
     \draw[rotate around={150:(1.03,-1.9)}][<-, >= stealth,>=stealth](1.33,-1.9)--(1.73,-1.9);
    \shadedraw[rotate around={150:(1.03,-1.9)},shift={(1.33cm,-1.9cm)}] \doubleflechescindeeleft;
     \shadedraw[rotate around={150:(1.03,-1.9)},shift={(1.33cm,-1.9cm)}] \doubleflechescindeeright;
     \draw[rotate around={60:(1.03,-1.9)}][->, >= stealth,>=stealth](1.33,-1.9)--(1.63,-1.9);
     \draw[rotate around={-120:(1.03,-1.9)}][->, >= stealth,>=stealth](1.33,-1.9)--(1.63,-1.9);
    \begin{scriptsize}
\draw [fill=black] (0,1.9) circle (0.3pt);
\draw [fill=black] (0.2,1.85) circle (0.3pt);
\draw [fill=black] (-0.2,1.85) circle (0.3pt);
\draw [fill=black] (0,-1.9) circle (0.3pt);
\draw [fill=black] (0.2,-1.85) circle (0.3pt);
\draw [fill=black] (-0.2,-1.85) circle (0.3pt);
\draw [fill=black] (-0.4,0) circle (0.3pt);
\draw [fill=black] (-0.35,-0.2) circle (0.3pt);
\draw [fill=black] (-0.35,0.2) circle (0.3pt);
\draw [fill=black] (1.4,-2.1) circle (0.3pt);
\draw [fill=black] (1.42,-1.9) circle (0.3pt);
\draw [fill=black] (1.25,-2.25) circle (0.3pt);
\draw [fill=black] (1.4,2.1) circle (0.3pt);
\draw [fill=black] (1.42,1.9) circle (0.3pt);
\draw [fill=black] (1.25,2.25) circle (0.3pt);
\draw [fill=black] (-1.4,2.1) circle (0.3pt);
\draw [fill=black] (-1.42,1.9) circle (0.3pt);
\draw [fill=black] (-1.25,2.25) circle (0.3pt);
\end{scriptsize}
\draw(0,1.55)node[anchor=north]{$M_{\MA}$};
\draw(0,-1.05)node[anchor=north]{$M_{\MA}$};
\draw(0,0.25)node[anchor=north]{$\Phi$};
\draw(1.03,-1.65)node[anchor=north]{$\Phi$};
\draw(1.03,2.15)node[anchor=north]{$\Phi$};
\draw(-1.03,2.15)node[anchor=north]{$\Phi$};
\draw(1.6,0.25)node[anchor=north]{and};
\end{tikzpicture}
\begin{tikzpicture}[line cap=round,line join=round,x=1.0cm,y=1.0cm]
\clip(-4,-2.8) rectangle (4,3);
 \draw [line width=0.5pt] (0.,0.) circle (0.5cm);
     \draw [rotate=90] [->, >= stealth, >= stealth] (0.5,0)--(0.9,0);
     \draw [rotate=-30] [->, >= stealth, >= stealth] (0.5,0)--(0.9,0);
     \draw [rotate=-150] [->, >= stealth, >= stealth] (0.5,0)--(0.9,0);
     \draw [rotate=0] [<-, >= stealth, >= stealth] (0.5,0)--(0.9,0);
     \draw [line width=0.5pt] (1.15,0.) circle (0.25cm);
     \draw[rotate around={0:(1.15,0)}] [->, >= stealth, >= stealth] (1.4,0)--(1.8,0);
     \draw[rotate around={90:(1.15,0)},shift={(1.4,0)}] \doublefleche;
     \draw [rotate=60] [<-, >= stealth, >= stealth] (0.5,0)--(0.9,0);
       \draw [rotate=180] [<-, >= stealth, >= stealth] (0.5,0)--(0.9,0);
      \draw [line width=0.5pt] (-1.15,0.) circle (0.25cm);
     \draw[rotate around={0:(-1.15,0)}] [->, >= stealth, >= stealth] (-1.4,0)--(-1.8,0);
     \draw[rotate around={-90:(-1.15,0)},shift={(-0.9,0)}]\doublefleche;
        \draw [rotate=120] [<-, >= stealth, >= stealth] (0.5,0)--(0.9,0);
        
        \draw [line width=0.5pt] (0.575,1) circle (0.25cm);
        \draw[rotate around={120:(0.575,1)}] [->, >= stealth, >= stealth] (0.825,1)--(1.225,1);
        \draw[rotate around={0:(0.575,1)}] [->, >= stealth, >= stealth] (0.825,1)--(1.225,1);
        \draw[rotate around={60:(0.575,1)},shift={(0.825,1)}] \doublefleche;
        \draw[rotate around={-60:(0.575,1)},shift={(0.825,1)}] \doublefleche;
        \draw [line width=0.5pt] (-0.575,1) circle (0.25cm);
      \draw[rotate around={300:(-0.575,1)}] [->, >= stealth, >= stealth] (-0.825,1)--(-1.225,1);
      \draw[rotate around={30:(-0.575,1)},shift={(-0.325,1)}] \doublefleche;
      \draw[rotate around={150:(0,0)}] [<-, >= stealth, >= stealth] (0.5,0)--(1.3,0);
    \draw [line width=0.5pt] (-1.34,0.775) circle (0.25cm);
    \draw[rotate around={150:(-1.34,0.775)}] [<-, >= stealth, >= stealth] (-1.09,0.775)--(-0.69,0.775);
    \draw[rotate around={90:(-1.34,0.775)}] [->, >= stealth, >= stealth] (-1.09,0.775)--(-0.71,0.775);
    \draw [rotate around={-150:(-1.34,0.775)}] [->, >= stealth, >= stealth] (-1.09,0.775)--(-0.71,0.775);
    \draw[rotate around={30:(-1.34,0.775)},shift={(-1.09,0.775)}]\doublefleche;
    \draw[rotate around={150:(-1.34,0.775)},shift={(-1.09,0.775)}]\doubleflechescindeeleft;
    \draw[rotate around={150:(-1.34,0.775)},shift={(-1.09,0.775)}]\doubleflechescindeeright;
     \draw [line width=0.5pt] (-2.34,1.35) circle (0.5cm);
     \shadedraw[rotate around={30:(-2.34,1.35) },shift={(-1.84,1.35)}]\doublefleche;
     \shadedraw[rotate around={150:(-2.34,1.35) },shift={(-1.84,1.35) }]\doublefleche;
     \draw[rotate around={-150:(-2.34,1.35) }] [->, >= stealth, >= stealth] (-1.84,1.35) --(-1.44,1.35);
      \draw [line width=0.5pt] (-2.34-1.03,1.35-0.6) circle (0.3cm);
     \draw[rotate around={90:(-2.34-1.03,1.35-0.6)}] [->, >= stealth, >= stealth] (-2.34-1.03+0.3,1.35-0.6)--(-2.34-1.03+0.6,1.35-0.6);
     \draw[rotate around={-30:(-2.34-1.03,1.35-0.6)}] [->, >= stealth, >= stealth] (-2.34-1.03+0.3,1.35-0.6)--(-2.34-1.03+0.6,1.35-0.6);
     \draw[rotate around={-150:(-2.34-1.03,1.35-0.6)}] [->, >= stealth, >= stealth] (-2.34-1.03+0.3,1.35-0.6)--(-2.34-1.03+0.6,1.35-0.6);
     \draw[rotate around={-90:(-2.34-1.03,1.35-0.6)},shift={(-2.34-1.03+0.3,1.35-0.6)}] \doublefleche;
      \draw[rotate around={30:(-2.34-1.03,1.35-0.6)},shift={(-2.34-1.03+0.3,1.35-0.6)}] \doubleflechescindeeleft;
      \draw[rotate around={30:(-2.34-1.03,1.35-0.6)},shift={(-2.34-1.03+0.3,1.35-0.6)}] \doubleflechescindeeright;
     \draw[line width=1.1pt,rotate around={90:(-2.34,1.35) }][->, >= stealth,>=stealth](-1.84,1.35) --(-1.44,1.35);
\begin{scriptsize}
\draw [fill=black] (0,-0.6) circle (0.3pt);
\draw [fill=black] (0.2,-0.55) circle (0.3pt);
\draw [fill=black] (-0.2,-0.55) circle (0.3pt);
\draw [rotate=120][fill=black] (0,-0.6) circle (0.3pt);
\draw [rotate=120][fill=black] (0.15,-0.55) circle (0.3pt);
\draw [rotate=120][fill=black] (-0.15,-0.55) circle (0.3pt);
\draw [fill=black] (-0.38,0.47) circle (0.3pt);
\draw [fill=black] (-0.44,0.43) circle (0.3pt);
\draw [fill=black] (-0.47,0.37) circle (0.3pt);
\draw [fill=black] (-0.56,0.22) circle (0.3pt);
\draw [fill=black] (-0.6,0.165) circle (0.3pt);
\draw [fill=black] (-0.6,0.1) circle (0.3pt);
\draw [fill=black] (-2.34,0.75) circle (0.3pt);
\draw [fill=black] (-2.14,0.8) circle (0.3pt);
\draw [fill=black] (-2.54,0.8) circle (0.3pt);
\draw [rotate around={180:(-1.15,0)}][fill=black] (-1.15,-0.35) circle (0.3pt);
\draw [rotate around={180:(-1.15,0)}][fill=black] (-1,-0.3) circle (0.3pt);
\draw [rotate around={180:(-1.15,0)}][fill=black] (-1.3,-0.3) circle (0.3pt);
\draw (1.15,-0.35) circle (0.3pt);
\draw (1,-0.3) circle (0.3pt);
\draw (1.3,-0.3) circle (0.3pt);
\draw [fill=black] (-3.6,1.1) circle (0.3pt);
\draw [fill=black] (-3.72,0.97) circle (0.3pt);
\draw [fill=black] (-3.77,0.8) circle (0.3pt);
\draw [fill=black] (-1.44,0.47) circle (0.3pt);
\draw [fill=black] (-1.32,0.45) circle (0.3pt);
\draw [fill=black] (-1.18,0.47) circle (0.3pt);
\draw [fill=black] (-0.85,0.85) circle (0.3pt);
\draw [fill=black] (-0.88,1.05) circle (0.3pt);
\draw [fill=black] (-0.7,0.72) circle (0.3pt);
\draw [rotate=-60][fill=black] (-0.85,0.85) circle (0.3pt);
\draw [rotate=-60] [fill=black] (-0.6,0.7) circle (0.3pt);
\draw [rotate=-60] [fill=black] (-0.75,0.75) circle (0.3pt);
\end{scriptsize}
\draw(0,0.25)node[anchor=north]{$M_{\MB}$};
\draw(-2.34,1.6)node[anchor=north]{$M_{\MA}$};
\draw(-1.34,0.775+0.25)node[anchor=north]{$\Phi$};
\draw(0.575,1.25)node[anchor=north]{$\Phi$};
\draw(-0.575,1.25)node[anchor=north]{$\Phi$};
\draw(1.15,0.25)node[anchor=north]{$\Phi$};
\draw(-1.15,0.25)node[anchor=north]{$\Phi$};
\draw(-2.34-1.03,1.35-0.6+0.25)node[anchor=north]{$\Phi$};
\end{tikzpicture}
\end{minipage}

\noindent
respectively.
Using that $(\Phi_0,s_{d+1}\Phi)$ is a pre-Calabi-Yau morphism, we have that 
\begin{equation}
\label{eq:diagram-D-dprime}
\sum\mathcal{E}(\mathcalboondox{D}\dprime)=\sum\mathcal{E}(\mathcalboondox{D_1})+\sum\mathcal{E}(\mathcalboondox{D_2})
\end{equation}
where $\mathcalboondox{D_1}$ and $\mathcalboondox{D_2}$ are filled diagrams of the form 

\begin{minipage}{10cm}
    \begin{equation}
        \label{eq:third-stasheff}
\begin{tikzpicture}[line cap=round,line join=round,x=1.0cm,y=1.0cm]
\clip(-2.7,-2) rectangle (5,1.8);
   \draw [line width=0.5pt] (0.,0.) circle (0.5cm);
     \shadedraw[rotate=150,shift={(0.5cm,0cm)}] \doublefleche;
      \draw [line width=0.5pt] (0.,1.3) circle (0.3cm);
     \shadedraw[rotate around={-90:(0,1.3)},shift={(0.3cm,1.3cm)}] \doubleflechescindeeleft;
     \shadedraw[rotate around={-90:(0,1.3)},shift={(0.3cm,1.3cm)}] \doubleflechescindeeright;
     \shadedraw[rotate around={-90:(0,1.3)},shift={(0.3cm,1.3cm)}] \fleche;
      \draw[rotate around={180:(0,1.3)}][->, >= stealth, >= stealth](0.3,1.3)--(0.6,1.3);
      \draw[rotate around={0:(0,1.3)}][->, >= stealth, >= stealth](0.3,1.3)--(0.6,1.3);
     \shadedraw[rotate=30,shift={(0.5cm,0cm)}] \doubleflechescindeeleft;
     \shadedraw[rotate=30,shift={(0.5cm,0cm)}] \doubleflechescindeeright;
     \shadedraw[rotate=30,shift={(0.5cm,0cm)}] \fleche;
      \draw [line width=0.5pt] (1.3,0.75) circle (0.5cm);
     \draw [line width=0.5pt] (1.03,-0.6) circle (0.3cm);
    \draw[rotate=-30][->, >= stealth, >= stealth](0.5,0)--(0.9,0);
     \draw [rotate around ={60:(1.03,-0.6)}] [->, >= stealth, >= stealth] (1.33,-0.6)--(1.63,-0.6);
     \draw [rotate around ={-120:(1.03,-0.6)}] [->, >= stealth, >= stealth] (1.33,-0.6)--(1.63,-0.6);
    \draw[rotate=-150][->, >= stealth, >= stealth](0.5,0)--(0.9,0);
    \draw [line width=0.5pt] (-1.03,-0.6) circle (0.3cm);
     \draw [rotate around ={-60:(-1.03,-0.6)}] [->, >= stealth, >= stealth] (-1.33,-0.6)--(-1.63,-0.6);
     \draw [rotate around ={120:(-1.03,-0.6)}] [->, >= stealth, >= stealth] (-1.33,-0.6)--(-1.63,-0.6);
     \draw [rotate around ={-30:(1.3,0.75)}] [->, >= stealth, >= stealth] (1.8,0.75)--(2.2,0.75);
     \draw [line width=1.1pt,rotate around ={90:(1.3,0.75)}] [->, >= stealth, >= stealth] (1.8,0.75)--(2.2,0.75);
     \draw [rotate around ={30:(1.3,0.75)}, shift={(1.8,0.75)}]\doublefleche;
     \draw [rotate around ={150:(1.3,0.75)}, shift={(1.8,0.75)}]\doublefleche;
     \draw [line width=0.5pt] (2.33,0.15) circle (0.3cm);
      \draw [rotate around ={60:(2.33,0.15)}] [->, >= stealth, >= stealth] (2.63,0.15)--(2.93,0.15);
       \draw [rotate around ={-120:(2.33,0.15)}] [->, >= stealth, >= stealth] (2.63,0.15)--(2.93,0.15);
     \shadedraw[rotate around={150:(2.33,0.15)},shift={(2.63cm,0.15cm)}] \doubleflechescindeeleft;
     \shadedraw[rotate around={150:(2.33,0.15)},shift={(2.63cm,0.15cm)}] \doubleflechescindeeright;
    \shadedraw[rotate around={30:(-1.03,-0.6)},shift={(-0.73,-0.6)}] \doubleflechescindeeleft;
     \shadedraw[rotate around={30:(-1.03,-0.6)},shift={(-0.73,-0.6)}] \doubleflechescindeeright;
     \shadedraw[rotate around={150:(1.03,-0.6)},shift={(1.33,-0.6)}] \doubleflechescindeeleft;
     \shadedraw[rotate around={150:(1.03,-0.6)},shift={(1.33,-0.6)}] \doubleflechescindeeright;
\begin{scriptsize}
\draw [fill=black] (0,-0.6) circle (0.3pt);
\draw [fill=black] (0.2,-0.55) circle (0.3pt);
\draw [fill=black] (-0.2,-0.55) circle (0.3pt);
\draw [fill=black] (1.3,0.75-0.6) circle (0.3pt);
\draw [fill=black] (1.5,0.75-0.55) circle (0.3pt);
\draw [fill=black] (1.1,0.75-0.55) circle (0.3pt);
\draw [fill=black] (0,1.7) circle (0.3pt);
\draw [fill=black] (0.2,1.65) circle (0.3pt);
\draw [fill=black] (-0.2,1.65) circle (0.3pt);
\draw [fill=black] (2.7,-0.02) circle (0.3pt);
\draw [fill=black] (2.73,0.18) circle (0.3pt);
\draw [fill=black] (2.55,-0.18) circle (0.3pt);
\draw [fill=black] (1.4,-0.6) circle (0.3pt);
\draw [fill=black] (1.35,-0.81) circle (0.3pt);
\draw [fill=black] (1.2,-0.95) circle (0.3pt);
\draw [fill=black] (-1.4,-0.6) circle (0.3pt);
\draw [fill=black] (-1.35,-0.81) circle (0.3pt);
\draw [fill=black] (-1.2,-0.95) circle (0.3pt);
\end{scriptsize}
\draw(4,0.25)node[anchor=north]{and};
\draw(0,0.25)node[anchor=north]{$M_{\MA}$};
\draw(1.3,1)node[anchor=north]{$M_{\MA}$};
\draw(2.33,0.4)node[anchor=north]{$\Phi$};
\draw(0,1.55)node[anchor=north]{$\Phi$};
\draw(1.03,-0.35)node[anchor=north]{$\Phi$};
\draw(-1.03,-0.35)node[anchor=north]{$\Phi$};
\end{tikzpicture}
\begin{tikzpicture}[line cap=round,line join=round,x=1.0cm,y=1.0cm]
\clip(-1.6,-2.4) rectangle (2.5,2.6);
  \draw [line width=0.5pt] (0.,0.) circle (0.3cm);
     \shadedraw[rotate=90,shift={(0.3cm,0cm)}] \doubleflechescindeeleft;
     \shadedraw[rotate=90,shift={(0.3cm,0cm)}] \doubleflechescindeeright;
     \shadedraw[rotate=90,shift={(0.3cm,0cm)}] \fleche;
     \shadedraw[rotate=-90,shift={(0.3cm,0cm)}] \doubleflechescindeeleft;
     \shadedraw[rotate=-90,shift={(0.3cm,0cm)}] \doubleflechescindeeright;
     \shadedraw[rotate=-90,shift={(0.3cm,0cm)}] \fleche;
     \draw[->, >= stealth,>=stealth](0.3,0)--(0.6,0);
     \draw [line width=0.5pt] (0,1.3) circle (0.5cm);
    \draw [rotate around ={-30:(0,1.3)}, shift={(0.5,1.3)}]\doublefleche;
     \draw [rotate around ={-150:(0,1.3)}, shift={(0.5,1.3)}]\doublefleche;
      \draw [line width=0.5pt] (-1.03,1.9) circle (0.3cm);
     \draw[rotate around={-30:(-1.03,1.9)}][<-, >= stealth,>=stealth](-0.73,1.9)--(-0.33,1.9);
    \shadedraw[rotate around={-30:(-1.03,1.9)},shift={(-0.73cm,1.9cm)}] \doubleflechescindeeleft;
     \shadedraw[rotate around={-30:(-1.03,1.9)},shift={(-0.73cm,1.9cm)}] \doubleflechescindeeright;
     \draw[rotate around={60:(-1.03,1.9)}][->, >= stealth,>=stealth](-0.73,1.9)--(-0.43,1.9);
     \draw[rotate around={-120:(-1.03,1.9)}][->, >= stealth,>=stealth](-0.73,1.9)--(-0.43,1.9);
     \draw [line width=0.5pt] (1.03,1.9) circle (0.3cm);
     \draw[rotate around={-150:(1.03,1.9)}][<-, >= stealth,>=stealth](1.33,1.9)--(1.73,1.9);
    \shadedraw[rotate around={-150:(1.03,1.9)},shift={(1.33cm,1.9cm)}] \doubleflechescindeeleft;
     \shadedraw[rotate around={-150:(1.03,1.9)},shift={(1.33cm,1.9cm)}] \doubleflechescindeeright;
     \draw[rotate around={-60:(1.03,1.9)}][->, >= stealth,>=stealth](1.33,1.9)--(1.63,1.9);
     \draw[rotate around={120:(1.03,1.9)}][->, >= stealth,>=stealth](1.33,1.9)--(1.63,1.9);
     \draw [line width=0.5pt] (0,-1.3) circle (0.5cm);
      \draw [rotate around ={30:(0,-1.3)}, shift={(0.5,-1.3)}]\doublefleche;
     \draw [rotate around ={150:(0,-1.3)}, shift={(0.5,-1.3)}]\doublefleche;
     \draw[line width=1.1pt,rotate around={-150:(0,-1.3)}][->, >= stealth,>=stealth](0.5,-1.3)--(0.9,-1.3);
      \draw [line width=0.5pt] (1.03,-1.9) circle (0.3cm);
     \draw[rotate around={150:(1.03,-1.9)}][<-, >= stealth,>=stealth](1.33,-1.9)--(1.73,-1.9);
    \shadedraw[rotate around={150:(1.03,-1.9)},shift={(1.33cm,-1.9cm)}] \doubleflechescindeeleft;
     \shadedraw[rotate around={150:(1.03,-1.9)},shift={(1.33cm,-1.9cm)}] \doubleflechescindeeright;
     \draw[rotate around={60:(1.03,-1.9)}][->, >= stealth,>=stealth](1.33,-1.9)--(1.63,-1.9);
     \draw[rotate around={-120:(1.03,-1.9)}][->, >= stealth,>=stealth](1.33,-1.9)--(1.63,-1.9);
    \begin{scriptsize}
\draw [fill=black] (0,1.9) circle (0.3pt);
\draw [fill=black] (0.2,1.85) circle (0.3pt);
\draw [fill=black] (-0.2,1.85) circle (0.3pt);
\draw [fill=black] (0,-1.9) circle (0.3pt);
\draw [fill=black] (0.2,-1.85) circle (0.3pt);
\draw [fill=black] (-0.2,-1.85) circle (0.3pt);
\draw [fill=black] (-0.4,0) circle (0.3pt);
\draw [fill=black] (-0.35,-0.2) circle (0.3pt);
\draw [fill=black] (-0.35,0.2) circle (0.3pt);
\draw [fill=black] (1.4,-2.1) circle (0.3pt);
\draw [fill=black] (1.42,-1.9) circle (0.3pt);
\draw [fill=black] (1.25,-2.25) circle (0.3pt);
\draw [fill=black] (1.4,2.1) circle (0.3pt);
\draw [fill=black] (1.42,1.9) circle (0.3pt);
\draw [fill=black] (1.25,2.25) circle (0.3pt);
\draw [fill=black] (-1.4,2.1) circle (0.3pt);
\draw [fill=black] (-1.42,1.9) circle (0.3pt);
\draw [fill=black] (-1.25,2.25) circle (0.3pt);
\end{scriptsize}
\draw(0,1.55)node[anchor=north]{$M_{\MA}$};
\draw(0,-1.05)node[anchor=north]{$M_{\MA}$};
\draw(0,0.25)node[anchor=north]{$\Phi$};
\draw(1.03,-1.65)node[anchor=north]{$\Phi$};
\draw(1.03,2.15)node[anchor=north]{$\Phi$};
\draw(-1.03,2.15)node[anchor=north]{$\Phi$};
\end{tikzpicture}
    \end{equation}
\end{minipage}

The minus in the identity \eqref{eq:diagram-D-dprime} comes from the fact that the discs filled with $M_{\MA}$ change their place, in the sense that the order of the labeling of their first outgoing arrow changes. Since $s_{d+1}M_{\MA}$ is an element
of degree $1$, this creates a minus sign.
Moreover, $\sum\mathcal{E}(\mathcalboondox{D'})=\sum\mathcal{E}(\mathcalboondox{D_2})$ so that it remains to show that $\sum\mathcal{E}(\mathcalboondox{D})+\sum\mathcal{E}(\mathcalboondox{D_1})=0$.
This is the case since $s_{d+1}M_{\MA}$ is a $d$-pre-Calabi-Yau structure. Indeed, the sum of these evaluations of diagrams is the composition of \[s_{d+1}M_{\MA}\upperset{\nec}{\circ}s_{d+1}M_{\MA}\] with a tensor product composed of maps of the collection $\Phi$ and of the identity map $\id$ in the last tensor factor. We can check in a similar manner that \[\pi_{\MB^*[d-1]}\circ(s_{d+1}M_{\MA\oplus\MB^*}\upperset{\Phi \nec}{\circ}s_{d+1}M_{\MA\oplus\MB^*})=0. \]
Therefore, the element $sm_{\MA\oplus\MB^*}$ satisfies the Stasheff identities for every $\Bar{x}\in\Bar{\MO}_{\MA}$.

It is clear that if the morphism $(\Phi_0,s_{d+1}\Phi)$ is good, then the $A_{\infty}$-structure on $\MA\oplus\MB^*[d-1]$ is almost cyclic with respect to $\Gamma^{\Phi}$.
Indeed, $\Gamma^{\Phi}\circ (sm_{\MA\oplus\MB^*\to \MA}\otimes \id_{\MA^*})=\sum\mathcal{E}(\mathcalboondox{D})$ where the sum is over all the filled diagrams $\mathcal{E}(\mathcalboondox{D})$ of the form

\begin{tikzpicture}[line cap=round,line join=round,x=1.0cm,y=1.0cm]
\clip(-7,-2) rectangle (4,2);
   \draw [line width=0.5pt] (0.,0.) circle (0.5cm);
     \shadedraw[rotate=30,shift={(0.5cm,0cm)}] \doublefleche;
     \shadedraw[rotate=150,shift={(0.5cm,0cm)}] \doublefleche;
     \draw[rotate=90][->, >= stealth, >= stealth](0.5,0)--(0.9,0);
     \draw [line width=0.5pt] (1.12,-0.65) circle (0.3cm);
     \shadedraw[shift={(0.86cm,-0.5cm)},rotate=150] \doubleflechescindeeleft;
     \shadedraw[shift={(0.86cm,-0.5cm)},rotate=150] \doubleflechescindeeright;
     \shadedraw[shift={(0.86cm,-0.5cm)},rotate=150] \fleche;
     \draw [rotate around ={60:(1.12,-0.65)}] [->, >= stealth, >= stealth] (1.43,-0.65)--(1.73,-0.65);
     \draw [rotate around ={-120:(1.12,-0.65)}] [->, >= stealth, >= stealth] (1.43,-0.65)--(1.73,-0.65);
     \draw [line width=0.5pt] (-1.12,-0.65) circle (0.3cm);
     \shadedraw[shift={(-0.86cm,-0.5cm)},rotate=30] \doubleflechescindeeleft;
      \shadedraw[shift={(-0.86cm,-0.5cm)},rotate=30] \doubleflechescindeeright;
       \shadedraw[shift={(-0.86cm,-0.5cm)},rotate=30] \fleche;
      \draw [rotate around ={-60:(-1.12,-0.65)}] [->, >= stealth, >= stealth] (-1.43,-0.65)--(-1.73,-0.65);
     \draw [rotate around ={120:(-1.12,-0.65)}] [->, >= stealth, >= stealth] (-1.43,-0.65)--(-1.73,-0.65);
     \draw [line width=0.5pt] (0.,1.2) circle (0.3cm);
     \draw [line width=1.1pt,rotate around={90:(0.,1.2)}][->, >= stealth, >=stealth](0.3,1.2)--(0.6,1.2);
\begin{scriptsize}
\draw [fill=black] (0,-0.6) circle (0.3pt);
\draw [fill=black] (0.2,-0.55) circle (0.3pt);
\draw [fill=black] (-0.2,-0.55) circle (0.3pt);
\draw [fill=black] (1.45,-0.85) circle (0.3pt);
\draw [fill=black] (1.5,-0.67) circle (0.3pt);
\draw [fill=black] (1.33,-0.97) circle (0.3pt);
\draw [fill=black] (-1.45,-0.85) circle (0.3pt);
\draw [fill=black] (-1.5,-0.67) circle (0.3pt);
\draw [fill=black] (-1.33,-0.97) circle (0.3pt);
\end{scriptsize}
\draw(0,0.25)node[anchor=north]{$M_{\MA}$};
\draw(1.12,-0.4)node[anchor=north]{$\Phi$};
\draw(-1.12,-0.4)node[anchor=north]{$\Phi$};
\draw(0,1.45)node[anchor=north]{$\Phi$};
\end{tikzpicture}

\noindent
On the other hand, $\Gamma^{\Phi}\circ (sm_{\MA\oplus\MB^*\to \MB^*}\otimes \id_{\MB})=\sum\mathcal{E}(\mathcalboondox{D'})$ where the sum is over all the filled diagrams $\mathcal{E}(\mathcalboondox{D'})$ of the form 

\begin{tikzpicture}[line cap=round,line join=round,x=1.0cm,y=1.0cm]
\clip(-7,-1) rectangle (7.635492124039547,2);
  \draw [line width=0.5pt] (0.,0.) circle (0.5cm);
     \draw [rotate=90] [->, >= stealth, >= stealth] (0.5,0)--(0.9,0);
     \draw [rotate=-30] [->, >= stealth, >= stealth] (0.5,0)--(0.9,0);
     \draw [rotate=-150] [->, >= stealth, >= stealth] (0.5,0)--(0.9,0);
     \draw [rotate=0] [<-, >= stealth, >= stealth] (0.5,0)--(0.9,0);
     \draw [line width=0.5pt] (1.15,0.) circle (0.25cm);
     \draw[rotate around={60:(1.15,0)}] [->, >= stealth, >= stealth] (1.4,0)--(1.7,0);
      \draw[rotate around={-60:(1.15,0)}] [->, >= stealth, >= stealth] (1.4,0)--(1.7,0);
      \shadedraw[rotate around={120:(1.15,0)}, shift={(1.4cm,0cm)}] \doublefleche;
       \shadedraw[shift={(1.4cm,0cm)}] \doublefleche;
     \draw [rotate=60] [<-, >= stealth, >= stealth] (0.5,0)--(0.9,0);
       \draw [rotate=180] [<-, >= stealth, >= stealth] (0.5,0)--(0.9,0);
      \draw [line width=0.5pt] (-1.15,0.) circle (0.25cm);
     \draw[rotate around={-60:(-1.15,0)}] [->, >= stealth, >= stealth] (-1.4,0)--(-1.7,0);
      \draw[rotate around={60:(-1.15,0)}] [->, >= stealth, >= stealth] (-1.4,0)--(-1.7,0);
      \shadedraw[rotate around={-60:(-1.15,0)}, shift={(-0.9cm,0cm)}] \doublefleche;
       \shadedraw[rotate around={180:(-1.15,0)},shift={(-0.9cm,0cm)}] \doublefleche;
        \draw [rotate=120] [<-, >= stealth, >= stealth] (0.5,0)--(0.9,0);
        \draw [line width=0.5pt] (0.575,1) circle (0.25cm);
        \draw[rotate around={120:(0.575,1)}] [->, >= stealth, >= stealth] (0.825,1)--(1.125,1);
      \draw[rotate around={0:(0.575,1)}] [->, >= stealth, >= stealth] (0.825,1)--(1.125,1);
      \shadedraw[rotate around={60:(0.575,1)}, shift={(0.825cm,1cm)}] \doublefleche;
       \shadedraw[rotate around={180:(0.575,1)},shift={(0.825cm,1cm)}] \doublefleche;
        \draw [line width=0.5pt] (-0.575,1) circle (0.25cm);
      \draw[rotate around={-120:(-0.575,1)}] [->, >= stealth, >= stealth] (-0.825,1)--(-1.125,1);
      \draw[rotate around={0:(-0.575,1)}] [->, >= stealth, >= stealth] (-0.825,1)--(-1.125,1);
      \shadedraw[rotate around={120:(-0.575,1)}, shift={(-0.325cm,1cm)}] \doublefleche;
       \shadedraw[rotate around={0:(-0.575,1)},shift={(-0.325cm,1cm)}] \doublefleche;
      \draw[rotate around={150:(0,0)}] [<-, >= stealth, >= stealth] (0.5,0)--(0.9,0);
      \draw [line width=0.5pt] (-1,0.575) circle (0.25cm);
      \draw[line width=1.1pt,rotate around={150:(-1,0.575)}] [<-, >= stealth, >= stealth] (-0.75,0.575)--(-0.45,0.575);
\begin{scriptsize}
\draw [fill=black] (0.65,0.7) circle (0.3pt);
\draw [fill=black] (0.79,0.75) circle (0.3pt);
\draw [fill=black] (0.88,0.85) circle (0.3pt);
\draw [fill=black] (-1,0.27) circle (0.3pt);
\draw [fill=black] (-0.9,0.2) circle (0.3pt);
\draw [fill=black] (-1.15,0.3) circle (0.3pt);
\draw [fill=black] (1,-0.3) circle (0.3pt);
\draw [fill=black] (1.15,-0.3) circle (0.3pt);
\draw [fill=black] (0.9,-0.2) circle (0.3pt);
\draw [fill=black] (0,-0.6) circle (0.3pt);
\draw [fill=black] (0.2,-0.55) circle (0.3pt);
\draw [fill=black] (-0.2,-0.55) circle (0.3pt);
\draw [rotate=120][fill=black] (0,-0.6) circle (0.3pt);
\draw [rotate=120][fill=black] (0.15,-0.55) circle (0.3pt);
\draw [rotate=120][fill=black] (-0.15,-0.55) circle (0.3pt);
\draw [rotate around={-120:(-0.575,1)}][fill=black] (-0.28,1) circle (0.3pt);
\draw [rotate around={-120:(-0.575,1)}][fill=black] (-0.3,0.85) circle (0.3pt);
\draw [rotate around={-120:(-0.575,1)}][fill=black] (-0.3,1.15) circle (0.3pt);
\draw [fill=black] (-0.38,0.47) circle (0.3pt);
\draw [fill=black] (-0.44,0.43) circle (0.3pt);
\draw [fill=black] (-0.47,0.37) circle (0.3pt);
\draw [fill=black] (-0.56,0.22) circle (0.3pt);
\draw [fill=black] (-0.6,0.165) circle (0.3pt);
\draw [fill=black] (-0.6,0.1) circle (0.3pt);
\end{scriptsize}
\draw(0,0.25)node[anchor=north]{$M_{\MB}$};
\draw(-0.575,1.25)node[anchor=north]{$\Phi$};
\draw(0.575,1.25)node[anchor=north]{$\Phi$};
\draw(-1,0.575+0.25)node[anchor=north]{$\Phi$};
\draw(-1.15,0.25)node[anchor=north]{$\Phi$};
\draw(1.15,0.25)node[anchor=north]{$\Phi$};
\end{tikzpicture}
\noindent
which shows that $sm_{\MA\oplus\MB^*}$ is an almost cyclic $A_{\infty}$-structure.
\end{proof}

Now, the aim is to define two morphisms of $A_{\infty}$-categories 
\begin{equation}
    s\varphi_{\MA} : \MA[1]\oplus\MB^{*}[d]\rightarrow \MA[1]\oplus\MA^{*}[d] \text{ and }s\varphi_{\MB} : \MA[1]\oplus\MB^{*}[d] \rightarrow \MB[1]\oplus\MB^{*}[d]
\end{equation}
using the $d$-pre-Calabi-Yau morphism $(\Phi_0,s_{d+1}\Phi)$. 

\begin{definition}
    Consider $d$-pre-Calabi-Yau categories $(\MA,s_{d+1}M_{\MA})$ and $(\MB,s_{d+1}M_{\MB})$ as well as a $d$-pre-Calabi-Yau morphism $(\Phi_0,s_{d+1}\Phi) :(\MA,s_{d+1}M_{\MA})\rightarrow(\MB,s_{d+1}M_{\MB})$.
    Then, $s_{d+1}\Phi$ induces maps
    \begin{equation}
\label{eq:phi-MA*}
\begin{split}
    \phi^{\doubar{x}}_{\MA^*} : \bigotimes\limits_{i=1}^{n-1}(\MA[1]^{\bar{x}^i}\otimes {}_{\Phi_0(\rrt(\bar{x}^{i+1}))}\MB^*_{\Phi_0(\llt(\bar{x}^{i}))}[d])\otimes \MA[1]^{\bar{x}^n} \rightarrow {}_{\llt(\bar{x}^1)}\MA^*_{\rrt(\bar{x}^n)}[d]
    \end{split}
\end{equation} 
defined by 
\begin{small}
\begin{equation}
    \begin{split}
        \phi^{\doubar{x}}_{\MA^*}&(\bar{sa}^1,tf^1,\dots,\bar{sa}^{n-1},tf^{n-1},\bar{sa}^n)(s_{-d}b)=\\&(-1)^{\epsilon}\big((f^{n-1}\circ s_d)\otimes\dots\otimes(f^{1}\circ s_d)\big)\big(\Phi^{\bar{x}^n\sqcup\bar{x}^1,\bar{x}^{n-1},\dots,\bar{x}^2}(\bar{sa}^n\otimes sb\otimes \bar{sa}^1,\bar{sa}^{n-1},\dots,\bar{sa}^2)\big)
    \end{split}
\end{equation}
\end{small}
for $\bar{sa}^i\in \MA[1]^{\otimes \bar{x}^i}$, $tf^i\in {}_{\Phi_0(\rrt(\bar{x}^{i+1}))}\MB^*_{\Phi_0(\llt(\bar{x}^{i}))}[d]$ and $sb\in {}_{\rrt(\bar{x}^n)}\MA_{\llt(\bar{x}^1)}[1]$ where 
\begin{equation}
    \begin{split}
        \epsilon&=\sum\limits_{i=1}^{n-1}|tf^i|\sum\limits_{j=i+1}^n|\bar{sa}^i|+\sum\limits_{1\leq i<j\leq n}|\bar{sa}^i||\bar{sa}^j|\sum\limits_{1\leq i<j\leq n-1}|tf^i||tf^j|+d(n-1)\\&+(|\bar{sa}^1|+|sb|)\sum\limits_{i=1}^{n-1}|\bar{sa}^i|+|\bar{sa}^1||sb|+d\sum\limits_{i=1}^{n-1}|tf^i|+(d+1)\sum\limits_{i=1}^n |\bar{sa}^i|
    \end{split}
\end{equation}
and
\begin{equation}
\label{eq:phi-MB}
    \phi^{\doubar{x}}_{\MB} : \bigotimes\limits_{i=1}^{n-1}(\MA[1]^{\bar{x}^i}\otimes {}_{\Phi_0(\rrt(\bar{x}^{i+1}))}\MB^*_{\Phi_0(\llt(\bar{x}^{i}))}[d])\otimes \MA[1]^{\bar{x}^n}\rightarrow {}_{\Phi_0(\llt(\bar{x}^1))}\MB_{\Phi_0(\rrt(\bar{x}^n))}[1]
\end{equation}
defined by 
\begin{small}
\begin{equation}
    \begin{split}
        \phi^{\doubar{x}}_{\MB}&(\bar{sa}^1,tf^1,\dots,\bar{sa}^{n-1},tf^{n-1},\bar{sa}^n)\\&=(-1)^{\delta}((f^{n-1}\circ s_d)\otimes \dots\otimes(f^1\circ s_d) \otimes\id)(s_{d+1}\Phi^{\doubar{x}^{-1}}(\bar{sa}^n,\dots,\bar{sa}^1))
    \end{split}
\end{equation}
\end{small}
for $\bar{sa}^i\in \MA[1]^{\otimes \bar{x}^i}$, $tf^i\in {}_{\Phi_0(\rrt(\bar{x}^{i+1}))}\MB^*_{\Phi_0(\llt(\bar{x}^{i}))}[d]$ with 
\begin{equation}
    \delta=\sum\limits_{i=1}^{n-1}|tf^i|\sum\limits_{j=i+1}^n|\bar{sa}^i|+\sum\limits_{1\leq i<j\leq n}|\bar{sa}^i||\bar{sa}^j|\sum\limits_{1\leq i<j\leq n-1}|tf^i||tf^j|+d(n-1)+d\sum\limits_{i=1}^{n-1}|tf^i|
\end{equation}
and for each $n\in\NN^*$, $\doubar{x}=(\bar{x}^1,\dots,\bar{x}^n)\in\bar{\MO}_{\MA}^n$.
\end{definition}

\begin{definition}
\label{def:morphisms-pCY-case}
 Consider two $d$-pre-Calabi-Yau categories $(\MA,s_{d+1}M_{\MA})$ and $(\MB,s_{d+1}M_{\MB})$ as well as a $d$-pre-Calabi-Yau morphism $(\Phi_0,s_{d+1}\Phi) :(\MA,s_{d+1}M_{\MA})\rightarrow(\MB,s_{d+1}M_{\MB})$.
    We define $s\varphi_{\MA}^{x,y}(sa)=sa$, $s\varphi_{\MB}^{x,y}(sa)=s_{d+1}\Phi^{x,y}(sa)$ for $sa\in {}_{x}\MA_{y}[1]$ and $s\varphi_{\MA}^{x,y}(tf)=tf\circ s_{d+1}\Phi^{y,x}[1-d]$, 
    $s\varphi_{\MB}^{x,y}(tf)=tf$, for $tf\in {}_{\Phi_0(x)}\MB^*_{\Phi_0(y)}[d]$, $x,y\in\MO_{\MA}$, as well as  
\begin{equation}
    \pi_{\MA^*[d]}\circ 
 s\varphi_{\MA}^{\doubar{x}}=s_{d+1}\phi^{\doubar{x}}_{\MA^*},  \pi_{\MA[1]}\circ 
 s\varphi_{\MA}^{\doubar{x}}=0
\end{equation}
and 
\begin{equation}
    \pi_{\MB[1]}\circ s\varphi_{\MB}^{\doubar{x}}= s_{d+1}\phi^{\doubar{x}}_{\MB},  \pi_{\MB^*[d]}\circ s\varphi_{\MB}^{\doubar{x}}=0
\end{equation}
where $\doubar{x}=(\bar{x}^1,\dots,\bar{x}^n)\in\bar{\MO}^n_{\MA}$ for $n>1$ or $\llg(\bar{x}^1)>2$ if $n=1$.
\end{definition}

\begin{proposition}
\label{prop:morphisms-general-case}
The maps $(\Phi_0,s\varphi_{\MA})$ and $(\Phi_0,s\varphi_{\MB})$ are morphisms of $A_{\infty}$-categories. 
\end{proposition}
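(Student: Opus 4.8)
The plan is to establish the $A_{\infty}$-morphism identity \eqref{eq:stasheff-morphisms} for $\varphi_{\MA}$ and for $\varphi_{\MB}$ separately, reusing the strategy of the proof of Proposition \ref{prop:morphisms-strict-case} but now accounting for the higher components $\phi^{\doubar{x}}_{\MA^*}$ and $\phi^{\doubar{x}}_{\MB}$. Since the source $(\MA\oplus\MB^*[d-1],sm_{\MA\oplus\MB^*})$ is an $A_{\infty}$-category by Proposition \ref{prop:struct-general-case} and both targets are $A_{\infty}$-categories by Proposition \ref{prop:equiv-pCY-Ainf}, only the compatibility equations \eqref{eq:stasheff-morphisms} remain to be checked. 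I would project each such identity onto the two summands of its target, namely $\MA[1]$ and $\MA^*[d]$ in the case of $\varphi_{\MA}$, and $\MB[1]$ and $\MB^*[d]$ in the case of $\varphi_{\MB}$, and expand every term as a sum of evaluations of filled diagrams, so that the whole verification takes place inside the diagrammatic calculus of Section~\ref{pcy-cat}.

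I would treat $\varphi_{\MB}$ first, as it is the more transparent case. Its only nonvanishing higher components $\phi^{\doubar{x}}_{\MB}$ land in $\MB[1]$ and repackage the components $\Phi^{\doubar{x}^{-1}}$ of the pre-Calabi-Yau morphism, while on $\MB^*$ the map $\varphi_{\MB}$ is the identity and has no higher part. Projecting \eqref{eq:stasheff-morphisms} onto $\MB^*[d]$ therefore collapses to the defining diagrams \eqref{eq:maps-to-B} of the $\MB^*$-part of $sm_{\MA\oplus\MB^*}$ and is immediate. Projecting onto $\MB[1]$, I expect the left-hand side of \eqref{eq:stasheff-morphisms} — where $\varphi_{\MB}$ is applied after $sm_{\MA\oplus\MB^*}$ — to reproduce exactly the filled diagrams occurring in the multinecklace composition $s_{d+1}\Phi\underset{\multinec}{\circ}M_{\MA}$, and the right-hand side — where the target multiplication $sm_{\MB\oplus\MB^*}$ is fed by tensor factors $\varphi_{\MB}$ — to reproduce those of the precomposition $M_{\MB}\underset{\pre}{\circ}s_{d+1}\Phi$. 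The equality of the two then follows directly from the defining equation \eqref{eq:morphism-pCY} of the pre-Calabi-Yau morphism $\Phi$, using the injectivity of $\mathcalboondox{j}^{\Phi}$ to descend the diagrammatic identity to $C(\MA\oplus\MB^*[d-1])$.

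The morphism $\varphi_{\MA}$ is the delicate case, since its higher components $\phi^{\doubar{x}}_{\MA^*}$ land in $\MA^*[d]$ and are defined by dualizing $\Phi$ through the natural form $\Gamma^{\MA}$ together with the mixed form $\Gamma^{\Phi}$. Both projections of \eqref{eq:stasheff-morphisms} now involve these higher components: on the $\MA[1]$-slot the identity reads, after expansion, that $sm_{\MA\oplus\MB^*\to\MA}$ (which by \eqref{eq:maps-to-A} is $M_{\MA}$ carrying $\Phi$-discs on its outputs) coincides with $m_{\MA\oplus\MA^*\to\MA}$ fed on its $\MA^*$-slots by the blocks $\phi^{\doubar{x}}_{\MA^*}$, while the $\MA^*[d]$-slot gives the genuinely new relation. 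The plan is to pair both sides against an arbitrary element of $\MA[1]$ via $\Gamma^{\MA}$, use the cyclicity of $sm_{\MA\oplus\MA^*}$ — valid because $M_{\MA}$ is a pre-Calabi-Yau structure — to move the paired argument into an input slot, and thereby convert the $\MA^*$-component identity into the already-treated $\MA$-component identity, exactly as \eqref{eq:cyclicity-mA-to-mB} does in the strict case. Tracing the components $\phi^{\doubar{x}}_{\MA^*}$ through this pairing turns the defining duality of Definition~\ref{def:morphisms-pCY-case} into the same pre-Calabi-Yau morphism equation \eqref{eq:morphism-pCY} that governed $\varphi_{\MB}$.

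The main obstacle is precisely this $\MA^*[d]$-projection: it requires combining three ingredients — the $\Gamma^{\MA}$-duality built into $\phi^{\doubar{x}}_{\MA^*}$, the cyclicity of the target structure $sm_{\MA\oplus\MA^*}$, and the pre-Calabi-Yau morphism equation \eqref{eq:morphism-pCY} — while keeping track of the Koszul signs that accumulate when arguments are transported across $\Gamma^{\MA}$ and $\Gamma^{\Phi}$ and when the tuples $\doubar{x}$ are reversed to $\doubar{x}^{-1}$. I expect the bulk of the work to be this sign bookkeeping, the structural identification of the diagrams being forced by \eqref{eq:morphism-pCY} once the pairing has been set up.
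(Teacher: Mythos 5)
Your overall skeleton coincides with the paper's: project \eqref{eq:stasheff-morphisms} onto the two summands of the target, expand all terms into evaluations of filled diagrams, and reduce to the pre-Calabi-Yau morphism equation \eqref{eq:morphism-pCY} (the paper does this for $\varphi_{\MA}$, declaring the $\MA[1]$-projection immediate and treating the $\MA^*[d]$-projection diagrammatically, with ``the case of $\varphi_{\MB}$ is similar''). However, there is a genuine gap in your treatment of $\varphi_{\MB}$. You claim that the left-hand side of \eqref{eq:stasheff-morphisms}, projected onto $\MB[1]$, reproduces \emph{exactly} the diagrams of $s_{d+1}\Phi\underset{\multinec}{\circ}s_{d+1}M_{\MA}$, and the right-hand side exactly those of $s_{d+1}M_{\MB}\underset{\pre}{\circ}s_{d+1}\Phi$, so that \eqref{eq:morphism-pCY} applies in one shot. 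This is false: the higher components $\phi_{\MB}^{\doubar{y}}$ of $\varphi_{\MB}$ have $\MB^*[d]$-inputs, and on the left-hand side these inputs can be fed by the $\MB^*$-component $sm_{\MA\oplus\MB^*\to\MB^*}$ of the inner multiplication, which by Definition \ref{def:structure-pCY-case} is built from $M_{\MB}$, not $M_{\MA}$. These cross terms are $M_{\MB}$-based diagrams sitting on the \emph{left} of \eqref{eq:stasheff-morphisms}; concretely, they are the precomposition configurations whose distinguished output lies on a $\Phi$-disc, whereas the right-hand side only produces the configurations whose distinguished output lies on the $M_{\MB}$-disc. So neither side of \eqref{eq:stasheff-morphisms} separately equals one side of \eqref{eq:morphism-pCY}, and a ``direct'' application of that equation does not balance.

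What is actually needed is a signed regrouping of terms \emph{across} the identity, which is precisely what the paper records as $\sum\mathcal{E}(\mathcalboondox{D_1})-\sum\mathcal{E}(\mathcalboondox{D_2})=-\sum\mathcal{E}(\mathcalboondox{D_3})$ in the case of $\varphi_{\MA}$: the two $M_{\MA}$-based families ($\mathcalboondox{D_1}$ coming from the left of \eqref{eq:stasheff-morphisms}, $\mathcalboondox{D_3}$ from the right) must first be combined, with signs coming from the degree-one discs changing position, before \eqref{eq:morphism-pCY} can be invoked against the $M_{\MB}$-based family $\mathcalboondox{D_2}$. Your plan for $\varphi_{\MA}$ (pair with $\Gamma^{\MA}$, unfold the defining duality of $\phi_{\MA^*}$, use cyclicity of $sm_{\MA\oplus\MA^*}$) is essentially the pairing-language version of the paper's diagrammatic unfolding, since $sm_{\MA\oplus\MA^*\to\MA^*}$ is anyway defined through \eqref{eq:cyclic-coder-2}; but it meets the same phenomenon, because after cyclation the $\MA^*$-argument promoted to pairing partner may itself be a $\phi_{\MA^*}$-block or a $\Phi$-twisted functional, so $M_{\MA}$-type and $M_{\MB}$-type contributions again land on both sides. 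To repair the proposal, replace the claimed term-by-term correspondence by this signed redistribution of the multinecklace and precomposition configurations; the reduction to \eqref{eq:morphism-pCY} then goes through as in the paper.
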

\begin{proof}
The part of the identity \eqref{eq:stasheff-morphisms} for $\varphi_{\MA}$ that takes place in 
\[
\Homgr_{\kk}(\bigotimes\limits_{i=1}^{n-1}(\MA[1]^{\bar{x}^i}\otimes {}_{\Phi_0(\rrt(\bar{x}^{i+1}))}\MB^*_{\Phi_0(\llt(\bar{x}^{i}))}[d])\otimes \MA[1]^{\bar{x}^n},{}_{\llt(\bar{x}^1)}\MA_{\rrt(\bar{x}^n)}[1])
\] is clearly satisfied.
Moreover, by definition of the $A_{\infty}$-structure $sm_{\MA\oplus\MB^*}$, the part of the identity \eqref{eq:stasheff-morphisms} that takes place in \[
\Homgr_{\kk}(\bigotimes\limits_{i=1}^{n-1}(\MA[1]^{\bar{x}^i}\otimes {}_{\Phi_0(\rrt(\bar{x}^{i+1}))}\MB^*_{\Phi_0(\llt(\bar{x}^{i}))}[d])\otimes \MA[1]^{\bar{x}^n},{}_{\llt(\bar{x}^1)}\MA^*_{\rrt(\bar{x}^n)}[d])
\] is tantamount to
$\sum\mathcal{E}(\mathcalboondox{D_1})-\sum\mathcal{E}(\mathcalboondox{D_3})=-\sum\mathcal{E}(\mathcalboondox{D_2})$ where the sums are over all the filled diagrams $\mathcalboondox{D_1},\mathcalboondox{D_2}$ and $\mathcalboondox{D_3}$ of type $\doubar{x}$ of the form

\begin{minipage}{21cm}
\begin{tikzpicture}[line cap=round,line join=round,x=1.0cm,y=1.0cm]
\clip(-2,-2) rectangle (3,3);
  \draw [line width=0.5pt] (0.,0.) circle (0.5cm);
     \shadedraw[rotate=30,shift={(0.5cm,0cm)}] \doublefleche;
     \shadedraw[rotate=150,shift={(0.5cm,0cm)}] \doublefleche;
     \draw[rotate=90][->, >= stealth, >= stealth](0.5,0)--(0.9,0);
     \draw [line width=0.5pt] (1.12,-0.65) circle (0.3cm);
     \shadedraw[shift={(0.86cm,-0.5cm)},rotate=150] \doubleflechescindeeleft;
     \shadedraw[shift={(0.86cm,-0.5cm)},rotate=150] \doubleflechescindeeright;
     \shadedraw[shift={(0.86cm,-0.5cm)},rotate=150] \fleche;
     \draw [rotate around ={60:(1.12,-0.65)}] [->, >= stealth, >= stealth] (1.43,-0.65)--(1.73,-0.65);
     \draw [rotate around ={-120:(1.12,-0.65)}] [->, >= stealth, >= stealth] (1.43,-0.65)--(1.73,-0.65);
     \draw [line width=0.5pt] (-1.12,-0.65) circle (0.3cm);
     \shadedraw[shift={(-0.86cm,-0.5cm)},rotate=30] \doubleflechescindeeleft;
      \shadedraw[shift={(-0.86cm,-0.5cm)},rotate=30] \doubleflechescindeeright;
       \shadedraw[shift={(-0.86cm,-0.5cm)},rotate=30] \fleche;
      \draw [rotate around ={-60:(-1.12,-0.65)}] [->, >= stealth, >= stealth] (-1.43,-0.65)--(-1.73,-0.65);
     \draw [rotate around ={120:(-1.12,-0.65)}] [->, >= stealth, >= stealth] (-1.43,-0.65)--(-1.73,-0.65);
     \draw [line width=0.5pt] (0.,1.2) circle (0.3cm);
     \draw[rotate around={-90:(0,1.2)},shift={(0.3,1.2)}]\doubleflechescindeeleft;
     \draw[rotate around={-90:(0,1.2)},shift={(0.3,1.2)}]\doubleflechescindeeright;
     \draw[rotate around={30:(0,1.2)},shift={(0.3,1.2)}]\doubleflechescindeeleft;
     \draw[rotate around={30:(0,1.2)},shift={(0.3,1.2)}]\doubleflechescindeeright;
     \draw [line width=1.1pt,rotate around={30:(0.,1.2)}][<-, >= stealth, >=stealth](0.3,1.2)--(0.7,1.2);
     \draw [rotate around={90:(0.,1.2)}][->, >= stealth, >=stealth](0.3,1.2)--(0.6,1.2);
     \draw [rotate around={-30:(0.,1.2)}][->, >= stealth, >=stealth](0.3,1.2)--(0.6,1.2);
     \draw [rotate around={-150:(0.,1.2)}][->, >= stealth, >=stealth](0.3,1.2)--(0.6,1.2);
\begin{scriptsize}
\draw [fill=black] (0,-0.6) circle (0.3pt);
\draw [fill=black] (0.2,-0.55) circle (0.3pt);
\draw [fill=black] (-0.2,-0.55) circle (0.3pt);
\draw [fill=black] (1.45,-0.85) circle (0.3pt);
\draw [fill=black] (1.5,-0.67) circle (0.3pt);
\draw [fill=black] (1.33,-0.97) circle (0.3pt);
\draw [fill=black] (-1.45,-0.85) circle (0.3pt);
\draw [fill=black] (-1.5,-0.67) circle (0.3pt);
\draw [fill=black] (-1.33,-0.97) circle (0.3pt);
\draw [fill=black] (-0.15,1.56) circle (0.3pt);
\draw [fill=black] (-0.3,1.42) circle (0.3pt);
\draw [fill=black] (-0.4,1.25) circle (0.3pt);
\end{scriptsize}
\draw(0,0.25)node[anchor=north]{$M_{\MA}$};
\draw(1.12,-0.4)node[anchor=north]{$\Phi$};
\draw(-1.12,-0.4)node[anchor=north]{$\Phi$};
\draw(0,1.45)node[anchor=north]{$\Phi$};
\draw(2,0.25)node[anchor=north]{,};
\end{tikzpicture}
\begin{tikzpicture}[line cap=round,line join=round,x=1.0cm,y=1.0cm]
\clip(-2,-2) rectangle (3,3);
 \draw [line width=0.5pt] (0.,0.) circle (0.5cm);
     \shadedraw[rotate around={30:(0,1.2)},shift={(0.3cm,1.2cm)}] \doublefleche;
     \shadedraw[rotate=150,shift={(0.5cm,0cm)}] \doublefleche;
     \draw[rotate=90][->, >= stealth, >= stealth](0.5,0)--(0.9,0);
     \draw [line width=0.5pt] (1.12,-0.65) circle (0.3cm);
     \shadedraw[shift={(0.86cm,-0.5cm)},rotate=150] \doubleflechescindeeleft;
     \shadedraw[shift={(0.86cm,-0.5cm)},rotate=150] \doubleflechescindeeright;
     \shadedraw[shift={(0.86cm,-0.5cm)},rotate=150] \fleche;
     \draw [rotate around ={60:(1.12,-0.65)}] [->, >= stealth, >= stealth] (1.43,-0.65)--(1.73,-0.65);
     \draw [rotate around ={-120:(1.12,-0.65)}] [->, >= stealth, >= stealth] (1.43,-0.65)--(1.73,-0.65);
     \draw [line width=0.5pt] (-1.12,-0.65) circle (0.3cm);
     \shadedraw[shift={(-0.86cm,-0.5cm)},rotate=30] \doubleflechescindeeleft;
      \shadedraw[shift={(-0.86cm,-0.5cm)},rotate=30] \doubleflechescindeeright;
       \shadedraw[shift={(-0.86cm,-0.5cm)},rotate=30] \fleche;
      \draw [rotate around ={-60:(-1.12,-0.65)}] [->, >= stealth, >= stealth] (-1.43,-0.65)--(-1.73,-0.65);
     \draw [rotate around ={120:(-1.12,-0.65)}] [->, >= stealth, >= stealth] (-1.43,-0.65)--(-1.73,-0.65);
     \draw [line width=0.5pt] (0.,1.2) circle (0.3cm);
     \draw[rotate around={-90:(0,1.2)},shift={(0.3,1.2)}]\doubleflechescindeeleft;
     \draw[rotate around={-90:(0,1.2)},shift={(0.3,1.2)}]\doubleflechescindeeright;
     \draw[rotate around={30:(0,0)},shift={(0.5,0)}]\doubleflechescindeeleft;
     \draw[rotate around={30:(0,0)},shift={(0.5,0)}]\doubleflechescindeeright;
     \draw [line width=1.1pt,rotate around={30:(0.,0)}][<-, >= stealth, >=stealth](0.5,0)--(0.9,0);
     \draw [rotate around={90:(0.,1.2)}][->, >= stealth, >=stealth](0.3,1.2)--(0.6,1.2);
     \draw [rotate around={-30:(0.,1.2)}][->, >= stealth, >=stealth](0.3,1.2)--(0.6,1.2);
     \draw [rotate around={-150:(0.,1.2)}][->, >= stealth, >=stealth](0.3,1.2)--(0.6,1.2);
\begin{scriptsize}
\draw [fill=black] (0,-0.6) circle (0.3pt);
\draw [fill=black] (0.2,-0.55) circle (0.3pt);
\draw [fill=black] (-0.2,-0.55) circle (0.3pt);
\draw [fill=black] (1.45,-0.85) circle (0.3pt);
\draw [fill=black] (1.5,-0.67) circle (0.3pt);
\draw [fill=black] (1.33,-0.97) circle (0.3pt);
\draw [fill=black] (-1.45,-0.85) circle (0.3pt);
\draw [fill=black] (-1.5,-0.67) circle (0.3pt);
\draw [fill=black] (-1.33,-0.97) circle (0.3pt);
\draw [fill=black] (-0.15,1.56) circle (0.3pt);
\draw [fill=black] (-0.3,1.42) circle (0.3pt);
\draw [fill=black] (-0.4,1.25) circle (0.3pt);
\end{scriptsize}
\draw(0,0.25)node[anchor=north]{$M_{\MA}$};
\draw(1.12,-0.4)node[anchor=north]{$\Phi$};
\draw(-1.12,-0.4)node[anchor=north]{$\Phi$};
\draw(0,1.45)node[anchor=north]{$\Phi$};
\draw(2.5,0.25)node[anchor=north]{and};
\end{tikzpicture}
\begin{tikzpicture}[line cap=round,line join=round,x=1.0cm,y=1.0cm]
\clip(-2.3,-2) rectangle (3,3);
 \draw [line width=0.5pt] (0.,0.) circle (0.5cm);
     \draw [rotate=90] [->, >= stealth, >= stealth] (0.5,0)--(0.9,0);
     \draw [rotate=-30] [->, >= stealth, >= stealth] (0.5,0)--(0.9,0);
     \draw [rotate=-150] [->, >= stealth, >= stealth] (0.5,0)--(0.9,0);
     \draw [rotate=0] [<-, >= stealth, >= stealth] (0.5,0)--(0.9,0);
     \draw [line width=0.5pt] (1.15,0.) circle (0.25cm);
     \draw[rotate around={0:(1.15,0)}] [->, >= stealth, >= stealth] (1.4,0)--(1.8,0);
     \draw[rotate around={90:(1.15,0)},shift={(1.4,0)}] \doublefleche;
     \draw [rotate=60] [<-, >= stealth, >= stealth] (0.5,0)--(0.9,0);
       \draw [rotate=180] [<-, >= stealth, >= stealth] (0.5,0)--(0.9,0);
      \draw [line width=0.5pt] (-1.15,0.) circle (0.25cm);
     \draw[rotate around={0:(-1.15,0)}] [->, >= stealth, >= stealth] (-1.4,0)--(-1.8,0);
     \draw[rotate around={-90:(-1.15,0)},shift={(-0.9,0)}]\doublefleche;
        \draw [rotate=120] [<-, >= stealth, >= stealth] (0.5,0)--(0.9,0);
        
        \draw [line width=0.5pt] (0.575,1) circle (0.25cm);
        \draw[rotate around={120:(0.575,1)}] [->, >= stealth, >= stealth] (0.825,1)--(1.225,1);
        \draw[rotate around={0:(0.575,1)}] [->, >= stealth, >= stealth] (0.825,1)--(1.225,1);
        \draw[rotate around={60:(0.575,1)},shift={(0.825,1)}] \doublefleche;
        \draw[rotate around={-60:(0.575,1)},shift={(0.825,1)}] \doublefleche;
        \draw [line width=0.5pt] (-0.575,1) circle (0.25cm);
      \draw[rotate around={300:(-0.575,1)}] [->, >= stealth, >= stealth] (-0.825,1)--(-1.225,1);
      \draw[rotate around={30:(-0.575,1)},shift={(-0.325,1)}] \doublefleche;
      \draw[rotate around={150:(0,0)}] [<-, >= stealth, >= stealth] (0.5,0)--(1.3,0);
    \draw [line width=0.5pt] (-1.34,0.775) circle (0.25cm);
    \draw[line width=1.1pt,rotate around={150:(-1.34,0.775)}] [<-, >= stealth, >= stealth] (-1.09,0.775)--(-0.69,0.775);
    \draw[rotate around={90:(-1.34,0.775)}] [->, >= stealth, >= stealth] (-1.09,0.775)--(-0.71,0.775);
    \draw [rotate around={-150:(-1.34,0.775)}] [->, >= stealth, >= stealth] (-1.09,0.775)--(-0.71,0.775);
    \draw[rotate around={30:(-1.34,0.775)},shift={(-1.09,0.775)}]\doublefleche;
    \draw[rotate around={150:(-1.34,0.775)},shift={(-1.09,0.775)}]\doubleflechescindeeleft;
    \draw[rotate around={150:(-1.34,0.775)},shift={(-1.09,0.775)}]\doubleflechescindeeright;
\begin{scriptsize}
\draw [fill=black] (0,-0.6) circle (0.3pt);
\draw [fill=black] (0.2,-0.55) circle (0.3pt);
\draw [fill=black] (-0.2,-0.55) circle (0.3pt);
\draw [rotate=120][fill=black] (0,-0.6) circle (0.3pt);
\draw [rotate=120][fill=black] (0.15,-0.55) circle (0.3pt);
\draw [rotate=120][fill=black] (-0.15,-0.55) circle (0.3pt);
\draw [fill=black] (-0.38,0.47) circle (0.3pt);
\draw [fill=black] (-0.44,0.43) circle (0.3pt);
\draw [fill=black] (-0.47,0.37) circle (0.3pt);
\draw [fill=black] (-0.56,0.22) circle (0.3pt);
\draw [fill=black] (-0.6,0.165) circle (0.3pt);
\draw [fill=black] (-0.6,0.1) circle (0.3pt);
\draw [fill=black] (0,-0.6) circle (0.3pt);
\draw [fill=black] (0.2,-0.55) circle (0.3pt);
\draw [fill=black] (-0.2,-0.55) circle (0.3pt);
\draw [rotate=120][fill=black] (0,-0.6) circle (0.3pt);
\draw [rotate=120][fill=black] (0.15,-0.55) circle (0.3pt);
\draw [rotate=120][fill=black] (-0.15,-0.55) circle (0.3pt);
\draw [fill=black] (-0.38,0.47) circle (0.3pt);
\draw [fill=black] (-0.44,0.43) circle (0.3pt);
\draw [fill=black] (-0.47,0.37) circle (0.3pt);
\draw [fill=black] (-0.56,0.22) circle (0.3pt);
\draw [fill=black] (-0.6,0.165) circle (0.3pt);
\draw [fill=black] (-0.6,0.1) circle (0.3pt);
\draw [rotate around={180:(-1.15,0)}][fill=black] (-1.15,-0.35) circle (0.3pt);
\draw [rotate around={180:(-1.15,0)}][fill=black] (-1,-0.3) circle (0.3pt);
\draw [rotate around={180:(-1.15,0)}][fill=black] (-1.3,-0.3) circle (0.3pt);
\draw (1.15,-0.35) circle (0.3pt);
\draw (1,-0.3) circle (0.3pt);
\draw (1.3,-0.3) circle (0.3pt);
\draw [fill=black] (-1.44,0.47) circle (0.3pt);
\draw [fill=black] (-1.32,0.45) circle (0.3pt);
\draw [fill=black] (-1.18,0.47) circle (0.3pt);
\draw [fill=black] (-0.85,0.85) circle (0.3pt);
\draw [fill=black] (-0.88,1.05) circle (0.3pt);
\draw [fill=black] (-0.7,0.72) circle (0.3pt);
\draw [rotate=-60][fill=black] (-0.85,0.85) circle (0.3pt);
\draw [rotate=-60] [fill=black] (-0.6,0.7) circle (0.3pt);
\draw [rotate=-60] [fill=black] (-0.75,0.75) circle (0.3pt);
\end{scriptsize}
\draw(0,0.25)node[anchor=north]{$M_{\MB}$};
\draw(-0.575,1.25)node[anchor=north]{$\Phi$};
\draw(0.575,1.25)node[anchor=north]{$\Phi$};
\draw(-1.34,0.775+0.25)node[anchor=north]{$\Phi$};
\draw(-1.15,0.25)node[anchor=north]{$\Phi$};
\draw(1.15,0.25)node[anchor=north]{$\Phi$};
\end{tikzpicture}
\end{minipage}

\noindent
respectively. 
Using that $(\Phi_0,s_{d+1}\Phi)$ is a $d$-pre-Calabi-Yau morphism, we thus have that $s\varphi_{\MA}$ is an $A_{\infty}$-morphism since $\sum\mathcal{E}(\mathcalboondox{D_3})=\sum\mathcal{E}(\mathcalboondox{D_1})+\sum\mathcal{E}(\mathcalboondox{D_2})$
The case of $s\varphi_{\MB}$ is similar.
\end{proof}

We now use the results of Propositions \ref{prop:struct-general-case} and \ref{prop:morphisms-general-case} to construct a functor between the category $\pCY$ and the partial category $\Ahat$.

\begin{corollary}
\label{corollary:functors}
There is a functor $\mathcal{P} : \pCY\rightarrow \Ahat$ sending a $d$-pre-Calabi-Yau category $(\MA,s_{d+1}M_{\MA})$ to the $A_{\infty}$-category $(\MA\oplus\MA^*[d-1],sm_{\MA\oplus\MA^*})$ defined in Proposition \ref{prop:equiv-pCY-Ainf} and a $d$-pre-Calabi-Yau morphism $(\Phi_0,s_{d+1}\Phi) : (\MA,s_{d+1}M_{\MA})\rightarrow (\MB,s_{d+1}M_{\MB})$ to the $A_{\infty}$-structure $sm_{\MA\oplus\MB^*}$ defined in Definition \ref{def:structure-pCY-case} together with the $A_{\infty}$-morphisms 
 \begin{equation}
\begin{tikzcd}
&(\MA \oplus \MB^*[d-1],sm_{\MA\oplus\MB^*}) \arrow[swap,"s\varphi_{\MA}"]{dl} \arrow[swap,"s\varphi_{\MB}"]{dr}\\
(\MA \oplus \MA^*[d-1],sm_{\MA\oplus\MA^*})&& (\MB \oplus \MB^*[d-1], sm_{\MB\oplus\MB^*})
\end{tikzcd}
\end{equation}
defined in Definition \ref{def:morphisms-pCY-case}.
Moreover, this functor restricts to a functor $\NpCY\rightarrow \cyc$.
\end{corollary}
\begin{proof}
    We only have to show that $\mathcal{P}$ is compatible with partial products. 
    Consider $d$-pre-Calabi-Yau categories $(\MA,s_{d+1}M_{\MA})$, $(\MB,s_{d+1}M_{\MB})$ and $(\mathcal{C},s_{d+1}M_{\mathcal{C}})$ as well as $d$-pre-Calabi-Yau morphisms 
    \begin{equation}
        (F_0,s_{d+1}\mathbf{F}) :(\MA,s_{d+1}M_{\MA})\rightarrow(\MB,s_{d+1}M_{\MB}) \text{ and }(G_0,s_{d+1}\mathbf{G}) : (\MB,s_{d+1}M_{\MB}) \rightarrow (\mathcal{C},s_{d+1}M_{\mathcal{C}}).
    \end{equation} 
    
    We will denote $\mathcal{P}(F_0,s_{d+1}\mathbf{F})=(sm_{\MA\oplus\MB^*},s\varphi_{\MA},s\varphi_{\MB})$, $\mathcal{P}(G_0,s_{d+1}\mathbf{G})=(sm_{\MB\oplus\mathcal{C}^*},s\psi_{\MB},s\psi_{\mathcal{C}})$ and $\mathcal{P}(G_0\circ F_0,s_{d+1}\mathbf{G}\upperset{}{\circ}s_{d+1}\mathbf{F})=(sm_{\MA\oplus\mathcal{C}^*},s\chi_{\MA},s\chi_{\mathcal{C}})$. To simplify, we set $H_0=G_0\circ F_0$.
    
    We define a map $s\xi_{\MA} : \MA[1]\oplus\mathcal{C}^*[d]\rightarrow \MA[1]\oplus\mathcal{B}^*[d]$ of degree $0$  given by $s\xi_{\MA}^{x,y}(sa)=sa$, for $sa\in {}_{x}\MA_{y}[1]$ and $s\xi_{\MA}^{x,y}(tf)=tf\circ s_{d+1}G[-d-1]$ for $tf\in{}_{(G_0\circ F_0)(x)}\mathcal{C}^*_{(G_0\circ F_0)(y)}[d]$ and by
$\pi_{\MA[1]}\circ s\xi_{\MA}^{\doubar{x}}=0$ and 
\begin{equation}
    \begin{split}
        &\big(\pi_{\MB^*[d]}\circ 
 s\xi_{\MA}^{\doubar{x}}(\bar{sa}^1,tf^1,\bar{sa}^2,\dots,\bar{sa}^n)\big)(s_{-d}b)
        \\&=(-1)^{\epsilon}\big((f^{n-1}\circ s_d)\otimes\dots\otimes(f^{1}\circ s_d)\big)\big(H_1^{\bar{x}^n\sqcup\bar{x}^1,\bar{x}^{n-1},\bar{x}^{n-2},\dots,\bar{x}^2}(\bar{sa}^n\otimes sb\otimes \bar{sa}^1,\bar{sa}^{n-1},\dots,\bar{sa}^2)\big)
    \end{split}
\end{equation}
for every $\doubar{x}=(\Bar{x}^1,\dots,\Bar{x}^n)\in\bar{\MO}_{\MA}^n$ and $\doubar{y}=(\Bar{x}^n\sqcup\Bar{x}^1,\dots,\Bar{x}^{n-1})$
where 
\begin{small}
    \begin{equation}
        \begin{split}
             \epsilon&=\sum\limits_{i=1}^{n-1}|tf^i|\sum\limits_{j=i+1}^n|\bar{sa}^i|+\hskip-2mm\sum\limits_{1\leq i<j\leq n}\hskip-2mm|\bar{sa}^i||\bar{sa}^j|+\hskip-2mm\sum\limits_{1\leq i<j\leq n-1}\hskip-2mm|tf^i||tf^j|+d(n-1)\\&+(|\bar{sa}^1|+|sb|)\sum\limits_{i=1}^{n-1}|\bar{sa}^i|+|\bar{sa}^1||sb|+d\sum\limits_{i=1}^{n-1}|tf^i|+(d+1)\sum\limits_{i=1}^n |\bar{sa}^i|
        \end{split}
    \end{equation}
\end{small}
and $s_{d+1}H_1^{\doubar{x}}=\sum\mathcal{E}(\mathcalboondox{D_1})$ where the sum is over all the filled diagrams $\mathcalboondox{D_1}$ of type $\doubar{x}$ and of the form

\begin{tikzpicture}[line cap=round,line join=round,x=1.0cm,y=1.0cm]
\clip(-7,-3.2) rectangle (11.420226760467184,1.7);
   \draw [line width=0.5pt] (0.,0.) circle (0.5cm);
     \draw [rotate=30] [->, >= stealth] (0.5,0)--(0.9,0);
     \draw [rotate=-90] [->, >= stealth] (0.5,0)--(0.9,0);
     \draw [rotate=150] [->, >= stealth] (0.5,0)--(0.9,0);
     \draw [rotate=0] [<-, >= stealth] (0.5,0)--(0.9,0);
     \draw [line width=0.5pt] (1.2,0.) circle (0.3cm);
     \draw [->, >= stealth] (1.5,0)--(1.9,0);
     \shadedraw[rotate around={90:(1.2,0)},shift={(1.5cm,0cm)}] \doublefleche;
     \shadedraw[rotate around={-90:(1.2,0)},shift={(1.5cm,0cm)}] \doublefleche;
      \draw [line width=0.5pt] (2.4,0.) circle (0.5cm);
      \draw [rotate around={-90:(2.4,0)}] [<-, >= stealth] (2.9,0)--(3.3,0);
      \draw [rotate around={-45:(2.4,0)}] [->, >= stealth] (2.9,0)--(3.3,0);
    \draw [rotate around={135:(2.4,0)}] [->, >= stealth] (2.9,0)--(3.3,0);
     \draw [line width=0.5pt] (2.4,-1.2) circle (0.3cm);
     \shadedraw[rotate around={-90:(2.4,-1.2)},shift={(2.7cm,-1.2cm)}] \doublefleche;
     \draw [rotate=-60] [<-, >= stealth] (0.5,0)--(0.9,0);
      \draw [line width=0.5pt] (0.6,-1.03) circle (0.3cm);
     \shadedraw[rotate around={-60:(0.6,-1.03)},shift={(0.9cm,-1.03cm)}] \doublefleche;
     \draw [line width=0.5pt] (-1.2,0.) circle (0.3cm);
     \draw [<-, >= stealth] (-0.5,0)--(-0.9,0);
     \shadedraw[rotate around={180:(-1.2,0)},shift={(-0.9cm,0cm)}] \doublefleche;
     \draw [rotate=-120] [<-, >= stealth] (0.5,0)--(0.9,0);
    \draw [line width=0.5pt] (-0.6,-1.03) circle (0.3cm);
     \shadedraw[rotate around={-120:(-0.6,-1.03)},shift={(-0.3cm,-1.03cm)}] \doublefleche;
     \shadedraw[rotate around={-240:(-0.6,-1.03)},shift={(-0.3cm,-1.03cm)}] \doublefleche;
     \draw[rotate around={180:(-0.6,-1.03)}][->, >= stealth] (-0.3,-1.03)--(0.1,-1.03);
     \draw[rotate around={-60:(-0.6,-1.03)}][->, >= stealth] (-0.3,-1.03)--(0.1,-1.03);
      \draw [line width=0.5pt] (-1.8,-1.03) circle (0.5cm);
      \draw [rotate around={120:(-1.8,-1.03)}] [<-, >= stealth] (-1.3,-1.03)--(-0.9,-1.03);
       \draw [rotate around={-60:(-1.8,-1.03)}] [->, >= stealth] (-1.3,-1.03)--(-0.9,-1.03);
        \draw [rotate around={180:(-1.8,-1.03)}] [->, >= stealth] (-1.3,-1.03)--(-0.9,-1.03);
     \draw [line width=0.5pt] (-2.4,0.) circle (0.3cm);
     \shadedraw[rotate around={120:(-2.4,0)},shift={(-2.1cm,0cm)}] \doublefleche;
     \draw [line width=0.5pt] (0,-2.06) circle (0.5cm);
      \draw [rotate around={-60:(0,-2.06)}] [->, >= stealth] (0.5,-2.06)--(0.9,-2.06);
    \draw [line width=1.1pt,rotate around={-135:(2.4,0)}] [<-, >= stealth,>=stealth] (2.9,0)--(3.3,0);
\begin{scriptsize}
\draw [fill=black] (0,0.6) circle (0.3pt);
\draw [fill=black] (0.2,0.55) circle (0.3pt);
\draw [fill=black] (-0.2,0.55) circle (0.3pt);
\draw [fill=black] (0.5,-0.3) circle (0.3pt);
\draw [fill=black] (0.4,-0.4) circle (0.3pt);
\draw [fill=black] (0.55,-0.18) circle (0.3pt);
\draw [fill=black] (-0.5,-0.3) circle (0.3pt);
\draw [fill=black] (-0.4,-0.4) circle (0.3pt);
\draw [fill=black] (-0.55,-0.18) circle (0.3pt);
\draw [fill=black] (1.8,-0.12) circle (0.3pt);
\draw [fill=black] (1.82,-0.2) circle (0.3pt);
\draw [fill=black] (1.88,-0.28) circle (0.3pt);
\draw [fill=black] (2.1,-0.5) circle (0.3pt);
\draw [fill=black] (2.18,-0.55) circle (0.3pt);
\draw [fill=black] (2.27,-0.57) circle (0.3pt);

\draw [fill=black] (2.8,0.4) circle (0.3pt);
\draw [fill=black] (2.95,0.2) circle (0.3pt);
\draw [fill=black] (2.6,0.55) circle (0.3pt);
\draw [fill=black] (-0.2,-1.03) circle (0.3pt);
\draw [fill=black] (-0.25,-0.9) circle (0.3pt);
\draw [fill=black] (-0.25,-1.16) circle (0.3pt);
\draw [fill=black] (0.3,-1.55) circle (0.3pt);
\draw [fill=black] (0.5,-1.7) circle (0.3pt);
\draw [fill=black] (0.6,-1.9) circle (0.3pt);
\draw [rotate around={180:(0,-2.06)}][fill=black] (0.6,-1.9) circle (0.3pt);
\draw [rotate around={180:(0,-2.06)}][fill=black] (0.5,-1.7) circle (0.3pt);
\draw [rotate around={180:(0,-2.06)}][fill=black] (0.3,-1.55) circle (0.3pt);
\draw [fill=black] (-1.3,-0.7) circle (0.3pt);
\draw [fill=black] (-1.45,-0.55) circle (0.3pt);
\draw [fill=black] (-1.67,-0.45) circle (0.3pt);
\draw [rotate around={180:(-1.8,-1.03)}][fill=black] (-1.3,-0.7) circle (0.3pt);
\draw [rotate around={180:(-1.8,-1.03)}][fill=black] (-1.45,-0.55) circle (0.3pt); circle (0.3pt);
\draw [rotate around={180:(-1.8,-1.03)}][fill=black] (-1.67,-0.45) circle (0.3pt);
\end{scriptsize}
\draw (0,0.25)node[anchor=north]{$\mathbf{G}$};
\draw (0,-1.8)node[anchor=north]{$\mathbf{G}$};
\draw (2.4,0.25)node[anchor=north]{$\mathbf{G}$};
\draw (-1.8,-0.78)node[anchor=north]{$\mathbf{G}$};
\draw (1.2,0.25)node[anchor=north]{$\mathbf{F}$};
\draw (-1.2,0.25)node[anchor=north]{$\mathbf{F}$};
\draw (2.4,-0.95)node[anchor=north]{$\mathbf{F}$};
\draw (-0.6,-0.78)node[anchor=north]{$\mathbf{F}$};
\draw (-2.4,0.25)node[anchor=north]{$\mathbf{F}$};
\draw (0.6,-0.78)node[anchor=north]{$\mathbf{F}$};
\end{tikzpicture}

\noindent if $\doubar{x}=(\bar{x}^1,\dots,\bar{x}^n)$ with $n>1$ or $n=1$ and $\llg(\bar{x}^1)>2$ where $\pi_{\MA[1]}$ (resp. $\pi_{\MB^*[d]})$ denotes the canonical projection $\MA[1]\oplus\mathcal{B}^*[d]\rightarrow \MA[1]$ (resp. $\MA[1]\oplus\mathcal{B}^*[d]\rightarrow\mathcal{B}^*[d]$).

    Similarly, we define $s\xi_{\mathcal{C}} : \MA[1]\oplus\mathcal{C}^*[d]\rightarrow \MB[1]\oplus\mathcal{C}^*[d]$ given by $s\xi_{\mathcal{C}}^{x,y}(sa)=s_{d+1}F^{x,y}(sa)$ for $sa\in {}_{x}\MA_{y}[1]$ and $s\xi_{\mathcal{C}}(tf)=tf$ for $tf\in{}_{H_0(x)}\mathcal{C}^*_{H_0(y)}[d]$ and by
\begin{equation}
    \begin{split}
        \pi_{\MB[1]}\circ s\xi_{\MA}^{\doubar{x}}=(-1)^{\delta} \big((f^{n-1}\circ s_d)\otimes\dots\otimes(f^{1}\circ s_d)\otimes \id\big)\big(H_2^{\doubar{x}^{-1}}(\bar{sa}^{n},\dots,\bar{sa}^1)\big)
    \end{split}
\end{equation}
where 
\begin{small}
    \begin{equation}
        \delta=\sum\limits_{i=1}^{n-1}|tf^i|\sum\limits_{j=i+1}^n|\bar{sa}^i|+\hskip-2mm\sum\limits_{1\leq i<j\leq n}\hskip-2mm|\bar{sa}^i||\bar{sa}^j|+\hskip-2mm\sum\limits_{1\leq i<j\leq n-1}\hskip-2mm|tf^i||tf^j|+d(n-1)+d\sum\limits_{i=1}^{n-1}|tf^i|
    \end{equation}
\end{small}
and $s_{d+1}H_2^{\doubar{x}}=\sum\mathcal{E}(\mathcalboondox{D_2})$ where the sum is over all the filled diagrams $\mathcalboondox{D_2}$ of type $\doubar{x}$ and of the form

\begin{tikzpicture}[line cap=round,line join=round,x=1.0cm,y=1.0cm]
\clip(-7,-2.5) rectangle (11.094475148786122,1.5);
 \draw [line width=0.5pt] (0.,0.) circle (0.5cm);
     \draw [rotate=30] [->, >= stealth] (0.5,0)--(0.9,0);
     \draw [rotate=-90] [->, >= stealth] (0.5,0)--(0.9,0);
     \draw [rotate=150] [->, >= stealth] (0.5,0)--(0.9,0);
     \draw [rotate=0] [<-, >= stealth] (0.5,0)--(0.9,0);
     \draw [line width=0.5pt] (1.2,0.) circle (0.3cm);
     \draw [->, >= stealth] (1.5,0)--(1.9,0);
     \shadedraw[rotate around={90:(1.2,0)},shift={(1.5cm,0cm)}] \doublefleche;
     \shadedraw[rotate around={-90:(1.2,0)},shift={(1.5cm,0cm)}] \doublefleche;
      \draw [line width=0.5pt] (2.4,0.) circle (0.5cm);
      \draw [rotate around={-90:(2.4,0)}] [<-, >= stealth] (2.9,0)--(3.3,0);
      \draw [rotate around={-45:(2.4,0)}] [->, >= stealth] (2.9,0)--(3.3,0);
    \draw [rotate around={135:(2.4,0)}] [->, >= stealth] (2.9,0)--(3.3,0);
     \draw [line width=0.5pt] (2.4,-1.2) circle (0.3cm);
     \shadedraw[rotate around={-90:(2.4,-1.2)},shift={(2.7cm,-1.2cm)}] \doublefleche;
     \draw [rotate=-60] [<-, >= stealth] (0.5,0)--(0.9,0);
      \draw [line width=0.5pt] (0.6,-1.03) circle (0.3cm);
     \shadedraw[rotate around={-60:(0.6,-1.03)},shift={(0.9cm,-1.03cm)}] \doublefleche;
     \draw [line width=0.5pt] (-1.2,0.) circle (0.3cm);
     \draw [<-, >= stealth] (-0.5,0)--(-0.9,0);
     \shadedraw[rotate around={180:(-1.2,0)},shift={(-0.9cm,0cm)}] \doublefleche;
     \draw [rotate=-120] [<-, >= stealth] (0.5,0)--(0.9,0);
    \draw [line width=0.5pt] (-0.6,-1.03) circle (0.3cm);
     \shadedraw[rotate around={-120:(-0.6,-1.03)},shift={(-0.3cm,-1.03cm)}] \doublefleche;
     \shadedraw[rotate around={-240:(-0.6,-1.03)},shift={(-0.3cm,-1.03cm)}] \doublefleche;
     \draw[rotate around={180:(-0.6,-1.03)}][->, >= stealth] (-0.3,-1.03)--(0.1,-1.03);
     \draw[line width=1.1pt,rotate around={-60:(-0.6,-1.03)}][->, >= stealth] (-0.3,-1.03)--(0.1,-1.03);
      \draw [line width=0.5pt] (-1.8,-1.03) circle (0.5cm);
      \draw [rotate around={120:(-1.8,-1.03)}] [<-, >= stealth] (-1.3,-1.03)--(-0.9,-1.03);
       \draw [rotate around={-60:(-1.8,-1.03)}] [->, >= stealth] (-1.3,-1.03)--(-0.9,-1.03);
        \draw [rotate around={180:(-1.8,-1.03)}] [->, >= stealth] (-1.3,-1.03)--(-0.9,-1.03);
     \draw [line width=0.5pt] (-2.4,0.) circle (0.3cm);
     \shadedraw[rotate around={120:(-2.4,0)},shift={(-2.1cm,0cm)}] \doublefleche;
\begin{scriptsize}
\draw [fill=black] (0,0.6) circle (0.3pt);
\draw [fill=black] (0.2,0.55) circle (0.3pt);
\draw [fill=black] (-0.2,0.55) circle (0.3pt);
\draw [fill=black] (0.5,-0.3) circle (0.3pt);
\draw [fill=black] (0.4,-0.4) circle (0.3pt);
\draw [fill=black] (0.55,-0.18) circle (0.3pt);
\draw [fill=black] (-0.5,-0.3) circle (0.3pt);
\draw [fill=black] (-0.4,-0.4) circle (0.3pt);
\draw [fill=black] (-0.55,-0.18) circle (0.3pt);
\draw [fill=black] (1.98,-0.4) circle (0.3pt);
\draw [fill=black] (1.88,-0.26) circle (0.3pt);
\draw [fill=black] (2.12,-0.5) circle (0.3pt);
\draw [fill=black] (2.8,0.4) circle (0.3pt);
\draw [fill=black] (2.95,0.2) circle (0.3pt);
\draw [fill=black] (2.6,0.55) circle (0.3pt);
\draw [fill=black] (-0.2,-1.03) circle (0.3pt);
\draw [fill=black] (-0.25,-0.9) circle (0.3pt);
\draw [fill=black] (-0.25,-1.16) circle (0.3pt);
\draw [fill=black] (-1.3,-0.7) circle (0.3pt);
\draw [fill=black] (-1.45,-0.55) circle (0.3pt);
\draw [fill=black] (-1.67,-0.45) circle (0.3pt);
\draw [rotate around={180:(-1.8,-1.03)}][fill=black] (-1.3,-0.7) circle (0.3pt);
\draw [rotate around={180:(-1.8,-1.03)}][fill=black] (-1.45,-0.55) circle (0.3pt); circle (0.3pt);
\draw [rotate around={180:(-1.8,-1.03)}][fill=black] (-1.67,-0.45) circle (0.3pt);
\end{scriptsize}
\draw (0,0.25)node[anchor=north]{$\mathbf{G}$};
\draw (2.4,0.25)node[anchor=north]{$\mathbf{G}$};
\draw (-1.8,-0.78)node[anchor=north]{$\mathbf{G}$};
\draw (1.2,0.25)node[anchor=north]{$\mathbf{F}$};
\draw (-1.2,0.25)node[anchor=north]{$\mathbf{F}$};
\draw (2.4,-0.95)node[anchor=north]{$\mathbf{F}$};
\draw (-0.6,-0.78)node[anchor=north]{$\mathbf{F}$};
\draw (-2.4,0.25)node[anchor=north]{$\mathbf{F}$};
\draw (0.6,-0.78)node[anchor=north]{$\mathbf{F}$};
\end{tikzpicture}

\noindent and $\pi_{\mathcal{C}^*[d]}\circ 
 s\xi_{\mathcal{C}}^{\doubar{x}}=0$
if $\doubar{x}=(\bar{x}^1,\dots,\bar{x}^n)$ with $n>1$ or $n=1$ and $\llg(\bar{x}^1)>2$ 
where $\pi_{\MB[1]}$ (resp. $\pi_{\mathcal{C}^*[d]}$) denotes the canonical projection $\MB[1]\oplus\mathcal{C}^*[d]\rightarrow \MB[1]$ (resp. $\MB[1]\oplus\mathcal{C}^*[d]\rightarrow \mathcal{C}^*[d]$).

    We only show that $s\xi_{\MA}$ is an $A_{\infty}$-morphism since the case of $s\xi_{\mathcal{C}^*}$ is similar. We have that 
    \[
    \pi_{\MA[1]}(s\xi_{\mathcal{A}}\upperset{G}{\circ}sm_{\MA\oplus\mathcal{C}^*})^{\doubar{x}}=\sum\mathcal{E}(\mathcalboondox{D})
    \]
    where the sum is over all the filled diagrams $\mathcalboondox{D}$ of type $\doubar{x}$ and of the form

\begin{tikzpicture}[line cap=round,line join=round,x=1.0cm,y=1.0cm]
\clip(-7,-2.7) rectangle (13.154435817002396,1.7);
   \draw [line width=0.5pt] (0.,0.) circle (0.5cm);
     \shadedraw[rotate=30,shift={(0.5cm,0cm)}] \doublefleche;
     \shadedraw[rotate=150,shift={(0.5cm,0cm)}] \doublefleche;
     \shadedraw [line width=1.1pt,shift={(0cm,1cm)},rotate=-90] \fleche;
     \shadedraw[shift={(0.86cm,-0.5cm)},rotate=150] \doubleflechescindeeleft;
     \shadedraw[shift={(0.86cm,-0.5cm)},rotate=150] \doubleflechescindeeright;
     \shadedraw[shift={(0.86cm,-0.5cm)},rotate=150] \fleche;
     \shadedraw[shift={(-0.86cm,-0.5cm)},rotate=30] \doubleflechescindeeleft;
      \shadedraw[shift={(-0.86cm,-0.5cm)},rotate=30] \doubleflechescindeeright;
       \shadedraw[shift={(-0.86cm,-0.5cm)},rotate=30] \fleche;
     \draw [line width=0.5pt] (1.12,-0.65) circle (0.3cm);
     \draw [rotate around ={60:(1.12,-0.65)}] [->, >= stealth] (1.43,-0.65)--(1.73,-0.65);
     \draw [line width=0.5pt] (1.57,0.13) circle (0.3cm);
     \draw[rotate around={-90:(1.57,0.13)}] [->, >= stealth] (1.87,0.13)--(2.17,0.13);
     \draw[rotate around={90:(1.57,0.13)}] [->, >= stealth] (1.87,0.13)--(2.17,0.13);
     \draw[rotate around={120:(1.57,0.13)}] [<-, >= stealth] (1.87,0.13)--(2.17,0.13);
     \draw [line width=0.5pt] (1.12,0.91) circle (0.3cm);
     \shadedraw[rotate around={120:(1.12,0.91)},shift={(1.42cm,0.91cm)}] \doublefleche;
     \draw [rotate around ={-120:(1.12,-0.65)}] [->, >= stealth] (1.43,-0.65)--(1.73,-0.65);
     \draw [line width=0.5pt] (0.67,-1.43) circle (0.3cm);
      \draw[rotate around={90:(0.67,-1.43)}] [->, >= stealth] (0.97,-1.43)--(1.27,-1.43);
      \draw[rotate around={-90:(0.67,-1.43)}] [->, >= stealth] (0.97,-1.43)--(1.27,-1.43);
     \draw[rotate around={-60:(0.67,-1.43)}] [<-, >= stealth] (0.97,-1.43)--(1.27,-1.43);
     \draw [line width=0.5pt] (0.67+0.45,-1.43-0.78) circle (0.3cm);
     \shadedraw[rotate around={-60:(0.67+0.45,-1.43-0.78)},shift={(1.42cm,-2.21cm)}] \doublefleche;
     \draw [line width=0.5pt] (-1.12,-0.65) circle (0.3cm);
     \draw [rotate around ={-60:(-1.12,-0.65)}] [->, >= stealth] (-1.43,-0.65)--(-1.73,-0.65);
     \draw [line width=0.5pt] (-1.12+0.45,-0.65-0.78) circle (0.3cm);
     \draw[rotate around={-60:(-1.12+0.45,-0.65-0.78)}][->, >= stealth] (-1.12+0.75,-0.65-0.78)--(-1.12+1.05,-0.65-0.78);
     \draw [rotate around ={120:(-1.12,-0.65)}] [->, >= stealth] (-1.43,-0.65)--(-1.73,-0.65);
     \draw [line width=0.5pt] (-1.12-0.45,-0.65+0.78) circle (0.3cm);
     \draw[rotate around={0:(-1.12-0.45,-0.65+0.78)}][->, >= stealth] (-1.12-0.15,-0.65+0.78)--(-1.12+0.15,-0.65+0.78);
     \draw[rotate around={180:(-1.12-0.45,-0.65+0.78)}][->, >= stealth] (-1.12-0.15,-0.65+0.78)--(-1.12+0.15,-0.65+0.78);
     \draw[rotate around={-120:(-1.12-0.45,-0.65+0.78)}][<-, >= stealth] (-1.12-0.15,-0.65+0.78)--(-1.12+0.15,-0.65+0.78);
    \draw [line width=0.5pt] (-1.12-0.9,-0.65) circle (0.3cm);
    \shadedraw[rotate around={-120:(-1.12-0.9,-0.65)},shift={(-1.12-0.6,-0.65)}] \doublefleche;
\begin{scriptsize}
\draw [fill=black] (0,-0.6) circle (0.3pt);
\draw [fill=black] (0.2,-0.55) circle (0.3pt);
\draw [fill=black] (-0.2,-0.55) circle (0.3pt);
\draw [fill=black] (1.45,-0.85) circle (0.3pt);
\draw [fill=black] (1.5,-0.67) circle (0.3pt);
\draw [fill=black] (1.33,-0.97) circle (0.3pt);
\draw [fill=black] (-1.45,-0.85) circle (0.3pt);
\draw [fill=black] (-1.5,-0.67) circle (0.3pt);
\draw [fill=black] (-1.33,-0.97) circle (0.3pt);
\draw [fill=black] (1.97,0.1) circle (0.3pt);
\draw [fill=black] (1.93,0.25) circle (0.3pt);
\draw [fill=black] (1.92,-0.07) circle (0.3pt);
\draw [rotate around={180:(1.57,0.13)}] [fill=black] (1.97,0.1) circle (0.3pt);
\draw [rotate around={180:(1.57,0.13)}] [fill=black] (1.93,0.25) circle (0.3pt);
\draw [rotate around={180:(1.57,0.13)}] [fill=black] (1.92,-0.05) circle (0.3pt);
\draw [fill=black] (1.05,-1.43) circle (0.3pt);
\draw [fill=black] (1,-1.23) circle (0.3pt);
\draw [fill=black] (1,-1.63) circle (0.3pt);
\draw [rotate around={180:(0.67,-1.43)}] [fill=black] (1.05,-1.43) circle (0.3pt);
\draw [rotate around={180:(0.67,-1.43)}] [fill=black] (1,-1.23) circle (0.3pt);
\draw [rotate around={180:(0.67,-1.43)}] [fill=black] (1,-1.63) circle (0.3pt);
\draw [fill=black] (-1.12-0.45,-0.65+0.78+0.4) circle (0.3pt);
\draw [fill=black] (-1.12-0.45-0.2,-0.65+0.78+0.34) circle (0.3pt);
\draw [fill=black] (-1.12-0.45+0.2,-0.65+0.78+0.34) circle (0.3pt);
\draw [rotate around={180:(-1.12-0.45,-0.65+0.78)}] [fill=black] (-1.12-0.45,-0.65+0.78+0.4) circle (0.3pt);
\draw [rotate around={180:(-1.12-0.45,-0.65+0.78)}][fill=black] (-1.12-0.45-0.1,-0.65+0.78+0.38) circle (0.3pt);
\draw [rotate around={180:(-1.12-0.45,-0.65+0.78)}][fill=black] (-1.12-0.45+0.1,-0.65+0.78+0.38) circle (0.3pt);
\draw [fill=black] (-1.45+0.65-0.18,-0.85-0.65-0.18) circle (0.3pt);
\draw [fill=black] (-1.5+0.65-0.2,-0.67-0.65-0.15) circle (0.3pt);
\draw [fill=black] (-1.33+0.65-0.15,-0.97-0.65-0.18) circle (0.3pt);
\draw [rotate around={180:(-1.12+0.45,-0.65-0.78)}][fill=black] (-1.45+0.65-0.18,-0.85-0.65-0.18) circle (0.3pt);
\draw [rotate around={180:(-1.12+0.45,-0.65-0.78)}][fill=black] (-1.5+0.65-0.2,-0.67-0.65-0.15) circle (0.3pt);
\draw [rotate around={180:(-1.12+0.45,-0.65-0.78)}][fill=black] (-1.33+0.65-0.15,-0.97-0.65-0.18) circle (0.3pt);
\end{scriptsize}
\draw (0,0.25)node[anchor=north]{$M_{\mathcal{A}}$};
\draw (1.12,-0.4)node[anchor=north]{$\mathbf{F}$};
\draw (-1.12,-0.4)node[anchor=north]{$\mathbf{F}$};
\draw (-1.12-0.9,-0.4)node[anchor=north]{$\mathbf{F}$};
\draw (1.12,1.16)node[anchor=north]{$\mathbf{F}$};
\draw (1.12,-1.95)node[anchor=north]{$\mathbf{F}$};
\draw (0.67,-1.16)node[anchor=north]{$\mathbf{G}$};
\draw (-1.12-0.45,-0.65+0.78+0.25)node[anchor=north]{$\mathbf{G}$};
\draw (1.12+0.45,-0.65+0.78+0.25)node[anchor=north]{$\mathbf{G}$};
\draw (-0.67,-1.43+0.25)node[anchor=north]{$\mathbf{G}$};
\end{tikzpicture}

It is clear that $\pi_{\MA[1]}(sm_{\MA\oplus\mathcal{B}^*}\upperset{M}{\circ}s\xi_{\mathcal{A}})^{\doubar{x}}=\sum\mathcal{E}(\mathcalboondox{D})$ as well.

Moreover, we have that $\pi_{\MB^*[d]}(sm_{\MA\oplus\mathcal{B}^*}\upperset{M}{\circ} s\xi_{\mathcal{A}})^{\doubar{x}}=-\sum\mathcal{E}(\mathcalboondox{D_1'})$
where the sum is over all the filled diagrams $\mathcalboondox{D_1'}$ of type $\doubar{x}$ and of the form

\begin{tikzpicture}[line cap=round,line join=round,x=1.0cm,y=1.0cm]
\clip(-7,-2) rectangle (15.541535560357394,2.5);
   \draw [line width=0.5pt] (0.,0.) circle (0.5cm);
     \draw[rotate=-45][->, >= stealth] (0.5,0)--(1,0);
     \draw [line width=0.5pt] (0.92,-0.92) circle (0.3cm);
     \draw[rotate around={180:(0.92,-0.92)}][->, >= stealth] (1.22,-0.92)--(1.52,-0.92);
     \draw[rotate around={0:(0.92,-0.92)}][->, >= stealth] (1.22,-0.92)--(1.52,-0.92);
     \draw[rotate around={30:(0.92,-0.92)}][<-, >= stealth] (1.22,-0.92)--(1.52,-0.92);
     \draw [line width=0.5pt] (0.92+0.78,-0.92+0.45) circle (0.3cm);
     \shadedraw[rotate around={30:(0.92+0.78,-0.92+0.45)}, shift={(0.92+0.78+0.3,-0.92+0.45)}] \doublefleche;
    \draw[rotate=-135][->, >= stealth] (0.5,0)--(1,0);
    \draw [line width=0.5pt] (-0.92,-0.92) circle (0.3cm);
     \draw[rotate around={180:(-0.92,-0.92)}][->, >= stealth] (-1.22,-0.92)--(-1.52,-0.92);
     \draw[rotate around={0:(-0.92,-0.92)}][->, >= stealth] (-1.22,-0.92)--(-1.52,-0.92);
     \draw[rotate around={150:(-0.92,-0.92)}][<-, >= stealth] (-0.62,-0.92)--(-0.32,-0.92);
     \draw [line width=0.5pt] (-0.92-0.78,-0.92+0.45) circle (0.3cm);
     \shadedraw[rotate around={150:(-0.92-0.78,-0.92+0.45)}, shift={(-0.92-0.78+0.3,-0.92+0.45)}] \doublefleche;
     \draw[line width=1.1pt,rotate=90][<-, >= stealth] (0.5,0)--(0.9,0);
     \draw[rotate=15][<-, >= stealth] (0.5,0)--(0.9,0);
     \draw [line width=0.5pt] (1.16,0.31) circle (0.3cm);
     \shadedraw[rotate around={105:(1.16,0.31)},shift={(1.46,0.31)}] \doublefleche;
     \shadedraw[rotate around={15:(1.16,0.31)}] [->, >= stealth,>=stealth](1.46,0.31)--(1.86,0.31);
     \draw [line width=0.5pt] (2.13,0.57) circle (0.3cm);
     \shadedraw[rotate around={15:(2.13,0.57)}] [->, >= stealth](2.43,0.57)--(2.83,0.57);
    \draw[rotate=165][<-, >= stealth] (0.5,0)--(0.9,0);
     \draw [line width=0.5pt] (-1.16,0.31) circle (0.3cm);
     \shadedraw[rotate around={75:(-1.16,0.31)},shift={(-0.86,0.31)}] \doublefleche;
     \shadedraw[rotate around={165:(-1.16,0.31)}] [->, >= stealth](-0.86,0.31)--(-0.46,0.31);
     \draw [line width=0.5pt] (-2.13,0.57) circle (0.3cm);
     \shadedraw[rotate around={205:(-2.13,0.57)}] [->, >= stealth](-1.83,0.57)--(-1.43,0.57);
     \shadedraw[rotate around={85:(-2.13,0.57)}] [<-, >= stealth](-1.83,0.57)--(-1.43,0.57);
     \draw [line width=0.5pt] (-2.02,1.57) circle (0.3cm);
     \shadedraw[rotate around={85:(-2.02,1.57)},shift={(-1.72,1.57)}] \doublefleche;
\begin{scriptsize}
\draw [fill=black] (0,-0.6) circle (0.3pt);
\draw [fill=black] (0.2,-0.55) circle (0.3pt);
\draw [fill=black] (-0.2,-0.55) circle (0.3pt);
\draw [rotate=150][fill=black] (-0.05,-0.6) circle (0.3pt);
\draw [rotate=150][fill=black] (0.15,-0.55) circle (0.3pt);
\draw [rotate=150][fill=black] (-0.22,-0.55) circle (0.3pt);
\draw [rotate=210][fill=black] (0.05,-0.6) circle (0.3pt);
\draw [rotate=210][fill=black] (0.2,-0.55) circle (0.3pt);
\draw [rotate=210][fill=black] (-0.12,-0.58) circle (0.3pt);
\draw [fill=black] (0.92,-1.32) circle (0.3pt);
\draw [fill=black] (0.72,-1.28) circle (0.3pt);
\draw [fill=black] (1.12,-1.28) circle (0.3pt);
\draw [rotate around={180:(0.92,-0.92)}][fill=black] (0.92,-1.32) circle (0.3pt);
\draw [rotate around={180:(0.92,-0.92)}][fill=black] (0.74,-1.28) circle (0.3pt);
\draw [rotate around={180:(0.92,-0.92)}][fill=black]  (1.1,-1.28) circle (0.3pt);
\draw [fill=black] (-0.92,-1.32) circle (0.3pt);
\draw [fill=black] (-0.72,-1.28) circle (0.3pt);
\draw [fill=black] (-1.12,-1.28) circle (0.3pt);
\draw [rotate around={180:(-0.92,-0.92)}][fill=black] (-0.92,-1.32) circle (0.3pt);
\draw [rotate around={180:(-0.92,-0.92)}][fill=black] (-0.74,-1.28) circle (0.3pt);
\draw [rotate around={180:(-0.92,-0.92)}][fill=black]  (-1.1,-1.28) circle (0.3pt);
\draw [fill=black] (-1.75,0.6) circle (0.3pt);
\draw [fill=black] (-1.78,0.77) circle (0.3pt);
\draw [fill=black] (-1.9,0.9) circle (0.3pt);
\draw [fill=black] (-1.5,0.1) circle (0.3pt);
\draw [fill=black] (-1.3,-0.05) circle (0.3pt);
\draw [fill=black] (-1.05,-0.05) circle (0.3pt);
\draw [fill=black] (1.5,0.1) circle (0.3pt);
\draw [fill=black] (1.3,-0.05) circle (0.3pt);
\draw [fill=black] (1.05,-0.05) circle (0.3pt);
\draw [fill=black] (2.2,0.93) circle (0.3pt);
\draw [fill=black] (1.78,0.75) circle (0.3pt);
\draw [fill=black] (1.95,0.9) circle (0.3pt);
\draw[rotate around={180:(2.13,0.57)}] [fill=black] (2.2,0.93) circle (0.3pt);
\draw [rotate around={180:(2.13,0.57)}][fill=black] (1.78,0.75) circle (0.3pt);
\draw [rotate around={180:(2.13,0.57)}][fill=black] (1.95,0.9) circle (0.3pt);
\end{scriptsize}
\draw (0,0.25)node[anchor=north]{$M_{\mathcal{B}}$};
\draw (-1.16,0.56)node[anchor=north]{$\mathbf{F}$};
\draw (1.16,0.56)node[anchor=north]{$\mathbf{F}$};
\draw  (-0.92-0.78,-0.92+0.7)node[anchor=north]{$\mathbf{F}$};
\draw  (0.92+0.78,-0.92+0.7)node[anchor=north]{$\mathbf{F}$};
\draw (-0.92,-0.67)node[anchor=north]{$\mathbf{G}$};
\draw (0.92,-0.67)node[anchor=north]{$\mathbf{G}$};
\draw (-2.13,0.57+0.25)node[anchor=north]{$\mathbf{G}$};
\draw  (-2.02,1.57+0.25)node[anchor=north]{$\mathbf{F}$};
\draw (2.13,0.57+0.25)node[anchor=north]{$\mathbf{G}$};
\end{tikzpicture}

On the other hand, $\pi_{\MB^*[d]}(s\xi_{\mathcal{A}}\upperset{G}{\circ}sm_{\MA\oplus\mathcal{C}^*})^{\doubar{x}}=-\sum\mathcal{E}(\mathcalboondox{D_2'})+\sum\mathcal{E}(\mathcalboondox{D_2\dprimeind})$ where the sum is over all the filled diagrams $\mathcalboondox{D_2'}$ and $\mathcalboondox{D_2\dprimeind}$ of type $\doubar{x}$ and of the form

\begin{minipage}{21cm}
\begin{tikzpicture}[line cap=round,line join=round,x=1.0cm,y=1.0cm]
\clip(-3,-3) rectangle (4.5,2.8);
 \draw [line width=0.5pt] (0.,0.) circle (0.5cm);
     \draw[rotate=-45][->, >= stealth] (0.5,0)--(1,0);
    \draw[rotate=-135][->, >= stealth] (0.5,0)--(1,0);
     \draw[rotate=90][<-, >= stealth] (0.5,0)--(0.9,0);
     \draw [line width=0.5pt] (0,1.2) circle (0.3cm);
     \draw[rotate around={150:(0,1.2)}][<-, >= stealth] (0.3,1.2)--(0.6,1.2);
     \draw[rotate around={30:(0,1.2)}][<-, >= stealth] (0.3,1.2)--(0.6,1.2);
     \draw[line width=1.1pt,rotate around={90:(0,1.2)}][<-, >= stealth] (0.3,1.2)--(0.7,1.2);
     \draw [line width=0.5pt] (0.78,1.65) circle (0.3cm);
     \shadedraw[rotate around={30:(0.78,1.65)}, shift={(1.08,1.65)}] \doublefleche;
     \draw [line width=0.5pt] (-0.78,1.65) circle (0.3cm);
      \shadedraw[rotate around={150:(-0.78,1.65)}, shift={(-0.48,1.65)}] \doublefleche;
     \draw[rotate=15][<-, >= stealth] (0.5,0)--(0.9,0);
     \draw [line width=0.5pt] (1.16,0.31) circle (0.3cm);
     \draw[rotate around={45:(1.16,0.31)}][<-, >= stealth] (1.46,0.31)--(1.76,0.31);
     \draw[rotate around={-15:(1.16,0.31)}][<-, >= stealth] (1.46,0.31)--(1.76,0.31);
     \draw [line width=0.5pt] (1.8,0.95) circle (0.3cm);
     \shadedraw[rotate around={135:(1.8,0.95)}, shift={(2.1,0.95)}] \doublefleche;
     \shadedraw[rotate around={45:(1.8,0.95)}][->,>=stealth] (2.1,0.95)--(2.4,0.95);
      \draw [line width=0.5pt] (2.44,1.59) circle (0.3cm);
      \shadedraw[rotate around={45:(2.44,1.59)}][->,>=stealth] (2.74,1.59)--(3.04,1.59);
     \draw [line width=0.5pt] (1.16+0.87,0.08) circle (0.3cm);
     \shadedraw[rotate around={-15:(1.16+0.87,0.08)}, shift={(1.46+0.87,0.08)}] \doublefleche;
    \draw[rotate=165][<-, >= stealth] (0.5,0)--(0.9,0);
     \draw [line width=0.5pt] (-1.16,0.31) circle (0.3cm);
    \draw[rotate around={135:(-1.16,0.31)}][<-, >= stealth] (-0.86,0.31)--(-0.56,0.31);
     \draw[rotate around={195:(-1.16,0.31)}][<-, >= stealth] (-0.86,0.31)--(-0.56,0.31);
     \draw [line width=0.5pt] (-1.8,0.95) circle (0.3cm);
     \shadedraw[rotate around={135:(-1.8,0.95)}, shift={(-1.5,0.95)}] \doublefleche;
     \draw [line width=0.5pt] (-1.16-0.87,0.08) circle (0.3cm);
     \shadedraw[rotate around={195:(-1.16-0.87,0.08)}, shift={(-1.16-0.57,0.08)}] \doublefleche;
\begin{scriptsize}
\draw [fill=black] (0,-0.6) circle (0.3pt);
\draw [fill=black] (0.2,-0.55) circle (0.3pt);
\draw [fill=black] (-0.2,-0.55) circle (0.3pt);
\draw [rotate=150][fill=black] (-0.05,-0.6) circle (0.3pt);
\draw [rotate=150][fill=black] (0.15,-0.55) circle (0.3pt);
\draw [rotate=150][fill=black] (-0.22,-0.55) circle (0.3pt);
\draw [rotate=210][fill=black] (0.05,-0.6) circle (0.3pt);
\draw [rotate=210][fill=black] (0.2,-0.55) circle (0.3pt);
\draw [rotate=210][fill=black] (-0.12,-0.58) circle (0.3pt);
\draw [fill=black] (0.16,1.63) circle (0.3pt);
\draw [fill=black] (0.25,1.6) circle (0.3pt);
\draw [fill=black] (0.32,1.53) circle (0.3pt);
\draw [rotate around={60:(0,1.2)}][fill=black] (0.16,1.63) circle (0.3pt);
\draw [rotate around={60:(0,1.2)}][fill=black] (0.25,1.6) circle (0.3pt);
\draw [rotate around={60:(0,1.2)}][fill=black] (0.32,1.53) circle (0.3pt);
\draw [fill=black] (1.51,0.5) circle (0.3pt);
\draw [fill=black] (1.55,0.42) circle (0.3pt);
\draw [fill=black] (1.54,0.33) circle (0.3pt);
\draw [rotate=150][fill=black] (1.51,0.5) circle (0.3pt);
\draw [rotate=150][fill=black] (1.55,0.42) circle (0.3pt);
\draw [rotate=150][fill=black] (1.54,0.33) circle (0.3pt);
(1.16,0.31)
\draw [fill=black] (2.2,1.9) circle (0.3pt);
\draw [fill=black] (2.08,1.7) circle (0.3pt);
\draw [fill=black] (2.4,1.97) circle (0.3pt);
\draw [rotate around={180:(2.44,1.59)}][fill=black] (2.2,1.9) circle (0.3pt);
\draw [rotate around={180:(2.44,1.59)}][fill=black] (2.08,1.7) circle (0.3pt);
\draw [rotate around={180:(2.44,1.59)}][fill=black] (2.4,1.97) circle (0.3pt);
\draw [fill=black] (1.9,0.58) circle (0.3pt);
\draw [fill=black] (2.1,0.68) circle (0.3pt);
\draw [fill=black] (2.2,0.88) circle (0.3pt);
\end{scriptsize}
\draw (0,0.25)node[anchor=north]{$M_{\mathcal{C}}$};
\draw (0,1.45)node[anchor=north]{$\mathbf{G}$};
\draw (1.16,0.56)node[anchor=north]{$\mathbf{G}$};
\draw (-1.16,0.56)node[anchor=north]{$\mathbf{G}$};
\draw (-1.16-0.87,0.08+0.25)node[anchor=north]{$\mathbf{F}$};
\draw (-1.8,0.95+0.25)node[anchor=north]{$\mathbf{F}$};
\draw (1.16+0.87,0.08+0.25)node[anchor=north]{$\mathbf{F}$};
\draw (1.8,0.95+0.25)node[anchor=north]{$\mathbf{F}$};
\draw (0.78,1.9)node[anchor=north]{$\mathbf{F}$};
\draw (-0.78,1.9)node[anchor=north]{$\mathbf{F}$};
\draw (2.44,1.59+0.25)node[anchor=north]{$\mathbf{G}$};
\draw (4,0.2)node[anchor=north]{and};
\end{tikzpicture}
\begin{tikzpicture}[line cap=round,line join=round,x=1.0cm,y=1.0cm]
\clip(-3.5,-3) rectangle (13.154435817002396,2.5);
   \draw [line width=0.5pt] (0.,0.) circle (0.5cm);
     \shadedraw[rotate=30,shift={(0.5cm,0cm)}] \doublefleche;
     \shadedraw[rotate=150,shift={(0.5cm,0cm)}] \doublefleche;
     \shadedraw[shift={(0.cm,1cm)},rotate=-90] \doubleflechescindeeleft;
     \shadedraw[shift={(0.cm,1cm)},rotate=-90] \doubleflechescindeeright;
     \shadedraw[shift={(0.cm,1cm)},rotate=-90] \fleche;
     \shadedraw[shift={(0.86cm,-0.5cm)},rotate=150] \doubleflechescindeeleft;
     \shadedraw[shift={(0.86cm,-0.5cm)},rotate=150] \doubleflechescindeeright;
     \shadedraw[shift={(0.86cm,-0.5cm)},rotate=150] \fleche;
     \shadedraw[shift={(-0.86cm,-0.5cm)},rotate=30] \doubleflechescindeeleft;
      \shadedraw[shift={(-0.86cm,-0.5cm)},rotate=30] \doubleflechescindeeright;
       \shadedraw[shift={(-0.86cm,-0.5cm)},rotate=30] \fleche;
       \draw (0,1.3) circle (0.3cm);
       \draw[rotate around={0:(0,1.3)}][->,>=stealth] (0.3,1.3)--(0.6,1.3);
       \draw (0.9,1.3) circle (0.3cm);
       \draw[rotate around={-150:(0.9,1.3)}][->,>=stealth](1.2,1.3)--(1.5,1.3);
       \draw[rotate around={30:(0.9,1.3)}][->,>=stealth](1.2,1.3)--(1.5,1.3);
       \draw[rotate around={60:(0.9,1.3)}][<-,>=stealth](1.2,1.3)--(1.5,1.3);
       \draw (1.35,2.08) circle (0.3cm);
       \draw[rotate around={60:(1.35,2.08)}, shift={(1.65,2.08)}]\doublefleche;
       \draw[rotate around={180:(0,1.3)}][->,>=stealth] (0.3,1.3)--(0.6,1.3);
       \draw (-0.9,1.3) circle (0.3cm);
       \draw[rotate around={150:(-0.9,1.3)}][->,>=stealth](-0.6,1.3)--(-0.3,1.3);
       \draw[rotate around={-30:(-0.9,1.3)}][->,>=stealth](-0.6,1.3)--(-0.3,1.3);
       \draw[rotate around={120:(-0.9,1.3)}][<-,>=stealth](-0.6,1.3)--(-0.3,1.3);
       \draw (-1.35,2.08) circle (0.3cm);
       \draw[rotate around={120:(-1.35,2.08)}, shift={(-1.05,2.08)}]\doublefleche;
       \draw[line width=1.1pt, rotate around={60:(-0.9,1.3)}][<-,>=stealth](-0.6,1.3)--(-0.3,1.3);
     \draw [line width=0.5pt] (1.12,-0.65) circle (0.3cm);
     \draw [rotate around={60:(1.12,-0.65)}] [->, >= stealth] (1.43,-0.65)--(1.73,-0.65);
     \draw [line width=0.5pt] (1.57,0.13) circle (0.3cm);
     \draw[rotate around={180:(1.57,0.13)}] [->, >= stealth] (1.87,0.13)--(2.17,0.13);
     \draw[rotate around={0:(1.57,0.13)}] [->, >= stealth] (1.87,0.13)--(2.17,0.13);
     \draw[rotate around={-60:(1.57,0.13)}] [<-, >= stealth] (1.87,0.13)--(2.17,0.13);
     \draw [line width=0.5pt] (1.12+0.9,-0.65) circle (0.3cm);
    \shadedraw[rotate around={-60:(1.12+0.9,-0.65)},shift={(1.12+0.6+0.6,-0.65)}] \doublefleche;
     \draw [rotate around ={-120:(1.12,-0.65)}] [->, >= stealth] (1.43,-0.65)--(1.73,-0.65);
     \draw [line width=0.5pt] (0.67,-1.43) circle (0.3cm);
      \draw[rotate around={90:(0.67,-1.43)}] [->, >= stealth] (0.97,-1.43)--(1.27,-1.43);
      \draw[rotate around={-90:(0.67,-1.43)}] [->, >= stealth] (0.97,-1.43)--(1.27,-1.43);
     \draw[rotate around={-60:(0.67,-1.43)}] [<-, >= stealth] (0.97,-1.43)--(1.27,-1.43);
     \draw [line width=0.5pt] (0.67+0.45,-1.43-0.78) circle (0.3cm);
     \shadedraw[rotate around={-60:(0.67+0.45,-1.43-0.78)},shift={(1.42cm,-2.21cm)}] \doublefleche;
     \draw [line width=0.5pt] (-1.12,-0.65) circle (0.3cm);
     \draw [rotate around ={-60:(-1.12,-0.65)}] [->, >= stealth] (-1.43,-0.65)--(-1.73,-0.65);
     \draw [line width=0.5pt] (-1.12+0.45,-0.65-0.78) circle (0.3cm);
     \draw[rotate around={-60:(-1.12+0.45,-0.65-0.78)}][->, >= stealth] (-1.12+0.75,-0.65-0.78)--(-1.12+1.05,-0.65-0.78);
     \draw [rotate around ={120:(-1.12,-0.65)}] [->, >= stealth] (-1.43,-0.65)--(-1.73,-0.65);
     \draw [line width=0.5pt] (-1.12-0.45,-0.65+0.78) circle (0.3cm);
     \draw[rotate around={0:(-1.12-0.45,-0.65+0.78)}][->, >= stealth] (-1.12-0.15,-0.65+0.78)--(-1.12+0.15,-0.65+0.78);
     \draw[rotate around={180:(-1.12-0.45,-0.65+0.78)}][->, >= stealth] (-1.12-0.15,-0.65+0.78)--(-1.12+0.15,-0.65+0.78);
     \draw[rotate around={-120:(-1.12-0.45,-0.65+0.78)}][<-, >= stealth] (-1.12-0.15,-0.65+0.78)--(-1.12+0.15,-0.65+0.78);
    \draw [line width=0.5pt] (-1.12-0.9,-0.65) circle (0.3cm);
    \shadedraw[rotate around={-120:(-1.12-0.9,-0.65)},shift={(-1.12-0.6,-0.65)}] \doublefleche;
\begin{scriptsize}
\draw [fill=black] (0,-0.6) circle (0.3pt);
\draw [fill=black] (0.2,-0.55) circle (0.3pt);
\draw [fill=black] (-0.2,-0.55) circle (0.3pt);
\draw [fill=black] (1.45,-0.85) circle (0.3pt);
\draw [fill=black] (1.5,-0.67) circle (0.3pt);
\draw [fill=black] (1.33,-0.97) circle (0.3pt);
\draw [fill=black] (-1.45,-0.85) circle (0.3pt);
\draw [fill=black] (-1.5,-0.67) circle (0.3pt);
\draw [fill=black] (-1.33,-0.97) circle (0.3pt);
\draw [fill=black] (+1.12+0.45,-0.65+0.78+0.4) circle (0.3pt);
\draw [fill=black] (+1.12+0.45-0.2,-0.65+0.78+0.34) circle (0.3pt);
\draw [fill=black] (+1.12+0.45+0.2,-0.65+0.78+0.34) circle (0.3pt);
\draw [rotate around={180:(+1.12+0.45,-0.65+0.78)}] [fill=black] (1.12+0.45,-0.65+0.78+0.4) circle (0.3pt);
\draw [rotate around={180:(1.12+0.45,-0.65+0.78)}][fill=black] (+1.12+0.45-0.1,-0.65+0.78+0.38) circle (0.3pt);
\draw [rotate around={180:(1.12+0.45,-0.65+0.78)}][fill=black] (1.12+0.45+0.1,-0.65+0.78+0.38) circle (0.3pt);
\draw [fill=black] (1.05,-1.43) circle (0.3pt);
\draw [fill=black] (1,-1.23) circle (0.3pt);
\draw [fill=black] (1,-1.63) circle (0.3pt);
\draw [rotate around={180:(0.67,-1.43)}] [fill=black] (1.05,-1.43) circle (0.3pt);
\draw [rotate around={180:(0.67,-1.43)}] [fill=black] (1,-1.23) circle (0.3pt);
\draw [rotate around={180:(0.67,-1.43)}] [fill=black] (1,-1.63) circle (0.3pt);
\draw [fill=black] (-1.12-0.45,-0.65+0.78+0.4) circle (0.3pt);
\draw [fill=black] (-1.12-0.45-0.2,-0.65+0.78+0.34) circle (0.3pt);
\draw [fill=black] (-1.12-0.45+0.2,-0.65+0.78+0.34) circle (0.3pt);
\draw [rotate around={180:(-1.12-0.45,-0.65+0.78)}] [fill=black] (-1.12-0.45,-0.65+0.78+0.4) circle (0.3pt);
\draw [rotate around={180:(-1.12-0.45,-0.65+0.78)}][fill=black] (-1.12-0.45-0.1,-0.65+0.78+0.38) circle (0.3pt);
\draw [rotate around={180:(-1.12-0.45,-0.65+0.78)}][fill=black] (-1.12-0.45+0.1,-0.65+0.78+0.38) circle (0.3pt);
\draw [fill=black] (-1.45+0.65-0.18,-0.85-0.65-0.18) circle (0.3pt);
\draw [fill=black] (-1.5+0.65-0.2,-0.67-0.65-0.15) circle (0.3pt);
\draw [fill=black] (-1.33+0.65-0.15,-0.97-0.65-0.18) circle (0.3pt);
\draw [rotate around={180:(-1.12+0.45,-0.65-0.78)}][fill=black] (-1.45+0.65-0.18,-0.85-0.65-0.18) circle (0.3pt);
\draw [rotate around={180:(-1.12+0.45,-0.65-0.78)}][fill=black] (-1.5+0.65-0.2,-0.67-0.65-0.15) circle (0.3pt);
\draw [rotate around={180:(-1.12+0.45,-0.65-0.78)}][fill=black] (-1.33+0.65-0.15,-0.97-0.65-0.18) circle (0.3pt);
\draw[fill=black] (0,1.7) circle (0.3pt);
\draw[fill=black] (-0.15,1.65) circle (0.3pt);
\draw[fill=black] (0.15,1.65) circle (0.3pt);
\draw[fill=black] (0.6,1.5) circle (0.3pt);
\draw[fill=black] (0.72,1.62) circle (0.3pt);
\draw[fill=black] (0.9,1.67) circle (0.3pt);
\draw[rotate around={180:(0.9,1.3)}][fill=black] (0.6,1.5) circle (0.3pt);
\draw[rotate around={180:(0.9,1.3)}][fill=black] (0.72,1.62) circle (0.3pt);
\draw[rotate around={180:(0.9,1.3)}][fill=black] (0.9,1.67) circle (0.3pt);
\draw[rotate around={180:(-0.9,1.3)}][fill=black] (-0.6,1.5) circle (0.3pt);
\draw[rotate around={180:(-0.9,1.3)}][fill=black] (-0.72,1.62) circle (0.3pt);
\draw[rotate around={180:(-0.9,1.3)}][fill=black] (-0.9,1.67) circle (0.3pt);
\draw[fill=black] (-0.8,1.67) circle (0.3pt);
\draw[fill=black] (-0.9,1.7) circle (0.3pt);
\draw[fill=black] (-1,1.67) circle (0.3pt);
\draw[rotate around={-60:(-0.9,1.3)}][fill=black] (-0.8,1.67) circle (0.3pt);
\draw[rotate around={-60:(-0.9,1.3)}][fill=black] (-0.9,1.7) circle (0.3pt);
\draw[rotate around={-60:(-0.9,1.3)}][fill=black] (-1,1.67) circle (0.3pt);
\end{scriptsize}
\draw (0,0.25)node[anchor=north]{$M_{\mathcal{A}}$};
\draw (1.12,-0.4)node[anchor=north]{$\mathbf{F}$};
\draw (-1.12,-0.4)node[anchor=north]{$\mathbf{F}$};
\draw (-1.12-0.9,-0.4)node[anchor=north]{$\mathbf{F}$};
\draw (1.12+0.9,-0.4)node[anchor=north]{$\mathbf{F}$};
\draw (1.12,-1.95)node[anchor=north]{$\mathbf{F}$};
\draw (0,1.55)node[anchor=north]{$\mathbf{F}$};
\draw (1.35,2.33)node[anchor=north]{$\mathbf{F}$};
\draw (-1.35,2.33)node[anchor=north]{$\mathbf{F}$};
\draw (0.67,-1.16)node[anchor=north]{$\mathbf{G}$};
\draw (-1.12-0.45,-0.65+0.78+0.25)node[anchor=north]{$\mathbf{G}$};
\draw (1.12+0.45,-0.65+0.78+0.25)node[anchor=north]{$\mathbf{G}$};
\draw (-0.67,-1.43+0.25)node[anchor=north]{$\mathbf{G}$};
\draw (0.9,1.55)node[anchor=north]{$\mathbf{G}$};
\draw (-0.9,1.55)node[anchor=north]{$\mathbf{G}$};
\end{tikzpicture}
\end{minipage}

Therefore, using that $(F_0,s_{d+1}\mathbf{F})$ is a $d$-pre-Calabi-Yau morphism, we finally obtain the equality $\sum\mathcal{E}(\mathcalboondox{D_2'})=\sum\mathcal{E}(\mathcalboondox{D_1'})+\sum\mathcal{E}(\mathcalboondox{D_2\dprimeind})$ which shows that $s\xi_{\MA}$ is an $A_{\infty}$-morphism. Moreover, $s\xi_{\MA}$ and $s\xi_{\mathcal{C}^*}$ satisfy that $s\varphi_{\MB}\circ s\xi_{\MA}=s\psi_{\MB}\circ 
 s\xi_{\mathcal{C}}$ so the triple $(sm_{\MA\oplus\mathcal{C}^*}, s\varphi_{\MA}\circ  s\xi_{\MA},s\psi_{\mathcal{C}}\circ 
 s\xi_{\mathcal{C}})$ is the composition of $\mathcal{P}(F_0,s_{d+1}\mathbf{F})$ and $\mathcal{P}(G_0,s_{d+1}\mathbf{G})$.    
Since $s\varphi_{\MA}\circ s\xi_{\MA}=s\chi_{\MA}$ and $s\psi_{\mathcal{C}}\circ 
 s\xi_{\mathcal{C}}=s\chi_{\mathcal{C}}$ the functor $\mathcal{P}$ is compatible with partial products.
\end{proof}

\bibliographystyle{alpha}
\bibliography{mybiblio}

\vspace{1cm}

MARION BOUCROT: Univ. Grenoble Alpes, CNRS, IF, 38000 Grenoble, France

\textit{E-mail adress :} \href{mailto:marion.boucrot@univ-grenoble-alpes.fr}{\texttt{marion.boucrot@univ-grenoble-alpes.fr}}

\end{document}